\renewcommand{\injlim}{\varinjlim}
\renewcommand{\projlim}{\varprojlim}
\crefname{equation}{}{}
\crefname{enumi}{}{}
\newlist{conenum}{enumerate}{1}
\setlist[conenum,1]{label=(\roman*),ref=\roman*}
\crefname{conenumi}{}{}
\numberwithin{equation}{section}
\theoremstyle{plain}
\newtheorem{Theorem}{Theorem}
\crefname{Theorem}{Theorem}{Theorems}
\newtheorem{theorem}[equation]{Theorem}
\newtheorem{proposition}[equation]{Proposition}
\newtheorem{lemma}[equation]{Lemma}
\newtheorem{corollary}[equation]{Corollary}
\theoremstyle{definition}
\newtheorem{definition}[equation]{Definition}
\newtheorem{example}[equation]{Example}
\theoremstyle{remark}
\newtheorem{remark}[equation]{Remark}
\let\oldSS\SS\let\SS\relax
\newcommand{\NN}{\mathbf{N}}
\newcommand{\ZZ}{\mathbf{Z}}
\newcommand{\QQ}{\mathbf{Q}}
\newcommand{\CC}{\mathbf{C}}
\newcommand{\SS}{\mathbf{S}}
\newcommand{\II}{\mathbb{I}}
\newcommand{\E}{\mathrm{E}}
\newcommand{\cg}{\textnormal{cg}}
\newcommand{\con}{\textnormal{con}}
\newcommand{\fin}{\textnormal{fin}}
\newcommand{\rig}{\textnormal{rig}}
\newcommand{\st}{\textnormal{st}}
\newcommand{\vSc}{\textnormal{vSc}}
\newcommand{\vnuc}{\textnormal{vnuc}}
\newcommand{\CAlg}{\operatorname{CAlg}}
\newcommand{\D}{\operatorname{D}}
\newcommand{\Frac}{\operatorname{Frac}}
\newcommand{\VD}{\operatorname{VD}}
\newcommand{\Fun}{\operatorname{Fun}}
\newcommand{\Idem}{\operatorname{Idem}}
\newcommand{\cIdem}{\operatorname{cIdem}}
\newcommand{\Ind}{\operatorname{Ind}}
\newcommand{\Map}{\operatorname{Map}}
\newcommand{\Mod}{\operatorname{Mod}}
\newcommand{\Aut}{\operatorname{Aut}}
\newcommand{\Pro}{\operatorname{Pro}}
\newcommand{\R}{\operatorname{R}}
\newcommand{\Shv}{\operatorname{Shv}}
\newcommand{\Seq}{\operatorname{Seq}}
\newcommand{\cShv}{\operatorname{cShv}}
\newcommand{\Sm}{\operatorname{Sm}}
\newcommand{\Spec}{\operatorname{Spec}}
\newcommand{\cofib}{\operatorname{cofib}}
\newcommand{\fib}{\operatorname{fib}}
\newcommand{\id}{\operatorname{id}}
\newcommand{\map}{\operatorname{map}}
\newcommand{\op}{\operatorname{op}}
\newcommand{\co}{\operatorname{co}}
\newcommand{\sd}{\operatorname{sd}}
\newcommand{\X}{\mathord{-}}
\newcommand{\unit}{\mathbf{1}}
\newcommand{\liq}{\operatorname{liq}}
\newcommand{\cat}[1]{\mathcal{#1}}
\newcommand{\Cat}[1]{\mathsf{#1}}
\newcommand{\shf}[1]{\mathscr{#1}}
\newcommand{\Cls}[1]{\mathscr{#1}}
\newcommand{\llbracket}{[\![}
\newcommand{\rrbracket}{]\!]}
\DeclarePairedDelimiterX\Set[1]{\{}{\}}{#1}
\title{Very Schwartz coidempotents and continuous spectrum}
\author{Ko Aoki}
\address{Max Planck Institute for Mathematics,
  Vivatsgasse 7, 53111 Bonn, Germany
}
\email{aoki@mpim-bonn.mpg.de}
\date{\today}
\begin{document}

\begin{abstract}
  We introduce the continuous version
  of the (unstable) smashing spectrum functor.
  In the stable case,
  it assigns to each dualizably symmetric monoidal
  stable presentable \(\infty\)-category
  a stably compact space
  whose open subsets
  correspond to very Schwartz idempotents—a certain class
  of idempotents we define.
  As an application,
  we prove Tannaka duality for
  spectral sheaves on stably compact spaces,
  including the case of compact Hausdorff spaces.
\end{abstract}

\maketitle
\setcounter{tocdepth}{1}
\tableofcontents

\section{Introduction}\label{s:intro}

\subsection{Tannaka duality for compact Hausdorff spaces}\label{ss:tan_ch}

Classical Tannaka duality
recovers a group~\(G\)
from its category of representations.
Lurie~\cite{Lurie05}
considered the problem
of reconstructing a stack~\(X\)
from its symmetric monoidal \(\infty\)-category of quasicoherent sheaves~\(\D(X)\),
which is related to the original Tannaka duality
when \(X\) is~\(BG\);
see~\cite[Chapter~9]{LurieSAG} for an extensive discussion.
Similarly,
one can ask whether a topological space
can be recovered from its category of sheaves.

In~\cite{ttg-shv},
we observed
a result of this type
for locales:
For a field~\(k\),
the functor
\begin{equation}
  \label{e:bf9ux}
  \Shv(\X;\D(k))\colon
  \Cat{Loc}^{\op}
  \to
  \CAlg_{\D(k)}(\Cat{Pr}_{\st}),
\end{equation}
where \(\Cat{Loc}\) denotes the category of locales,
is fully faithful;
in particular,
any \(k\)-linear symmetric monoidal functor
\(\Shv(X;\D(k))\to\Shv(Y;\D(k))\)
for locales~\(X\) and~\(Y\)
comes from a morphism \(Y\to X\).
The key point of our proof
was to consider the right adjoint of~\cref{e:bf9ux}:
In~\cite{ttg-sm},
we identified
the classical \emph{smashing spectrum} functor~\(\Sm\)
with the right adjoint of
\begin{equation*}
  \Shv(\X;\Cat{Sp})\colon
  \Cat{Loc}^{\op}
  \to
  \CAlg(\Cat{Pr}_{\st}).
\end{equation*}
Therefore,
to deduce that \cref{e:bf9ux} is fully faithful,
it suffices to show that
the map of locales
\(\Sm(\Shv(X;\D(k)))\to X\)
is an equivalence for a locale~\(X\)—reducing
the problem to understanding idempotent algebras
in \(\Shv(X;\D(k))\).

In this paper,
we prove that
\begin{equation}
  \label{e:6h4hz}
  \Shv(\X;\Cat{Sp})\colon
  \Cat{CH}^{\op}
  \to
  \CAlg(\Cat{Pr}_{\st}),
\end{equation}
where \(\Cat{CH}\) denotes the category of compact Hausdorff spaces,
is fully faithful;
in particular,
any colimit-preserving symmetric monoidal functor
\(\Shv(X;\Cat{Sp})\to\Shv(Y;\Cat{Sp})\)
for compact Hausdorff spaces~\(X\) and~\(Y\)
comes from a continuous map \(Y\to X\).
The plain smashing spectrum functor~\(\Sm\)
does not help in this case,
since \(\Sm(\Cat{Sp})\)
is not a singleton (and our understanding is limited,
but see~\cite{BHIS} for recent progress).
Rather, we prove this
by considering a variant of the smashing spectrum functor.

First,
we note that by, e.g.,~\cite[Theorem~6.5]{verdier-asc},
the functor
\cref{e:6h4hz} lands in
the full subcategory
\(\CAlg(\Cat{Pr}_{\st})_{\rig}\)
of rigid stable presentably symmetric monoidal \(\infty\)-categories.

\begin{Theorem}\label{main_rig}
  We have an adjunction
  \begin{equation*}
    \begin{tikzcd}[column sep=huge]
      \Cat{CH}^{\op}\ar[r,shift left,"\Shv(\X;\Cat{Sp})"]&
      \CAlg(\Cat{Pr}_{\st})_{\rig}
      \subset
      \CAlg(\Cat{Pr}_{\st})
      \rlap,\ar[l,shift left,"\Sm^{\rig}"]
    \end{tikzcd}
  \end{equation*}
  where the right adjoint~\(\Sm^{\rig}\)
  is called the \emph{rigid spectrum} functor.
  For an object \(\cat{C}\in\CAlg(\Cat{Pr}_{\st})_{\rig}\),
  a closed subset of
  \(\Sm^{\rig}(\cat{C})\)
  corresponds to a \emph{very nuclear idempotent},
  which we introduce in \cref{ss:nr}.
\end{Theorem}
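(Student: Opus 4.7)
The plan is to deduce this theorem from the general continuous smashing spectrum construction advertised in the abstract, which associates to every dualizable symmetric monoidal stable presentable $\infty$-category $\cat{C}$ a stably compact space $\Sm^{\vSc}(\cat{C})$ whose open subsets correspond to very Schwartz idempotents. Since every rigid $\cat{C}$ is in particular dualizable, restricting this functor yields a candidate right adjoint $\Sm^{\rig}\colon\CAlg(\Cat{Pr}_{\st})_{\rig}\to\Cat{StCpt}^{\op}$. What remains to prove is threefold: (i) this restriction factors through $\Cat{CH}^{\op}\subset\Cat{StCpt}^{\op}$; (ii) closed subsets of $\Sm^{\rig}(\cat{C})$ correspond to very nuclear idempotents; and (iii) the resulting functor is right adjoint to $\Shv(\X;\Cat{Sp})$.

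I would first define very nuclear idempotents (the content of \S\ref{ss:nr}) as the ``nuclear dual'' of very Schwartz idempotents, mirroring the Clausen--Scholze dichotomy between Schwartz and nuclear objects, and prove that in the stably compact space $\Sm^{\vSc}(\cat{C})$ attached to a dualizable $\cat{C}$ they parametrize precisely the complementary, i.e.\ closed, subsets. This already establishes (ii) in the dualizable generality. Next, for (i), I would show that rigidity of $\cat{C}$ forces $\Sm^{\vSc}(\cat{C})$ to be Hausdorff; the key input is that rigidity symmetrizes the very Schwartz/very nuclear dichotomy, so that these idempotents together separate distinct points and the original and dual (patch) topologies on $\Sm^{\vSc}(\cat{C})$ coincide.

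For (iii), the adjunction would follow from the general dualizable adjunction once we know the right adjoint factors through $\Cat{CH}^{\op}$, combined with the identification $\Sm^{\rig}(\Shv(X;\Cat{Sp}))\simeq X$ for compact Hausdorff $X$. The latter reduces, via~(ii), to showing that very Schwartz and very nuclear idempotents in $\Shv(X;\Cat{Sp})$ are exactly the opens and closeds of $X$, which is a local check on $X$.

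The main obstacle I expect is step (i): proving Hausdorffness of $\Sm^{\vSc}(\cat{C})$ for rigid $\cat{C}$ demands enough very Schwartz/very nuclear separating pairs and ultimately rests on a generation-by-compact-object argument that leverages the full strength of rigidity. A secondary technical challenge is the exact formulation of ``very nuclear'' so that the correspondence with closed subsets in~(ii) is a literal bijection and fits cleanly with the existing very Schwartz theory, rather than being an ad hoc duality.
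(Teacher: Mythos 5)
Your plan hinges on step (i): that rigidity forces the very-Schwartz spectrum \(\Sm^{\con}(\cat{C})\) to be Hausdorff, so that \(\Sm^{\rig}\) can be taken to be its restriction. This is the genuine gap: it is false in general. For a rigid \(\cat{C}\), \cref{rig_tc} identifies compact morphisms with trace-class morphisms, so the very Schwartz idempotents are exactly the idempotents built from morphisms that are trace class \emph{as morphisms of \(\cat{C}\)}; but the frame they form need not be regular. The paper's own non-noetherian example \cref{xrxo97} produces a rigid \(\cat{C}=\D(V)\) (\(V\) a valuation ring, so \(\Spec V\) connected with no nontrivial complemented idempotents) whose very Schwartz idempotents contain a nontrivial chain \(V\le\injlim_{p<q}V[T^{-p}]\le\Frac(V)\); a chain frame with no complemented elements has a trivial rather-below relation and is never regular, so \(\Sm^{\con}(\D(V))\) is a non-Hausdorff stably compact space. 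Relatedly, your step (ii) misidentifies the two classes: very nuclear idempotents are not a ``complementary/closed'' dual of very Schwartz ones but a \emph{subclass} of them, cut out by the strictly stronger condition of being trace class \emph{as maps of idempotents} (\cref{xxlhf9}, which demands a witnessing idempotent \(F\)); the open/closed phrasing in the theorem is only the idempotent-versus-coidempotent bookkeeping. \Cref{xmzmlj} warns against exactly the conflation your proposal makes, and \cref{xu0zbv} versus \cref{xrxo97} shows the distinction is substantive.

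The paper's actual proof bypasses the continuous spectrum entirely and is much shorter: trace-class maps of idempotents are precisely the rather-below relation \(\prec\) on the frame \(\Idem(\cat{C})\) (\cref{xdzqx4}), so \(\Idem_{\vnuc}(\cat{C})=\Ind_{\cat{I}_{\sd}}(\Idem(\cat{C}))\) is, by the Banaschewski--Mulvey construction (\cref{xv2gz5}), the frame of the localic Stone--\v{C}ech compactification of \(\Sm(\cat{C})\). The adjunction of \cref{main_rig} is then the composite of the sheaves--spectrum adjunction of~\cite{ttg-sm} with the compact-regular reflection (\cref{xm44hc}); rigidity enters only to guarantee that \(\Shv(X;\Cat{Sp})\) lands in the source and via \cref{rig_tc}/\cref{xdi90c}. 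What \emph{is} salvageable from your outline is the bare existence of the right adjoint: since \(\Cat{CH}\) is reflective in \(\Cat{SC}\) and \(\CAlg(\Cat{Pr}_{\st})_{\rig}\) is full in \(\CAlg(\Cat{Dbl})\) by \cref{x01b14}, \cref{main_dbl} formally yields \(\Sm^{\rig}=\beta\circ\Sm^{\con}\) on rigid objects---but the Stone--\v{C}ech reflection \(\beta\) cannot be omitted, and this route still leaves the very-nuclear description of the resulting frame, which is the real content of the statement, unproven.
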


The existence of the right adjoint
in \cref{main_rig} is straightforward;
one can define
\(\Sm^{\rig}(\cat{C})\)
as the Stone–Čech compactification
of \(\Sm(\cat{C})\).
What is important to us
is to describe the open subsets of~\(\Sm^{\rig}(\cat{C})\)
in terms of special idempotents.
Using our presentation,
we prove the following,
which implies the desired Tannaka duality statement:

\begin{Theorem}\label{tan_ch}
  The rigid spectrum
  of \(\Shv(X;\Cat{Sp})\)
  for a compact Hausdorff space~\(X\)
  is canonically identified with~\(X\);
  i.e., \cref{e:6h4hz} is fully faithful.
\end{Theorem}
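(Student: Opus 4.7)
The plan is to reduce \cref{tan_ch} to a classification of very nuclear idempotents in $\Shv(X;\Cat{Sp})$. By \cref{main_rig}, the closed subsets of the compact Hausdorff space $\Sm^{\rig}(\Shv(X;\Cat{Sp}))$ are naturally in bijection with the very nuclear idempotents of $\Shv(X;\Cat{Sp})$; since a compact Hausdorff space is determined by the lattice of its closed subsets, to show $\Sm^{\rig}(\Shv(X;\Cat{Sp}))\simeq X$ in $\Cat{CH}$ it suffices to exhibit a natural lattice isomorphism between the closed subsets of $X$ and the very nuclear idempotents of $\Shv(X;\Cat{Sp})$, and then to verify that it matches the unit of the adjunction.

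For the forward direction I would assign, to each closed subset $Z\subseteq X$ with inclusion $i\colon Z\hookrightarrow X$, the smashing idempotent $A_Z\coloneqq i_*\unit_Z$ associated to the localization $\Shv(X;\Cat{Sp})\to\Shv(Z;\Cat{Sp})$. Very nuclearity of $A_Z$ should follow from the rigidity of $\Shv(Z;\Cat{Sp})$ (itself an instance of \cite[Theorem~6.5]{verdier-asc}) combined with a direct check against the definition from \cref{ss:nr}, exploiting the favorable behavior of pushforward along closed embeddings in $\Cat{CH}$.

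The main obstacle is the converse direction: showing that every very nuclear idempotent arises in this way. By the plain smashing spectrum identification of \cite{ttg-sm}, a general idempotent in $\Shv(X;\Cat{Sp})$ corresponds to an open sublocale of $X$; the task is to argue that the very nuclearity condition forces such a sublocale to be a genuine topological open, equivalently the complement of a closed subspace of $X$. This is where I expect the ``very'' in ``very nuclear''---beyond ordinary idempotence or nuclearity---to do real work, plausibly via regularity of compact Hausdorff spaces, perhaps through a Urysohn-style approximation. Once the classification is in hand, monotonicity of the bijection and its compatibility with the unit of the adjunction in \cref{main_rig} are formal, yielding $\Sm^{\rig}(\Shv(X;\Cat{Sp}))\simeq X$ naturally; full faithfulness of \cref{e:6h4hz} then follows immediately from the adjunction formula.
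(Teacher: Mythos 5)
Your overall frame (classify the very nuclear idempotents of \(\Shv(X;\Cat{Sp})\), match them with closed subsets, check compatibility with the unit of the adjunction) is the right shape, and your forward direction is essentially correct: for opens \(U\Subset U'\) of a compact Hausdorff space the corresponding map of idempotents is trace class, witnessed by the idempotent of \(X\setminus\overline{U}\) (cf.\ \cref{xdzqx4} and regularity), so each \(\SS_{Z}\) is very nuclear. The converse direction, however, rests on a false premise. You assert that by the plain smashing spectrum identification of~\cite{ttg-sm} a general idempotent in \(\Shv(X;\Cat{Sp})\) corresponds to an open sublocale of~\(X\). This fails already over a point: \(\Idem(\Cat{Sp})\) is not the two-element poset (it contains \(\SS[1/p]\), \(L_{n}\SS\), \(L_{n}^{\fin}\SS\), \dots), and \cref{ss:tan_ch} explicitly notes that \(\Sm(\Cat{Sp})\) is not a singleton, which is precisely why the plain smashing spectrum does not help here. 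So the real difficulty is not to upgrade an open sublocale to a genuine open subset; it is to show that the ``very nuclear'' condition excludes all the exotic, fiberwise-chromatic idempotents. Regularity of \(X\) and Urysohn-style approximation operate only at the level of the frame of opens and say nothing about, say, an idempotent whose stalk at a point is \(L_{n}\SS\).

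The paper closes exactly this gap by first proving the stronger continuous statement (\cref{xw2z2y}) and then noting that very nuclear idempotents are in particular very Schwartz. The essential inputs are \cref{xvs1r4} and \cref{xadi4q}: a Schwartz \(\Ind\)-idempotent of \(\Cat{Sp}_{(p)}\) must be \(\SS_{(p)}\) or \(0\), because a compact transition between idempotents sandwiched between \(L_{n}^{\fin}\SS\) and \(L_{n}\SS\) would factor through a bounded-below spectrum, contradicting \(v_{n}\)-periodicity; an arithmetic fracture argument (\cref{xaymiy}) then gives \(\Sm^{\con}(\Cat{Sp})=*\), and a stalkwise argument combined with compactness of \(X^{\op}\) finishes the sheaf case. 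Without a substitute for this homotopy-theoretic input, your classification of very nuclear idempotents does not go through.
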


\begin{remark}\label{xgldwx}
  Since \(\Cat{CH}\)
  is generated under limits by \([0,1]\),
  we can restate \cref{main_rig,tan_ch} as follows:
  Inside \(\CAlg(\Cat{Pr}_{\st})_{\rig}\),
  we consider the full subcategory generated
  under colimits
  by \(\Shv([0,1];\Cat{Sp})\).
  Then this full subcategory is equivalent
  to \(\Cat{CH}^{\op}\) via \(\Shv(\X;\Cat{Sp})\)
  and is coreflective.
  The coreflector maps \(\cat{C}\)
  to \(\Shv(\Sm^{\rig}(\cat{C});\Cat{Sp})\).
\end{remark}

\subsection{Stably compact spaces and continuous spectrum}\label{ss:intro_sc}

Our main object of study
in this paper is the notion
of \emph{continuous} spectrum
rather than that of rigid spectrum.
To explain this,
we first need to recall the notion of stably compact spaces,
which we review in \cref{s:sc}.
It is a simultaneous generalization of spectral spaces
and compact Hausdorff spaces;
see \cref{f:006w8}.

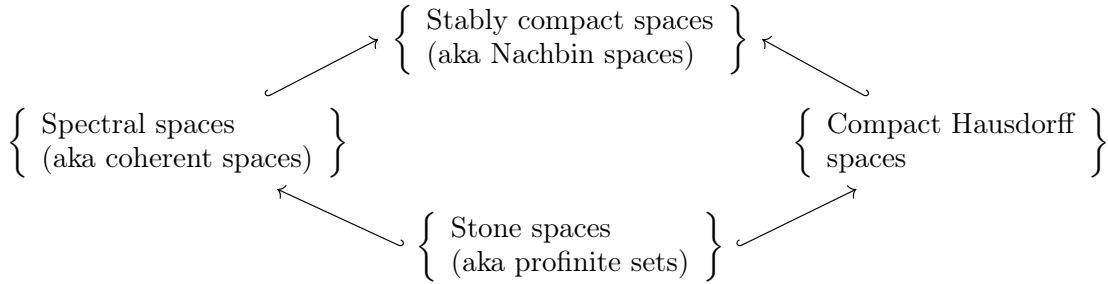
\begin{figure}[htbp]
  \begin{equation*}
    \begin{tikzcd}[column sep=tiny,row sep=tiny]
      {}&
      \Set*{\text{\begin{tabular}{l}
            Stably compact spaces\\
            (aka Nachbin spaces)
      \end{tabular}}}&
      {}\\
      \Set*{\text{\begin{tabular}{l}
            Spectral spaces\\
            (aka coherent spaces)
      \end{tabular}}}
      \ar[ur,hook,end anchor=west]&
      {}&
      \Set*{\text{\begin{tabular}{l}
            Compact Hausdorff\\
            spaces
      \end{tabular}}}
      \ar[ul,hook',end anchor=east]\\
      {}&
      \Set*{\text{\begin{tabular}{l}
            Stone spaces\\
            (aka profinite sets)
      \end{tabular}}}
      \ar[ul,hook',start anchor=west]
      \ar[ur,hook,start anchor=east]&
      {}
    \end{tikzcd}
  \end{equation*}
  \caption{The class of stably compact spaces contains
    both
    the classes of spectral spaces and compact Hausdorff spaces.
    The intersection of these two is the class of Stone spaces.
  }\label{f:006w8}
\end{figure}

We also have generalizations
of familiar notions in this class;
see \cref{t:4zbff}:

\begin{table}[htbp]
  \centering
  \caption{Notions on stably compact spaces.}\label{t:4zbff}
  \begin{tabular}{lll}
    \toprule
    Spectral spaces&
    Stably compact spaces&
    Compact Hausdorff spaces\\
    \midrule
    quasicompact map&
    perfect map&
    continuous map\\
    Hochster dual&
    de~Groot dual&
    identity\\
    constructible topology&
    patch topology&
    identity\\
    conj.\ of
    \(\Pro\)-Zariski top.&
    Flachsmeyer construction&
    N/A\\
    \bottomrule
  \end{tabular}
\end{table}

For sheaf theory,
we prove
in \cref{ss:sheaf}
that
\(\Shv(\X;\Cat{Sp})\colon\Cat{Loc}^{\op}\to\CAlg(\Cat{Pr}_{\st})\)
restricts to
\(\Cat{SC}^{\op}\to\CAlg(\Cat{Dbl})\),
where \(\Cat{SC}\) and \(\Cat{Dbl}\)
denote the \(\infty\)-categories
of stably compact spaces 
and dualizable stable presentable \(\infty\)-categories,
respectively.
With this,
we state
our main construction as follows:

\begin{Theorem}\label{main_dbl}
  We have an adjunction
  \begin{equation*}
    \begin{tikzcd}[column sep=huge]
      \Cat{SC}^{\op}\ar[r,shift left,"\Shv(\X;\Cat{Sp})"]&
      \CAlg(\Cat{Dbl})
      \rlap.\ar[l,shift left,"\Sm^{\con}"]
    \end{tikzcd}
  \end{equation*}
  where the right adjoint~\(\Sm^{\con}\)
  is called the \emph{continuous spectrum} functor.
  For an object \(\cat{C}\in\CAlg(\Cat{Dbl})\),
  a closed subset of~\(\Sm^{\con}(\cat{C})\)
  corresponds to a \emph{very Schwartz idempotent},
  which we introduce in \cref{ss:vsc}.
\end{Theorem}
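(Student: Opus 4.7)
My plan is to construct $\Sm^{\con}$ in a manner parallel to the rigid case of \cref{main_rig} and then verify the universal property through the explicit description by very Schwartz idempotents. The existence of $\Sm^{\con}$ should come for free: the classical smashing spectrum $\Sm$ is already right adjoint to $\Shv(\X;\Cat{Sp})\colon\Cat{Loc}^{\op}\to\CAlg(\Cat{Pr}_{\st})$, and combining this with a standard adjunction between $\Cat{SC}$ and $\Cat{Loc}$ — realized via a construction such as the Flachsmeyer one listed in \cref{t:4zbff} — together with the restriction established in \cref{ss:sheaf} should produce the adjunction on the dualizable side. Thus $\Sm^{\con}(\cat{C})$ can be taken to be the stably compact space obtained by applying this construction to the locale $\Sm(\cat{C})$.

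The substantive content is the lattice identification: closed subsets of $\Sm^{\con}(\cat{C})$ should correspond bijectively to very Schwartz idempotents of $\cat{C}$ as defined in \cref{ss:vsc}. I would first verify that very Schwartz idempotents form a poset with the correct algebraic operations (finite joins and arbitrary directed meets) to serve as the closed-set lattice of a stably compact space, and then match this lattice with the closed-set lattice of the space produced in the previous paragraph. A useful intermediate step is to characterize a very Schwartz idempotent as a smashing idempotent of $\cat{C}$ carrying an additional nuclearity/dualizability property, reflecting the passage from $\Cat{Pr}_{\st}$ to $\Cat{Dbl}$; this is what makes the stably compact structure (as opposed to a bare locale) the natural target.

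Granted these ingredients, the universal property unpacks straightforwardly: a perfect map $X\to\Sm^{\con}(\cat{C})$ is equivalent to a lattice map from very Schwartz idempotents of $\cat{C}$ to closed subsets of $X$, while a symmetric monoidal colimit-preserving functor $\cat{C}\to\Shv(X;\Cat{Sp})$ induces such a lattice map by functoriality, using that morphisms in $\CAlg(\Cat{Dbl})$ preserve very Schwartz idempotents. The adjunction then reduces to the identification of very Schwartz idempotents of $\Shv(X;\Cat{Sp})$ with closed subsets of $X$ for stably compact $X$.

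The main obstacle will be exactly this last identification. Closed subsets $Z\subset X$ certainly produce idempotents via the closed pushforward $\Shv(Z;\Cat{Sp})\hookrightarrow\Shv(X;\Cat{Sp})$, but showing that these exhaust the very Schwartz idempotents is delicate: the classical smashing spectrum of $\Shv(X;\Cat{Sp})$ has vastly more open subsets than $X$ — as noted in the introduction, $\Sm(\Cat{Sp})$ alone is already nontrivial — so the very Schwartz condition must carve out precisely those idempotents with a genuinely topological support. I expect to handle this with a support/nuclearity argument tailored to sheaves on stably compact spaces, paralleling and likely generalizing the treatment of very nuclear idempotents promised for \cref{main_rig}.
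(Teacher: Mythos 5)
There is a genuine gap, and it sits in your very first step. You propose to get the adjunction ``for free'' by composing the known adjunction \(\Shv\dashv\Sm\) with an adjunction between \(\Cat{SC}\) and \(\Cat{Loc}\), so that \(\Sm^{\con}(\cat{C})\) is obtained by applying some localic construction to \(\Sm(\cat{C})\). This is exactly what the paper warns against immediately after the statement: unlike the rigid case, neither \(\Cat{SC}^{\op}\subset\Cat{Loc}^{\op}\) nor \(\CAlg(\Cat{Dbl})\subset\CAlg(\Cat{Pr}_{\st})\) is a \emph{full} subcategory (morphisms must be perfect, resp.\ admit right adjoints), so there is no reflection to compose with. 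Worse, \(\Sm^{\con}(\cat{C})\) is not a function of the locale \(\Sm(\cat{C})\) at all: the very Schwartz condition is defined via compactness of morphisms \emph{in \(\cat{C}\)} (\cref{xdgmmi}), which is not visible in the frame \(\Idem(\cat{C})\). Contrast the rigid case, where trace-class maps of idempotents are detected inside the frame by the rather-below relation (\cref{xdzqx4}), which is precisely why the Stone--Čech description of \(\Sm^{\rig}\) works there. The actual construction must build the stably continuous frame \(\cIdem_{\vSc}(\cat{C})\) directly, and the real work is showing it is closed under the frame operations (binary joins require \cref{xbagrg} together with \cref{fun_buf}) and that its way-below relation is detected by compactness in \(\cat{C}\) (\cref{xwl07g}); you gesture at ``verifying the correct algebraic operations'' but do not engage with where this could fail.

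The second problem is that you reduce the adjunction to the identification of very Schwartz idempotents of \(\Shv(X;\Cat{Sp})\) with closed subsets of \(X\), and declare that identification the main obstacle. That identification is \cref{tan_sc}, a separate and strictly harder theorem whose proof needs chromatic input (\cref{xvs1r4,xadi4q,xaymiy}); the adjunction of \cref{main_dbl} neither needs it nor follows from it alone (you would still have to produce the inverse equivalence of mapping spaces, not just a unit isomorphism). The paper instead proves \cref{sm_con_adj}: starting from the already-established equivalence \(\Map_{\CAlg(\Cat{Pr})}(\Shv(R),\cat{C})\simeq\Map_{\Cat{Frm}}(R,\cIdem(\cat{C}))\) of~\cite[Theorem~B]{ttg-sm}, one checks that \(F\) lies in \(\CAlg(\Cat{Pr}^{\con})\) if and only if the associated frame map lands in \(\cIdem_{\vSc}(\cat{C})\) and preserves the way-below relation, using \cref{xcy9f7} in one direction and \cref{sheafify,xd0jpr} in the other. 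By routing the adjunction through \cref{tan_sc} you have both mislocated the difficulty of \cref{main_dbl} and taken on a burden the theorem does not require.
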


Note that this is subtler than \cref{main_rig},
since neither side is a full subcategory
of \(\Cat{Loc}^{\op}\) or \(\CAlg(\Cat{Pr}_{\st})\),
respectively.

\begin{remark}\label{x5cdq5}
  As we did in~\cite{ttg-sm},
  we prove the unstable version of \cref{main_dbl}
  by considering very Schwartz \emph{coidempotents}.
\end{remark}

We also prove
the following strengthening of \cref{tan_ch}
in \cref{ss:sm_sc}:

\begin{Theorem}\label{tan_sc}
  The continuous spectrum
  of \(\Shv(X;\Cat{Sp})\)
  for a stably compact space~\(X\)
  is canonically identified with~\(X\),
  i.e.,
  the left adjoint in \cref{main_dbl}
  is fully faithful.
\end{Theorem}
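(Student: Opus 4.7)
The plan is to show that the unit $\eta_X : X \to \Sm^{\con}(\Shv(X;\Cat{Sp}))$ of the adjunction in \cref{main_dbl} is an equivalence in $\Cat{SC}$ for every stably compact space $X$; full faithfulness of the left adjoint then follows by standard adjunction formalism. Since a stably compact space is determined by its lattice of open subsets together with its de~Groot dual structure, the task reduces to identifying closed subsets on both sides compatibly. By \cref{main_dbl}, closed subsets of $\Sm^{\con}(\Shv(X;\Cat{Sp}))$ correspond bijectively to very Schwartz idempotents in $\Shv(X;\Cat{Sp})$, so the core of the proof is a classification: the very Schwartz idempotents of $\Shv(X;\Cat{Sp})$ are precisely the idempotent algebras $i_*\unit$ for closed inclusions $i : Z \hookrightarrow X$.

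First, I would verify that every idempotent of the form $i_*\unit$ is very Schwartz. The dualizable symmetric monoidal structure on $\Shv(X;\Cat{Sp})$ produced in \cref{ss:sheaf} encodes the way-below relation on the open lattice of $X$ via factorization data for extension-by-zero and pushforward functors. A closed $Z\subset X$ is the intersection of its open neighborhoods, and the interpolation property of stable compactness provides cofinal systems of compactly-contained such neighborhoods; these should furnish exactly the factorizations required by the ``very Schwartz'' condition introduced in \cref{ss:vsc}.

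The main obstacle is the converse: showing that an arbitrary very Schwartz idempotent $E \in \Shv(X;\Cat{Sp})$ arises from a closed subset. My plan is to reduce to the compact Hausdorff case, already handled by \cref{tan_ch}, via the patch topology $X^{\mathrm{patch}}$. The natural comparison $\pi : X^{\mathrm{patch}} \to X$ is a perfect map out of a compact Hausdorff space, so the pullback $\pi^* : \Shv(X;\Cat{Sp}) \to \Shv(X^{\mathrm{patch}};\Cat{Sp})$ is a symmetric monoidal colimit-preserving functor; it should carry very Schwartz idempotents to very Schwartz (equivalently, in the rigid compact Hausdorff case, very nuclear) ones. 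By \cref{tan_ch} combined with \cref{main_rig}, the pulled-back idempotent $\pi^*E$ then arises from a closed subset $W \subset X^{\mathrm{patch}}$. The Schwartz factorization condition for the original~$E$ should force $W$ to be upward-closed under the specialization order of $X$, and hence of the form $\pi^{-1}(Z)$ for a unique closed $Z \subset X$; a final comparison gives $E \simeq i_*\unit$.

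Naturality of $\eta$ then upgrades this pointwise identification to an isomorphism in $\Cat{SC}$, not merely of underlying lattices: the de~Groot dual structure is intrinsic to both $X$ and $\Sm^{\con}(\Shv(X;\Cat{Sp}))$, and the bijection on closed subsets is compatible with the perfect maps induced by morphisms in $\Cat{SC}$. The hardest step is clearly the specialization-closedness argument in the converse direction, which is where the refinement from the compact Hausdorff setting of \cref{tan_ch} to general stably compact spaces genuinely enters, and where the full strength of the ``very Schwartz'' definition of \cref{ss:vsc}, rather than merely its ``very nuclear'' cousin, must be brought to bear.
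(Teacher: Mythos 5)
There is a genuine gap, and it is structural. Your converse direction reduces to the compact Hausdorff case via the patch topology and an appeal to \cref{tan_ch}; but in the paper \cref{tan_ch} is a \emph{corollary} of \cref{tan_sc} (the compact Hausdorff statement is deduced from the stably compact one via \cref{xa1a4f}, not the other way around), and your proposal supplies no independent proof of it. So as written the argument is circular. The reduction itself is not unreasonable—\(\pi\colon X^{\#}\to X\) is perfect, \(\pi^{*}\) preserves very Schwartz idempotents, and one can hope to descend an identification \(\pi^{*}E\simeq\SS_{W}\) (note that \(W\) being closed under specialization needs no Schwartz input: the locus where the stalk of an idempotent algebra vanishes is the locus where \(1=0\) in \(\pi_{0}E\), which is open in \(X\))—but every hard step has been pushed into the unproven input.

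Concretely, two things are missing. First, the computation over a point: that every very Schwartz idempotent of \(\Cat{Sp}\) is \(\SS\) or \(0\) (\cref{xaymiy}). A priori the chromatic localizations \(L_{n}\SS\) and \(L_{n}^{\fin}\SS\) are idempotents of \(\Cat{Sp}\)—this is exactly why \(\Sm(\Cat{Sp})\) is not a singleton—and excluding them as \emph{very Schwartz} idempotents requires genuine homotopy theory: the paper proves (\cref{xadi4q}) that \(L_{n}^{\fin}\SS\to L_{n}\SS\) does not factor through a bounded-below spectrum, using \(v_{n}\)-self maps. No formal manipulation of open lattices, interpolation properties, or patch topologies can substitute for this, and your proposal never engages with it. Second, even granting the stalkwise classification, one must pass from ``\(E\) has the right stalks'' to an actual equivalence \(E\simeq\SS_{Z}\); the paper does this by using compactness of the transitions \(E_{i}\to E\) to factor \(E_{i}\to\SS_{\overline{\{x\}}}\) through some \(\SS_{Z_{x}}\) and then invoking compactness of \(X^{\op}\) to extract a finite cover. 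In your plan this local-to-global step is hidden inside \cref{tan_ch} and would reappear verbatim in any self-contained proof of that case.
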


\begin{remark}\label{x2mvyj}
  In~\cite[Appendix~F]{Efimov},
  Efimov drew an analogy
  between compact Hausdorff spaces
  and dualizable stable presentable \(\infty\)-categories.
  This paper uses a different analogy shown in \cref{t:cxic0}.
\end{remark}

\begin{table}[htbp]
  \centering
  \caption{Our analogy between locales and objects of \(\CAlg(\Cat{Pr}_{\st})\),
    where \(\Cat{Spec}\) and \(\Cat{Stone}\) denote
    the categories of spectral and Stone spaces, respectively.
  }\label{t:cxic0}
  \begin{tabular}{lll}
    \toprule
    Subcats.\ of \(\Cat{Loc}\)&
    Subcats.\ of \(\CAlg(\Cat{Pr}_{\st})\) \\
    \midrule
    \(\Cat{SC}\)&
    \(\CAlg(\Cat{Dbl})\)\\
    \(\Cat{CH}\)&
    \(\CAlg(\Cat{Pr}_{\st})_{\rig}\)\\
    \(\Cat{Spec}\)&
    \(\CAlg(\Cat{Pr}^{\cg}_{\st})\)\\
    \(\Cat{Stone}\)&
    \(\CAlg(\Cat{Pr}^{\cg}_{\st})_{\rig}\)\\
    \bottomrule
  \end{tabular}
\end{table}

\begin{remark}\label{x8h0yu}
\Cref{t:cxic0}
  suggests considering the additional spectrum functors
  \(\CAlg(\Cat{Pr}^{\cg}_{\st})\to\Cat{Spec}^{\op}\)
  and
  \(\CAlg(\Cat{Pr}^{\cg}_{\st})_{\rig}\to\Cat{Stone}^{\op}\).
  The first (or rather its unstable generalization)
  was already considered in~\cite[Remark~3.32]{ttg-sm}:
  The procedure there produces a distributive lattice,
  and we then take its spectrum
  to obtain a spectral space.
By adjunction,
  the second functor is obtained by applying~\(\pi_{0}\) to the first.
  However,
  as shown in \cref{xqp9qd},
  this is not necessary;
  the first functor always produces a Stone space
  in this case.
\end{remark}

\subsection*{Organization}\label{ss:outline}

\Cref{s:ind} introduces the notion of restricted \(\Ind\)-objects.
\Cref{s:con} reviews
the theory of compact morphisms
and continuous presentable \(\infty\)-categories.
\Cref{s:sc} studies stably compact spaces
and sheaves on them, including a digression on
the proper base change theorem and Verdier duality.
\Cref{s:sm_con} introduces the continuous spectrum functor \(\Sm^{\con}\)
and the unstable version of \cref{main_dbl}.
\Cref{s:ex} contains computations of continuous spectra,
including the proof of \cref{tan_sc}.
\Cref{s:rig_sp} turns to rigid spectra
and establishes \cref{main_rig,tan_ch}.

\subsection*{Acknowledgments}\label{ss:ack}

I thank Alexander Efimov
for explaining his work
on \(K\)-theory
of dualizable stable presentable \(\infty\)-categories (cf.~\cite{Efimov}).
I thank Peter Scholze
for valuable discussions related to this paper.
I thank
Deven Manam,
Juan~Esteban Rodríguez~Camargo,
and
Peter Scholze
for helpful comments on a draft of this paper.
I thank the Max Planck Institute for Mathematics.

This work was originally announced in~\cite{ttg-sm}
with the title “Very nuclear idempotents and continuous spectrum”,
as \(\Sm^{\con}\) was yet to be discovered
and the initial focus was on \(\Sm^{\rig}\).
I apologize for any confusion and for the delay in publication.

\subsection*{Conventions}\label{ss:conv}

We assume the reader is familiar with~\cite{ttg-sm}
and follow the same conventions.
For a presentable \(\infty\)-category~\(\cat{C}\),
we write \(\cat{C}_{\kappa}\) for the full subcategory
of \(\kappa\)-compact objects.

\section{Idealoids and restricted \texorpdfstring{\(\Ind\)}{Ind}-objects}\label{s:ind}

We introduce the notion of idealoids in \cref{ss:idl}.
Associated to them,
we get a special class of \(\Ind\)-objects,
which we study in \cref{ss:res}.

\subsection{Idealoids}\label{ss:idl}

\begin{definition}\label{xbjisd}
  An \emph{idealoid}
  of an \(\infty\)-category~\(\cat{C}\)
  is a class of morphisms~\(\cat{I}\)
  such that
  if \(f\in\cat{I}\),
  then \(g\circ f\circ h\in\cat{I}\)
  for any composable morphisms~\(g\) and~\(h\)
  in~\(\cat{C}\).
\end{definition}

\begin{remark}\label{xwii0l}
  Note that \cref{xbjisd}
  is \(1\)-categorical;
  an idealoid on~\(\cat{C}\) corresponds
  uniquely to an idealoid on its homotopy \(1\)-category.
\end{remark}

\begin{example}\label{x8lmc4}
  Let \(M\) be a monoid.
  Then we consider its classifying category~\(\cat{C}\),
  i.e.,
  the category with one object
  whose endomorphism monoid is~\(M\).
  In this situation,
  idealoids of~\(\cat{C}\) correspond
  to (two-sided) ideals of~\(M\).
  This is where the name in \cref{xbjisd} comes from;
  remember that “monoidoid” is another name for category.
\end{example}

\begin{example}\label{fun_idl}
Let \(\cat{I}\) be an idealoid of an \(\infty\)-category~\(\cat{C}\).
  For any \(\infty\)-category~\(\cat{K}\),
  we get the idealoid of \(\Fun(\cat{K},\cat{C})\)
  consisting of \(F\to G\)
  such that
  \(F(K)\to G(K)\) is in~\(\cat{I}\)
  for any \(K\in\cat{K}\).
  We write \(\Fun(K,\cat{I})\) for this idealoid.
\end{example}

We then consider special kinds of idealoids,
which play an important role in this paper:

\begin{definition}\label{subdiv}
  An idealoid~\(\cat{I}\) is \emph{subdivisible}
  if for any \(f\in\cat{I}\),
  there are \(g\) and \(h\in\cat{I}\)
  such that \(f=g\circ h\).
\end{definition}

We recall the following classical result:

\begin{theorem}[Cantor]\label{cantor}
  Let \(P\) be
  a totally ordered set
  of cardinality~\(\aleph_{0}\).
  Suppose that it is \emph{dense};
  i.e., for any \(a<b\) in~\(P\)
  there is \(c\in P\) satisfying \(a<c<b\).
  Then it is isomorphic to
  either
  \(\QQ\cap(0,1)\),
  \(\QQ\cap(0,1]\),
  \(\QQ\cap[0,1)\), or
  \(\QQ\cap[0,1]\).
\end{theorem}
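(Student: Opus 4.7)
The plan is to use Cantor's classical back-and-forth argument, organized by the four endpoint cases. First I would split according to whether $P$ has a minimum and whether it has a maximum; these four possibilities correspond bijectively to the four candidate target sets $\QQ\cap I$ for $I\in\{(0,1),(0,1],[0,1),[0,1]\}$, which are also characterized by the presence or absence of a minimum and a maximum. Since having or lacking extrema is an order-isomorphism invariant, it suffices to produce, within each case, an order isomorphism between $P$ and the corresponding target $Q$.

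Fix such a case and enumerate $P=\{p_0,p_1,\dots\}$ and $Q=\{q_0,q_1,\dots\}$; both are possible by $|P|=\aleph_0$ and $|Q|=\aleph_0$. I would construct the desired order isomorphism $f\colon P\to Q$ as the union of an increasing chain of finite partial order isomorphisms $f_0\subset f_1\subset f_2\subset\cdots$. Start with $f_0=\emptyset$, except when extrema are present: if $P$ has a minimum (resp.\ maximum), pair it in $f_0$ with the minimum (resp.\ maximum) of $Q$, which exists by our choice of case. At step $2n+1$, extend $f_{2n}$ to $f_{2n+1}$ so that $p_n$ is in its domain (the \emph{forth} move); at step $2n+2$, extend $f_{2n+1}$ to $f_{2n+2}$ so that $q_n$ is in its image (the \emph{back} move). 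Taking unions, $f:=\bigcup_n f_n$ will be a bijection by the two halves of the alternation and order-preserving because each $f_n$ is.

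The crucial technical step, and the main obstacle, is showing that the extension sought at each stage actually exists. At the forth move, the already-used elements of $P$ partition $P$ into finitely many order-intervals; $p_n$ lies in one of them, determined by a finite list of strict order relations with previously matched points. Under $f_{2n}$, those matched points map to elements of $Q$ that cut $Q$ into the corresponding pattern of intervals, and we need a point of $Q$ sitting in the interval matching $p_n$'s position. For interior intervals this is supplied by density of $Q$; for the leftmost or rightmost interval it is supplied by density together with the case assumption, since $P$ has a minimum (resp.\ maximum) exactly when $Q$ does. Matching the endpoint structure in $f_0$ ensures $p_n$ is never strictly below (resp.\ above) all previously matched elements unless unbounded room remains in $Q$ on that side. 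The back move is symmetric, swapping the roles of $P$ and $Q$.

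Finally, I would note that the four candidate targets are pairwise non-isomorphic (again because extrema are an invariant), so the theorem gives a complete classification of countable dense totally ordered sets up to isomorphism.
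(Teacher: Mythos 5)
The paper states this result as classical and does not include a proof, so there is nothing internal to compare against; your back-and-forth argument is the standard proof of Cantor's theorem and it is correct. You handle the one genuinely delicate point properly: by pairing the extrema (when they exist) already in \(f_0\), you guarantee that during a forth move the element \(p_n\) can fall into the unbounded leftmost (resp.\ rightmost) cell only when \(P\), and hence the chosen target \(Q=\QQ\cap I\), has no minimum (resp.\ maximum), so the required witness in \(Q\) always exists; the interior cells are handled by density of \(\QQ\), and the back move is symmetric. The closing observation that the four targets are pairwise non-isomorphic because possession of a minimum or maximum is an order invariant completes the classification. No gaps.
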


\begin{example}\label{dyadic}
  By \cref{cantor},
  the poset
  \(\ZZ[1/2]\cap[0,1]\)
  is isomorphic to \(\QQ\cap[0,1]\).
  The so-called \emph{Minkowski question-mark function}
  gives a concrete isomorphism.
\end{example}

\begin{lemma}\label{x7sojd}
  Let \(\cat{I}\) be a subdivisible idealoid
  in an \(\infty\)-category~\(\cat{C}\).
  Then any morphism \(C(0)\to C(1)\) in~\(\cat{I}\)
  can be refined to a diagram
  \(C(\X)\colon\QQ\cap[0,1]\to\cat{C}\)
  such that
  \(C(a)\to C(b)\)
  is in~\(\cat{I}\) for any \(a<b\)
  in \(\QQ\cap[0,1]\).
\end{lemma}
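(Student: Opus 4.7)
The plan is to reduce to constructing the refined diagram on the dyadic poset $\ZZ[1/2]\cap[0,1]$ and then transport along the order-isomorphism of \cref{dyadic}. Writing $D_{n}\coloneqq\{k/2^{n}\mid 0\le k\le 2^{n}\}\subset[0,1]$, one has $\ZZ[1/2]\cap[0,1]=\bigcup_{n}D_{n}$, and since the nerve preserves filtered colimits, a functor out of this union is the same data as a compatible tower of functors $F_{n}\colon D_{n}\to\cat{C}$. I would let $F_{0}$ classify the given morphism $C(0)\to C(1)$ and then build the tower by induction on~$n$, arranging that every adjacent edge of $F_{n}$ lies in~$\cat{I}$.

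For the inductive step, I observe that $D_{n+1}$ is obtained from $D_{n}$ by gluing, along each pair of consecutive elements $\{k/2^{n},(k+1)/2^{n}\}$, a copy of the three-element chain $\{k/2^{n}<(2k+1)/2^{n+1}<(k+1)/2^{n}\}$. Passing to nerves, this exhibits $N(D_{n+1})$ as an iterated pushout of $N(D_{n})$ along the inclusions $\Delta^{\{0,2\}}\hookrightarrow\Delta^{2}$, and these pushouts compute the homotopy pushout in $\Cat{Cat}_{\infty}$ since the inclusions are monomorphisms of simplicial sets. Extending $F_{n}$ to $F_{n+1}$ therefore amounts to choosing, for each adjacent edge of~$F_{n}$, a $2$-simplex in~$\cat{C}$ whose long face is that edge, and subdivisibility of $\cat{I}$ provides exactly such a $2$-simplex with both short edges in~$\cat{I}$.

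Taking the colimit of the tower yields a functor $F\colon\ZZ[1/2]\cap[0,1]\to\cat{C}$. For arbitrary $a<b$ in $\ZZ[1/2]\cap[0,1]$, both lie in a common~$D_{n}$, so $F(a)\to F(b)$ factors as a composition of adjacent edges of~$F_{n}$; closure of~$\cat{I}$ under pre- and post-composition then forces $F(a)\to F(b)\in\cat{I}$. Finally, precomposing with the order-isomorphism $\QQ\cap[0,1]\cong\ZZ[1/2]\cap[0,1]$ of \cref{dyadic}, which fixes~$0$ and~$1$, yields the desired diagram with long edge the original morphism.

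The main obstacle is the inductive extension. Subdivisibility is by \cref{xwii0l} a $1$-categorical condition, yet we must produce an \(\infty\)-coherent diagram; the key point is that the required $2$-simplex lifts of these homotopy-level factorizations can be chosen independently at each step, so that the pushout presentation of $N(D_{n+1})$ assembles them into a genuine functor out of a poset. Modulo this, the argument is a straightforward refinement of Cantor's classical construction.
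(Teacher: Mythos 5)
Your proposal is correct and follows the same route as the paper's (much terser) proof: construct the diagram on $\ZZ[1/2]\cap[0,1]$ by induction over the dyadic levels, using subdivisibility to supply the $2$-simplices, and then transport along the order-isomorphism with $\QQ\cap[0,1]$ from \cref{dyadic}. The extra detail you supply—the pushout presentation of $N(D_{n+1})$ along $\Delta^{\{0,2\}}\hookrightarrow\Delta^{2}$ being a categorical equivalence onto the nerve of the finer chain—is a correct and welcome elaboration of the induction the paper leaves implicit.
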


\begin{proof}
  By induction,
  we construct
  a diagram \(C'(\X)\colon\ZZ[1/2]\cap[0,1]\to\cat{C}\)
  satisfying the same condition.
  We then use \cref{dyadic}
  to renumber it to obtain the desired diagram.
\end{proof}

\begin{remark}\label{x1hmh6}
  In the situation of \cref{x7sojd},
  suppose that \(\cat{C}\) has countable colimits.
  In that case,
  we can left Kan extend
  the diagram to obtain
  a diagram \([0,1]\to\cat{C}\)
  such that \(C(a)\to C(b)\) is in~\(\cat{I}\)
  for any \(a<b\).
\end{remark}

\begin{lemma}\label{xgpzzq}
  For any idealoid~\(\cat{I}\),
  there exists a largest subdivisible
  subidealoid of~\(\cat{I}\).
\end{lemma}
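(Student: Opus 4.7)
The plan is to construct the desired subidealoid as the union of all subdivisible subidealoids of $\cat{I}$, that is,
\[
  \cat{I}^{\sd} := \bigcup \Set*{\cat{J} \subseteq \cat{I} \given \cat{J} \text{ is a subdivisible idealoid of } \cat{C}},
\]
and verify that this union is itself a subdivisible subidealoid. It will then tautologically be the largest. First I would observe that the indexing family is nonempty: the empty class of morphisms vacuously satisfies both \cref{xbjisd,subdiv}, so there is no set-theoretic worry in forming the union (the collection of subidealoids may be large, but every member is constrained to lie inside the fixed class~$\cat{I}$).

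Next I would check that $\cat{I}^{\sd}$ is still an idealoid. Given $f \in \cat{I}^{\sd}$, pick any subdivisible subidealoid $\cat{J}$ with $f \in \cat{J}$; then for composable $g$ and $h$ we have $g \circ f \circ h \in \cat{J} \subseteq \cat{I}^{\sd}$ because $\cat{J}$ is an idealoid. By the same token $\cat{I}^{\sd} \subseteq \cat{I}$.

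Finally I would check subdivisibility of $\cat{I}^{\sd}$: for any $f \in \cat{I}^{\sd}$, choosing a subdivisible $\cat{J}$ with $f \in \cat{J}$ provides a factorization $f = g \circ h$ with $g, h \in \cat{J} \subseteq \cat{I}^{\sd}$. The fundamental reason this union argument works is that subdivisibility is an existential condition on each morphism: the witnessing factors needed for a given $f$ remain available in any larger class. Since every subdivisible subidealoid of~$\cat{I}$ is by construction contained in $\cat{I}^{\sd}$, this is the required maximum. I do not foresee any genuine obstacle; the argument is essentially formal, and the only point worth noting is that properties defined by such pointwise existential clauses are automatically stable under arbitrary unions.
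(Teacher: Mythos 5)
Your proof is correct, and it takes a genuinely different route from the paper's. You build the largest subdivisible subidealoid abstractly as the union of all subdivisible subidealoids, observing—correctly—that both the idealoid axiom and subdivisibility are per-morphism conditions whose witnesses live inside whichever member of the family the morphism came from, so they survive arbitrary unions; maximality is then tautological. The paper instead gives an explicit description: it defines \(\cat{J}\) to be the class of morphisms \(C(0)\to C(1)\) admitting a refinement \(C(\X)\colon\QQ\cap[0,1]\to\cat{C}\) with every transition \(C(a)\to C(b)\) (\(a<b\)) in \(\cat{I}\), checks that \(\cat{J}\) is a subdivisible subidealoid (split at \(1/2\)), and uses \cref{x7sojd} to see that every subdivisible subidealoid is contained in \(\cat{J}\). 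What the paper's construction buys is that the members of \(\cat{I}_{\sd}\) come packaged with \(\QQ\)-indexed refinements, which is the form in which \(\cat{I}_{\sd}\) is actually used downstream (e.g.\ in the proofs of \cref{xwl07g} and \cref{rigidify}); your definition recovers the same description only after applying \cref{x7sojd} to \(\cat{I}^{\sd}\), which of course works, so the two definitions agree. Your argument is shorter and more formal, and the only point deserving a remark is the set-theoretic one you already flag: the union ranges over a possibly large collection of subclasses of \(\cat{I}\), but since each is contained in the fixed class \(\cat{I}\) this is harmless under the paper's conventions.
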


\begin{proof}
  Consider the class~\(\cat{J}\)
  consisting of all morphisms \(C(0) \to C(1)\)
  that can be refined to a diagram
  \(C(\X)\colon \QQ \cap [0,1] \to \cat{C}\)
  such that each morphism \(C(a) \to C(b)\)
  is in~\(\cat{I}\) for all \(a < b\).
  Then \(\cat{J}\) is a subidealoid of~\(\cat{I}\).
  It is subdivisible,
  since for any \(C(0) \to C(1) \in \cat{J}\),
  the refinement \(C(\X)\) yields
  \(C(0) \to C(1/2)\) and \(C(1/2) \to C(1) \in \cat{J}\).
  Finally, by \cref{x7sojd}, \(\cat{J}\) is the largest
  among subdivisible subidealoids of~\(\cat{I}\).
\end{proof}

\begin{definition}\label{xhyy8b}
  We write \(\cat{I}_{\sd}\) for the largest subidealoid
  of an idealoid~\(\cat{I}\)
  that is subdivisible,
  which exists by \cref{xgpzzq}.
\end{definition}

\begin{example}\label{fun_sd}
  We continue \cref{fun_idl}.

  Note that when \(\cat{I}\) is subdivisible,
  \(\Fun(\cat{K},\cat{I})\) need not be subdivisible.
  Take~\(\cat{C}\) to be the poset~\(Q\) given
  by \(P=P'=\QQ\cap[0,1]\) glued together
  at~\(0\in P\), \(0'\in P'\)
  and~\(1\in P\), \(1'\in P'\).
  Then the idealoid~\(\cat{I}\) consisting of nonidentity morphisms
  is subdivisible.
  Take \(\cat{K}=\Delta^{1}\)
  and consider the unique natural transformation
  from \(0=0'\to(1/2)'\) to \(1/2\to 1=1'\).
  Then this does not factor in the desired way.

  This also shows that
  \(\Fun(\cat{K},\cat{I})_{\sd}\subset\Fun(\cat{K},\cat{I}_{\sd})\)
  is not an equality in general.
\end{example}

However,
having a small buffer solves the problem
in \cref{fun_sd},
at least for \(\cat{K}=\Delta^{1}\):

\begin{example}\label{fun_buf}
Let \(\cat{I}\) be a subdivisible idealoid
  in an \(\infty\)-category~\(\cat{C}\).
  Suppose that we have a morphism
  \((C_{0}\to C'_{0})\to(C_{1}\to C'_{1})\)
  in \(\Fun(\Delta^{1},\cat{I})\).
  Assume that we have a factorization
  \begin{equation*}
    \begin{tikzcd}
      C_{0}\ar[rr]\ar[d]&
      {}&
      C_{1}\ar[dl]\ar[d]\\
      C'_{0}\ar[r]&
      C'_{1-\epsilon}\ar[r]&
      C'_{1}
    \end{tikzcd}
  \end{equation*}
  such that \(C'_{1-\epsilon}\to C'_{1}\)
  is in~\(\cat{I}\).
  We claim that
  the original morphism
  is in \(\Fun(\Delta^{1},\cat{I})_{\sd}\).
  In fact,
  the morphism
  \((C_{0}\to C'_{1-\epsilon})\to(C_{1}\to C'_{1})\)
  is in \(\Fun(\Delta^{1},\cat{I})_{\sd}\).
  The subdivision can be constructed
  by considering subdivisions
  of \(C_{0}\to C_{1}\)
  and \(C'_{1-\epsilon}\to C'_{1}\).
\end{example}

\begin{example}\label{xh40rh}
Subdivisibility also plays an important role
  in the following observation:
  Let \(\cat{C}_{0} \hookrightarrow \cat{C}\) be a full subcategory inclusion.
  Suppose \(\cat{I}\) is an ideal in~\(\cat{C}\),
  and let \(\cat{I}_{0}\) denote its pullback to~\(\cat{C}_{0}\).
  A natural question is whether \(\cat{I}\) is generated by~\(\cat{I}_{0}\).
  A necessary condition is that
  every morphism in~\(\cat{I}\)
  factors through an object of~\(\cat{C}_{0}\).
  In general, this is \emph{not} sufficient.
  However, if \(\cat{I}\) is subdivisible,
  this condition does imply that \(\cat{I}_{0}\) generates~\(\cat{I}\).
\end{example}

\subsection{Restricted \texorpdfstring{\(\Ind\)}{Ind}-objects}\label{ss:res}

Before
considering restricted \(\Ind\)-objects,
we first start with remarks
on general \(\Ind\)-objects.

\begin{remark}\label{xc7lf4}
  For a presentable \(\infty\)-category~\(\cat{C}\),
  we define
  \begin{equation*}
    \Ind(\cat{C})
    =
    \bigcup_{\kappa}\Ind(\cat{C}_{\kappa}),
  \end{equation*}
  where \(\kappa\) runs over infinite regular cardinals
  and \(\cat{C}_{\kappa}\) denotes the full subcategory
  of \(\kappa\)-compact objects.
  We call its objects
  \emph{\(\Ind\)-objects} of~\(\cat{C}\).

  When a presentable \(\infty\)-category~\(\cat{C}\)
  is small\footnote{This happens only when \(\cat{C}\) is essentially a poset.
  },
  the definition of \(\Ind(\cat{C})\) in \cref{xc7lf4}
  coincides with the usual definition.
\end{remark}

\begin{lemma}\label{xsoyii}
  Let \((C_{i})_{i}\to(D_{i})_{i}\) be a map
  between directed diagrams
  in an \(\infty\)-category~\(\cat{C}\).
  Suppose that for each~\(i\),
  there is \(j\geq i\)
  and a filler in the diagram
  \begin{equation*}
    \begin{tikzcd}
      C_{i}\ar[r]\ar[d]&
      C_{j}\ar[d]\\
      D_{i}\ar[r]\ar[ur,dashed]&
      D_{j}\rlap.
    \end{tikzcd}
  \end{equation*}
  Then it induces
  an equivalence between \(\Ind\)-objects.
\end{lemma}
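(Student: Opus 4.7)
The plan is to reduce to a statement about filtered colimits of spaces. Choose a regular cardinal $\kappa$ large enough that both directed systems land in $\Ind(\cat{C}_{\kappa})$ (taking $\cat{C}_{\kappa}=\cat{C}$ if $\cat{C}$ itself is small). By the Yoneda description of mapping spaces in $\Ind(\cat{C})$, the induced map of $\Ind$-objects is an equivalence iff, for every $K\in\cat{C}_{\kappa}$, the map
\begin{equation*}
\injlim_{i}\map_{\cat{C}}(K,C_{i})\to\injlim_{i}\map_{\cat{C}}(K,D_{i})
\end{equation*}
is an equivalence in $\SS$.

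Next I would transport the hypothesis along $\map(K,\X)$: the given $2$-simplex filler yields, for each $i$, an index $j\geq i$ together with a map $\sigma_{i}\colon\map(K,D_{i})\to\map(K,C_{j})$ making both triangles of the resulting square in $\SS$ commute. So the problem reduces to the following standard fact: for a natural transformation $F\to G$ between filtered diagrams in $\SS$, if for every $i$ there exists $j\geq i$ and a filler $G(i)\to F(j)$ of the corresponding square, then $\injlim F\to\injlim G$ is an equivalence.

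This last fact is elementary, since filtered colimits of spaces commute with finite limits and can thus be verified on each homotopy group. The lower triangle supplies surjectivity: any class in $\pi_{n}(G(i))$ lifts through $\sigma_{i}$ to a class in $\pi_{n}(F(j))$ with matching image in $\pi_{n}(G(j))$. The upper triangle supplies injectivity: two classes in $\pi_{n}(F(i))$ agreeing in some $\pi_{n}(G(j))$ must already agree in $\pi_{n}(F(k))$ after composing with $\sigma_{j}$, by the upper-triangle identity $F(j)\to G(j)\to F(k)=F(j)\to F(k)$. The place requiring care is $\infty$-categorical coherence: the hypothesis furnishes each $\sigma_{i}$ only pointwise rather than as a coherent natural system. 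Reducing to $\SS$ defuses this, because homotopy groups of a filtered colimit are detected pointwise, so only the existence of each $\sigma_{i}$ in the homotopy category of $\cat{C}$ is needed.
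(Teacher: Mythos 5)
Your proof is correct and follows essentially the same route as the paper's: reduce by Yoneda to a map of filtered colimits in \(\Cat{S}\), pass to homotopy groups to reduce to \(\Cat{Set}\), and verify bijectivity by hand using the two triangles of the filler. The paper's proof is simply a terser version of this argument, and your explicit treatment of the coherence point (that only homotopy-category-level fillers are needed after reducing to spaces) is a sound elaboration rather than a deviation.
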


\begin{proof}
  By Yoneda,
  we have to show that
  under the assumption
  it induces an equivalence
  between the colimits
  when \(\cat{C}=\Cat{S}\).
  By considering homotopy groups,
  we can furthermore
  replace~\(\Cat{S}\) with \(\Cat{Set}\).
  In that case,
  we can just check that
  it is bijective by hand.
\end{proof}

Now,
we define restricted \(\Ind\)-objects:

\begin{definition}\label{strict}
  Let \(\cat{I}\) be a class of morphisms
  of an \(\infty\)-category~\(\cat{C}\).
  We say that an \(\Ind\)-object~\(X\)
is \emph{\(\cat{I}\)-restricted}
  if any morphism \(C\to X\)
  from \(C\in\cat{C}\)
  can be factored as
  \(C\to C'\to X\)
  with the first map being in~\(\cat{I}\).
  We write
  \(\Ind_{\cat{I}}(\cat{C})\)
  for the full subcategory
  of \(\Ind(\cat{C})\)
  spanned by
  \(\cat{I}\)-restricted \(\Ind\)-objects.
\end{definition}

This notion behaves
particularly nicely
when \(\cat{I}\) is an idealoid:

\begin{lemma}\label{x5hdob}
  Let \(\cat{I}\) be an idealoid
  in an \(\infty\)-category~\(\cat{C}\).
  An \(\Ind\)-object~\(X\)
  is \(\cat{I}\)-restricted
  if and only if
  it is equivalent to
  the colimit of a directed diagram
  \(D\colon P\to\cat{C}\)
  where
  for any~\(p\)
  there is a~\(p'\geq p\)
  such that \(D(p)\to D(p')\) is in~\(\cat{I}\).
\end{lemma}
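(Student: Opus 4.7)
The plan is to prove both implications separately, with the idealoid property---closure of \(\cat{I}\) under both pre- and post-composition---doing all the real work. For the ``if'' direction, I would take \(X\simeq\injlim_{p\in P}D(p)\) for a directed diagram with the stated refinement property. Given any \(f\colon C\to X\) with \(C\in\cat{C}\), the mapping space \(\Map(C,X)\) is computed as the filtered colimit of \(\Map(C,D(p))\), so \(f\) factors through some \(D(p)\). Picking \(p'\geq p\) with \(D(p)\to D(p')\) in \(\cat{I}\), I obtain a factorization \(C\to D(p')\to X\) whose first leg is the composite \(C\to D(p)\to D(p')\); by closure under precomposition this composite lies in \(\cat{I}\), exhibiting \(X\) as \(\cat{I}\)-restricted.

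For the ``only if'' direction, I would start from a presentation \(X\simeq\injlim_{i\in I}C_i\) over a directed poset \(I\), enlarging the compactness cardinal of \cref{xc7lf4} if necessary. For each \(i\), \cref{strict} supplies a factorization \(C_i\xrightarrow{f_i}C'_i\xrightarrow{g_i}X\) with \(f_i\in\cat{I}\); after possibly further enlarging the cardinal, \(C'_i\) is compact in \(\Ind(\cat{C})\), so \(g_i\) factors through \(C_{j(i)}\) for some \(j(i)\geq i\). Two maps \(C_i\rightrightarrows C_{j(i)}\) now come into play---the structural transition and the composite through \(C'_i\)---and they agree after postcomposing with \(C_{j(i)}\to X\). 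By the universal property of filtered colimits they must coincide after a further postcomposition \(C_{j(i)}\to C_{j'(i)}\) for some \(j'(i)\geq j(i)\). The transition \(C_i\to C_{j'(i)}\) then factors through \(f_i\) and so lies in \(\cat{I}\) by closure under postcomposition, so taking \(P=I\) and \(D=(C_i)_i\) yields the required diagram.

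The only genuinely \(\infty\)-categorical subtlety is this ``coincidence after further postcomposition'' step: two morphisms that become equal in a filtered colimit must already agree at some finite stage. This is routine---it is essentially the content of \cref{xsoyii} in the spirit of Yoneda on \(\Ind\)-objects---but it is the one place where homotopy coherence, rather than pure idealoid formalism, has to be invoked. Everything else is a direct application of the two-sided ideal axiom.
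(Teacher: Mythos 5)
Your proof is correct and follows essentially the same route as the paper's: factor through a stage of the diagram and use closure of \(\cat{I}\) under pre-/post-composition in one direction, and in the other direction use the \(\cat{I}\)-restricted factorization together with compactness of objects of \(\cat{C}\) in \(\Ind(\cat{C})\). Your explicit "coincidence after further postcomposition" step is a point the paper's proof glosses over, and it is indeed needed to conclude that the \emph{structural} transition map lies in \(\cat{I}\); otherwise the two arguments coincide.
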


\begin{proof}
  We prove the “if” direction.
  Let \(C\to X\) be a morphism.
  Then there is \(p\in P\) such that
  it decomposes as
  \(C\to D(p)\to X\).
  Then we can take \(p'\geq p\)
  such that \(D(p)\to D(p')\) is in~\(\cat{I}\).
  Then \(C\to D(p')\to X\) is the desired factorization.

  We then prove the “only if” direction.
  Let \(X\) be a \(\cat{I}\)-restricted \(\Ind\)-object.
  We write it as a colimit
  of a filtered diagram
  \(D\colon P\to\cat{C}\).
  We claim that this satisfies the condition.
  Indeed, for \(p\in P\), by condition,
  \(D(p)\to X\)
  factors as \(D(p)\to C\to X\)
  where \(D(p)\to C\) is in~\(\cat{I}\).
  We can then take
  \(p'\geq p\)
  such that \(C\to X\) factors as
  \(C\to D(p')\to X\).
  Now \(D(p)\to D(p')\) is in~\(\cat{I}\).
\end{proof}

\begin{lemma}\label{xm2a0p}
  Let \(\cat{I}\) be an idealoid
  in an \(\infty\)-category~\(\cat{C}\).
  Then
  the full subcategory
  \(\Ind_{\cat{I}}(\cat{C})\subset\Ind(\cat{C})\)
  is closed under filtered colimits.
\end{lemma}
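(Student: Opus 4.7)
The plan is to unwind the factorization criterion of \cref{strict} directly and exploit the compactness of representables inside an Ind-completion. Let \((X_{\alpha})_{\alpha\in A}\) be a filtered diagram in \(\Ind_{\cat{I}}(\cat{C})\) with colimit \(X\) in \(\Ind(\cat{C})\), and fix a morphism \(f\colon C\to X\) with \(C\in\cat{C}\). The goal is to factor \(f\) as \(C\to C'\to X\) with \(C\to C'\) in~\(\cat{I}\).

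First, I would select a regular cardinal~\(\kappa\) large enough that \(C\in\cat{C}_{\kappa}\) and each~\(X_{\alpha}\) lies in \(\Ind(\cat{C}_{\kappa})\); such a~\(\kappa\) exists by \cref{xc7lf4} because the indexing category~\(A\) and the domain~\(C\) are small (in the case of small~\(\cat{C}\) one can simply take \(\kappa=\aleph_{0}\)). Since \(\Ind(\cat{C}_{\kappa})\) is closed under filtered colimits inside \(\Ind(\cat{C})\), the colimit~\(X\) also lies in \(\Ind(\cat{C}_{\kappa})\) and is computed as the filtered colimit of \((X_{\alpha})\) there. Inside \(\Ind(\cat{C}_{\kappa})\) the representable~\(C\) is a compact object, so \(\Map(C,\X)\) preserves the filtered colimit; hence \(f\) factors through some stage as \(C\xrightarrow{g}X_{\alpha}\to X\).

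Second, by hypothesis \(X_{\alpha}\) is \(\cat{I}\)-restricted, so \cref{strict} applied to~\(g\) yields a factorization \(C\to C'\to X_{\alpha}\) with \(C\to C'\) in~\(\cat{I}\). Composing with \(X_{\alpha}\to X\) gives the required factorization of~\(f\), proving that \(X\) is \(\cat{I}\)-restricted.

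The only delicate point is the cardinal bookkeeping in the first step: one must verify that filtered colimits of diagrams landing in \(\Ind(\cat{C}_{\kappa})\subset\Ind(\cat{C})\) can be computed already in \(\Ind(\cat{C}_{\kappa})\), so that the compactness of \(C\in\cat{C}_{\kappa}\) inside its Ind-completion can be invoked. Once this formal size-theoretic step is in place, the argument does not require \cref{x5hdob} or any subdivisibility considerations—it is purely a two-step application of the definition of \(\cat{I}\)-restrictedness.
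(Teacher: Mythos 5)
Your proof is correct and follows essentially the same route as the paper's (much terser) argument: factor $C\to X$ through a stage $X_{\alpha}$ using compactness of $C$ in the relevant $\Ind$-category, then invoke $\cat{I}$-restrictedness of $X_{\alpha}$. The extra cardinal bookkeeping you supply is sound and merely makes explicit what the paper leaves implicit.
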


\begin{proof}
  Let \(X=\injlim_{i}X_{i}\)
  be a filtered colimit
  of \(\cat{I}\)-restricted \(\Ind\)-objects.
  Then any \(C\to X\)
  from \(C\in\cat{C}\)
  factors through some~\(X_{i}\),
  and therefore it factors through
  a morphism \(C\to C'\) in~\(\cat{I}\).
\end{proof}

\begin{remark}\label{xxvm0b}
  In the situation of \cref{xm2a0p},
  the full subcategory need not be closed under all colimits.
  For example,
  when \(\cat{I}\) is empty,
  \(\Ind_{\cat{I}}(\cat{C})=\emptyset\)
  does not admit an initial object.
\end{remark}

\begin{lemma}\label{sequential}
  The full subcategory
  \(\Ind_{\cat{I}}(\cat{C})\)
  is generated under filtered colimits
  by objects
  \((C_{n})_{n}\)
  such that \(C_{n}\to C_{n+1}\) is in~\(\cat{I}\).
\end{lemma}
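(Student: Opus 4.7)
The plan is to realize any $\cat{I}$-restricted $\Ind$-object $X$ as a filtered colimit of sequential $\Ind$-objects of the desired form. Applying \cref{x5hdob}, I would write $X = \injlim_{p \in P} D(p)$ for a directed poset~$P$ and $D \colon P \to \cat{C}$ such that for each $p \in P$ there is $p' \geq p$ with $D(p) \to D(p')$ in~$\cat{I}$. Let $Q$ be the poset of nondecreasing sequences $\sigma \colon \NN \to P$ with each $D(\sigma(n)) \to D(\sigma(n+1))$ in~$\cat{I}$, ordered pointwise ($\sigma \leq \tau$ iff $\sigma(n) \leq \tau(n)$ for all $n$). For each $\sigma \in Q$, set $Y_\sigma \coloneqq \injlim_n D(\sigma(n))$; this is a sequentially $\cat{I}$-restricted $\Ind$-object of the stated form, and these assemble into a functor $Q \to \Ind_{\cat{I}}(\cat{C})$ equipped with a canonical cocone to~$X$.

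The first substantive step is to check that $Q$ is directed. Given $\sigma, \tau \in Q$, I would build a dominating $\rho \in Q$ by induction: take $\rho(0) \in P$ above $\sigma(0)$ and $\tau(0)$ by directedness of $P$; given $\rho(n)$, first pick $r \in P$ above $\rho(n)$, $\sigma(n+1)$, and $\tau(n+1)$, then invoke the refining property of $D$ to find $\rho(n+1) \geq r$ with $D(r) \to D(\rho(n+1))$ in~$\cat{I}$. The composite $D(\rho(n)) \to D(r) \to D(\rho(n+1))$ then lies in~$\cat{I}$ because, by \cref{xbjisd}, idealoids are closed under arbitrary precomposition. Hence $\rho \in Q$ and $\rho \geq \sigma, \tau$.

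The second step is to identify $X$ with $\injlim_{\sigma \in Q} Y_\sigma$. I would introduce the auxiliary directed poset $R \coloneqq Q \times \NN$ with the product order and the functor $F \colon R \to \cat{C}$, $(\sigma, n) \mapsto D(\sigma(n))$. On one hand, $\injlim_R F = \injlim_\sigma \injlim_n D(\sigma(n)) = \injlim_\sigma Y_\sigma$. On the other, the projection $\pi \colon R \to P$, $(\sigma, n) \mapsto \sigma(n)$, is cofinal: for any $p \in P$, iterating the refining property yields $\sigma \in Q$ with $\sigma(0) = p$, providing an object $(\sigma, 0)$ above~$p$; and any two such lifts $(\sigma, n), (\tau, m) \in R$ over~$p$ admit a common upper bound by the directedness of $Q$ and of $\NN$ together with monotonicity of sequences (so that $\rho(k) \geq \sigma(k) \geq \sigma(n) \geq p$ for $\rho \geq \sigma, \tau$ and $k \geq n, m$). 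Cofinality then gives $\injlim_R F = \injlim_P D = X$. The main obstacle I anticipate is the directedness of $Q$, which is precisely where the idealoid axiom is used in an essential way; once this is established, the identification of the colimit reduces to the standard cofinality argument.
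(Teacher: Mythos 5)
Your proposal is correct and takes essentially the same route as the paper's proof: your poset $Q$ is exactly the paper's $\Seq_{I}(P)$, with the same inductive construction of a dominating sequence (using closure of the idealoid under composition) to establish directedness, and the same cofinality of the projection $Q\times\NN\to P$ followed by an interchange of colimits. No gaps.
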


\begin{proof}
  Let \(D\colon P\to\cat{C}\)
  be an \(\cat{I}\)-restricted \(\Ind\)-object
  indexed by a directed poset~\(P\).
  Let \(I\) be its idealoid
  defined as the pullback of the idealoid~\(\cat{I}\);
  i.e., \(p\leq q\) is in~\(I\)
  if and only if \(D(p)\to D(q)\) is in~\(\cat{I}\).
  Since \(D\) is \(\cat{I}\)-restricted,
  for any \(p\in P\),
  there is \(q\geq p\) such that \(p\leq q\) is in~\(I\).

We write \(\Seq(P)\)
  for the functor poset \(\Fun(\NN,P)\).
  We write \(\Seq_{I}(P)\)
  for its full subposet
  spanned by the sequences \((p_{n})_{n}\)
  such that \(p_{n}\leq p_{n+1}\) is in~\(I\).
  We claim that \(\Seq_{I}(P)\) is directed.
  First,
  since \(P\) is directed,
  we have an element \(p\in P\).
  Starting from this,
  we get a sequence in \(\Seq_{I}(P)\).
  Therefore, it is nonempty.
  Now, let \((p_{n})_{n}\) and \((p'_{n})_{n}\)
  be two elements of \(\Seq_{I}(P)\).
  Since \(P\) is directed,
  we can take \(q_{0}\)
  that is greater than or equal to~\(p_{0}\) and~\(p'_{0}\).
  Then we take \(q_{0}\leq r_{0}\) in~\(I\).
  Since \(P\) is directed,
  we can take~\(q_{1}\) that is greater than or equal to
  \(p_{1}\), \(p'_{1}\), and~\(r_{0}\).
  By repeating this process,
  we get a sequence \((q_{n})_{n}\)
  that is greater than or equal to~\((p_{n})_{n}\) and~\((p'_{n})_{n}\).

  We have the composite
  \begin{equation*}
    f\colon
    \Seq_{I}(P)\times\NN
    \hookrightarrow\Seq(P)\times\NN
    \to P,
  \end{equation*}
  where the second map is given by \(((p_{n})_{n},n)\mapsto p_{n}\).
  The cofinality of~\(f\)
  follows from the argument for the nonemptiness above,
  since we can start from arbitrary \(p_{0}\).
  This shows that
  \(D\circ f\) computes the same colimit.
  We can first take the colimit in~\(\NN\)
  to see the desired result.
\end{proof}

We illustrate the utility of \cref{sequential}
through the following examples:

\begin{example}\label{fun_ind}
  We continue \cref{fun_idl}.
  Note from~\cite[Proposition~5.3.5.15]{LurieHTT}
  that
  when \(\cat{K}=P\) is a finite poset,
  then the functor \(\Ind(\Fun(P,\cat{C}))\to\Fun(P,\Ind(\cat{C}))\)
  is an equivalence.
  We claim that
  the induced functor
  \(\Ind_{\Fun(P,\cat{I})}(\Fun(P,\cat{C}))\to\Fun(P,\Ind_{\cat{I}}(\cat{C}))\)
  is also an equivalence.
  By \cref{sequential},
  it suffices to show that
  \(P\)-indexed sequential \(\cat{I}\)-restricted \(\Ind\)-objects
  are inside the image.

  We observe this for \(P=\Delta^{1}\),
  which is the case we need.
  We consider \((C_{n})_{n}\to(C'_{n})_{n}\)
  in \(\Ind_{\cat{I}}(\cat{C})\)
  such that \(C_{n}\to C_{n+1}\) and \(C'_{n}\to C'_{n+1}\)
  are in~\(\cat{I}\).
  We consider \(C_{0}\to\injlim_{n}C'_{n}\).
  Then by definition,
  it factors through \(C'_{f(0)}\)
  for some~\(f(0)\).
  We then consider
  \(C_{1}\to\injlim_{n>f(0)}C'_{n}\),
  but it factors through \(C'_{f(1)}\)
  for some \(f(1)>f(0)\).
  Repeating this process,
  we obtain a map \(f\colon\NN\to\NN\)
  such that \(f(n+1)>f(n)\) and
  \(C_{n}\to\injlim_{n}C'_{n}\)
  factors through \(C_{n}\to C'_{f(n)}\).
  Since \(f\) is cofinal,
  the original morphism is the image of
  \((C_{n}\to C'_{f(n)})_{n}\).
  The general case follows from a simple induction,
  which we leave to the reader.
\end{example}

\begin{example}\label{mini}
  Let \(\cat{C}\) be a \(\kappa\)-compactly generated \(\infty\)-category.
  Then the class of morphisms
  that factor through a \(\kappa\)-compact object
  is an idealoid~\(\cat{I}\).
  With this,
  the trivial inclusion
  \(\Ind(\cat{C}_{\kappa})\hookrightarrow\Ind_{\cat{I}}(\cat{C})\)
  is an equivalence.
  This can be seen using \cref{sequential}.
  Indeed,
  we need to see that
  the \(\Ind\)-object
  \((C_{n})_{n}\) such that \(C_{n}\to C_{n+1}\) is in~\(\cat{I}\)
  is in the essential image,
  which has an equivalent
  sequential \(\Ind\)-object in \(\cat{C}_{\kappa}\).
\end{example}

\section{Compact morphisms and continuous categories}\label{s:con}

The notion of a continuous \(1\)-category
was introduced by Johnstone–Joyal~\cite{JohnstoneJoyal82};
see also~\cite[Section~21.1.2]{LurieSAG},~\cite[Section~4.2]{AnelLejay},
and~\cite[Section~2]{Ramzi1}
for \(\infty\)-categorical accounts.
In this section,
we review some aspects of the theory
of continuous \(\infty\)-categories (see \cref{xv8wwv})
and introduce new terminology.
Most importantly,
we recall the following well-known facts:

\begin{proposition}\label{con_0}
  A presentable \(\infty\)-category is continuous
  if and only if every object
  is the colimit of some Schwartz \(\Ind\)-object
  (see \cref{xxc7v8}).
\end{proposition}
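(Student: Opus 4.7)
My plan is to identify continuity of $\cat{C}$ with the existence of a left adjoint $L$ to the colimit functor $\injlim \colon \Ind(\cat{C}) \to \cat{C}$, and to describe Schwartz $\Ind$-objects (to be introduced in \cref{xxc7v8}) as precisely the $\cat{I}$-restricted $\Ind$-objects for the idealoid $\cat{I}$ of compact morphisms of $\cat{C}$. By \cref{sequential}, I may restrict attention throughout to sequential Schwartz $\Ind$-objects whose transition maps are compact.

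For the $(\Rightarrow)$ direction, assume $\cat{C}$ is continuous and fix $X \in \cat{C}$. Standard properties of the adjunction $L \dashv \injlim$ identify $X$ with the colimit of $L(X)$, so the content is to show $L(X)$ is Schwartz. Via \cref{x5hdob}, this reduces to verifying that the idealoid of compact morphisms is subdivisible—the classical interpolation property for the way-below relation—which I would prove as a formal consequence of the adjunction by applying it to a suitable test $\Ind$-object built out of the slice over $L(X)$ to manufacture the intermediate factorization.

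For the $(\Leftarrow)$ direction, I would construct the left adjoint pointwise. Given $X \in \cat{C}$, choose via \cref{sequential} a sequential Schwartz $\Ind$-object $(C_n)_n$ with $\injlim_n C_n \simeq X$ and compact transition maps. The claim is that $(C_n)_n$ represents the functor $Y \mapsto \Map_{\cat{C}}(X, \injlim Y)$ on $\Ind(\cat{C})$, which unwinds to the identification
\begin{equation*}
  \lim_n \injlim_m \Map(C_n, D_m) \simeq \lim_n \Map(C_n, \injlim_m D_m)
\end{equation*}
for every $(D_m)_m \in \Ind(\cat{C})$. The main obstacle is proving this identification, since the individual $C_n$ are typically not compact objects of $\cat{C}$ and therefore $\Map(C_n, \X)$ does not commute with $\injlim_m$ termwise. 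The decisive input is that each transition $C_{n-1} \to C_n$ is a compact morphism: a map $C_n \to \injlim_m D_m$ therefore produces, by precomposition with the previous transition, a factorization $C_{n-1} \to D_{m(n-1)} \to \injlim_m D_m$, and choosing these factorizations coherently in $n$ (with mild reindexing in both $n$ and $m$, as in \cref{fun_ind}) assembles them into a genuine map of $\Ind$-objects $(C_n)_n \to (D_m)_m$ after shifting. Functoriality and naturality of the resulting left adjoint then follow from the uniqueness of adjoints. The argument is an $\infty$-categorical reformulation of the classical Johnstone–Joyal proof and should go through cleanly using the framework of \cref{ss:idl,ss:res}.
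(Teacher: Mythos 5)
Your overall architecture---continuity as the existence of a left adjoint \(L\) to the colimit functor (this is \cref{x9m28r}), the backward direction via corepresentability of Schwartz \(\Ind\)-objects, the forward direction by showing \(L(X)\) is Schwartz---is the same as the paper's. The backward direction is essentially right: your mapping-space identification is exactly \cref{xd0jpr}, with the ``coherent choice of factorizations'' handled by \cref{xsoyii}. Two caveats there: a general object need \emph{not} be the colimit of a single sequential Schwartz \(\Ind\)-object (\cref{sequential} only exhibits every Schwartz \(\Ind\)-object as a filtered colimit of sequential ones), so the sequential reduction must be applied to the corepresentability statement rather than to the presentation of \(X\) itself; and you should fix a regular cardinal \(\kappa\) and land in \(\Ind(\cat{C}_{\kappa})\) (legitimate by \cref{xhfgld}), so that the resulting retract really exhibits \(\cat{C}\) as a retract of a compactly generated category.

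The genuine gap is in the forward direction. ``\(L(X)\) is Schwartz'' means every map from a representable into \(L(X)\) factors through a \emph{compact} morphism, equivalently (\cref{x5hdob}) that the transitions in a presentation \(L(X)\simeq\injlim_{i}j(D_{i})\) are eventually compact. Subdivisibility of the compact idealoid does not deliver this: interpolation lets you split a morphism \emph{already known} to be compact into two compact pieces, but it says nothing about why the transitions \(D_{i}\to D_{i'}\) are compact in the first place. (Note also that ``Schwartz'' is defined via the idealoid \(\cat{I}\) of compact morphisms, not \(\cat{I}_{\sd}\), so subdivisibility is not the relevant condition for Schwartzness at all.) The missing crux is the identification of maps \(j(C)\to L(X)\) with compact morphisms \(C\to D_{k}\) into the stages of the presentation; in the paper this is done by comparing the two presentations \(\injlim_{i}\check{\jmath}(D_{i})\simeq\injlim_{i}j(D_{i})\) of \(\check{\jmath}(X)\), factoring a given map twice through the comparison maps \(\check{\jmath}(D_{i})\to j(D_{i})\), and invoking \cref{x11iiz} (compactness is created by the fully faithful \(\check{\jmath}\)) together with \cref{xo7ubc}. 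Interpolation is then a \emph{corollary} of this argument (\cref{xrz98z}), not an input; and your proposed Johnstone--Joyal-style proof of interpolation via a test \(\Ind\)-object would itself already need the same missing identification to recognize the stages of that test object as compact morphisms. As written, the forward direction therefore rests on an unproved (and incorrectly located) reduction.
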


\begin{proposition}\label{con_1}
  Consider
  a colimit-preserving functor
  between continuous presentable \(\infty\)-categories.
  Its right adjoint
  preserves filtered colimits
  if and only if it preserves compact morphisms.
\end{proposition}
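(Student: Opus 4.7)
Both directions will use \cref{con_0}: every object $Y$ of a continuous presentable $\infty$-category is the colimit $\injlim_n B_n$ of a Schwartz $\Ind$-object, meaning each transition $B_n \to B_{n+1}$ is compact. Since compact morphisms form an idealoid, each $B_n \to Y$ is also compact. Let $F \dashv G$ denote the given adjunction, and write $R_{\cat{C}}, R_{\cat{D}}$ for the left adjoints to the colimit functors guaranteed by continuity.

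For the forward direction, assume $G$ preserves filtered colimits and let $f \colon X \to Y$ be a compact morphism in $\cat{D}$. By the characterization of compact morphisms (lifting $j_{\cat{D}}(X) \to j_{\cat{D}}(Y)$ through $R_{\cat{D}}(Y) \to j_{\cat{D}}(Y)$ in $\Ind(\cat{D})$) combined with the Schwartz description of $R_{\cat{D}}(Y)$, one may factor $f$ as $X \to B_n \to Y$ for some $n$. Using $G(Y) \simeq \injlim_n G(B_n)$ by hypothesis, it suffices to show each $G(B_n) \to G(Y)$ is compact in $\cat{C}$. For this, given a filtered diagram $(W_j)_j$ in $\cat{C}$ with a map $g \colon G(Y) \to \injlim_j W_j$, I would construct a filtered diagram in $\cat{D}$ via the pushout $V_j := Y \cup_{FG(Y)} F(W_j)$, using the counit $\epsilon_Y \colon FG(Y) \to Y$ and $F(g)$; this yields a canonical map $Y \to \injlim_j V_j$ to which compactness of $B_n \to Y$ in $\cat{D}$ applies, producing a factorization $B_n \to V_j$ for some $j$. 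After reindexing along the Schwartz $\Ind$-object (passing to $B_m \to Y$ with $m \geq n$) and applying $G$, this translates via the adjunction $F \dashv G$ to the desired factorization of $G(B_n) \to \injlim_j W_j$ through $W_j$.

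For the reverse direction, assume $G$ preserves compact morphisms. Given a filtered diagram $(Y_i)$ in $\cat{D}$ with colimit $Y$, write $Y \simeq \injlim_n B_n$ Schwartz. Each compact $B_n \to Y \simeq \injlim_i Y_i$ factors through some $Y_{i(n)}$, and by a cofinality argument analogous to the proof of \cref{sequential}, these factorizations can be chosen compatibly, producing a morphism $(B_n) \to (Y_i)$ of $\Ind$-objects whose colimit is $\id_Y$. Applying $G$ gives $(G(B_n)) \to (G(Y_i))$, with $(G(B_n))$ Schwartz in $\cat{C}$ by hypothesis, and we obtain a commutative triangle $\injlim_n G(B_n) \to \injlim_i G(Y_i) \to G(Y)$. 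The outer composite is an equivalence by continuity of $\cat{C}$ combined with the characterization of $R_{\cat{C}}(G(Y))$---any Schwartz $\Ind$-object with compatible compact maps into $G(Y)$ presents $R_{\cat{C}}(G(Y))$ up to canonical equivalence---while the first map is an equivalence by the cofinal construction above. Two-out-of-three then gives the desired equivalence $\injlim_i G(Y_i) \xrightarrow{\sim} G(Y)$.

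The main obstacle is the forward direction, specifically the transfer of the factorization $B_n \to V_j$ obtained in $\cat{D}$ back to a $\cat{C}$-factorization through $W_j$. Because $G$ does not preserve the pushout $V_j = Y \cup_{FG(Y)} F(W_j)$, one cannot simply apply $G$ and read off the desired factorization; the reindexing along the Schwartz $\Ind$-object together with the unit and counit of $F \dashv G$ must be exploited to absorb the contribution from $Y$ in the pushout and isolate the factorization through $W_j$.
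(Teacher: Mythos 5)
The proposal proves the wrong statement: you have read ``it preserves compact morphisms'' as referring to the right adjoint \(G\), whereas it refers to the given colimit-preserving functor \(F\) itself. This is clear from the paper's proof (which reduces via \cref{xhfb2c} to showing that \(F\) is a morphism of \(\Cat{Pr}^{\con}\) iff \emph{\(F\)} preserves compact morphisms) and from the posetal prototype in \cref{xbmydj,xdzqu2}, where it is the frame map---the left adjoint---that must preserve the way-below relation. The distinction matters because the statement you argue for, ``\(G\) preserves filtered colimits iff \(G\) preserves compact morphisms,'' is false in both directions. For your forward direction: take \(F=\X\otimes\QQ\colon\Cat{Sp}\to\D(\QQ)\); its right adjoint \(G\) is the forgetful functor, which preserves all colimits, yet \(G\) sends the compact morphism \(\id_{\unit}\) of \(\D(\QQ)\) to the identity of the underlying Eilenberg--MacLane spectrum of \(\QQ\), which is not compact in \(\Cat{Sp}\) (cf.\ \cref{xq60rx}). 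Accordingly, the step you yourself flag as the main obstacle---transferring the factorization \(B_{n}\to V_{j}=Y\cup_{FG(Y)}F(W_{j})\) back to a factorization of \(G(B_{n})\to\injlim_{j}W_{j}\) through \(W_{j}\)---is not a technical wrinkle but an impossibility; in the example above the pushouts \(V_{j}\) carry no information about \(W_{j}\). Your reverse direction has a matching gap: the asserted ``characterization of \(R_{\cat{C}}(G(Y))\)'' is false, since a Schwartz \(\Ind\)-object equipped with compatible compact maps to \(G(Y)\) need not have colimit \(G(Y)\) (the zero \(\Ind\)-object always admits such maps), so the outer composite of your triangle need not be an equivalence; and indeed the implication itself fails, e.g.\ for \(G=\map(X,\X)\) with \(X\) a nonzero spectrum whose Spanier--Whitehead dual vanishes, which annihilates every compact morphism of \(\Cat{Sp}\) yet does not preserve filtered colimits.

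The statement actually being claimed is: \(G\) preserves filtered colimits iff \(F\) carries compact morphisms of \(\cat{C}\) to compact morphisms of \(\cat{D}\). The paper's proof identifies ``\(G\) preserves filtered colimits'' with left adjointability of the square \cref{e:bc7qh} via \cref{mor_adj}, and then uses \cref{x11iiz,xo7ubc}: a morphism \(C\to C'\) is compact iff \(\check{\jmath}(C)\to\check{\jmath}(C')\) factors through a representable \(j(C'')\), a condition that is visibly transported by \(F\) once the square commutes; conversely, \(F\) preserving compact morphisms lets one check adjointability using the unique Schwartz presentation \(\check{\jmath}\) from \cref{con_0,unique}. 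If you wish to recast this in your colimit language, the forward direction should start from a compact morphism \(C\to C'\) in the \emph{source} and use that \(G\) preserves filtered colimits to solve the lifting problem for \(\Map(FC',\injlim_{j}D_{j})\to\Map(FC,\injlim_{j}D_{j})\) via the adjunction---no pushout construction is needed.
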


\begin{remark}\label{xcqr0g}
  More generally,
  one can consider the situation without finite colimits,
  but here we restrict ourselves to the presentable case.
\end{remark}

\begin{remark}\label{xah8cd}
  In the literature,
  one often considers a variant of \cref{con_0} where only
  “basic” (i.e., sequential) Schwartz objects are considered
  and the \(\infty\)-category is then generated under
  colimits by them.
  To deduce such a version,
  one can use \cref{sequential},
  our general combinatorial result
  that reduces general \(\Ind\)-objects
  to sequential ones.
\end{remark}

\subsection{Compact morphisms}\label{ss:wav}

\begin{definition}\label{xl2h0i}
  Let \(\cat{C}\) be a presentable \(\infty\)-category.
  A morphism \(C\to C'\) is called \emph{compact}
  if for any filtered diagram
  \((D_{j})_{j}\),
  there is a filler
  \begin{equation*}
    \begin{tikzcd}
      \injlim_{j}\Map(C',D_{j})\ar[r]\ar[d]&
      \injlim_{j}\Map(C,D_{j})\ar[d]\\
      \Map\bigl(C',\injlim_{j}D_{j}\bigr)
      \ar[r]\ar[ur,dashed]&
      \Map\bigl(C,\injlim_{j}D_{j}\bigr)
    \end{tikzcd}
  \end{equation*}
  in~\(\Cat{S}\).
  Note that
  the condition depends only on
  \((D_{j})_{j}\) as an \(\Ind\)-object (see \cref{xc7lf4}).
\end{definition}

\begin{remark}\label{xo7dpa}
  The terminology in \cref{xl2h0i} may be confusing,
  since it could refer to compact objects in the \(\infty\)-category
  of morphisms,
  which are precisely those morphisms whose source and target are compact.
\end{remark}

\begin{example}\label{xq60rx}
  In \cref{xl2h0i},
  for an object~\(C\),
  the morphism \({\id}_{C}\) is compact
  if and only if \(C\) is compact.
\end{example}

\begin{example}\label{xo7ubc}
  In the situation of \cref{xl2h0i},
  suppose further that \(\cat{C}\) is compactly generated.
  In this case,
  \(C\to C'\) is compact if and only if
  it factors through a compact object,
  as we can see by expressing~\(C'\) as
  a filtered colimit of compact objects.
\end{example}

\begin{example}\label{xaps0r}
  In the situation of \cref{xl2h0i},
  suppose further that \(\cat{C}\) is
  \(\kappa\)-compactly generated
  for an infinite regular cardinal~\(\kappa\).
  Then still the argument in \cref{xo7ubc}
  shows that any compact morphism
  factors through a \(\kappa\)-compact object.
\end{example}

\begin{example}\label{way_below}
  In a complete poset (or more generally a poset with directed joins),
  we say that \(p\) is \emph{way below}~\(q\)
  and write \(p\ll q\)
  if \(p\leq q\) is compact in the sense of \cref{xl2h0i}.
  Concretely,
  this means that for any directed subset~\(D\),
  if \(q\leq\bigvee D\),
  there is \(d\in D\) such that \(p\leq d\).
\end{example}

\begin{example}\label{xa4f06}
In the situation of \cref{xl2h0i},
  the full subcategory
  of \(\Fun(\Delta^{1},\cat{C})\)
  spanned by compact morphisms
  is closed under finite coproducts.
  It is \emph{not} closed under finite colimits in general;
  e.g., in the stable case,
  any morphism is the cofiber
  of some morphism between compact morphisms.
  However, see \cref{xbagrg} for a positive result.
\end{example}

\begin{lemma}\label{cpt_idl}
  In the situation of \cref{xl2h0i},
  compact morphisms form an idealoid.
\end{lemma}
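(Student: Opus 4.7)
The plan is to unpack the definition directly. Let $f\colon C\to C'$ be compact, and take arbitrary composable $h\colon B\to C$ and $g\colon C'\to C''$; I want to show $g\circ f\circ h$ is compact. Fix a filtered diagram $(D_j)_j$. The key observation is that the square appearing in \cref{xl2h0i} for $g\circ f\circ h$ is obtained from the corresponding square for $f$ by precomposing with the pullback $g^*$ on the ``source'' side and postcomposing with $h^*$ on the ``target'' side, using naturality of the canonical comparison
\[
\nu(-)\colon \injlim_j \Map(-, D_j) \to \Map(-, \injlim_j D_j)
\]
in its contravariant argument.

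First I would extract from compactness of $f$ a filler $\alpha\colon \Map(C', \injlim_j D_j) \to \injlim_j \Map(C, D_j)$ satisfying the upper triangle $\alpha\circ\nu(C') = f^*\colon \injlim_j\Map(C',D_j)\to\injlim_j\Map(C,D_j)$ and the lower triangle $\nu(C)\circ\alpha = f^*\colon \Map(C',\injlim_j D_j)\to\Map(C,\injlim_j D_j)$. Second, I would propose as the candidate filler for $g\circ f\circ h$ the composite
\[
\Map(C'', \injlim_j D_j) \xrightarrow{g^*} \Map(C', \injlim_j D_j) \xrightarrow{\alpha} \injlim_j \Map(C, D_j) \xrightarrow{h^*} \injlim_j \Map(B, D_j).
\]

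Third, I would verify the two triangles. For the upper one, precompose the candidate with $\nu(C'')$: naturality of $\nu$ applied to $g$ rewrites $g^*\circ\nu(C'')$ as $\nu(C')\circ g^*$, after which the upper triangle for $\alpha$ collapses $\alpha\circ\nu(C')$ to $f^*$, leaving $h^*\circ f^*\circ g^* = (g\circ f\circ h)^*$, which is the top horizontal. For the lower one, postcompose the candidate with $\nu(B)$: naturality of $\nu$ applied to $h$ rewrites $\nu(B)\circ h^*$ as $h^*\circ \nu(C)$, and the lower triangle for $\alpha$ collapses $\nu(C)\circ\alpha$ to $f^*$, again yielding $h^*\circ f^*\circ g^* = (g\circ f\circ h)^*$.

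There is essentially no obstacle: the argument is a purely formal manipulation, since compactness is a factorization condition whose defining square is fully contravariantly functorial in the morphism under consideration. The only minor care required is $\infty$-categorical coherence, i.e., checking that the 2-cells witnessing each triangle compose as expected; this is automatic from functoriality of $\Map(-,-)$, from the fact that filtered colimits commute with the relevant operations in $\Cat{S}$, and from naturality of $\nu$ in its contravariant argument.
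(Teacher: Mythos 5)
Your proposal is correct and is exactly the unpacking the paper intends: its proof of this lemma is the one-line ``This directly follows from the definition,'' and your explicit construction of the filler for $g\circ f\circ h$ by conjugating the filler for $f$ with $g^*$ and $h^*$, using naturality of the comparison map in the contravariant argument, is precisely that routine verification.
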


\begin{proof}
  This directly follows from the definition.
\end{proof}

\begin{lemma}\label{xbagrg}
In the situation of \cref{xl2h0i},
  consider a commutative diagram
  \begin{equation}
    \label{e:vqifd}
    \begin{tikzcd}
      C_{0}'\ar[r]&
      C_{1}'\ar[r]&
      C_{2}'\\
      C_{0}\ar[r]\ar[d]\ar[u]&
      C_{1}\ar[r]\ar[d]\ar[u]&
      C_{2}\ar[d]\ar[u]\\
      C_{0}''\ar[r]&
      C_{1}''\ar[r]&
      C_{2}''\rlap.
    \end{tikzcd}
  \end{equation}
  When \(C_{0}\to C_{1}\),
  \(C_{1}'\to C_{2}'\),
  and \(C_{1}''\to C_{2}''\)
  are compact,
  then the induced morphism
  \(\injlim(C_{0}'\gets C_{0}\to C_{0}'')
  \to\injlim(C_{2}'\gets C_{2}\to C_{2}'')\)
  is compact.
\end{lemma}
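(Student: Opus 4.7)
The plan is to verify \cref{xl2h0i} directly for \(P_0 \to P_2\), where \(P_n \coloneqq \injlim(C_n' \gets C_n \to C_n'')\) is the pushout of the \(n\)-th column of~\cref{e:vqifd}. Given any filtered diagram \((D_j)_j\), I must construct a filler
\begin{equation*}
  \phi\colon \Map(P_2, \injlim_j D_j) \to \injlim_j \Map(P_0, D_j).
\end{equation*}
Since mapping out of a pushout gives a fiber product, and filtered colimits commute with finite limits in~\(\Cat{S}\), both source and target of~\(\phi\) decompose as fiber products: the source over \(\Map(C_2, \injlim_j D_j)\), and the target over \(\injlim_j \Map(C_0, D_j)\).

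Compactness of \(C_1' \to C_2'\) supplies a filler \(\phi'\colon \Map(C_2', \injlim_j D_j) \to \injlim_j \Map(C_1', D_j)\), and compactness of \(C_1'' \to C_2''\) supplies an analogous~\(\phi''\). Post-composing with restriction along \(C_0' \to C_1'\) (resp.~\(C_0'' \to C_1''\)) and with the projection from \(\Map(P_2, \injlim_j D_j)\) to \(\Map(C_2', \injlim_j D_j)\) (resp.~\(\Map(C_2'', \injlim_j D_j)\)) produces candidates \(\tilde\phi'\) and \(\tilde\phi''\) into the outer factors \(\injlim_j \Map(C_0', D_j)\) and \(\injlim_j \Map(C_0'', D_j)\) of the target. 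To assemble them via the fiber product universal property, I must verify that their two composites into \(\injlim_j \Map(C_0, D_j)\) agree.

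This is where the compactness of \(C_0 \to C_1\) intervenes: it supplies a filler \(\phi_3\colon \Map(C_1, \injlim_j D_j) \to \injlim_j \Map(C_0, D_j)\). For a point \((f', g, f'') \in \Map(P_2, \injlim_j D_j)\), the top-triangle property of~\(\phi'\) implies that the restriction of \(\phi'(f') \in \injlim_j \Map(C_1', D_j)\) along \(C_1 \to C_1'\) is an element of \(\injlim_j \Map(C_1, D_j)\) lifting \(g|_{C_1} = f'|_{C_1} \in \Map(C_1, \injlim_j D_j)\); the top-triangle property of~\(\phi_3\) then identifies its further restriction to~\(C_0\) with \(\phi_3(g|_{C_1})\). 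The analogous computation for \(\tilde\phi''\) yields the same element, so the required compatibility holds. The two filler triangles for the resulting~\(\phi\) then follow from those of \(\phi'\) and \(\phi''\), together with the commutativity of~\cref{e:vqifd}.

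The main obstacle is to organize this pointwise-looking argument with full \(\infty\)-categorical coherence: the agreement of the two composites into \(\injlim_j \Map(C_0, D_j)\) is really a homotopy, part of a coherent limit datum in~\(\Cat{S}\), and one must carefully propagate the higher data encoded in each of \(\phi'\), \(\phi''\), and \(\phi_3\) through the construction. Working at the level of the \(\infty\)-category of commutative squares in~\(\Cat{S}\), and treating each filler as a point in the space of fills rather than as a mere function on points, should suffice to upgrade the argument above into a coherent construction of~\(\phi\).
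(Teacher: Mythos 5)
Your proposal is correct and is essentially the paper's argument unwound: the paper fixes the same three witnesses and assembles them into a \(3\times3\) commutative diagram whose outer columns are the cospans with limits \(\Map(P_{2},\injlim_{j}D_{j})\) and \(\injlim_{j}\Map(P_{0},D_{j})\) (the latter because filtered colimits commute with pullbacks in \(\Cat{S}\)), the induced map on limits being your \(\phi\). The coherence concern you raise at the end is exactly what the paper's packaging resolves, by routing everything through the middle column \(\injlim_{j}\Map(C_{1}',D_{j})\to\Map(C_{1},\injlim_{j}D_{j})\gets\injlim_{j}\Map(C_{1}'',D_{j})\) and taking the induced map of limits, rather than gluing the two components by hand over the base.
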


\begin{proof}
Consider a filtered colimit \(D=\injlim_{j}D_{j}\).
  We fix witnesses
  \(\Map(C_{1},D)\to\injlim_{j}\Map(C_{0},D_{j})\),
  \(\Map(C_{2}',D)\to\injlim_{j}\Map(C_{1}',D_{j})\),
  and \(\Map(C_{2}'',D)\to\injlim_{j}\Map(C_{1}'',D_{j})\).
  With them, we obtain the commutative diagram
  \begin{equation*}
    \begin{tikzcd}
      \Map(C_{2}',D)\ar[r]\ar[d]&
      \injlim_{j}\Map(C_{1}',D_{j})\ar[r]\ar[d]&
      \injlim_{j}\Map(C_{0}',D_{j})\ar[d]\\
      \Map(C_{2},D)\ar[r]&
      \Map(C_{1},D)\ar[r]&
      \injlim_{j}\Map(C_{0},D_{j})\\
      \Map(C_{2}'',D)\ar[r]\ar[u]&
      \injlim_{j}\Map(C_{1}'',D_{j})\ar[r]\ar[u]&
      \injlim_{j}\Map(C_{0}'',D_{j})\rlap.\ar[u]
    \end{tikzcd}
  \end{equation*}
  We get the desired witness
  by taking the limits of the left and right columns.
\end{proof}

\subsection{(Very) Schwartz \texorpdfstring{\(\Ind\)}{Ind}-objects}\label{ss:sch}

\begin{definition}\label{xxc7v8}
  Let \(\cat{C}\) be a presentable \(\infty\)-category.
  We write~\(\cat{I}\) for
  the class of compact morphisms—this is an idealoid by \cref{cpt_idl}.
  We call a morphism in \(\cat{I}_{\sd}\) (see \cref{xhyy8b})
  a \emph{very compact morphism}.
  We call an object of \(\Ind_{\cat{I}}(\cat{C})\)
  a \emph{Schwartz \(\Ind\)-object}
  and an object of \(\Ind_{\cat{I}_{\sd}}(\cat{C})\)
  a \emph{very Schwartz \(\Ind\)-object}.
\end{definition}

\begin{remark}\label{xzsrxi}
  Grothendieck introduced the notion of Schwartz spaces
in~\cite[Section~III.4]{Grothendieck54}.
The name in \cref{xxc7v8} comes from this,
  but see \cref{x17h0a} below.
\end{remark}

\begin{remark}\label{x17h0a}
  A nuclear module
  in condensed mathematics
  (see \cref{xsdc04})
  captures the idea of \emph{dual} nuclear space
  in functional analysis.
  The usage of the adjective “Schwartz” here
  follows this convention:
  Schwartz spaces in functional analysis (see \cref{xzsrxi})
  are typically limits—rather than colimits—along compact transitions\footnote{Of course,
    this compactness is different from the compactness in \cref{xl2h0i}.
  }.
  In our setting,
  limits play a minor role, so we drop “dual”.
\end{remark}

\begin{remark}\label{xhfgld}
  In \cref{xxc7v8},
  we consider
  \(\Ind\) of a presentable \(\infty\)-category
  (see \cref{xc7lf4}).
  However,
  this is not necessary.
  When \(\cat{C}\)
  is \(\kappa\)-compactly generated
  for an infinite regular cardinal~\(\kappa\),
  \cref{mini,xaps0r} show that
  any Schwartz \(\Ind\)-object
  is in \(\Ind(\cat{C}_{\kappa})\).
\end{remark}

\begin{example}\label{xa4ihu}
  A constant \(\Ind\)-object is
  Schwartz if and only if it is compact.
\end{example}

\begin{lemma}\label{xd0jpr}
  Consider a presentable \(\infty\)-category
  and a Schwartz \(\Ind\)-object~\((C_{i})_{i}\).
  Then the morphism
  \begin{equation*}
    \Map_{\Ind(\cat{C})}((C_{i})_{i},(D_{j})_{j})
    \to
    \Map_{\cat{C}}\biggl(\injlim_{i}C_{i},\injlim_{j}D_{j}\biggr)
  \end{equation*}
  is an equivalence
  for any \(\Ind\)-object~\((D_{j})_{j}\).
\end{lemma}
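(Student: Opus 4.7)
The plan is to unfold both mapping spaces into cofiltered limits of spaces, reduce to a sequential Schwartz presentation of~\((C_{i})_{i}\), and then exploit compactness of each transition to bridge the two sides. Concretely, the definition of \(\Map\) in \(\Ind(\cat{C})\) gives
\[
  \Map_{\Ind(\cat{C})}((C_{i})_{i},(D_{j})_{j})
  =\projlim_{i}\injlim_{j}\Map_{\cat{C}}(C_{i},D_{j}),
\]
while the limit formula for maps out of a colimit gives \(\Map_{\cat{C}}(\injlim_{i}C_{i},\injlim_{j}D_{j})=\projlim_{i}\Map(C_{i},\injlim_{j}D_{j})\). Writing \(A_{i}=\injlim_{j}\Map(C_{i},D_{j})\) and \(B_{i}=\Map(C_{i},\injlim_{j}D_{j})\), the natural transformation \(\alpha_{i}\colon A_{i}\to B_{i}\) is what induces the map of the lemma after applying \(\projlim_{i}\).

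Both sides of the claimed equivalence convert filtered colimits in \((C_{i})_{i}\) into cofiltered limits of spaces (using that \(\injlim\colon\Ind(\cat{C})\to\cat{C}\) preserves colimits), so the Schwartz \(\Ind\)-objects for which the conclusion holds form a subcategory of \(\Ind_{\cat{I}}(\cat{C})\) closed under filtered colimits. By \cref{sequential} it therefore suffices to treat the sequential case \((C_{n})_{n\in\NN}\) with each \(C_{n}\to C_{n+1}\) compact.

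In the sequential case, the compactness of \(C_{n}\to C_{n+1}\) (\cref{xl2h0i}) applied to \((D_{j})_{j}\) yields, for each~\(n\), a filler \(\phi_{n}\colon B_{n+1}\to A_{n}\) together with homotopies \(\phi_{n}\circ\alpha_{n+1}\simeq(A_{n+1}\to A_{n})\) and \(\alpha_{n}\circ\phi_{n}\simeq(B_{n+1}\to B_{n})\). Interleaving \((A_{n})\) and \((B_{n+1})\) via the \(\alpha\)'s and \(\phi\)'s produces an \(\NN^{\op}\)-shaped diagram in~\(\Cat{S}\) whose even and odd subsequences are both initial subcategories; hence its \(\projlim\) coincides with both \(\projlim_{n}A_{n}\) and \(\projlim_{n}B_{n}\), and chasing arrows identifies the resulting equivalence with \(\projlim_{n}\alpha_{n}\).

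The main technical obstacle I anticipate is coherence: \cref{xl2h0i} only supplies the filler \(\phi_{n}\) with first-order commutativities, whereas assembling them into a genuine \(\NN^{\op}\)-diagram in~\(\Cat{S}\) formally demands higher coherences. The cleanest way around this is to view the argument as the cofiltered-limit dual of \cref{xsoyii}: by Yoneda one reduces to \(\cat{C}=\Cat{S}\), and then via homotopy groups—which behave well for sequential \(\projlim\)—to \(\Cat{Set}\), where the filler condition is exactly what is needed for an elementwise check of bijectivity between \(\projlim_{n}A_{n}\) and \(\projlim_{n}B_{n}\).
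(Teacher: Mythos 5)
Your overall strategy is the right one and, at its core, coincides with the paper's: unfold both sides as \(\projlim_{i}\) of \(A_{i}=\injlim_{j}\Map(C_{i},D_{j})\) and \(B_{i}=\Map(C_{i},\injlim_{j}D_{j})\), and use the compactness of the Schwartz transitions to produce fillers \(B_{i'}\to A_{i}\). The paper then simply observes that these fillers exhibit \((A_{i})_{i}\to(B_{i})_{i}\) as an equivalence of \(\Pro\)-spaces (the dual of \cref{xsoyii}, i.e.\ \cref{xsoyii} applied in the opposite category) and concludes, since the limit of a cofiltered diagram depends only on the underlying \(\Pro\)-object. In particular no reduction to the sequential case is needed; your detour through \cref{sequential} is harmless but superfluous, since the filler criterion of \cref{xsoyii} already handles arbitrary (co)directed index posets without any interleaving or coherence bookkeeping.

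The genuine problem is in your last step. The claim that homotopy groups ``behave well for sequential \(\projlim\)'' and reduce the comparison of \(\projlim_{n}A_{n}\) with \(\projlim_{n}B_{n}\) to an elementwise check in \(\Cat{Set}\) is false as stated: for a tower of spaces one has the Milnor sequence \(0\to{\projlim}^{1}\,\pi_{k+1}\to\pi_{k}(\projlim X_{n})\to\projlim\pi_{k}(X_{n})\to 0\), plus basepoint and \(\pi_{0}\)-surjectivity issues, so a bijection on \(\projlim\pi_{k}\) does not by itself give an equivalence of limits. (This is exactly the asymmetry with \cref{xsoyii}, where homotopy groups genuinely commute with filtered \emph{colimits}.) The correct repair is the one you gesture at but do not carry out: do not compute the limit at all. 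The Yoneda reduction for the \emph{dual} of \cref{xsoyii} lands in filtered colimits \(\injlim_{n}\Map(X_{n},T)\) of spaces, where the passage to \(\Cat{Set}\) is legitimate; this shows the map of \(\Pro\)-spaces is an equivalence, and then \(\projlim\), being a functor on \(\Pro(\Cat{S})\), sends it to an equivalence. With that substitution your argument closes up and becomes the paper's proof.
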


\begin{proof}
  The morphism is equivalent to
  \begin{equation*}
    \projlim_{i}
    \injlim_{j}
    \Map(C_{i},D_{j})
    \to
    \projlim_{i}
    \Map\biggl(C_{i},\injlim_{j}D_{j}\biggr).
  \end{equation*}
  By the definition of Schwartz \(\Ind\)-object
  and \cref{xsoyii},
  \begin{equation*}
    \biggl(\injlim_{j}\Map(C_{i},D_{j})\biggr)_{i}
    \to
    \biggl(\Map\biggl(C_{i},\injlim_{j}D_{j}\biggr)\biggr)_{i}
  \end{equation*}
  is an equivalence of \(\Pro\)-spaces.
  Hence the desired claim follows.
\end{proof}

\begin{corollary}\label{unique}
  Let \(\cat{I}\) be the idealoid of compact morphisms.
  Then the colimit functor
  \(\Ind_{\cat{I}}(\cat{C})\to\cat{C}\)
  is fully faithful.
\end{corollary}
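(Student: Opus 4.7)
The plan is to reduce the statement directly to \cref{xd0jpr}. For Schwartz $\Ind$-objects $(C_{i})_{i}$ and $(D_{j})_{j}$, I need to show that the canonical comparison
\begin{equation*}
  \Map_{\Ind_{\cat{I}}(\cat{C})}((C_{i})_{i},(D_{j})_{j})
  \to
  \Map_{\cat{C}}\biggl(\injlim_{i}C_{i},\injlim_{j}D_{j}\biggr)
\end{equation*}
is an equivalence. The first step is to observe that $\Ind_{\cat{I}}(\cat{C})$ is by definition a full subcategory of $\Ind(\cat{C})$ (see \cref{strict}), so the source of this map coincides with $\Map_{\Ind(\cat{C})}((C_{i})_{i},(D_{j})_{j})$. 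The second step is simply to invoke \cref{xd0jpr}, which asserts precisely the equivalence of this latter mapping space with $\Map_{\cat{C}}(\injlim_{i}C_{i},\injlim_{j}D_{j})$; note that \cref{xd0jpr} only requires $(C_{i})_{i}$ to be Schwartz and places no restriction on the target $(D_{j})_{j}$, so it applies.

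Because the statement is a direct corollary, there is no real obstacle here; all of the work has been carried out in \cref{xd0jpr}, whose proof uses \cref{xsoyii} to upgrade the level-wise filler condition coming from compactness of each $C_{i}\to C_{i'}$ into an equivalence of $\Pro$-spaces $(\injlim_{j}\Map(C_{i},D_{j}))_{i}\simeq (\Map(C_{i},\injlim_{j}D_{j}))_{i}$. Taking limits of this $\Pro$-space equivalence yields the required comparison.
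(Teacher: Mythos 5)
Your proposal is correct and is exactly the argument the paper intends: the corollary is stated without a separate proof precisely because, $\Ind_{\cat{I}}(\cat{C})$ being a full subcategory of $\Ind(\cat{C})$, full faithfulness of the colimit functor is the statement of \cref{xd0jpr} applied to a Schwartz source and an arbitrary $\Ind$-object target. Nothing further is needed.
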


\subsection{Continuous categories}\label{ss:con}

\begin{definition}\label{xaq9jc}
  We write \(\Cat{Pr}^{\cg}\)
  for the subcategory of~\(\Cat{Pr}\)
  spanned by compactly generated \(\infty\)-categories
  and functors preserving colimits and compact objects.
\end{definition}

\begin{definition}\label{xv8wwv}
  A presentable \(\infty\)-category
  is \emph{continuous} if
  it is a retract of
  a compactly generated \(\infty\)-category
  in~\(\Cat{Pr}\).
  Similarly,
  a morphism of continuous presentable \(\infty\)-categories
  is a retract
  of a morphism in \(\Cat{Pr}^{\cg}\)
  in \(\Fun(\Delta^{1},\Cat{Pr})\).
  We write \(\Cat{Pr}^{\con}\)
  for this subcategory of~\(\Cat{Pr}\).
\end{definition}

\begin{example}\label{x9qljx}
  Compactly generated \(\infty\)-categories
  are continuous.
  Moreover,
  it follows from \cref{xhfb2c} below that
  the functor \(\Cat{Pr}^{\cg}\to\Cat{Pr}^{\con}\)
  is a full inclusion.
\end{example}

We have a canonical (up to the choice of a regular cardinal)
way to present a continuous presentable \(\infty\)-category
as a retract of a compactly generated one:

\begin{lemma}\label{x9m28r}
  Let \(\cat{C}\)
  be a presentable \(\infty\)-category.
  It is continuous
  if and only if
  for a sufficiently large regular cardinal~\(\kappa\)
  the functor
  \(G\colon\Ind(\cat{C}_{\kappa})\to\cat{C}\)
  obtained as the left Kan extension of the inclusion
  \(\cat{C}_{\kappa}\hookrightarrow\cat{C}\)
  has a left adjoint.
\end{lemma}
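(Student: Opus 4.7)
The plan is to prove the two implications separately, since each direction calls on a different construction.

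For the ``only if'' direction, I start from a retract presentation \(F'\colon \cat{C}\to\cat{D}\) and \(G'\colon \cat{D}\to\cat{C}\) with \(\cat{D}\) compactly generated and \(G'\circ F'\simeq\id_{\cat{C}}\), and choose \(\kappa\) large enough that both \(F'\) and \(G'\) preserve \(\kappa\)-compact objects. Since \(\cat{D}\) is compactly generated, the colimit functor \(G_{\cat{D}}\colon \Ind(\cat{D}_{\kappa})\to\cat{D}\) admits a fully faithful left adjoint \(L_{\cat{D}}\), sending \(Y\) to the canonical Ind-presentation \(\injlim_{(\cat{D}_{\omega})_{/Y}}j(D)\) (using \(\cat{D}_{\omega}\subset\cat{D}_{\kappa}\)). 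I propose \(F=\Ind(G'|_{\cat{D}_{\kappa}})\circ L_{\cat{D}}\circ F'\colon \cat{C}\to\Ind(\cat{C}_{\kappa})\) as the candidate left adjoint. To verify \(F\dashv G\), I first check the universal property on representables \(X=j(D')\) for \(D'\in\cat{C}_{\kappa}\), using the \(G'\dashv R'\) adjunction and the identity \(G'F'\simeq\id\) to arrive at \(\Map_{\Ind(\cat{C}_{\kappa})}(F(C),j(D'))\simeq\Map_{\cat{C}}(C,D')\), and then extend to arbitrary \(X\in\Ind(\cat{C}_{\kappa})\).

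For the ``if'' direction, suppose \(G\) has a left adjoint \(F\) for some \(\kappa\), which I take at least as large as the presentability rank of \(\cat{C}\) so that \(\cat{C}_{\kappa}\) jointly detects equivalences. I show that the unit \(\eta\colon \id_{\cat{C}}\to G\circ F\) is an equivalence, which exhibits \(\cat{C}\) as a retract of the compactly generated \(\Ind(\cat{C}_{\kappa})\) (compact generation is automatic since \(\cat{C}_{\kappa}\) is essentially small with finite colimits), hence continuous. For \(D\in\cat{C}_{\kappa}\), the adjunction combined with \(G\circ j\simeq\id\) on \(\cat{C}_{\kappa}\) yields \(\Map_{\Ind(\cat{C}_{\kappa})}(F(D),j(D'))\simeq\Map_{\cat{C}}(D,D')\) naturally in \(D'\in\cat{C}_{\kappa}\). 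Writing \(F(D)=\injlim_{i}j(E_{i})\), the left side also computes as \(\projlim_{i}\Map(E_{i},D')\simeq\Map_{\cat{C}}(GF(D),D')\), and the resulting equivalence unwinds via the triangle identity to precomposition with \(\eta_{D}\); joint detection of equivalences by \(\cat{C}_{\kappa}\) therefore forces \(\eta_{D}\) to be an equivalence. The general case follows because \(F\) preserves colimits and every \(C\in\cat{C}\) is a \(\kappa\)-filtered colimit of \(\cat{C}_{\kappa}\)-objects.

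The main obstacle is the extension of the adjunction verification beyond representables in the ``only if'' direction. For a general \(X=\injlim_{\alpha}j(E'_{\alpha})\in\Ind(\cat{C}_{\kappa})\), matching \(\Map(F(C),X)\) against \(\Map(C,\injlim_{\alpha}E'_{\alpha})\) is delicate, because mapping out of a \(\kappa\)-compact object need not commute with an \(\omega\)-filtered colimit. The resolution relies on the specific form \(F(C)=\Ind(G')(L_{\cat{D}}(F'(C)))\): the factor \(L_{\cat{D}}(F'(C))\) is an \(\omega\)-filtered colimit of genuine compacts of \(\cat{D}\), and the chain of right adjoints from \(\cat{D}\) down to \(\cat{C}\) transports that compactness into the required continuity property on the \(\cat{C}\)-side, allowing the representable verification to propagate to all of \(\Ind(\cat{C}_{\kappa})\).
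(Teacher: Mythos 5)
Your ``only if'' direction has a genuine gap, located exactly where you flag ``the main obstacle,'' and your proposed resolution does not close it. Unwinding the candidate \(F=\Ind(G'|_{\cat{D}_{\kappa}})\circ L_{\cat{D}}\circ F'\) against a general \(X=\injlim_{\alpha}j(E_{\alpha})\in\Ind(\cat{C}_{\kappa})\), and using that the objects \(D_{i}\) appearing in \(L_{\cat{D}}(F'(C))\) are genuinely compact in \(\cat{D}\), one gets
\begin{equation*}
  \Map_{\Ind(\cat{C}_{\kappa})}(F(C),X)
  \simeq
  \Map_{\cat{D}}\Bigl(F'(C),\injlim_{\alpha}R'(E_{\alpha})\Bigr),
\end{equation*}
where \(R'\) is the right adjoint of the retraction \(G'\); the desired answer is \(\Map_{\cat{C}}(C,\injlim_{\alpha}E_{\alpha})\simeq\Map_{\cat{D}}(F'(C),R'(\injlim_{\alpha}E_{\alpha}))\). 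So you must show that the canonical map \(\injlim_{\alpha}R'(E_{\alpha})\to R'(\injlim_{\alpha}E_{\alpha})\) becomes an equivalence after applying \(\Map_{\cat{D}}(F'(C),\X)\). Nothing in the retract data supplies this: the colimit over \(\alpha\) is merely filtered, \(R'\) only preserves \(\kappa\)-filtered colimits, and the right adjoint of the section \(F'\) likewise has no reason to preserve this colimit (that would amount to \(F'\) or \(G'\) preserving compact objects, which is not part of a retraction in \(\Cat{Pr}\)). Your closing paragraph asserts that ``the chain of right adjoints transports compactness to the \(\cat{C}\)-side'' without an argument, and in fact the formula is wrong in general: the left adjoint, when it exists, must send \(C\) to the (unique, by \cref{unique} and \cref{xd0jpr}) Schwartz \(\Ind\)-presentation of \(C\), whereas \((G'(D_{i}))_{i}\) is Schwartz only if \(G'\) carries compact morphisms of \(\cat{D}\) to compact morphisms of \(\cat{C}\), i.e.\ only if \(R'\) preserves filtered colimits (\cref{con_1}) — precisely what an arbitrary retraction does not give you. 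The paper sidesteps this by never writing the left adjoint down in the general case: it verifies the compactly generated case exactly as you do, observes that \(G\) is a retract of \(\Ind(\cat{D}_{\kappa})\to\cat{D}\) in \(\Fun(\Delta^{1},\Cat{Pr})\), and invokes the abstract fact that a retract of a functor admitting a left adjoint admits a left adjoint (\cite[Lemma~21.1.2.14]{LurieSAG}). If you want an explicit formula, you would instead have to produce the Schwartz presentation directly, which is essentially the content of \cref{con_0}.

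A secondary issue: in your ``if'' direction, you deduce that \(\eta_{D}\) is an equivalence from \(\Map(GF(D),D')\simeq\Map(D,D')\) for \(D'\in\cat{C}_{\kappa}\), invoking ``joint detection of equivalences by \(\cat{C}_{\kappa}\).'' Detection goes the other way: the functors \(\Map(E,\X)\) for \(E\in\cat{C}_{\kappa}\) jointly detect equivalences, but the functors \(\Map(\X,D')\) do not, and \(GF(D)\) need not be \(\kappa\)-compact. The standard repair makes this direction genuinely immediate, as the paper asserts: for \(\kappa\) at least the presentability rank, \(G\) has a fully faithful right adjoint (restricted Yoneda), and in an adjoint triple \(F\dashv G\dashv j\) with \(j\) fully faithful, \(F\) is automatically fully faithful; hence \(GF\simeq\id\) and \(\cat{C}\) is a retract of the compactly generated \(\Ind(\cat{C}_{\kappa})\) in \(\Cat{Pr}\).
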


\begin{proof}
  The “if” direction is clear.
  We prove the “only if” direction.

  We first consider the case
  when \(\cat{C}\) is compactly generated.
  We write \(\cat{A}\) for the full subcategory
  of compact objects.
  We prove that \(\cat{C}\simeq\Ind(\cat{A})\) satisfies the property
  for any~\(\kappa\)
  by showing that
  the functor
  \(F\colon\Ind(\cat{A})\to\Ind(\Ind(\cat{A})_{\kappa})\)
  obtained as
  the left Kan extension
  of the inclusion
  is left adjoint to~\(G\).
  To see this,
  it suffices to construct
  a functorial equivalence
  \begin{equation*}
    \Map_{\Ind(\Ind(\cat{A})_{\kappa})}(F(C),D)
    \simeq
    \Map_{\Ind(\cat{A})}(C,G(D)).
  \end{equation*}
  First, it suffices to
  consider the case \(C\in\cat{A}\).
  In that case,
  since \(C\) and \(F(C)\) are both compact,
  it suffices to consider the case
  \(D\in\Ind(\cat{A})_{\kappa}\).
  Then we have an obvious choice of
  such an equivalence.

We now consider the general case.
  We take a regular cardinal~\(\kappa\)
  such that
  \(\cat{C}\) is a retract
  in \(\Cat{Pr}^{\kappa}\)
  of a compactly generated \(\infty\)-category~\(\cat{D}\).
  Then \(G\) is a retract
  of the functor
  \(\Ind(\cat{D}_{\kappa})\to\cat{D}\)
  in \(\Cat{Pr}\).
  But the previous argument
  shows that this has a left adjoint.
  Therefore, so does~\(G\);
  see, e.g.,~\cite[Lemma~21.1.2.14]{LurieSAG}.
\end{proof}

\begin{lemma}\label{x11iiz}
  Let \(F\colon\cat{C}\to\cat{D}\) be
  a fully faithful functor between presentable \(\infty\)-categories
  with right adjoint~\(G\).
  Suppose that \(G\) preserves filtered colimits.
  Then a morphism \(C\to C'\) in \(\cat{C}\)
  is compact if and only if \(FC\to FC'\) is compact.
\end{lemma}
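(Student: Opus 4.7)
The plan is to reduce the compactness test for a morphism to a compactness test in the opposite category along the adjunction $F \dashv G$, handling each direction separately.

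For the ``only if'' direction, suppose $C \to C'$ is compact in $\cat{C}$. Given a filtered diagram $(D_j)_j$ in $\cat{D}$, I would first apply the adjunction $F \dashv G$ to rewrite $\Map_{\cat{D}}(FC, D_j) \simeq \Map_{\cat{C}}(C, G D_j)$ (and similarly with $C'$), and use the hypothesis that $G$ preserves filtered colimits to rewrite $\Map_{\cat{D}}(FC, \injlim_j D_j) \simeq \Map_{\cat{C}}(C, \injlim_j G D_j)$. This translates the filling problem for $FC \to FC'$ relative to $(D_j)_j$ into the filling problem for $C \to C'$ relative to $(G D_j)_j$, which has a solution by compactness of $C \to C'$. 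Note that this direction uses only that $G$ preserves filtered colimits; fully faithfulness of $F$ is not needed here.

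For the ``if'' direction, suppose $FC \to FC'$ is compact in $\cat{D}$. Given a filtered diagram $(D_j)_j$ in $\cat{C}$, I would use fully faithfulness of $F$ to identify $\Map_{\cat{C}}(C, D_j) \simeq \Map_{\cat{D}}(FC, F D_j)$, and the fact that $F$ is a left adjoint (hence preserves all colimits, including filtered ones) to identify $\Map_{\cat{C}}(C, \injlim_j D_j) \simeq \Map_{\cat{D}}(FC, \injlim_j F D_j)$. This translates the filling problem for $C \to C'$ relative to $(D_j)_j$ into the one for $FC \to FC'$ relative to $(FD_j)_j$, which is solvable by hypothesis.

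In both directions, the construction of the filler is pulled back, respectively pushed forward, along a chain of canonical equivalences, so functoriality is automatic and no genuine obstacle appears. If anything, the only point requiring care is to notice that the ``only if'' direction does not use fully faithfulness, while the ``if'' direction uses both fully faithfulness (to identify mapping spaces before and after $F$) and the fact that $F$ is a left adjoint (to commute with the filtered colimit in the target).
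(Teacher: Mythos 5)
Your proposal is correct and is essentially the paper's own argument: the ``only if'' direction tests against $(GD_j)_j$ using the adjunction and the hypothesis that $G$ preserves filtered colimits, and the ``if'' direction tests against $(FD_j)_j$ using fully faithfulness and colimit-preservation of $F$. Your added observation that fully faithfulness is not needed for the ``only if'' direction is accurate.
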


\begin{proof}
  We prove the “if” direction
  by checking
  any filtered diagram \((D_{j})_{j}\) in~\(\cat{C}\)
  satisfies the condition in \cref{xl2h0i}.
  We see this
  by using the compactness of \(FC\to FC'\)
  against the diagram \((FD_{j})_{j}\).

  We then prove the “only if” direction
  by constructing a filler
  \begin{equation*}
    \begin{tikzcd}
      \injlim_{j}\Map(FC',D_{j})\ar[r]\ar[d]&
      \injlim_{j}\Map(FC,D_{j})\ar[d]\\
      \Map\bigl(FC',\injlim_{j}D_{j}\bigr)
      \ar[r]\ar[ur,dashed]&
      \Map\bigl(FC,\injlim_{j}D_{j}\bigr)
    \end{tikzcd}
  \end{equation*}
  for any filtered diagram \((D_{j})_{j}\) in~\(\cat{D}\).
  We see this
  by using the compactness of \(C\to C'\)
  against the diagram \((GD_{j})_{j}\)
\end{proof}

\begin{proof}[Proof of \cref{con_0}]
  We first prove the “if” direction.
  Suppose that \(\cat{C}\) is a presentable \(\infty\)-category
  such that every object is the colimit of
  some Schwartz \(\Ind\)-object.
  We fix a regular cardinal~\(\kappa\)
  such that \(\cat{C}\) is \(\kappa\)-compactly generated.
  By \cref{x9m28r},
  it suffices to prove that
  the functor
  \(\Ind(\cat{C}_{\kappa})\to\cat{C}\)
  obtained as the left Kan extension of the inclusion
  \(\cat{C}_{\kappa}\hookrightarrow\cat{C}\)
  admits a left adjoint.
To see this,
  for each object~\(C\),
  we need to find an \(\Ind\)-object \((C_{i})_{i}\)
  of~\(\cat{C}_{\kappa}\)
  with colimit~\(C\)
  such that
  \begin{equation*}
    \Map_{\Ind(\cat{C})}((C_{i})_{i},(D_{j})_{j})
    \to
    \Map_{\cat{C}}\biggl(\injlim_{i}C_{i},\injlim_{j}D_{j}\biggr)
  \end{equation*}
  is an equivalence
  for any \(\Ind\)-object \((D_{j})_{j}\) of \(\cat{C}_{\kappa}\).
  By assumption,
  we can take a Schwartz \(\Ind\)-object~\(X\)
  whose colimit is~\(C\).
  Let \((C_{i})_{i}\)
  be the image of~\(X\)
  under the functor
  \(\Ind(\cat{C})\to\Ind(\cat{C}_{\kappa})\)
  that is obtained as the left Kan extension of
  \(\cat{C}\to\Ind(\cat{C}_{\kappa})\),
  which is the left adjoint of the functor
  \(\Ind(\cat{C}_{\kappa})\to\cat{C}\).
  Then this satisfies the desired property by \cref{xd0jpr}.

  We then prove the converse.
  Consider an object~\(D\) in a continuous presentable \(\infty\)-category~\(\cat{C}\).
  By \cref{x9m28r},
  we can take \(\kappa\) such that
  the Yoneda embedding \(j\colon\cat{C}\to\Ind(\cat{C}_{\kappa})\)
  admits a twice left adjoint~\(\check{\jmath}\).
  We claim that \(\check{\jmath}(D)\)
  is a Schwartz \(\Ind\)-object of~\(\cat{C}_{\kappa}\)\footnote{To be precise,
    here we mean that
    it is in \(\Ind_{\cat{I}}(\cat{C}_{\kappa})\),
    where \(\cat{I}\) is the class of compact morphisms of~\(\cat{C}\)
    that are in~\(\cat{C}_{\kappa}\).
  }.
First,
  we write
  \(\check{\jmath}(D)\)
  as a filtered colimit \(\injlim_{j}j(D_{j})\),
  where \(D_{j}\in\cat{C}_{\kappa}\).
  Since \(\check{\jmath}\) preserves colimits,
  the morphism
  \(\injlim_{j}\check{\jmath}(D_{j})\to\injlim_{j}j(D_{j})\)
  is an equivalence.
  Then we consider \(C\in\cat{C}_{\kappa}\)
  and a morphism
  \(j(C)\to\check{\jmath}(D)\).
  By the observation above,
  it factors through
  \(\check{\jmath}(D_{j})\to j(D_{j})\)
  for some~\(j\).
  Applying the same argument again,
  we can factor
  the map \(j(D_{j})\to\check{\jmath}(D)\)
  through
  \(\check{\jmath}(D_{k})\to j(D_{k})\)
  for some \(k\geq j\).
  Then the map \(C\to D_{k}\) is compact by \cref{x11iiz},
  and therefore \(\check{\jmath}(D)\) is Schwartz.
\end{proof}

The proof above shows the following:

\begin{corollary}\label{xrz98z}
  In a continuous presentable \(\infty\)-category,
  the idealoid of compact morphisms
  is subdivisible;
  i.e., compact morphisms are very compact.
\end{corollary}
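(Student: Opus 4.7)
The plan is to use \cref{con_0} to realize $C'$ as the colimit of a Schwartz $\Ind$-object and then use its compact transitions to promote a crude factorization of $f$ into one whose two pieces are both compact.

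Let $f \colon C \to C'$ be a compact morphism in a continuous presentable $\infty$-category $\cat{C}$. By \cref{con_0} and \cref{x5hdob}, I write $C' = \injlim_{p \in P} D(p)$ for some directed diagram $D \colon P \to \cat{C}$ with the Schwartz property: every $p$ admits some $p' \geq p$ with $D(p) \to D(p')$ compact. Write $\alpha_p \colon D(p) \to C'$ for the canonical cone maps, so that $\alpha_p = \alpha_{p'} \circ D(p \to p')$ whenever $p \leq p'$. Applying the compactness of $f$ to the filtered diagram $(D(p))_{p \in P}$ yields a factorization $f = \alpha_{p_0} \circ \beta$ for some $p_0 \in P$ and some $\beta \colon C \to D(p_0)$.

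Next I invoke the Schwartz property twice: choose $p_0' \geq p_0$ with $\sigma \colon D(p_0) \to D(p_0')$ compact, and then $p_0'' \geq p_0'$ with $\tau \colon D(p_0') \to D(p_0'')$ compact. By cone compatibility, $f = \alpha_{p_0'} \circ (\sigma \circ \beta)$ and $\alpha_{p_0'} = \alpha_{p_0''} \circ \tau$. Since compact morphisms form an idealoid (\cref{cpt_idl}), $\sigma \circ \beta$ is compact as it contains the compact morphism $\sigma$ as a factor, and $\alpha_{p_0'} = \alpha_{p_0''} \circ \tau$ is compact as it contains the compact morphism $\tau$ as a factor. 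Hence $f = \alpha_{p_0'} \circ (\sigma \circ \beta)$ is the desired decomposition into two compact morphisms.

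No genuine obstacle appears once \cref{con_0} is in hand: the Schwartz structure of the presentation of $C'$ furnishes compact transitions that, inserted on each side of the crude factorization through $D(p_0)$, promote both halves to compact morphisms via the idealoid property of \cref{cpt_idl}.
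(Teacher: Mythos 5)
Your proof is correct. The factorization you produce—$f=\alpha_{p_0'}\circ(\sigma\circ\beta)$ with $\sigma\circ\beta$ compact because it has the Schwartz transition $\sigma$ as a factor, and $\alpha_{p_0'}$ compact because it factors as $\alpha_{p_0''}\circ\tau$ through a second Schwartz transition—is exactly the kind of two-sided absorption the idealoid property of \cref{cpt_idl} permits, and your extraction of the crude factorization $f\simeq\alpha_{p_0}\circ\beta$ from the filler in \cref{xl2h0i} applied to the presenting diagram of $C'$ is sound. The underlying idea is the same as the paper's, but the packaging differs: the paper works inside $\Ind(\cat{C}_{\kappa})$ via the twice left adjoint $\check{\jmath}$, factoring $\check{\jmath}(C)\to\check{\jmath}(C')$ through $j(D)$ and then $\check{\jmath}(D')$ using \cref{xo7ubc}, and transfers compactness back and forth with \cref{x11iiz}; you instead take \cref{con_0} and \cref{x5hdob} as black boxes and run the argument entirely in $\cat{C}$ against a Schwartz presentation $C'=\injlim_{p}D(p)$. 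Your version is more elementary and avoids re-entering the $\check{\jmath}$ machinery, at the cost of invoking \cref{con_0} wholesale where the paper simply continues in the ambient setup of its own proof of that proposition (hence the paper's phrasing "the proof above shows the following"). Both yield a decomposition of $f$ into two compacts, which by \cref{x7sojd} and \cref{xgpzzq} is what "subdivisible", and hence "very compact", means.
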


\begin{proof}
  We use the notation in the proof of \cref{con_0} above.
  Let \(C\to C'\) be a compact morphism in~\(\cat{C}\).
  Then the morphism \(\check{\jmath}(C)\to\check{\jmath}(C')\)
  in \(\Ind(\cat{C}_{\kappa})\) is compact
  by \cref{x11iiz}.
  By \cref{xo7ubc},
  it factors through
  \(j(D)\to\check{\jmath}(C')\) for some \(D\in\cat{C}_{\kappa}\).
  As in the proof of \cref{con_0},
  this map factors through
  \(\check{\jmath}(D')\to j(D')\).
  Therefore, the morphism
  \(\check{\jmath}(C)\to\check{\jmath}(C')\)
  factors
  into
  \(\check{\jmath}(C)\to j(D)\to\check{\jmath}(D')\to j(D')\to\check{\jmath}(C')\).
  By \cref{x11iiz},
  the morphisms \(C\to D'\) and \(D'\to C'\) are compact.
\end{proof}

We then consider morphisms in \(\Cat{Pr}^{\con}\).
The definition
can be also justified using this adjunction:

\begin{lemma}\label{mor_adj}
  For a colimit-preserving functor
  between continuous presentable \(\infty\)-categories
  \(\cat{C}\to\cat{D}\),
  it is a morphism in \(\Cat{Pr}^{\con}\)
  if and only if
  \begin{equation}
    \label{e:bc7qh}
    \begin{tikzcd}
      \Ind(\cat{C}_{\kappa})\ar[r]\ar[d]&
      \cat{C}\ar[d]\\
      \Ind(\cat{D}_{\kappa})\ar[r]&
      \cat{D}
    \end{tikzcd}
  \end{equation}
  is (horizontally) left adjointable
  for a sufficiently large regular cardinal~\(\kappa\).
\end{lemma}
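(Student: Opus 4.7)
The plan is to pass between the abstract retract description of $\Cat{Pr}^{\con}$ and the concrete Beck--Chevalley condition on the square by choosing a well-behaved presentation. Fix $\kappa$ large enough that (i)~by \cref{x9m28r} the colimit functors $G_{\cat{C}}\colon\Ind(\cat{C}_{\kappa})\to\cat{C}$ and $G_{\cat{D}}\colon\Ind(\cat{D}_{\kappa})\to\cat{D}$ admit left adjoints $\check{\jmath}_{\cat{C}}$ and $\check{\jmath}_{\cat{D}}$; (ii)~$F$ preserves $\kappa$-compact objects, so that the left vertical arrow of \cref{e:bc7qh} is the $\Ind$-extension $\Ind(F_{\kappa})\colon\Ind(\cat{C}_{\kappa})\to\Ind(\cat{D}_{\kappa})$, which already lies in $\Cat{Pr}^{\cg}$; and (iii)~in the only-if direction, the retract witnessing $F\in\Cat{Pr}^{\con}$ exists at the $\kappa$-level. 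The computation in the proof of \cref{con_0} shows that the counits $G_{\cat{C}}\check{\jmath}_{\cat{C}}\simeq\id_{\cat{C}}$ and $G_{\cat{D}}\check{\jmath}_{\cat{D}}\simeq\id_{\cat{D}}$ are equivalences, so $\cat{C}$ and $\cat{D}$ are honest retracts of their $\Ind$-categories in $\Cat{Pr}$.

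For the if direction, left adjointability supplies an equivalence $\Ind(F_{\kappa})\circ\check{\jmath}_{\cat{C}}\simeq\check{\jmath}_{\cat{D}}\circ F$, which pastes with the commutativity of \cref{e:bc7qh} into the diagram
\begin{equation*}
  \begin{tikzcd}
    \cat{C}\ar[r,"\check{\jmath}_{\cat{C}}"]\ar[d,"F"']&
    \Ind(\cat{C}_{\kappa})\ar[r,"G_{\cat{C}}"]\ar[d,"\Ind(F_{\kappa})"]&
    \cat{C}\ar[d,"F"]\\
    \cat{D}\ar[r,"\check{\jmath}_{\cat{D}}"]&
    \Ind(\cat{D}_{\kappa})\ar[r,"G_{\cat{D}}"]&
    \cat{D}\rlap{,}
  \end{tikzcd}
\end{equation*}
whose horizontal composites are the identities. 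This exhibits $F$ as a retract in $\Fun(\Delta^{1},\Cat{Pr})$ of $\Ind(F_{\kappa})\in\Cat{Pr}^{\cg}$, hence $F\in\Cat{Pr}^{\con}$.

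For the converse, suppose $F$ is a retract of some $F'\colon\cat{C}'\to\cat{D}'$ in $\Cat{Pr}^{\cg}$. I first check left adjointability of the square for $F'$ directly: by the description of $\check{\jmath}_{\cat{C}'}$ in the first half of the proof of \cref{x9m28r}, $\check{\jmath}_{\cat{C}'}$ sends a compact object $A\in\cat{C}'$ to its Yoneda image in $\Ind(\cat{C}'_{\kappa})$, and because $F'$ preserves compact objects, $\Ind(F'_{\kappa})$ sends this Yoneda image to the Yoneda image of $F'(A)$, which coincides with $\check{\jmath}_{\cat{D}'}F'(A)$; both sides of the Beck--Chevalley map preserve colimits and agree on a generating class of compact objects, so they agree globally. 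Now enlarge $\kappa$ so that all four functors in the retracts $\cat{C}\to\cat{C}'\to\cat{C}$ and $\cat{D}\to\cat{D}'\to\cat{D}$ preserve $\kappa$-compact objects. These retracts restrict to retracts of essentially small $\infty$-categories at the $\kappa$-level, and applying $\Ind$ assembles them into a retract diagram in $\Fun(\Delta^{1}\times\Delta^{1},\Cat{Pr})$ exhibiting the Beck--Chevalley square for $F$ as a retract of that for $F'$. Since being an equivalence is closed under retracts in the arrow category of $\Cat{Pr}$, the square for $F$ inherits left adjointability from the one for $F'$.

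The main obstacle is the bookkeeping in the only-if direction: one must choose $\kappa$ uniformly large so that the retract data, the $\kappa$-accessibility of $F$ and $F'$, and the existence of the $\check{\jmath}$'s coexist, and then verify that the retract of $F$ by $F'$ in $\Fun(\Delta^{1},\Cat{Pr})$ lifts to a compatible retract of the associated Beck--Chevalley squares.
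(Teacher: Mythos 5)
Your overall strategy---settle the compactly generated case directly and transfer to the general case along the retract---is the same as the paper's, and both your ``if'' direction (pasting the left adjoints to exhibit \(F\) as a retract of \(\Ind(F_{\kappa})\)) and your treatment of the compactly generated case are fine. The gap is in the last step of the ``only if'' direction. You correctly observe that the commutative square \cref{e:bc7qh} for \(F\) is a retract of the one for \(F'\) in \(\Fun(\Delta^{1}\times\Delta^{1},\Cat{Pr})\), but you then conclude by invoking ``being an equivalence is closed under retracts in the arrow category''. That inference needs the Beck--Chevalley \emph{transformations} \(\check{\jmath}_{\cat{D}}F\to\Ind(F_{\kappa})\check{\jmath}_{\cat{C}}\) and \(\check{\jmath}_{\cat{D}'}F'\to\Ind(F'_{\kappa})\check{\jmath}_{\cat{C}'}\) to form a retract diagram, and this does not follow formally from the retract diagram of squares: the mate construction is only functorial along maps of squares that are themselves compatible with the left adjoints, i.e.\ you would need the connecting squares built from \(i,r,i',r'\) and the colimit functors \(G\) to be left adjointable---which is precisely the assertion that \(i,r,i',r'\) are morphisms of \(\Cat{Pr}^{\con}\), something not contained in the retract datum (they are arbitrary colimit-preserving functors). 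You flag this lifting problem yourself as ``the main obstacle'' but then do not resolve it.

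The repair is the explicit retract formula for the left adjoint coming from the proof of \cref{x9m28r} (cf.~\cite[Lemma~21.1.2.14]{LurieSAG}): \(\check{\jmath}_{\cat{C}}\simeq\Ind(r_{\kappa})\circ\check{\jmath}_{\cat{C}'}\circ i\) and \(\check{\jmath}_{\cat{D}}\simeq\Ind(r'_{\kappa})\circ\check{\jmath}_{\cat{D}'}\circ i'\). Chaining these with the adjointability already established for \(F'\) and with the commutativity data \(F'i\simeq i'F\) and \(F_{\kappa}r_{\kappa}\simeq r'_{\kappa}F'_{\kappa}\) yields
\begin{equation*}
  \Ind(F_{\kappa})\check{\jmath}_{\cat{C}}
  \simeq\Ind(r'_{\kappa})\Ind(F'_{\kappa})\check{\jmath}_{\cat{C}'}i
  \simeq\Ind(r'_{\kappa})\check{\jmath}_{\cat{D}'}F'i
  \simeq\Ind(r'_{\kappa})\check{\jmath}_{\cat{D}'}i'F
  \simeq\check{\jmath}_{\cat{D}}F,
\end{equation*}
so \(\Ind(F_{\kappa})\check{\jmath}_{\cat{C}}(C)\) lies in the essential image of \(\check{\jmath}_{\cat{D}}\) for every \(C\in\cat{C}\); since \(\check{\jmath}_{\cat{D}}\) is fully faithful with \(G_{\cat{D}}\check{\jmath}_{\cat{D}}\simeq\id\) (\cref{unique}), this is equivalent to the mate being an equivalence. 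With this replacement of your final step, the argument is complete and agrees with the paper's intended reduction.
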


\begin{proof}
  The “if” direction is clear.
  For the “only if” direction,
  we can argue as in the proof of \cref{x9m28r}.
  In the compactly generated case,
  the desired adjointability
  follows from the description of~\(F\) in \cref{x9m28r}.
  The general case follows from that case.
\end{proof}

\begin{lemma}\label{xhfb2c}
  A colimit-preserving functor between
  continuous presentable \(\infty\)-categories
  is a morphism in \(\Cat{Pr}^{\con}\)
  if and only if its right adjoint preserves filtered colimits.
\end{lemma}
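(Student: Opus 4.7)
The plan is to use \cref{mor_adj}, which identifies ``$F\in\Cat{Pr}^{\con}$'' with the left adjointability, for sufficiently large $\kappa$, of the canonical square
\[
\begin{tikzcd}[column sep=large]
\Ind(\cat{C}_\kappa) \ar[r, "G_C"] \ar[d, "\Ind F_\kappa"'] & \cat{C} \ar[d, "F"] \\
\Ind(\cat{D}_\kappa) \ar[r, "G_D"'] & \cat{D}\rlap.
\end{tikzcd}
\]
Taking right adjoints of the vertical maps, this is equivalent to the Beck--Chevalley natural transformation
\[
G\circ G_D\longrightarrow G_C\circ(\Ind F_\kappa)^R
\]
of functors $\Ind(\cat{D}_\kappa)\to\cat{C}$ being an equivalence, and I will prove both directions of the lemma by working with this reformulation.

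For the ``only if'' direction, assume the Beck--Chevalley equivalence and take a filtered diagram $(d_j)_j$ in~$\cat{D}$. Since $j_D$ is fully faithful we have $G_D\circ j_D\simeq\id_{\cat{D}}$, and $G_D$ commutes with filtered colimits as a left adjoint, so $\injlim_j d_j\simeq G_D(\injlim_j j_D(d_j))$. Applying $G$ and invoking the Beck--Chevalley equivalence yields $G(\injlim_j d_j)\simeq G_C\circ(\Ind F_\kappa)^R(\injlim_j j_D(d_j))$. Now $G_C$, as a left adjoint, and $(\Ind F_\kappa)^R$, because $\Ind F_\kappa\in\Cat{Pr}^{\cg}$, both preserve filtered colimits, so this further equals $\injlim_j G_C\circ(\Ind F_\kappa)^R(j_D(d_j))\simeq\injlim_j G\circ G_D(j_D(d_j))\simeq\injlim_j G(d_j)$, reapplying the Beck--Chevalley equivalence and $G_D\circ j_D\simeq\id$.

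For the ``if'' direction, assume $G$ preserves filtered colimits, and choose $\kappa$ large enough that $F$ sends $\cat{C}_\kappa$ into $\cat{D}_\kappa$ and $G$ sends $\cat{D}_\kappa$ into $\cat{C}_\kappa$; the second requirement is achievable because $G$ is accessible. Both $G\circ G_D$ and $G_C\circ(\Ind F_\kappa)^R$ preserve filtered colimits—$G_C$ and $G_D$ as left adjoints, $G$ by hypothesis, and $(\Ind F_\kappa)^R$ as before—so it suffices to verify the Beck--Chevalley transformation is an equivalence on the compact generators $d\in\cat{D}_\kappa\subset\Ind(\cat{D}_\kappa)$. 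For such~$d$, the universal property identifies $(\Ind F_\kappa)^R(d)$ with the presheaf $c\mapsto\Map_{\cat{D}}(F(c),d)\simeq\Map_{\cat{C}}(c,G(d))$ on~$\cat{C}_\kappa$; since $G(d)\in\cat{C}_\kappa$, this presheaf is representable by $G(d)$, so $(\Ind F_\kappa)^R(d)\simeq j_C(G(d))$ and hence $G_C\circ(\Ind F_\kappa)^R(d)\simeq G(d)\simeq G\circ G_D(d)$ as required.

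The step I expect to be the main obstacle is upgrading the pointwise identification in the ``if'' direction to a genuine natural equivalence of the Beck--Chevalley mate: one must verify that under the Yoneda identification on representables the mate transformation really is the canonical one rather than some twist. A secondary but routine point is arranging the cardinal~$\kappa$ so that both the $F$- and $G$-side compactness assumptions hold simultaneously.
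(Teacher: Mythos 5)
Your proof is correct and follows essentially the same route as the paper: the ``if'' direction reduces via \cref{mor_adj} to the Beck--Chevalley mate of the square being an equivalence, which is then checked on the generators \(\cat{D}_{\kappa}\) using that all four functors involved preserve filtered colimits (the pointwise-vs-natural issue you flag is glossed over at the same level in the paper, which simply calls the verification on \(\cat{D}_{\kappa}\) ``clear''). The only immaterial divergence is in the ``only if'' direction, where the paper appeals directly to the definition of a morphism in \(\Cat{Pr}^{\con}\) as a retract of a morphism in \(\Cat{Pr}^{\cg}\) rather than running the filtered-colimit computation through the mate.
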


\begin{proof}
  The “only if” direction directly follows from the definition.
  For the “if” direction,
  we use \cref{mor_adj}.
  Since the square is horizontally left adjointable
  if and only if it is vertically right adjointable,
  it suffices to observe that the natural transformation
  from
  \(\Ind(\cat{D}_{\kappa})\to\Ind(\cat{C}_{\kappa})\to\cat{C}\)
  to
  \(\Ind(\cat{D}_{\kappa})\to\cat{D}\to\cat{C}\)
  is an equivalence.
  Since all the functors preserve filtered colimits,
  it suffices to check this for an object of~\(\cat{D}_{\kappa}\),
  which is clear.
\end{proof}

\begin{proof}[Proof of \cref{con_1}]
  By \cref{xhfb2c},
  it suffices to show that a colimit-preserving functor
  \(F\colon\cat{C}\to\cat{D}\) between continuous presentable \(\infty\)-categories
  is a morphism in \(\Cat{Pr}^{\con}\)
  if and only if it preserves compact morphisms.

  We prove the “only if” direction.
  Let \(C\to C'\) be a compact morphism.
  We take \(\kappa\) sufficiently large so that
  \(\cat{C}\to\cat{D}\) is in \(\Cat{Pr}^{\kappa}\) and
  \(C\) and~\(C'\) are \(\kappa\)-compact.
  By \cref{mor_adj},
  the diagram
  \begin{equation*}
    \begin{tikzcd}
      \cat{C}\ar[r,"\check{\jmath}"]\ar[d]&
      \Ind(\cat{C}_{\kappa})\ar[d]\\
      \cat{D}\ar[r,"\check{\jmath}"]&
      \Ind(\cat{D}_{\kappa})
    \end{tikzcd}
  \end{equation*}
  commutes.
  By \cref{x11iiz},
  we see that \(\check{\jmath}(C)\to\check{\jmath}(C')\)
  is compact.
  Hence by \cref{xo7ubc},
  it factors through \(j(C'')\) for some \(C''\in\cat{C}_{\kappa}\).
  Therefore,
  the morphism
  \(\check{\jmath}(FC)\to\check{\jmath}(FC')\) also
  factors through \(j(FC'')\).
  By \cref{x11iiz}, we see that \(FC\to FC'\) is compact.

  For the “if” direction, assume that \(F\) preserves compact morphisms.
  We must show that \cref{e:bc7qh} is left adjointable.
  This follows from the fact—observed in the proof of \cref{con_0}—that
  the functor \(\check{\jmath}\) presents an object
  as a Schwartz \(\Ind\)-object, uniquely so by \cref{unique}.
\end{proof}

\subsection{Tensor products of continuous categories}\label{ss:smon}

\begin{proposition}\label{xwccpi}
  The symmetric monoidal structure on~\(\Cat{Pr}\)
  constructed in~\cite[Section~4.8.1]{LurieHA}
  restricts to~\(\Cat{Pr}^{\con}\).
\end{proposition}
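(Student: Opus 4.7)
The plan is to verify the three standard closure conditions: that the unit $\Cat{S}\in\Cat{Pr}$ is continuous, that $\Cat{Pr}^{\con}$ is closed under the bifunctor $\otimes\colon\Cat{Pr}\times\Cat{Pr}\to\Cat{Pr}$, and that $\Cat{Pr}^{\con}$ is closed under $\otimes$ at the level of morphisms. The unit case is trivial: $\Cat{S}$ is compactly generated, hence continuous by \cref{x9qljx}.

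The key reduction is to the compactly generated case, which is classical. First I would record the fact that $\Cat{Pr}^{\cg}$ is closed under the Lurie tensor product: if $\cat{C}\simeq\Ind(\cat{A})$ and $\cat{D}\simeq\Ind(\cat{B})$ for small $\infty$-categories $\cat{A},\cat{B}$ with finite colimits, then $\cat{C}\otimes\cat{D}\simeq\Ind(\cat{A}\otimes_{\text{rex}}\cat{B})$; see~\cite[Lemma~5.3.2.11 and Section~4.8.1]{LurieHA}. Closure at the level of morphisms follows because compact objects of $\cat{C}\otimes\cat{D}$ are generated by pure tensors $c\otimes d$ with $c\in\cat{C}_{\omega}$, $d\in\cat{D}_{\omega}$, and if $F,G$ preserve compact objects then $(F\otimes G)(c\otimes d)=Fc\otimes Gd$ is again compact.

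Then for the general case I would invoke that $-\otimes-\colon\Cat{Pr}\times\Cat{Pr}\to\Cat{Pr}$, being a bifunctor, preserves retracts in each variable: given $\cat{C},\cat{D}\in\Cat{Pr}^{\con}$, pick compactly generated $\cat{C}',\cat{D}'$ of which they are retracts; then $\cat{C}\otimes\cat{D}$ is a retract of $\cat{C}'\otimes\cat{D}'$, which is compactly generated by the previous step, so $\cat{C}\otimes\cat{D}\in\Cat{Pr}^{\con}$. The morphism version is identical, applied inside $\Fun(\Delta^{1},\Cat{Pr})$: a morphism in $\Cat{Pr}^{\con}$ is by definition a retract in $\Fun(\Delta^{1},\Cat{Pr})$ of one in $\Cat{Pr}^{\cg}$, and the tensor product bifunctor on morphisms preserves retracts in each variable.

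No step here is truly difficult; the main thing is to phrase the argument at the correct level of structure. If one wants the full statement that the symmetric monoidal $\infty$-operad $\Cat{Pr}^{\otimes}$ restricts to $(\Cat{Pr}^{\con})^{\otimes}$ rather than just the one-object-at-a-time closure, the mild obstacle is to package the retract data coherently. This is handled by the general principle that a symmetric monoidal structure on $\cat{E}$ restricts to a subcategory $\cat{E}_{0}\subset\cat{E}$ as soon as $\cat{E}_{0}$ contains the unit and is closed under all $n$-ary tensor operations; the $n$-ary case is no harder than the binary case above, since retracts propagate through iterated tensor products. This completes the verification.
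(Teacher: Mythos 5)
Your proposal is correct and follows essentially the same route as the paper: reduce to the fact that the Lurie tensor product restricts to \(\Cat{Pr}^{\cg}\), and then propagate this to \(\Cat{Pr}^{\con}\) using that the tensor bifunctor preserves retracts in each variable (both on objects and, in \(\Fun(\Delta^{1},\Cat{Pr})\), on morphisms), which is exactly what the definition of continuity as a retract of a compactly generated object or morphism requires. The paper's proof is just a terser statement of the same argument, so no further comparison is needed.
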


\begin{proof}
  It suffices to check
  that \(\cat{C}\otimes\cat{D}\to\cat{C}\otimes\cat{D}'\)
  is in \(\Cat{Pr}^{\con}\)
  for an object~\(\cat{C}\)
  and a morphism~\(\cat{D}\to\cat{D}'\) in \(\Cat{Pr}^{\con}\).
  This formally follows from \cref{xv8wwv}
  and the fact that
  the symmetric monoidal structure on~\(\Cat{Pr}\)
  restricts to~\(\Cat{Pr}^{\cg}\).
\end{proof}

We can describe
the binary tensor product
concretely as follows:

\begin{proposition}\label{xlcwpp}
  Consider \(\cat{C}\), \(\cat{D}\), and~\(\cat{E}\in\Cat{Pr}^{\con}\)
  and a functor \(F\colon\cat{C}\times\cat{D}\to\cat{E}\)
  that preserves colimits in each variable.
  Then the corresponding morphism
  \(\cat{C}\otimes\cat{D}\to\cat{E}\) in~\(\Cat{Pr}\)
  is in \(\Cat{Pr}^{\con}\) if and only if
  for any compact morphisms \(C\to C'\) in~\(\cat{C}\)
  and \(D\to D'\) in~\(\cat{D}\),
  the morphism \(F(C,D)\to F(C',D')\) is compact.
\end{proposition}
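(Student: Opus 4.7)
The plan is to reduce both directions to the compactly generated setting via the retraction from \cref{x9m28r}. Fix a regular cardinal $\kappa$ large enough that $\cat{C}$, $\cat{D}$, and $\cat{E}$ each realize in $\Cat{Pr}^{\kappa}$ as retracts of $\Ind(\cat{C}_\kappa)$, $\Ind(\cat{D}_\kappa)$, $\Ind(\cat{E}_\kappa)$ via fully faithful left adjoints $F_{\cat{C}},F_{\cat{D}},F_{\cat{E}}$. Tensoring gives a fully faithful embedding $F_{\otimes}:=F_{\cat{C}}\otimes F_{\cat{D}}\colon\cat{C}\otimes\cat{D}\hookrightarrow\Ind(\cat{C}_\kappa)\otimes\Ind(\cat{D}_\kappa)$, and by \cref{xwccpi,xhfb2c} its right adjoint $G_{\otimes}$ preserves filtered colimits, so \cref{x11iiz} detects compactness through $F_{\otimes}$.

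For the ``only if'' direction, suppose $\tilde{F}\in\Cat{Pr}^{\con}$. By \cref{con_1}, $\tilde{F}$ preserves compact morphisms, so it suffices to show that a pure tensor $C\otimes D\to C'\otimes D'$ of compact morphisms is itself compact in $\cat{C}\otimes\cat{D}$. Transporting via $F_{\otimes}$ to the compactly generated tensor and applying \cref{x11iiz}, the morphisms $F_{\cat{C}}(C)\to F_{\cat{C}}(C')$ and $F_{\cat{D}}(D)\to F_{\cat{D}}(D')$ are compact; by \cref{xo7ubc} each factors through a compact object. The tensor of these factorizations realizes the pure tensor as a morphism factoring through a pure tensor of compact objects, which is compact, so \cref{xo7ubc} concludes.

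For the ``if'' direction, which is the technical heart, assume the pure-tensor hypothesis on $F$. Given a compact morphism $X\to X'$ in $\cat{C}\otimes\cat{D}$, the image $F_{\otimes}(X)\to F_{\otimes}(X')$ is compact in the compactly generated tensor (\cref{x11iiz}) and hence factors through a compact object $W$ (\cref{xo7ubc}), which can be written as a retract of a finite colimit $\operatorname{colim}_i j(C_i)\otimes j(D_i)$ with $C_i\in\cat{C}_\kappa$ and $D_i\in\cat{D}_\kappa$. Combining this with a Schwartz presentation of $X'$ coming from continuity of $\cat{C}\otimes\cat{D}$ (\cref{xwccpi,con_0}) and the subdivisibility of compact morphisms (\cref{xrz98z}), I subdivide $X\to X'$ into a chain of compact morphisms between $\kappa$-compact stages assembled from pure tensors. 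Iterating \cref{xbagrg} for the finite-colimit closure and invoking the pure-tensor hypothesis shows that $\tilde{F}$ sends each stage-to-stage piece to a compact morphism in $\cat{E}$, and the idealoid property \cref{cpt_idl} closes the composition under compactness.

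The main obstacle is exactly this last step: a general compact morphism between finite colimits of pure tensors of compact objects is not itself of pure-tensor form, so propagating the hypothesis requires combining the finite-colimit stability \cref{xbagrg} with careful Schwartz-refinement bookkeeping. The reason to expect success is that \cref{xbagrg} supplies precisely the closure of compact morphisms under the $3\times 3$ pushout pattern that arises when decomposing morphisms between finite colimits, so iterated application aligned with a Schwartz subdivision should suffice to bridge the gap between the pure-tensor hypothesis and an arbitrary compact morphism in $\cat{C}\otimes\cat{D}$.
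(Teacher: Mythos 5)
Your ``only if'' direction is correct and is essentially the paper's argument: reduce to pure tensors, transport along the fully faithful embeddings of \cref{x9m28r}, and detect compactness via \cref{x11iiz} and \cref{xo7ubc}.

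The ``if'' direction has a genuine gap, which you yourself flag as ``the main obstacle.'' You attempt to show directly that \(\tilde F\) preserves \emph{arbitrary} compact morphisms of \(\cat{C}\otimes\cat{D}\), but the hypothesis only controls pure tensors of compact morphisms, and the bridge you propose is not established. Concretely: after factoring a compact morphism \(X\to X'\) through a compact object \(W\) of \(\Ind(\cat{C}_{\kappa})\otimes\Ind(\cat{D}_{\kappa})\), the maps \(X\to W\) and \(W\to X'\) are arbitrary, and \(W\) is a retract of a finite colimit of objects \(j(C_{i})\boxtimes j(D_{i})\) with \(C_{i}\), \(D_{i}\) merely \(\kappa\)-compact---not compact---so the identity morphisms on these pieces are not compact morphisms of \(\cat{C}\) or \(\cat{D}\) and the pure-tensor hypothesis says nothing about them. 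To make your strategy work you would need to show that every compact morphism of \(\cat{C}\otimes\cat{D}\) refines to a chain whose stage-to-stage maps are finite-colimit assemblies, in the precise \(3\times3\) pattern of \cref{xbagrg}, of pure tensors of compact morphisms of \(\cat{C}\) and \(\cat{D}\); this is essentially a generation statement for the idealoid of compact morphisms of \(\cat{C}\otimes\cat{D}\) that is at least as hard as the proposition itself, and you give no argument for it. The paper avoids this entirely: by \cref{xhfb2c} it suffices to show that the right adjoint \(G\colon\cat{E}\to\cat{C}\otimes\cat{D}\) preserves filtered colimits, and since the pure tensors \(C\boxtimes D\) generate \(\cat{C}\otimes\cat{D}\) under colimits, one only needs to test \(\Map(C\boxtimes D,\X)\). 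Writing \(C\) and \(D\) as colimits of very Schwartz \(\Ind\)-objects over a common index, the hypothesis makes \((F(C_{i},D_{i}))_{i}\) Schwartz, and two applications of \cref{xd0jpr} (fully faithfulness of the colimit functor on Schwartz \(\Ind\)-objects) give the required equivalence. I recommend you replace your ``if'' direction with this adjoint-criterion argument, which never requires handling a general compact morphism of the tensor product.
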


\begin{proof}
  To prove the “only if” direction,
  it suffices to prove that
  \(C\boxtimes D\to C'\boxtimes D'\) is compact
  for any compact morphisms \(C\to C'\) and \(D\to D'\).
  This follows by writing~\(\cat{C}\) and~\(\cat{D}\)
  as in \cref{x9m28r}
  and applying \cref{x11iiz}.

  We then prove the “if” direction
  by showing that
  the right adjoint
  \(G\colon\cat{E}\to\cat{C}\otimes\cat{D}\)
  preserves filtered colimits.
  Let \(E=\injlim_{k}E_{k}\)
  be a filtered colimit in~\(\cat{E}\).
  We wish to show that
  \begin{equation*}
    \Map_{\cat{C}\otimes\cat{D}}\biggl(C\boxtimes D,\injlim_{k}G(E_{k})\biggr)
    \to
    \Map_{\cat{C}\otimes\cat{D}}(C\boxtimes D,G(E))
    \simeq
    \Map_{\cat{E}}(F(C,D),E)
  \end{equation*}
  is an equivalence
  for any \(C\in\cat{C}\) and \(D\in\cat{D}\).
  We write~\(C\) and~\(D\)
  as the colimits
  of very Schwartz \(\Ind\)-objects.
  By considering the product of the index \(\infty\)-categories,
  we obtain presentations~\((C_{i})_{i}\) and~\((D_{i})_{i}\)
  indexed by the same \(\infty\)-category.
  By the argument above,
  \((F(C_{i},D_{i}))_{i}\)
  is again very Schwartz.
  We then use \cref{xd0jpr} twice to obtain
  \begin{align*}
    \Map_{\cat{C}\otimes\cat{D}}\biggl(C\boxtimes D,\injlim_{k}G(E_{k})\biggr)
&\simeq
    \Map_{\Ind(\cat{C}\otimes\cat{D})}((C_{i}\boxtimes D_{i})_{i},(G(E_{k}))_{k})\\
    &\simeq
    \Map_{\Ind(\cat{E})}((F(C_{i},D_{i}))_{i},(E_{k})_{k})
    \simeq
    \Map_{\cat{E}}(F(C,D),E),
  \end{align*}
  which is the desired equivalence.
\end{proof}

\begin{corollary}\label{x0daz5}
  An object of \(\CAlg(\Cat{Pr}^{\con})\)
  is a presentably symmetric monoidal \(\infty\)-category
  that is continuous
  such that \(\unit\) is compact
  and \(C\otimes D\to C'\otimes D'\) is compact
  whenever \(C\to C'\) and \(D\to D'\) are compact morphisms.
  A morphism between objects \(\cat{C}\) and \(\cat{D}\in\CAlg(\Cat{Pr}^{\con})\)
  is a colimit-preserving symmetric monoidal functor
  \(\cat{C}\to\cat{D}\)
  that preserves compact morphisms.
\end{corollary}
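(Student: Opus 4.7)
The plan is to unpack what it means to be a commutative algebra object in $\Cat{Pr}^{\con}$ with respect to the symmetric monoidal structure supplied by \cref{xwccpi}. Since that structure is the restriction of the tensor product on $\Cat{Pr}$, an object $\cat{C}$ of $\CAlg(\Cat{Pr}^{\con})$ is a presentably symmetric monoidal $\infty$-category whose underlying $\infty$-category is continuous and whose unit map $u\colon\Cat{S}\to\cat{C}$ and multiplication map $m\colon\cat{C}\otimes\cat{C}\to\cat{C}$ both lie in $\Cat{Pr}^{\con}$; similarly a morphism in $\CAlg(\Cat{Pr}^{\con})$ is a symmetric monoidal functor whose underlying functor lies in $\Cat{Pr}^{\con}$. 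I would translate each of these three conditions into the intrinsic conditions stated in the corollary.

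For the multiplication, I would apply \cref{xlcwpp} to the symmetric monoidal product $\cat{C}\times\cat{C}\to\cat{C}$, which preserves colimits in each variable. That proposition says the induced morphism $\cat{C}\otimes\cat{C}\to\cat{C}$ belongs to $\Cat{Pr}^{\con}$ precisely when $C\otimes D\to C'\otimes D'$ is compact whenever $C\to C'$ and $D\to D'$ are compact, which is exactly the tensor-product compactness condition in the statement.

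For the unit map $u$ (sending a space $X$ to $X\otimes\unit$), I would show that it lies in $\Cat{Pr}^{\con}$ if and only if $\unit$ is compact. In one direction, the identity of a point is a compact morphism in $\Cat{S}$, so its image $\id_{\unit}$ must be compact, whence $\unit$ is compact by \cref{xq60rx}. For the converse, using that $\Cat{S}$ is compactly generated, \cref{xo7ubc} lets one factor any compact morphism through a compact $Z\in\Cat{S}$; then $Z\otimes\unit$ is a retract of a finite colimit of copies of the compact object $\unit$ and is therefore compact, so the image in $\cat{C}$ factors through a compact object and is itself compact (the factorization-gives-compactness direction of \cref{xo7ubc} is immediate from \cref{xl2h0i} and holds in any presentable $\infty$-category).

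Finally, the morphism statement is immediate from \cref{con_1} (equivalently \cref{xhfb2c}): a colimit-preserving functor between continuous presentable $\infty$-categories lies in $\Cat{Pr}^{\con}$ if and only if it preserves compact morphisms. The main obstacle is essentially absent; the argument is a bookkeeping exercise combining \cref{xwccpi,xlcwpp,con_1}, with the only mildly delicate point being the equivalence between the unit map lying in $\Cat{Pr}^{\con}$ and the compactness of $\unit$.
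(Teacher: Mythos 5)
Your proposal is correct and follows essentially the route the paper intends: the corollary is stated without proof as a direct consequence of \cref{xwccpi,xlcwpp,con_1}, and your unpacking of the unit and multiplication maps is exactly the implicit argument. Your treatment of the unit map (compactness of \(\unit\) via \cref{xq60rx}, and the converse via factoring through a finite colimit of copies of \(\unit\)) correctly fills in the one detail the paper leaves unstated.
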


\subsection{Stable continuous categories}\label{ss:st}

We recall the following notion:

\begin{definition}\label{xxsc2l}
  We write \(\Cat{Dbl}\)
  for the subcategory
  of~\(\Cat{Pr}_{\st}\)
  spanned by dualizable objects (see \cref{xz1vid})
  and morphisms admitting right adjoints in~\(\Cat{Pr}_{\st}\).
\end{definition}

Lurie proved the following in~\cite[Section~D.7]{LurieSAG},
which connects this notion to our previous discussions:

\begin{theorem}[Lurie]\label{x2hild}
  The subcategory \(\Cat{Dbl}\subset\Cat{Pr}\)
  is the intersection of~\(\Cat{Pr}^{\con}\)
  and~\(\Cat{Pr}_{\st}\).
\end{theorem}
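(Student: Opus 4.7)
The plan is to verify the equality of subcategories at the level of objects and of morphisms separately. The object-level statement reduces to a deep result of Lurie, while the morphism-level statement is a short consequence of stability.

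For objects, I would invoke directly Lurie's theorem in~\cite[Section~D.7]{LurieSAG}, which characterizes the dualizable objects of $\Cat{Pr}_{\st}$ as exactly the stable presentable $\infty$-categories that arise as retracts in $\Cat{Pr}$ of compactly generated stable ones. By \cref{xv8wwv}, this last condition is the defining property of being continuous (using that a retract of a stable $\infty$-category in $\Cat{Pr}$ is again stable, so passing between $\Cat{Pr}$ and $\Cat{Pr}_{\st}$ retracts costs nothing). Hence the objects of $\Cat{Dbl}$ are exactly the stable presentable $\infty$-categories that lie in $\Cat{Pr}^{\con}$.

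For morphisms, let $\cat{C}, \cat{D}$ be stable continuous presentable $\infty$-categories and let $F\colon\cat{C}\to\cat{D}$ be a colimit-preserving functor with right adjoint $G$. Membership in $\Cat{Dbl}$ (as per \cref{xxsc2l}) means $G$ is itself a morphism of $\Cat{Pr}_{\st}$, i.e., $G$ preserves all colimits. Membership in $\Cat{Pr}^{\con}$ is equivalent, by \cref{xhfb2c}, to $G$ preserving filtered colimits. One implication is immediate. For the converse, assume $G$ preserves filtered colimits. Since $G$ is a right adjoint, it preserves all limits; in particular it preserves finite limits, which in the stable $\infty$-categories $\cat{C}$ and $\cat{D}$ coincide with finite colimits. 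Thus $G$ preserves both filtered and finite colimits, hence all colimits, so $F$ lies in $\Cat{Dbl}$.

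Combining the two pieces, $\Cat{Dbl}$ and $\Cat{Pr}^{\con}\cap\Cat{Pr}_{\st}$ have the same objects and the same morphisms, and the identifications are compatible with the inclusions into $\Cat{Pr}$. The only real obstacle is the object-level equivalence between dualizability and continuity, which is genuinely nontrivial but available off the shelf from~\cite{LurieSAG}; the morphism-level comparison is essentially the observation that in the stable setting ``preserves filtered colimits'' upgrades for free to ``preserves all colimits'' for a right adjoint.
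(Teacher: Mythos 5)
The paper gives no proof of this statement: it is quoted as a theorem of Lurie and the reader is referred to \cite[Section~D.7]{LurieSAG}, so there is nothing internal to compare against. Your proposal is a reasonable unpacking of what that citation is doing, and both halves are essentially sound: the object-level identification is exactly Lurie's characterization of dualizable stable presentable \(\infty\)-categories as retracts of compactly generated stable ones, and your morphism-level argument (right adjoint preserves filtered colimits, plus preserves finite limits as a right adjoint, hence is exact between stable \(\infty\)-categories and so preserves finite colimits, hence all colimits) is the standard and correct way to see that \cref{xhfb2c}'s condition upgrades to membership in \(\Cat{Dbl}\).

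One small imprecision worth fixing: your parenthetical justification for reconciling Lurie's ``retract of a compactly generated \emph{stable} \(\infty\)-category'' with \cref{xv8wwv}'s ``retract of a compactly generated \(\infty\)-category'' addresses the wrong direction. The nontrivial point is not that the retract of a stable ambient category is stable, but the converse: given that \(\cat{C}\) is stable and a retract in \(\Cat{Pr}\) of some compactly generated \(\cat{D}\) that need not be stable, you must produce a compactly generated \emph{stable} category retracting onto \(\cat{C}\). This is easy—tensor the retract diagram with \(\Cat{Sp}\), or invoke \cref{x9m28r} to exhibit \(\cat{C}\) as a retract of \(\Ind(\cat{C}_{\kappa})\), which is stable and compactly generated when \(\cat{C}\) is—but as written the parenthetical does not supply it.
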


\begin{remark}\label{xvg681}
  Note that compact morphisms are
  \emph{not} closed under extensions or (co)fibers.
  For example,
  for any object~\(C\),
  the morphism
  \({\id}\colon C\to C\)
  is an extension
  of \(C\to 0\)
  by \(0\to C\).
\end{remark}

Still,
in some cases,
compactness interacts favorably
with the formation
of (co)fibers:

\begin{proposition}\label{cpt_st}
  Let \(C\to C'\to D\) be morphisms
  in a stable continuous presentable \(\infty\)-category.
  Consider the morphism
  \begin{equation*}
    E=\cofib(C\to D)
    \to
    \cofib(C'\to D)=E'.
  \end{equation*}
  When \(D\) is compact,
  \(C\to C'\) is compact
  if and only if \(E\to E'\) is compact.
\end{proposition}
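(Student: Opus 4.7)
The plan is to derive both directions from \cref{xbagrg}, which controls compactness of the morphism between pushouts of a \(3\times 3\) diagram, combined with the rotation of cofiber sequences in a stable \(\infty\)-category.

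For the ``only if'' direction, assume \(C\to C'\) is compact and apply \cref{xbagrg} to the diagram
\begin{equation*}
  \begin{tikzcd}
    0\ar[r]&0\ar[r]&0\\
    C\ar[r]\ar[u]\ar[d]&C'\ar[r,"\id"]\ar[u]\ar[d]&C'\ar[u]\ar[d]\\
    D\ar[r,"\id"]&D\ar[r,"\id"]&D\rlap,
  \end{tikzcd}
\end{equation*}
whose left and right columns have pushouts \(E\) and \(E'\) respectively (using the factorization \(C\to C'\to D\) in the left column). The three compactness hypotheses of \cref{xbagrg} hold: the middle-row left horizontal is the given compact \(C\to C'\), while \(\id_{0}\) and \(\id_{D}\) are compact since \(0\) and~\(D\) are compact objects (see \cref{xq60rx}). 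Thus \cref{xbagrg} yields that \(E\to E'\) is compact.

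For the ``if'' direction, I would first rotate the cofiber sequences \(C\to D\to E\) and \(C'\to D\to E'\) to identify \(\Sigma C\simeq\cofib(D\to E)\) and \(\Sigma C'\simeq\cofib(D\to E')\), with the induced map being \(\Sigma\) of \(C\to C'\). Then apply \cref{xbagrg} to the analogous diagram
\begin{equation*}
  \begin{tikzcd}
    0\ar[r]&0\ar[r]&0\\
    D\ar[r,"\id"]\ar[u]\ar[d]&D\ar[r,"\id"]\ar[u]\ar[d]&D\ar[u]\ar[d]\\
    E\ar[r,"\id"]&E\ar[r]&E'\rlap,
  \end{tikzcd}
\end{equation*}
where the bottom-right square commutes because \(D\to E\to E'\) agrees with \(D\to E'\) by the universal property of the quotient \(E=\cofib(C\to D)\). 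The compact morphisms supplied to \cref{xbagrg} are now \(\id_{D}\) and the assumed compact \(E\to E'\), so \(\Sigma C\to\Sigma C'\) is compact; since \(\Sigma\) is an autoequivalence of~\(\cat{C}\), this forces \(C\to C'\) to be compact.

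The only obstacle is the bookkeeping of the two \(3\times 3\) diagrams—verifying commutativity (especially that \(D\to E\to E'\) equals \(D\to E'\)) and that the pushouts of the outer columns really are the claimed cofibers. No deeper input is needed beyond \cref{xbagrg} and the stable identification \(\cofib(D\to E)\simeq\Sigma C\).
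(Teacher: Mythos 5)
Your proof is correct, but it takes a genuinely different route from the paper. The paper's proof is a one-line reduction to \cref{x1rr9b}: in a stable \emph{continuous} presentable \(\infty\)-category, compactness of a morphism can be tested on mapping \emph{spectra}, where the two cofiber sequences become fiber sequences with equivalent middle terms (as \(D\) is compact), so fillers for one side are obtained from fillers for the other by taking (co)fibers, using that filtered colimits commute with finite limits in \(\Cat{Sp}\). You instead stay at the level of mapping spaces and feed two \(3\times 3\) diagrams into \cref{xbagrg}, presenting \(E\to E'\) as the induced map on pushouts of \(0\gets C\to D\) and \(0\gets C'\to D\) for the forward direction, and—after rotating the cofiber sequences to \(\Sigma C\simeq\cofib(D\to E)\)—running the same lemma backwards and desuspending for the converse. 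Both arguments are sound; the diagram chases you flag (commutativity of \(D\to E\to E'\) with \(D\to E'\), identification of the outer pushouts, and the sign-irrelevant identification of the induced map with \(\Sigma(C\to C')\)) are all standard consequences of functoriality of cofibers in a stable \(\infty\)-category, and compactness of morphisms is indeed reflected by the autoequivalence \(\Sigma\). What the paper's route buys is brevity, since \cref{x1rr9b} is needed elsewhere anyway; what yours buys is generality: \cref{xbagrg} and the rotation use only stability and presentability, so your argument establishes the statement for an arbitrary stable presentable \(\infty\)-category, with the continuity hypothesis playing no role.
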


\begin{lemma}\label{x1rr9b}
  Let \(\cat{C}\) be a stable presentable \(\infty\)-category.
  We write \(\map\) for the mapping spectrum.
  We call a morphism \(\map\)-compact if the condition in \cref{xl2h0i}
  is satisfied when we replace~\(\Map\) with~\(\map\).
  \begin{enumerate}
    \item\label{i:obv_st}
      If a morphism is \(\map\)-compact,
      then it is compact.
    \item\label{i:rev_st}
      If \(\cat{C}\) is continuous,
      the converse holds.
  \end{enumerate}
\end{lemma}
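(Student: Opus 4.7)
Part~\cref{i:obv_st} is immediate: applying $\Omega^\infty\colon\Cat{Sp}\to\Cat{S}$ (which preserves filtered colimits, since filtered colimits of spectra are computed levelwise) converts a filler of spectra into a filler of spaces.

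For part~\cref{i:rev_st}, the plan is to first establish a mapping-spectrum analogue of \cref{x11iiz}: if $F\colon\cat{C}\to\cat{D}$ is a fully faithful functor between stable presentable $\infty$-categories with right adjoint $G$ preserving filtered colimits, then $C\to C'$ is $\map$-compact if and only if $FC\to FC'$ is. The argument is a verbatim adaptation of \cref{x11iiz} with $\Map$ replaced by $\map$, using that a stable adjunction lifts to mapping spectra, so $\map_{\cat{D}}(FC,D)\simeq\map_{\cat{C}}(C,GD)$.

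Since $\cat{C}$ is continuous, by \cref{x9m28r} we may take $\kappa$ large enough so that the colimit functor $G\colon\Ind(\cat{C}_{\kappa})\to\cat{C}$ admits a fully faithful left adjoint $\check{\jmath}$ (as already used in the proofs of \cref{con_0} and \cref{xrz98z}); its right adjoint $G$ preserves all colimits, in particular filtered ones. Both \cref{x11iiz} and its mapping-spectrum analogue therefore apply to $\check{\jmath}$, so it suffices to show that every compact morphism is $\map$-compact in $\Ind(\cat{C}_{\kappa})$, which is stable and compactly generated. By \cref{xo7ubc}, any compact morphism $X\to Y$ there factors through a compact object $E$, and the required filler is then constructed as
\begin{equation*}
  \map\bigl(Y,\injlim_{j}D_{j}\bigr)\to\map\bigl(E,\injlim_{j}D_{j}\bigr)\simeq\injlim_{j}\map(E,D_{j})\to\injlim_{j}\map(X,D_{j}),
\end{equation*}
where the middle equivalence follows from $\map(E,-)$ preserving filtered colimits (a compact object $E$ in a stable $\infty$-category satisfies this, as one checks on each homotopy group using that shifts commute with filtered colimits).

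The only substantive step is the mapping-spectrum analogue of \cref{x11iiz}; this is routine once one verifies that the space-level adjunction extends to spectra, which uses the exactness of $F$ and $G$ between stable $\infty$-categories. Once that is in hand, the reduction to the compactly generated case—where the factorization through a compact object makes $\map$-compactness transparent—is a straightforward application of the cited results.
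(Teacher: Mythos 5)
Your proof is correct and follows essentially the same route as the paper: reduce to $\Ind(\cat{C}_{\kappa})$ via the fully faithful left adjoint $\check{\jmath}$ using a $\map$-analogue of \cref{x11iiz}, then use the factorization of a compact morphism through a compact object together with the fact that mapping spectra out of compact objects commute with filtered colimits. The only cosmetic difference is that you spell out the $\map$-versions of \cref{xq60rx,x11iiz} that the paper merely invokes.
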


\begin{proof}
  First, \cref{i:obv_st} follows from just taking~\(\Omega^{\infty}\).
  We prove~\cref{i:rev_st}.
  Suppose that \(\cat{C}\) is \(\kappa\)-compactly generated
  and \(C\to C'\) is a compact morphism
  between \(\kappa\)-compact objects.
  We consider the Yoneda embedding \(j\colon\cat{C}\to\Ind(\cat{C}_{\kappa})\),
  which has a twice left adjoint by \cref{x9m28r}.
  By \cref{x11iiz},
  the map \(\check{\jmath}(C)\to\check{\jmath}(C')\)
  factors through~\(j(C'')\) for some \(C''\in\cat{C}_{\kappa}\).
  Then our desired result follows
  by considering the \(\map\)-version of \cref{xq60rx,x11iiz}.
\end{proof}

\begin{proof}[Proof of \cref{cpt_st}]
  The statement immediately follows from \cref{x1rr9b}.
\end{proof}

\section{Stably compact spaces and sheaves thereon}\label{s:sc}

Specializing the discussion in \cref{s:con} to the posetal setting
yields the theory of continuous posets and stably compact locales.
We recall the definition of stably compact spaces
in \cref{ss:sc_def},
study basic operations
in \cref{ss:sc_op},
and then proceed to the study of sheaves
in \cref{ss:sheaf}.
We prove proper base change
and Verdier duality for these spaces
in \cref{ss:ver}.

\begin{remark}\label{xll2rj}
  Our treatment here is ahistorical;
  the notion of continuous poset—which itself was derived from
  the notion of continuous lattice
  of Scott~\cite{Scott70}—was introduced earlier,
  and then
  the notion of continuous categories
  was introduced by
  Johnstone–Joyal~\cite{JohnstoneJoyal82}
  as its categorification.
\end{remark}

\subsection{Stably compact spaces}\label{ss:sc_def}

Recall that a \emph{frame} is an algebraic structure
that axiomatizes the lattice of open sets of a topological space.
It is a complete poset
satisfying the distributivity law
\(
\bigvee_{i\in I}(x\wedge y_i)
=
x\wedge\bigvee_{i\in I}y_i
\) for any \(x\) and \((y_i)_{i\in I}\).
We can regard the category \(\Cat{Frm}\)
as a full subcategory
of \(\CAlg(\Cat{Pr})\):
Namely, a presentably symmetric monoidal \(\infty\)-category
is a frame
if and only if
the \(\infty\)-category
is (essentially) a poset\footnote{Note that accessible \(0\)-categories are small.
} and
the symmetric monoidal structure is cartesian.
We make the following definition:

\begin{definition}\label{xxw5fc}
  We define the category of \emph{stably continuous frames}
  to be the pullback
  \begin{equation*}
    \begin{tikzcd}
      \Cat{SCFrm}\ar[r,hook]\ar[d]&
      \CAlg(\Cat{Pr}^{\con})\ar[d]\\
      \Cat{Frm}\ar[r,hook]&
      \CAlg(\Cat{Pr})\rlap.
    \end{tikzcd}
  \end{equation*}
  We write \(\Cat{SC}\)
  for the opposite of \(\Cat{SCFrm}\)
  and call it the category of \emph{stably compact locales}.
\end{definition}

\begin{example}\label{xmavip}
  When we replace \(\Cat{Pr}^{\con}\)
  with \(\Cat{Pr}^{\cg}\)
  in \cref{xxw5fc},
  we get the category of coherent frames.
  Its opposite category
  is the category of spectral spaces (and quasicompact maps) \(\Cat{Spec}\).
  Therefore,
  \(\Cat{Spec}\)
  is a full subcategory of \(\Cat{SC}\).
\end{example}

We have the following concrete characterization
(see \cref{way_below} for the symbol \(\ll\)):

\begin{proposition}\label{xbmydj}
  A frame~\(R\) is stably continuous
  if and only if the following conditions are satisfied:
  \begin{conenum}
    \item\label{i:con_con}
      For any~\(x\), we have
      \(x=\bigvee_{y\ll x}y\).
    \item\label{i:con_alg}
      For any~\(x\) and~\(y_{1}\), …,~\(y_{n}\),
      we have
      \(x\ll y_{1}\wedge\dotsb\wedge y_{n}\)
      whenever \(x\ll y_{i}\) for all \(i\).
  \end{conenum}
  For a frame morphism \(f\colon R\to S\)
  between stably continuous frames,
  it is a morphism
  of stably continuous frame
  if and only if
  \(x\ll y\) implies \(f(x)\ll f(y)\).
\end{proposition}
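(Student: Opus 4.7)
The plan is to apply \cref{x0daz5} and translate its conditions into the poset setting. A frame, viewed as a presentably symmetric monoidal $\infty$-category with cartesian structure, has tensor equal to meet and unit equal to the top element~$1$; moreover, compact morphisms in the sense of \cref{xl2h0i} specialize to the way-below relation $\ll$ by \cref{way_below}. Thus, by \cref{x0daz5}, $R\in\CAlg(\Cat{Pr}^{\con})$ holds if and only if (a) $R$ is continuous as a presentable $\infty$-category, (b) $1\ll 1$, and (c) $x\ll x'$ and $y\ll y'$ imply $x\wedge y\ll x'\wedge y'$.

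For the equivalence of (b)+(c) with~(ii): the $n=0$ case of (ii) is ``$x\ll 1$ for all $x$'', which specializes at $x=1$ to~(b); conversely, $1\ll 1$ together with $x\leq 1$ and the compatibility of $\ll$ with $\leq$ on the left yields $x\ll 1$ for every $x$. For $n\geq 2$, the implication (ii)$\Rightarrow$(c) uses $x\wedge y\leq x\ll x'$ and $x\wedge y\leq y\ll y'$ followed by (ii) with $n=2$; conversely, (b)+(c) gives (ii) by induction on $n$.

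For (a)$\Leftrightarrow$(i), I would invoke \cref{con_0}. In the poset case, a Schwartz $\Ind$-object (combining \cref{xxc7v8} and \cref{x5hdob}) is a directed subset $D$ with interpolation from above: for each $d\in D$ there exists $d'\in D$ with $d\leq d'$ and $d\ll d'$. If $R$ is continuous, then each $x$ is the join of such a~$D$; each $d\in D$ satisfies $d\ll d'\leq x$, hence $d\ll x$, which forces~(i). Conversely, given~(i), the set $\{y:y\ll x\}$ is directed (since $y_{1},y_{2}\ll x$ implies $y_{1}\vee y_{2}\ll x$ by the standard argument about directed covers), and the classical interpolation property is obtained by iterating~(i) twice to write $x$ as the directed join of $\{v:\exists w,\,v\ll w\ll x\}$; any $a\ll x$ then factors through some such $v$, yielding $a\leq v\ll w\ll x$ and hence $a\ll w\ll x$.

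The final assertion about morphisms follows from \cref{x0daz5}: a morphism in $\CAlg(\Cat{Pr}^{\con})$ is a colimit-preserving symmetric monoidal functor preserving compact morphisms. A frame morphism automatically preserves joins and finite meets, hence is symmetric monoidal for the cartesian structures; the remaining condition ``preserves compact morphisms'' reads exactly $x\ll y\Rightarrow f(x)\ll f(y)$. The most delicate point in the whole argument is the interpolation lemma, but this unwinds cleanly from a double application of~(i); everything else is essentially a translation exercise.
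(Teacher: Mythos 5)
Your proposal is correct and follows essentially the same route as the paper: \cref{con_0} specializes in the posetal case to give condition~(i), and \cref{x0daz5} accounts for condition~(ii) (unit compactness being the $n=0$ case, binary meets the $n=2$ case) as well as the statement about morphisms. The extra details you supply—directedness of $\{y:y\ll x\}$, the interpolation argument by iterating~(i), and the translation of ``preserves compact morphisms'' into preservation of $\ll$—are exactly the unwinding the paper leaves implicit.
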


\begin{proof}
  We prove the first part.
  By \cref{con_0},
  for a complete poset,
  it is continuous if and only if
  \(x=\bigvee_{y\ll x}y\) holds for any~\(x\),
  which explains \cref{i:con_con}.
  Then \cref{x0daz5}
  explains \cref{i:con_alg}.

  The second part follows from \cref{x0daz5}.
\end{proof}

Note that
the theory becomes much simpler,
since
\(\otimes\) coincides with finite products
in this situation:

\begin{proposition}\label{xmx7uh}
  A frame~\(R\) is stably continuous
  if and only if it is a retract (as a frame) of a spectral frame.
\end{proposition}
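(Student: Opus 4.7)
The plan is to translate the statement through the full embedding $\Cat{Frm}\hookrightarrow\CAlg(\Cat{Pr})$, reducing it to a posetal analog of the characterization of $\Cat{Pr}^{\con}$ as retracts of $\Cat{Pr}^{\cg}$ built into \cref{xv8wwv}; recall from \cref{xmavip} that spectral frames are exactly $\Cat{Frm}\cap\CAlg(\Cat{Pr}^{\cg})$. The ``if'' direction is then immediate: a $\Cat{Frm}$-retract of a spectral frame~$S$ gives, via the full embedding, a retract in $\CAlg(\Cat{Pr})$, and since $\Cat{Pr}^{\con}\subseteq\Cat{Pr}$ is stable under retracts by \cref{xv8wwv} and this closure passes to $\CAlg(\Cat{Pr}^{\con})\subseteq\CAlg(\Cat{Pr})$, the retract lies in $\CAlg(\Cat{Pr}^{\con})\cap\Cat{Frm}=\Cat{SCFrm}$.

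For the converse, given a stably continuous frame~$R$, I plan to realize it as a retract of $\Ind(R_{\kappa})$ for a suitably chosen regular cardinal~$\kappa$. I would choose $\kappa$ large enough that $R$ is $\kappa$-compactly generated and $R_{\kappa}\subseteq R$ is closed under finite meets; the latter is possible because by \cref{x0daz5} the unit $\top$ is compact and tensor products of compact objects are compact, and a standard accessibility argument then forces $\otimes$ to preserve $\kappa$-compactness for all sufficiently large~$\kappa$. Combined with the automatic closure of $R_{\kappa}$ under finite joins (as a $\kappa$-compact full subposet), this makes $R_{\kappa}$ a sub-distributive-lattice of~$R$, so $\Ind(R_{\kappa})$ is a coherent frame, i.e.\ a spectral frame, by \cref{xmavip}.

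It then remains to lift the adjunction from \cref{x9m28r} to~$\Cat{Frm}$. The functor $G\colon\Ind(R_{\kappa})\to R$ is the left Kan extension of the inclusion $R_{\kappa}\hookrightarrow R$; it preserves all joins by construction and preserves finite meets because directed joins distribute over meets in the frame~$R$, so $G$ is a frame morphism. For the left adjoint $F=\check{\jmath}$, the proof of \cref{con_0} identifies $F(C)$ with the canonical Schwartz $\Ind$-presentation of~$C$, whose colimit is~$C$; in particular $G\circ F\simeq\id_{R}$. The argument in the proof of \cref{xlcwpp}, that joint presentations $(C_{i}\otimes C_{i}')_{i}$ of $C\otimes C'$ are again Schwartz, additionally shows that $F$ is symmetric monoidal, hence a frame morphism. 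This gives the desired retract datum in~$\Cat{Frm}$.

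The main technical point is in this last step: checking that the canonical retract from \cref{x9m28r}, which a priori lives only in~$\Cat{Pr}$, respects the frame (equivalently, cartesian symmetric monoidal) structure. The key ingredient will be the Schwartz-refinement construction from the proof of \cref{xlcwpp}, which supplies the symmetric monoidality of~$\check{\jmath}$.
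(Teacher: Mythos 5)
Your proposal is correct and follows essentially the same route as the paper: the ``retract implies stably continuous'' direction via retract-closure of \(\Cat{Pr}^{\con}\), and the converse by exhibiting \(R\) as a retract of its ideal frame with section \(\check{\jmath}\colon y\mapsto\{x\mid x\ll y\}\), whose preservation of finite meets is exactly the content of \cref{i:con_alg} of \cref{xbmydj} (equivalently, the monoidality of compact morphisms from \cref{x0daz5} that you invoke via \cref{xlcwpp}). The only difference is that your cardinal bookkeeping is unnecessary: a frame is a small poset, so for \(\kappa>\lvert R\rvert\) one has \(R_{\kappa}=R\) and \(\Ind(R)\) is simply the coherent frame of ideals, which is how the paper proceeds.
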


\begin{proof}
  The “only if” direction is immediate.
  We prove the “if” direction.
  We consider the canonical map \(\Ind(R)\to R\) of frames.
  It admits a left adjoint~\(\check{\jmath}\) by \cref{x9m28r}.
  By identifying \(\Ind(R)\) with the frame of ideals\footnote{Recall that for a poset~\(P\),
    its \(\Ind\)-objects correspond to its \emph{ideals};
    when \(P\) has finite joins,
    an ideal is a downward subset closed under finite joins.
  },
  we see that \(\check{\jmath}(y)\) is \(\{x\in R\mid x\ll y\}\).
  We wish to prove that \(\check{\jmath}\) is a morphism of frames,
  i.e., that \(\check{\jmath}(y_{1}\wedge\dotsb\wedge y_{n})
  \leq\check{\jmath}(y_{1})\cap\dotsb\cap\check{\jmath}(y_{n})\)
  is equality.
  This is exactly~\cref{i:con_alg} of~\cref{xbmydj}.
\end{proof}

The same argument shows the following:

\begin{proposition}\label{xdzqu2}
  Let \(R\to S\) be a frame morphism
  between stably continuous frames.
  The following are equivalent:
  \begin{conenum}
    \item\label{i:okay}
      It is in \(\Cat{SCFrm}\).
    \item\label{i:can2}
      The square
      \begin{equation*}
        \begin{tikzcd}
          \Ind(R)\ar[r]\ar[d]&
          R\ar[d]\\
          \Ind(S)\ar[r]&
          S
        \end{tikzcd}
      \end{equation*}
      is left adjointable.
    \item\label{i:ret2}
      It is a retract (as a frame morphism)
      of a morphism in \(\Cat{Spec}\).
  \end{conenum}
\end{proposition}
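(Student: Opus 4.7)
The plan is to mirror the argument of \cref{xmx7uh} at the level of morphisms, with \cref{mor_adj} as the main technical input. The equivalence \cref{i:okay} $\Leftrightarrow$ \cref{i:can2} is essentially a direct specialization of \cref{mor_adj}: since frame morphisms automatically preserve the cartesian symmetric monoidal structure, lying in $\Cat{SCFrm}$ is equivalent to the underlying colimit-preserving functor lying in $\Cat{Pr}^{\con}$, and in the posetal setting the canonical square of \cref{mor_adj} reduces to the square in \cref{i:can2}, whose horizontal left adjointability is exactly the required condition.

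For \cref{i:can2} $\Rightarrow$ \cref{i:ret2}, I would invoke the left adjoint $\check{\jmath}_{R}\colon R\to\Ind(R)$ of the colimit map, which by the proof of \cref{xmx7uh} sends $y\mapsto\{x\in R\mid x\ll y\}$ and is a frame morphism thanks to \cref{i:con_alg} of \cref{xbmydj}; analogously for $\check{\jmath}_{S}$. The left adjointability in \cref{i:can2} unpacks to the identity $\Ind(f)\circ\check{\jmath}_{R}=\check{\jmath}_{S}\circ f$, while condition \cref{i:con_con} of \cref{xbmydj} ensures that $\operatorname{colim}\circ\check{\jmath}=\id$, yielding the diagram
\begin{equation*}
  \begin{tikzcd}
    R\ar[r,"\check{\jmath}_{R}"]\ar[d,"f"']&
    \Ind(R)\ar[r]\ar[d,"\Ind(f)"]&
    R\ar[d,"f"]\\
    S\ar[r,"\check{\jmath}_{S}"]&
    \Ind(S)\ar[r]&
    S
  \end{tikzcd}
\end{equation*}
of frame morphisms whose horizontal composites are identities. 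Since $\Ind(R)$ and $\Ind(S)$ are coherent frames and $\Ind(f)$ tautologically preserves compact elements, this exhibits $f$ as a retract of a morphism in $\Cat{Spec}^{\op}$.

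For \cref{i:ret2} $\Rightarrow$ \cref{i:okay}, I would use that both the objects and morphisms of $\Cat{Pr}^{\con}$ are defined as retracts in \cref{xv8wwv}, so $\CAlg(\Cat{Pr}^{\con})$ and therefore $\Cat{SCFrm}$ (by its pullback definition) are closed under retracts; since $\Cat{Spec}^{\op}\subset\Cat{SCFrm}$ by \cref{xmavip}, any retract in $\Cat{Frm}$ of a morphism in $\Cat{Spec}^{\op}$ lies in $\Cat{SCFrm}$. The main obstacle throughout is bookkeeping — ensuring that each map in the retract diagram is genuinely a frame morphism, not merely a colimit-preserving poset map — but this reduces either to the computation from \cref{xmx7uh} for the $\check{\jmath}$'s or to standard facts about ideal frames for $\Ind(f)$.
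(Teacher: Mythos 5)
Your proposal is correct and follows essentially the same route as the paper, whose proof is simply the citation of \cref{mor_adj,x0daz5} together with the remark that "the same argument" as in \cref{xmx7uh} applies; your retract diagram with $\check{\jmath}_{R}$, $\Ind(f)$, $\check{\jmath}_{S}$ is exactly the morphism-level version of that argument. The only cosmetic point is the variance bookkeeping between $\Cat{Spec}$ and the category of coherent frames, which the paper's statement of \cref{i:ret2} already elides in the same way.
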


\begin{proof}
  This follows from \cref{mor_adj,x0daz5}.
\end{proof}

Since \(\Cat{Spec}\)
can be realized as a (nonfull) subcategory of topological spaces,
we deduce the following spatiality result:

\begin{corollary}\label{aa91d6}
  We can realize \(\Cat{SC}\)
  as a (nonfull) subcategory of the category of topological spaces.
  With this realization,
  we call it the category of \emph{stably compact spaces}.
\end{corollary}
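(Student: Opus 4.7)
The plan is to deduce the corollary from the already-noted realization of $\Cat{Spec}$ as a subcategory of $\Cat{Top}$ (\cref{xmavip}) together with the retract description of stably continuous frames in \cref{xmx7uh}. The realization functor will be the space-of-points functor $\operatorname{pt}\colon\Cat{SCFrm}^{\op}\to\Cat{Top}$; what needs to be checked is that it is essentially injective on objects and faithful on morphisms, while nonfullness is a reflection of \cref{xbmydj}.

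First, I would verify that every stably continuous frame $R$ is spatial, i.e.\ that the canonical map $R\to\Omega(\operatorname{pt}(R))$ is an isomorphism. Since this map is always surjective by definition of the topology on $\operatorname{pt}(R)$, spatiality amounts to $R$ having enough points to separate its elements. By \cref{xmx7uh} we have $i\colon R\to S$ and $r\colon S\to R$ in $\Cat{Frm}$ with $r\circ i=\id_{R}$ and $S$ spectral, and spectral frames are spatial. Given $x\not\leq y$ in $R$, we get $i(x)\not\leq i(y)$ in $S$ (else $x=r(i(x))\leq r(i(y))=y$), and spatiality of $S$ yields a frame homomorphism $p\colon S\to\{0,1\}$ with $p(i(x))=1$ and $p(i(y))=0$; the composition $p\circ i$ is then a point of $R$ separating $x$ from $y$.

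With spatiality in hand, essential injectivity on objects follows because $R$ is reconstructed as $\Omega(\operatorname{pt}(R))$, and faithfulness follows because two frame morphisms $f,g\colon R\to S$ in $\Cat{SCFrm}$ inducing the same continuous map on points satisfy $q(f(x))=q(g(x))$ for every point $q$ of $S$ and every $x\in R$, which forces $f=g$ by spatiality of $S$. The subcategory is nonfull because, by \cref{xbmydj}, frame morphisms between stably continuous frames lie in $\Cat{SCFrm}$ precisely when they preserve $\ll$, a condition that translates on the topological side to the perfectness condition recorded in \cref{t:4zbff} and is strictly stronger than mere continuity.

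The main obstacle is the spatiality of spectral frames, which I am invoking as a classical input (implicit in \cref{xmavip}); once granted, transferring spatiality through the retract and upgrading to morphisms are both formal. The only subtlety worth flagging is that the retract really lives in $\Cat{Frm}$ and not just between the underlying posets of points, so that the spatiality of the ambient spectral frame can genuinely be applied elementwise through $i$ and $r$.
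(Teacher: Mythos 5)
Your proposal is correct and follows essentially the same route as the paper: the paper deduces the corollary from the retract descriptions in \cref{xmx7uh} and \cref{xdzqu2} together with the spatiality of spectral frames, which is exactly what you do. You merely make explicit the (standard) step that spatiality transfers along a frame retract via the separating-points argument, which the paper leaves implicit.
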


\begin{remark}\label{concrete}
  Regarding \cref{aa91d6},
  we have the following more precise description:
  \begin{itemize}
    \item
      A topological space is stably compact
      if
      it is sober, compact, locally compact,
      and the intersection of two compact saturated\footnote{A subset of a topological space is called \emph{saturated}
        if it is closed under generalizations;
        e.g., open subsets are saturated.
      } subsets is compact.
    \item
      A continuous map between them is a morphism,
      which we call a \emph{perfect} map,
      if the inverse image of any compact saturated set
      is compact.
\end{itemize}
Note that some literature calls the morphisms “proper” instead,
  but this conflicts the notion of properness
  in locale theory (cf.~\cref{ss:ver}).
  For example,
  the inclusion of the open point
  to the Sierpiński space is perfect,
  but not closed.
\end{remark}

\begin{example}\label{xccv9o}
  Consider the \emph{directed interval}~\(\II\),
  whose underlying set is \([0,1]\)
  with the topology whose nontrivial open sets
  are of the form \((r,1]\) for \(r \in [0,1)\).
  This is a stably compact space.
\end{example}

\subsection{Operations on stably compact spaces}\label{ss:sc_op}

We review basic operations on stably compact spaces.
We begin by the following,
which already appeared in the proof of \cref{xmx7uh}:

\begin{definition}\label{xs5a86}
  Let \(X\) be a stably compact locale
  whose frame is \(R\).
  We write \(\gamma X\)
  for the locale corresponding to
  the frame \(\Ind(R)\)
  and call it
  the \emph{Flachsmeyer construction}.
  As \cref{xmx7uh} shows,
  there are canonical locale morphisms in both directions,
  whereas the morphism
  \(\gamma X\to X\) is a morphism of stably compact locales.
\end{definition}

\begin{remark}\label{x6spu8}
  To my knowledge,
  \cref{xs5a86} was first considered by Flachsmeyer~\cite{Flachsmeyer61}
  for topological spaces,
  hence the name we use here.
\end{remark}

\begin{remark}\label{x5dfgr}
  In \cref{xs5a86},
  this situation
  of~\(p_{*}\) having a fully faithful right adjoint
  is called a \emph{local} geometric morphism
  in topos theory.
\end{remark}

With this,
we rewrite \cref{xdzqu2} as follows:

\begin{corollary}\label{xq6bfr}
  Let \(X\) and \(Y\) be stably compact locales
  and \(f\colon Y\to X\) a map of locales.
  Then \(f\) is a morphism of stably compact locales
  if and only if
  the canonical \(2\)-cell in
  \begin{equation*}
    \begin{tikzcd}
      \gamma Y\ar[r]\ar[d]&
      Y\ar[d]\\
      \gamma X\ar[r]&
      X
    \end{tikzcd}
  \end{equation*}
  is invertible.
\end{corollary}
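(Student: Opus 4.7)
The plan is to unwind the Flachsmeyer construction on the level of frames and then invoke \cref{xdzqu2}. Write \(R=\mathcal{O}(X)\) and \(S=\mathcal{O}(Y)\), and \(f^{-1}\colon R\to S\) for the frame morphism dual to~\(f\). By \cref{xs5a86}, \(\gamma X\) has frame \(\Ind(R)\), and the locale morphism \(\gamma X\to X\) is dual to \(\check{\jmath}_X\colon R\to\Ind(R)\), the left adjoint of the colimit map \(p_X\colon\Ind(R)\to R\)—this is a frame morphism by the proof of \cref{xmx7uh}. Similarly for~\(Y\), and the morphism \(\gamma Y\to\gamma X\) is dual to \(\Ind(f^{-1})\). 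Hence the locale square in the statement dualizes to
\begin{equation*}
  \begin{tikzcd}
    R\ar[r,"\check{\jmath}_X"]\ar[d,"f^{-1}"']&
    \Ind(R)\ar[d,"\Ind(f^{-1})"]\\
    S\ar[r,"\check{\jmath}_Y"']&
    \Ind(S)\rlap.
  \end{tikzcd}
\end{equation*}

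Next, I identify the canonical 2-cell. The square of frames with the colimits as horizontals,
\begin{equation*}
  \begin{tikzcd}
    \Ind(R)\ar[r,"p_X"]\ar[d,"\Ind(f^{-1})"']&
    R\ar[d,"f^{-1}"]\\
    \Ind(S)\ar[r,"p_Y"']&
    S\rlap,
  \end{tikzcd}
\end{equation*}
commutes on the nose (both composites are given by applying~\(f^{-1}\) to each element of the ideal and then taking the join). Taking the mate under the adjunctions \(\check{\jmath}\dashv p\) yields a canonical comparison \(\check{\jmath}_Y\circ f^{-1}\to\Ind(f^{-1})\circ\check{\jmath}_X\) in the frame square above; since \(\Spec\) is contravariant, this is exactly the canonical 2-cell in the locale square displayed in the statement.

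Finally, invertibility of this mate is by definition left adjointability of the \(p\)-square, which is condition~\cref{i:can2} of \cref{xdzqu2}. That proposition then identifies this with~\(f\) being a morphism of stably compact locales, proving the corollary. The only real point of care is tracking the direction of 2-cells across the frame/locale duality; in the posetal setting this reduces to checking that a canonical inequality is an equality, and agrees with the concrete characterization via preservation of~\(\ll\) given in \cref{xbmydj}.
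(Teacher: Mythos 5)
Your proposal is correct and is essentially the paper's own argument: the paper presents \cref{xq6bfr} as a direct rewriting of condition~\cref{i:can2} of \cref{xdzqu2}, and your identification of the canonical $2$-cell in the locale square with the mate of the (strictly commuting) square of colimit maps $\Ind(R)\to R$, $\Ind(S)\to S$ is exactly the translation being left implicit there. The directional check at the end (that the mate's canonical inequality matches preservation of $\ll$) is a sensible sanity check but not a new ingredient.
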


\begin{definition}\label{xv06ia}
  For a stably compact space~\(X\),
  we define~\(X^{\#}\)
  to be the topological space
  whose open sets are generated
  by the open sets of~\(X\)
  and the complements of compact saturated sets of~\(X\).
  We call this topology the \emph{patch topology}.
\end{definition}

\begin{example}\label{xxqapw}
  The patch topology on~\(\II\) (see \cref{xccv9o})
  coincides with the standard topology on~\([0,1]\).
\end{example}

\begin{definition}\label{x7pj79}
  A \emph{Nachbin space} is a compact Hausdorff space~\(X\)
  with a partial order on the underlying set such that
  \({\leq}=\{(x,x')\mid x\leq x'\}\subset X\times X\)
  is closed.
  A morphism is an order-preserving continuous map.
  We write \(\Cat{Nach}\) for the category of Nachbin spaces.
\end{definition}

\begin{example}\label{xpq4ys}
  The usual ordering on the compact Hausdorff space~\([0,1]\)
  determines a Nachbin space.
  In fact,
  this is what corresponds to~\(\II\) (see \cref{xccv9o})
  under the equivalence of \cref{xf8mf0} below.
\end{example}

We use the following,
which first appeared
in~\cite[Exercises~VII.1.18–19]{GHKLMS}:

\begin{theorem}[Gierz–Lawson]\label{xf8mf0}
For a stably compact space~\(X\),
  the pair of \(X^{\#}\)
  and the specialization order
  constitute a Nachbin space.
  Moreover,
  this assignment gives an equivalence of categories
  \(\Cat{SC}\to\Cat{Nach}\).
\end{theorem}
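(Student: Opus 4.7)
The plan is to verify directly that the functor $X\mapsto(X^\#,\leq_X)$ lands in $\Cat{Nach}$, then exhibit an inverse functor sending a Nachbin space $(Y,\leq)$ to its frame of open up-sets.

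\textbf{Step 1: $(X^\#,\leq_X)$ is a Nachbin space.} Compactness of $X^\#$ follows from the Alexander subbase theorem applied to the subbase of opens of $X$ together with complements of compact saturated sets—the finite intersection property for such a subbase translates, via the characterization in \cref{concrete} (intersections of compact saturated sets are compact), into the nonempty-intersection conclusion. To prove $\leq_X$ is closed in $X^\#\times X^\#$, I would argue pointwise: suppose $x\not\leq_X y$. Then there is an open $U$ of $X$ with $x\in U$ and $y\notin U$. Local compactness of $X$ (\cref{concrete}) yields an open $V$ of $X$ and a compact saturated $K$ with $x\in V\subset K\subset U$. Then $V$ is patch-open, $X\setminus K$ is patch-open by definition, and they contain $x$ and $y$ respectively. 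For any $(x',y')\in V\times(X\setminus K)$, saturation of $K$ forces $x'\not\leq_X y'$: otherwise $x'\leq_X y'$ with $x'\in K$ would give $y'\in K$. This simultaneously separates $x$ from $y$ in the patch topology (giving the Hausdorff property, by applying the same argument to $y\not\leq_X x$ when $x$ and $y$ are incomparable, and noting that equal points need no separation) and shows the order is closed.

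\textbf{Step 2: Construction of the inverse.} Given $(Y,\leq)\in\Cat{Nach}$, I take the frame $\tau_\uparrow(Y)$ of open up-sets of $Y$. This is a frame (closed under arbitrary unions and finite intersections), and I claim it presents a stably continuous frame with compact saturated sets corresponding to closed up-sets of $Y$. The key fact, which I would deduce from compactness plus the closed-order condition, is that whenever $x\not\leq y$ in a Nachbin space, there exist disjoint $U$ open up and $W$ open down with $x\in U$ and $y\in W$; equivalently, any point $x$ and any closed up-set $K$ not containing $x$ can be separated by an open up-set and an open down-set. This Urysohn-type lemma for Nachbin spaces (classical; provable by a standard argument using compactness and the closed order) implies that the frame of open up-sets is locally compact, and that binary intersections of closed up-sets are closed up-sets (hence compact saturated). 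Using \cref{xbmydj}, this yields stable continuity: $U\ll V$ in $\tau_\uparrow(Y)$ iff there is a closed up-set $K$ with $U\subset K\subset V$, and the union over such $U$ recovers $V$ by the separation lemma.

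\textbf{Step 3: The two constructions are mutually inverse.} For a stably compact space $X$, the open up-sets of $(X^\#,\leq_X)$ are precisely the open sets of $X$: any open of $X$ is patch-open and saturated for $\leq_X$ (which is the specialization order); conversely, a patch-open up-set is the union of basic patch-opens of the form $V\cap(X\setminus K)$ it contains, and being an up-set forces the $K$-complement factor to be redundant on each such basic piece, so the set is already open in $X$. Conversely, for a Nachbin space $(Y,\leq)$, the topology on the associated stably compact space has specialization order $\leq$ (since open up-sets separate $x$ from $y$ whenever $x\not\leq y$, by the separation lemma), and its patch topology recovers the original compact Hausdorff topology of $Y$ (the open up-sets and open down-sets jointly generate it, again by separation, and give a topology coarser than the original that is still compact Hausdorff, hence equal to it).

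\textbf{Step 4: Morphisms.} A perfect map $f\colon Y\to X$ preserves specialization (any continuous map does) and pulls compact saturated sets back to compact saturated sets, so $f\colon Y^\#\to X^\#$ is continuous for the patch topologies and order-preserving. Conversely, an order-preserving continuous map $g\colon(Y,\leq)\to(X,\leq)$ of Nachbin spaces pulls open up-sets back to open up-sets (order-preserving $+$ continuous) and pulls closed up-sets back to closed up-sets, so becomes a perfect map between the associated stably compact spaces. The main obstacle is the separation lemma for Nachbin spaces underlying Step 2; once that is in hand, the rest is essentially the bookkeeping of translating between open up-sets/closed up-sets on one side and opens/compact saturated sets on the other.
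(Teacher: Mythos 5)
The paper offers no proof of \cref{xf8mf0} at all—it is quoted as a classical result with a citation to the exercises of Gierz–Lawson—so the comparison here is with the standard argument in the literature, which your outline follows (patch topology plus specialization order one way, open up-sets the other way, with Nachbin's separation lemma as the engine). The overall architecture is right, and you correctly isolate the separation lemma as the main input for Step~2.

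There is, however, a concrete error in Step~3. You claim that for a patch-open up-set $W$, "being an up-set forces the $K$-complement factor to be redundant on each such basic piece," i.e.\ that $V\cap(X\setminus K)\subset W$ implies $V\subset W$. This is false: take $X=\{a,b\}$ discrete (a stably compact space with trivial specialization order), $W=\{a\}$, $V=X$, $K=\{b\}$; then $V\cap(X\setminus K)=\{a\}\subset W$ but $V\not\subset W$, so your recipe would output $\bigcup V_\alpha=X\neq W$. The correct argument is global rather than per-piece: $X\setminus W$ is patch-closed, hence patch-compact, and is a down-set; for each $z\in X\setminus W$ one has $x\not\leq z$, so local compactness gives $x\in V_z\subset K_z$ with $z\notin K_z$; finitely many of the patch-opens $X\setminus K_z$ cover $X\setminus W$, and then $\bigcap_i V_{z_i}$ is an $X$-open neighborhood of $x$ contained in $\bigcap_i K_{z_i}\subset W$. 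A second, smaller gap: your compactness argument for $X^{\#}$ in Step~1 invokes only "intersections of compact saturated sets are compact," but the Alexander-subbase reduction also needs the Hofmann–Mislove/well-filteredness property (if a filtered family of compact saturated sets has intersection contained in an open set, some member already is), which is where sobriety from \cref{concrete} enters; without it the finite-intersection bookkeeping does not close up. Both gaps are repairable by the standard arguments, but as written those two steps do not go through.
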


\begin{remark}\label{xglrlq}
The equivalence of \cref{xf8mf0}
  can be upgraded to an equivalence
  between \((1,2)\)-categories.
\end{remark}

\begin{definition}\label{xoc8bw}
  On \(\Cat{Nach}\)
  we have an autoequivalence
  by swapping the order.
  The \emph{de~Groot dual} functor~\((\X)^{\op}\)
  is the corresponding autoequivalence on~\(\Cat{SC}\)
  induced by \cref{xf8mf0}.
\end{definition}

\begin{remark}\label{xx808s}
  In terms of \cref{concrete},
  from a stably compact space~\(X\),
  an open subset
  of its de~Groot dual
  is the complement of
  a compact saturated set.
\end{remark}

\begin{example}\label{xsc2iw}
  The nontrivial open sets of
  the de~Groot dual of~\(\II\) (see \cref{xccv9o})
  are of the form \([0,r)\) for \(r\in(0,1]\).
\end{example}

\begin{remark}\label{x6duhh}
  Note that \cref{xoc8bw} describes the only nontrivial automorphism
  of~\(\Cat{SC}\) (as an \(\infty\)-category).
  More precisely,
  the space \(\Aut(\Cat{SC})\) is equivalent to~\(C_{2}\),
  as we can see as follows:
  Since any autoequivalence must preserve the final object,
  the underlying set of each space remains unchanged.
  Furthermore, by examining the two-point cases closely,
  we see that the Sierpiński space is mapped to itself.
  From these facts,
  we see that any autoequivalence restricts to \(\Cat{Spec}\).
  However, again,
  any automorphism of \(\Cat{Spec}\) is fixed by what it does on generators,
  so we see that \(\pi_{0}(\Aut(\Cat{Spec}))\) is \(C_{2}\).
Since any stably compact space is the colimit
  of some diagram of spectral spaces,
  an autoequivalence of \(\Cat{Spec}\)
  extends (at most) uniquely to an autoequivalence of \(\Cat{SC}\).
  Consequently, \(\pi_{0}(\Aut(\Cat{SC}))\) is~\(C_{2}\).
  Also, it is easy to see that \(\Aut(\id_{\Cat{SC}})\) is trivial,
  again by looking at the Sierpiński space.
\end{remark}

\begin{remark}\label{xw0gri}
  If we consider \(\Cat{SC}\) as a \((1,2)\)-category
  using the \((1,2)\)-categorical structure on~\(\Cat{Loc}\),
  the de~Groot duality functor
  is an equivalence \(\Cat{SC}^{\co}\to\Cat{SC}\).
\end{remark}

\begin{remark}\label{xwhlbz}
  In \cref{ss:ver},
  we give a sheaf-theoretic explanation
  of the duality in \cref{xoc8bw}.
\end{remark}

\begin{example}\label{x95c5f}
  Let \(X\) be a spectral space.
  We have the following:
  \begin{itemize}
    \item
      Its de~Groot dual is again spectral,
      and called the \emph{Hochster dual}.
    \item
      The patch topology
      is commonly called the \emph{constructible topology}.
    \item
      Its Flachsmeyer construction
      is the conjugate of the \(\Pro\)-Zariski topology;
      i.e., \((\gamma(X^{\op}))^{\op}\)
      is (the corresponding locale of)
      the \(\Pro\)-Zariski topos of~\(X\).
    \item
      The Stone–Čech compactification
      is \emph{not} \(\pi_{0}X\);
      see \cref{xmd5ll}.
  \end{itemize}
\end{example}

\begin{example}\label{xumbdo}
  Let \(X\) be a compact Hausdorff space.
  Then the de~Groot dual, patch topology,
  Stone–Čech construction
  leave the space unchanged.
  The Flachsmeyer construction
  gives a spectral space
  whose quasicompact opens
  are opens in~\(X\).
\end{example}

\begin{remark}\label{xqjsp7}
  Gabriel–Ulmer~\cite{GabrielUlmer71}
  proved that \(\Cat{CH}\) is \(\aleph_{1}\)-cocompactly generated.
Similarly,
  one can show that \(\Cat{SC}\)
  is \(\aleph_{1}\)-cocompactly generated.
  Moreover,
  as in~\cite[Proposition~2.11]{k-ros-1},
  one can obtain
  a characterization of \(\kappa\)-cocompacts
  for any regular cardinal~\(\kappa\).
  Cocompact objects are precisely
  finite spectral spaces.
  When \(\kappa\) is uncountable,
  \(\kappa\)-cocompact objects are precisely
  those whose weights are less than~\(\kappa\).
\end{remark}

\subsection{Sheaves on stably compact spaces}\label{ss:sheaf}

\begin{theorem}\label{xc8o0g}
  The functor
  \({\Shv}\colon\Cat{Frm}\to\CAlg(\Cat{Pr})\)
  restricts to
  \(\Cat{SCFrm}\to\CAlg(\Cat{Pr}^{\con})\).
\end{theorem}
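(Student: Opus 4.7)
The plan is to reduce to the coherent case via the retract characterizations in \cref{xmx7uh,xdzqu2}.

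For a coherent frame~$R'$ presented by a distributive lattice~$L$ of compact elements, $\Shv(R')$ is compactly generated by the representables $j(\ell)$ for $\ell \in L$; these are closed under the cartesian tensor product via $j(\ell) \otimes j(\ell') \simeq j(\ell \wedge \ell')$, and the unit $j(1)$ is compact. Hence $\Shv(R') \in \CAlg(\Cat{Pr}^{\cg}) \subset \CAlg(\Cat{Pr}^{\con})$ by \cref{x9qljx}. A coherent frame morphism sends representables to representables, so its induced pullback on sheaves lies in $\CAlg(\Cat{Pr}^{\cg})$; this establishes the theorem in the coherent case.

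For a general stably continuous frame~$R$, applying $\Shv$ to the retract diagram from \cref{xmx7uh} gives a retract $\Shv(R) \rightleftarrows \Shv(R')$ in $\CAlg(\Cat{Pr})$, and condition \cref{i:ret2} of \cref{xdzqu2} similarly transports a morphism in $\Cat{SCFrm}$ to a retract of a morphism in $\CAlg(\Cat{Pr}^{\cg})$ in the arrow category of $\CAlg(\Cat{Pr})$. The theorem thus reduces to showing that $\CAlg(\Cat{Pr}^{\con})$ is closed under retracts in $\CAlg(\Cat{Pr})$. Continuity at the underlying $\Cat{Pr}$-level is immediate from \cref{xv8wwv}, and compactness of the unit transfers by a direct chase through \cref{xl2h0i}: any factorization in~$\cat{D}$ of $\unit_{\cat{D}} = s(\unit_{\cat{C}}) \to \injlim_j s(F_j)$ through some $s(F_{j_0})$ yields, after applying the retraction~$r$ and using $rs \simeq \id$, the required factorization in~$\cat{C}$.

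The main obstacle is the remaining tensor-stability condition of \cref{x0daz5}: given compact $f, g$ in~$\cat{C}$, one wants $f \otimes g$ compact, but the section~$s$ need not preserve compact morphisms, so compactness of $s(f) \otimes s(g) = s(f \otimes g)$ in~$\cat{D}$ does not obviously transfer back. To break this apparent circularity, I would exploit the specific form of the retract in \cref{xmx7uh}: the section is the Flachsmeyer frame map $\check\jmath\colon R \to \Ind(R)$, whose corresponding locale morphism $\gamma X \to X$ is already known to lie in $\Cat{SC}$ by \cref{xs5a86}. Establishing the morphism part of the theorem first---which via \cref{xhfb2c} only requires $\Shv(R)$ and $\Shv(\Ind(R))$ to be continuous (already secured from the retract) and $\check\jmath_{*}$ to preserve filtered colimits (provable directly from the concrete description of the underlying frame map $\Ind(R) \to R$)---then shows via \cref{con_1} that $\check\jmath^{*}$ preserves compact morphisms. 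Tensor-stability in $\Shv(R)$ then follows from that in $\Shv(\Ind(R))$ through $s = \check\jmath^{*}$ and~$r$, completing the proof.
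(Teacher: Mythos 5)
Your reduction to the spectral case via the Flachsmeyer retract is the same entry point as the paper's, but the two arguments then diverge. The paper never verifies the conditions of \cref{x0daz5} by hand: since $\Shv$ is symmetric monoidal and the multiplication $\Shv(X)\otimes\Shv(X)\simeq\Shv(X\times X)\to\Shv(X)$ and unit are themselves $\Shv$ of morphisms in $\Cat{SC}$, it suffices to show that $\Shv$ restricts to $\Cat{SC}^{\op}\to\Cat{Pr}^{\con}$ on underlying $\infty$-categories; and the morphism-level claim is then immediate because a morphism of $\Cat{Pr}^{\con}$ is \emph{defined} in \cref{xv8wwv} as a retract of a morphism of $\Cat{Pr}^{\cg}$---exactly what \cref{xq6bfr} (equivalently \cref{xdzqu2}\cref{i:ret2}) hands you. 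You exhibit this retract of arrows but then do not exploit it, instead verifying unit-compactness and tensor-stability directly; this is workable but much longer, and it forces you to prove along the way that $\check{\jmath}_{*}=p_{*}$ preserves filtered colimits, which is true but not a one-liner: it needs the formula for filtered colimits of sheaves on~$X$ (\cref{sheafify}) together with the interpolation property of~$\ll$, essentially the computation in \cref{shv_fl}.

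The genuine problem is the last step, "tensor-stability follows through $s=\check{\jmath}^{*}$ and~$r$". The retraction~$r$ is $\Shv$ of the frame map $\Ind(R)\to R$, i.e.\ pullback along $X\to\gamma X$, and this functor does \emph{not} preserve compact morphisms: its right adjoint fails to preserve filtered colimits, and concretely it sends the compact object $j(U)\in\Shv(\gamma X)$ (for $U$ an arbitrary open of~$X$, viewed as a quasicompact open of $\gamma X$) to $j(U)\in\Shv(X)$, which is compact only when $U\ll U$. So you cannot conclude that $f\otimes g=r(s(f)\otimes s(g))$ is compact by applying~$r$ to a compact morphism. The step can be repaired: compactness is \emph{detected} along~$s$, either by noting that $\Map_{\Shv(X)}(A,B)$ is a natural retract of $\Map_{\Shv(\gamma X)}(sA,sB)$, so the lifting square of \cref{xl2h0i} for $f\otimes g$ against $(F_j)_j$ is a retract of the one for $s(f\otimes g)$ against $(sF_j)_j$ and inherits its filler, or by invoking \cref{x11iiz} after checking that $p^{*}$ is fully faithful (as in \cref{shv_fl}). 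With that substitution your chain closes, but as written the transfer-back is exactly the obstacle you flagged, restated rather than resolved.
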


\begin{proof}
  It suffices to check that
  \({\Shv}\colon\Cat{Loc}^{\op}\to\Cat{Pr}\)
  restricts to
  \({\Shv}\colon\Cat{SC}^{\op}\to\Cat{Pr}^{\con}\).
  On the level of objects,
  \(\Shv(X)\) is a retract of \(\Shv(\gamma X)\)
  in~\(\Cat{Pr}\),
  and therefore we see that it is continuous
  using \cref{xl9gxj} below.
  On the level of morphisms,
  this follows from \cref{xq6bfr}
  and using \cref{xl9gxj} again.
\end{proof}

This proof is not direct
and does not reveal much about the internal structure
of \(\Shv(X)\).
To understand \(\Shv(X)\) more concretely,
the Flachsmeyer construction \(p\colon\gamma X\to X\)
and its conjugate
\(q\colon(\gamma(X^{\op}))^{\op}\to X\)
are useful.
We can describe sheaves on~\(X\)
in different ways:
\begin{itemize}
  \item
    In \cref{shv_fl},
    we realize it as
    the essential image of~\(p^{*}\)
    inside \(\Shv(\gamma X)\).
  \item
    In \cref{shv_cofl},
    we realize it as
    the essential image of~\(q^{*}\)
    inside \(\Shv((\gamma(X^{\op}))^{\op})\);
    when \(X\) is Hausdorff,
    this description is called “K-sheaves”;
    see \cref{xjlmvy}.
  \item
    We can also realize it as
    the essential image of~\(p^{!}\),
    which is the right adjoint of~\(p_{*}\),
    but this is not special to our situation;
    see \cref{xrmpdj}.
\end{itemize}
We do this for more general coefficients
for the sake of completeness.
First,
we recall how to describe
sheaves on spectral spaces
from~\cite[Theorem~7.3.5.2]{LurieHTT}:

\begin{proposition}[Lurie]\label{xjhzdx}
  Let \(\cat{C}\) be a presentable \(\infty\)-category
  and \(X\) a spectral space.
  Let \(D\) be the distributive lattice of compact open sets of~\(X\).
  Then the restriction
  \(\Shv(X;\cat{C})\to\Fun(D^{\op},\cat{C})\)
  is fully faithful and its essential image
  consists of those functors~\(\shf{F}\colon D^{\op}\to\cat{C}\)
  satisfying the following conditions:
  \begin{conenum}
    \item\label{i:reduced}
      It maps \(\emptyset\) to the final object.
    \item\label{i:excisive}
      For any elements~\(U\) and~\(V\),
      the square
      \begin{equation*}
        \begin{tikzcd}
          \shf{F}(U\cup V)\ar[r]\ar[d]&
          \shf{F}(U)\ar[d]\\
          \shf{F}(V)\ar[r]&
          \shf{F}(U\cap V)
        \end{tikzcd}
      \end{equation*}
      is cartesian.
   \end{conenum}
\end{proposition}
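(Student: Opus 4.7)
The plan is to deduce this from two standard principles: sheaves on~\(X\) are determined by their restriction to a basis of opens closed under finite intersections, and descent for finite covers in a distributive lattice reduces to descent for the empty cover together with two-element covers.

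First, I would observe that since \(X\) is spectral, the compact opens~\(D\) form a basis of its topology closed under finite intersections (and in fact form a distributive sublattice of the frame of opens). By the general comparison of sheaves on a space with sheaves on such a basis, the restriction
\begin{equation*}
  \Shv(X;\cat{C})\to\Shv_{\tau}(D;\cat{C})\subset\Fun(D^{\op},\cat{C})
\end{equation*}
is an equivalence of \(\infty\)-categories, where \(\tau\) is the Grothendieck topology on~\(D\) induced from the topology on~\(X\). This already yields fully faithfulness, and reduces the remaining task to identifying \(\Shv_{\tau}(D;\cat{C})\) as the full subcategory cut out by \cref{i:reduced,i:excisive}.

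Next, I would describe~\(\tau\) concretely: since every \(U\in D\) is quasicompact, a sieve on~\(U\) is \(\tau\)-covering exactly when it contains a finite family \((U_{i})_{i=1}^{n}\) with \(U=\bigcup_{i} U_{i}\). Hence being a \(\tau\)-sheaf means that \(\shf{F}(U)\) is the limit of the \v{C}ech nerve of every such finite cover. For \(n=0\) the empty cover condition is exactly \cref{i:reduced}, and for \(n=2\) the condition is exactly \cref{i:excisive}, since the \v{C}ech nerve of a two-element cover in a meet-semilattice computes the pullback \(\shf{F}(U_{1})\times_{\shf{F}(U_{1}\cap U_{2})}\shf{F}(U_{2})\).

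The main obstacle is the induction on~\(n\) showing, conversely, that \cref{i:reduced,i:excisive} imply the full \(\tau\)-sheaf condition. For \(n\geq 3\), set \(V=U_{2}\cup\dots\cup U_{n}\); by distributivity of~\(D\), the family \((U_{1}\cap U_{i})_{i\geq 2}\) covers \(U_{1}\cap V\). Applying \cref{i:excisive} to the cover \((U_{1},V)\) of~\(U\) and then using the inductive hypothesis to rewrite both \(\shf{F}(V)\) and \(\shf{F}(U_{1}\cap V)\) as limits of \v{C}ech nerves of the shorter covers \((U_{i})_{i\geq 2}\) and \((U_{1}\cap U_{i})_{i\geq 2}\), an interchange of limits identifies \(\shf{F}(U)\) with the limit of the full \v{C}ech nerve of \((U_{i})_{i=1}^{n}\). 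The delicate point is the coherence of these iterated pullbacks in the \(\infty\)-categorical setting, but this is standard once one recognises that the \v{C}ech nerve of an \(n\)-element cover is a pushout (in simplicial objects in~\(D\)) of the \v{C}ech nerves of \((U_{1})\), \((U_{i})_{i\geq 2}\), and \((U_{1}\cap U_{i})_{i\geq 2}\), and that \(\shf{F}\) turns this into the corresponding iterated pullback.
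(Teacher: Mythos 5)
Your argument is correct and is essentially the standard proof of this result: the paper itself offers no proof, deferring entirely to~\cite[Theorem~7.3.5.2]{LurieHTT}, and your route—comparison of \(\Shv(X;\cat{C})\) with sheaves on the basis~\(D\) of compact opens, identification of the induced topology with the finite-cover topology via quasicompactness, and reduction of finite descent to the empty and binary cases by induction using distributivity—is precisely the argument underlying that reference. The only imprecision is the phrase ``pushout in simplicial objects in~\(D\)'' (such pushouts need not exist in a poset); what you mean, and what makes the interchange of limits work, is the decomposition of the punctured-cube indexing poset of nonempty subsets of \(\{1,\dotsc,n\}\) into the cosieve of subsets containing~\(1\) and the sieve of subsets not containing~\(1\), which exhibits the total limit as the asserted iterated pullback.
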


This immediately implies the following:

\begin{corollary}\label{xl9gxj}
  The functor \({\Shv}\colon\Cat{Frm}\to\CAlg(\Cat{Pr})\)
  restricts to \(\Cat{Spec}\to\CAlg(\Cat{Pr}^{\cg})\).
\end{corollary}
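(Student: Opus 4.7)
The plan is to deduce everything directly from \cref{xjhzdx}. Write $D=D(X)$ for the distributive lattice of compact opens of a spectral space~$X$. By \cref{xjhzdx}, the $\infty$-category $\Shv(X)=\Shv(X;\Cat{S})$ is the full subcategory of $\PShv(D)=\Fun(D^{\op},\Cat{S})$ cut out by the two finite-limit conditions~\cref{i:reduced,i:excisive}, so it is an accessible left exact localization of $\PShv(D)$ at a set of morphisms whose sources and targets are finite colimits of representables—hence between compact objects of~$\PShv(D)$. Consequently, the right adjoint of the localization $L\colon\PShv(D)\to\Shv(X)$ preserves filtered colimits, so $L$ carries compact objects to compact objects. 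The images $L(j_{U})$ for $U\in D$ therefore form a set of compact generators of $\Shv(X)$, which shows $\Shv(X)\in\Cat{Pr}^{\cg}$.

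Next I would promote this to a statement about commutative algebras. The symmetric monoidal structure on $\Shv(X)$ is cartesian, and under the embedding of \cref{xjhzdx} it corresponds to the pointwise product, which on representables is given by $L(j_{U})\otimes L(j_{V})\simeq L(j_{U\cap V})$. Since $D$ is closed under finite intersections (this is a defining feature of a distributive lattice of compact opens of a spectral space), the tensor product of two compact generators of the above form lies again in the generating set, and the unit $L(j_{X})$ is compact because $X\in D$. By \cref{x0daz5} (applied in its unstable, posetal form), this exhibits $\Shv(X)$ as an object of $\CAlg(\Cat{Pr}^{\cg})$.

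It remains to treat morphisms. Let $f\colon Y\to X$ be a quasicompact map of spectral spaces; by definition, the preimage $f^{-1}(U)$ of any $U\in D(X)$ lies in $D(Y)$. On the presheaf side the pullback along the induced lattice morphism $D(X)\to D(Y)$ sends $j_{U}$ to $j_{f^{-1}(U)}$, and passing to sheaves yields $f^{*}L(j_{U})\simeq L(j_{f^{-1}(U)})$, which is one of our compact generators of $\Shv(Y)$. Since $f^{*}$ preserves colimits and sends a generating set of compacts to compacts, it preserves all compact objects; combined with the fact that $f^{*}$ is already symmetric monoidal (it is a frame morphism in $\CAlg(\Cat{Pr})$), this verifies that $f$ defines a morphism in $\CAlg(\Cat{Pr}^{\cg})$.

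There is no real obstacle here: the content is entirely in \cref{xjhzdx}, and the only thing to be careful about is that conditions~\cref{i:reduced,i:excisive} are finitary, so that the localization is compatible with filtered colimits and compact generators descend to compact generators. Everything else—stability of compact opens under finite intersection and under quasicompact preimages—is built into the very definition of spectral spaces and of morphisms therein.
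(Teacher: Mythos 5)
Your argument is correct and is exactly the elaboration the paper has in mind: \cref{xl9gxj} is asserted to follow immediately from \cref{xjhzdx}, and your filling-in—realizing \(\Shv(X)\) as the localization of \(\PShv(D)\) at a set of maps between compact objects (using that \(D\) has a top element so the terminal presheaf is representable), noting that the compact generators \(L(j_{U})\) are stable under products with compact unit \(L(j_{X})\), and that \(f^{*}\) sends \(L(j_{U})\) to \(L(j_{f^{-1}(U)})\) for quasicompact \(f\)—is the standard way to make that precise. One small blemish: \cref{x0daz5} concerns \(\CAlg(\Cat{Pr}^{\con})\) and compact \emph{morphisms}, so it is not quite the statement you need; the correct (and standard) fact is that an object of \(\CAlg(\Cat{Pr}^{\cg})\) is a compactly generated presentably symmetric monoidal \(\infty\)-category with compact unit whose tensor product preserves compact objects, which here follows from universality of colimits in the \(\infty\)-topos.
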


\begin{definition}\label{x9h7sm}
  Let \(X\) be a locale
  whose frame is~\(R\).
  We call a presheaf \(R^{\op}\to\cat{C}\)
  valued in an \(\infty\)-category with finite limits
  \emph{excisive}\footnote{In some contexts,
    the term “excisive” refers only to~\cref{i:excisive},
    while the term “reduced” is used for~\cref{i:reduced}.
  }
  when it satisfies \cref{i:reduced,i:excisive} of \cref{xjhzdx}.
\end{definition}

\begin{example}\label{xrmpdj}
  Consider a locale~\(X\)
  and a presentable \(\infty\)-category~\(\cat{C}\).
  We have seen in~\cite[Theorem~3.19]{ttg-sm}
  that a \(\cat{C}\)-valued presheaf on~\(X\) is a sheaf
  if and only if it is excisive
  and preserves cofiltered limits.
  In fact,
  this corresponds to
  describing \(\Shv(X;\cat{C})\)
  as a full subcategory
  using the pushforward along \(X\to\gamma X\)—note that
  the definition of \(\gamma X\)
  makes sense for any locale,
  but we only obtain a morphism \(X\to\gamma X\).
  When \(X\) is stably compact,
  this correspond to describing
  the image of~\(p^{!}\),
  which is right adjoint to~\(p_{*}\),
  for \(p\colon\gamma X\to X\).
\end{example}

\begin{proposition}\label{shv_fl}
  Consider a stably compact space~\(X\)
  and a presentable \(\infty\)-category~\(\cat{C}\).
  Suppose that finite limits
  commute with filtered colimits in~\(\cat{C}\).
  Let \(\shf{G}\) be a \(\cat{C}\)-valued sheaf on~\(\gamma X\).
  It is in the image of \(p^{*}\colon\Shv(X)\to\Shv(\gamma X)\)
  if and only if
  \begin{equation}
    \label{e:i9uc6}
    \injlim_{V\gg U}\shf{G}(V)
    \to
    \shf{G}(U)
  \end{equation}
  is an equivalence for any open~\(U\) of~\(X\).
\end{proposition}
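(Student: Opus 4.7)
The plan is to reduce the equivalence to a direct computation via the adjunction $p^*\dashv p_*$. Writing $R$ for the frame of $X$, the Flachsmeyer map $p\colon\gamma X\to X$ corresponds on frames to $\check{\jmath}\colon R\to\Ind(R)$, sending $U$ to the ideal $\{x\in R : x\ll U\}$. Since $\gamma X$ is spectral with lattice of compact opens $R$, I will use \cref{xjhzdx} to identify $\Shv(\gamma X;\cat{C})$ with the $\infty$-category of excisive functors $R^{\op}\to\cat{C}$, so that $\shf{G}(U)$ for $U\in R$ refers to $\shf{G}(\downarrow U)$. Unwinding the adjunction yields
\begin{equation*}
  (p_*\shf{G})(U)
  =\shf{G}(\check{\jmath}(U))
  =\lim_{W\ll U}\shf{G}(\downarrow W),
  \qquad
  (p^*\shf{F})(\downarrow V)
  =\injlim_{V\ll U}\shf{F}(U);
\end{equation*}
the first uses the sheaf property applied to the directed join $\check{\jmath}(U)=\bigvee_{W\ll U}\downarrow W$ in $\Ind(R)$, and the second is the left Kan extension along $\check{\jmath}^{\op}$, whose excisiveness in $V$ will follow from interpolation together with the commutation of finite limits with filtered colimits in $\cat{C}$, so that no sheafification is required.

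For the ``only if'' direction, if $\shf{G}=p^*\shf{F}$, one computes
\begin{equation*}
  \injlim_{V\gg U}\shf{G}(V)
  \simeq\injlim_{V\gg U}\injlim_{V\ll W}\shf{F}(W)
  \simeq\injlim_{U\ll V\ll W}\shf{F}(W)
  \simeq\injlim_{U\ll W}\shf{F}(W)
  \simeq\shf{G}(U),
\end{equation*}
where the penultimate equivalence uses subdivisibility of the way-below relation in the stably continuous frame $R$—i.e., $U\ll W$ implies the existence of $V$ with $U\ll V\ll W$—to conclude that the projection $(V,W)\mapsto W$ is cofinal.

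For the converse, I would show that the counit $p^*p_*\shf{G}\to\shf{G}$ is an equivalence. Evaluated at $\downarrow U$, the counit is the map $B:=\injlim_{V\gg U}\shf{G}(\check{\jmath}(V))\to\shf{G}(U)$, whereas the hypothesis says that $A:=\injlim_{V\gg U}\shf{G}(V)\to\shf{G}(U)$ is an equivalence; it thus suffices to produce an \emph{unconditional} equivalence $A\simeq B$. The restriction $\shf{G}(\downarrow V)\to\shf{G}(\check{\jmath}(V))$ (using $\check{\jmath}(V)\subseteq\downarrow V$) gives a natural map $A\to B$; conversely, for each $V\gg U$, interpolation provides $V'$ with $V\gg V'\gg U$, and the inclusion $\downarrow V'\subseteq\check{\jmath}(V)$ yields a restriction $\shf{G}(\check{\jmath}(V))\to\shf{G}(\downarrow V')$, inducing $B\to A$. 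The round-trip composites reduce to restriction maps along $V'\leq V$ that become identities in the respective colimits. The hardest part will be this equivalence $A\simeq B$: the ideals $\downarrow V$ and $\check{\jmath}(V)$ are distinct elements of $\Ind(R)$ in general, so $\shf{G}(\downarrow V)$ and $\shf{G}(\check{\jmath}(V))$ differ at each $V$, and only the interpolation-based sweep over $V\gg U$—made coherent by \cref{i:con_alg} of \cref{xbmydj} (stability of $\ll$ under finite meets)—forces their colimit values to coincide.
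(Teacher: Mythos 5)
Your overall strategy is the same as the paper's: identify sheaves on \(\gamma X\) with excisive functors on \(R\) via \cref{xjhzdx}, reduce membership in the image of \(p^{*}\) to the counit \(p^{*}p_{*}\shf{G}\to\shf{G}\) being an equivalence, compute \(p_{*}\) from the directed cover \(\check{\jmath}(V)=\bigvee_{W\ll V}\downarrow W\), and compare the two colimit formulas by interpolating the way-below relation. In particular your unconditional equivalence \(A\simeq B\) is exactly the paper's statement that \(\projlim_{V\gg U}\injlim_{W\ll V}W\to\projlim_{V\gg U}V\) is an equivalence in \(\Pro(\Ind(R))\), transported through \(\shf{G}\) by the argument of \cref{xsoyii}; that part, and the cofinality computation in your ``only if'' direction, are fine.

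The genuine gap is the step you defer: the claim that \((p^{*}\shf{F})(\downarrow V)=\injlim_{V\ll W}\shf{F}(W)\) because the left Kan extension along \(\check{\jmath}^{\op}\) is ``already excisive, so no sheafification is required.'' The natural verification—reindex the excisive square over pairs \((W_{1},W_{2})\) with \(V_{i}\ll W_{i}\) and commute the filtered colimit past the pullback—breaks at the intersection corner, since \(\{W_{1}\wedge W_{2}\}\) is not cofinal in \(\{W: V_{1}\wedge V_{2}\ll W\}\). Concretely, take \(X=[0,1]\), \(V_{1}=[0,1/2)\), \(V_{2}=(1/2,1]\), and \(\shf{F}\) the constant sheaf on a two-element set: then \(\injlim_{V_{1}\vee V_{2}\ll W}\shf{F}(W)=\shf{F}([0,1])\) has two elements, whereas \(\injlim_{V_{1}\ll W}\shf{F}(W)\times_{\injlim_{\emptyset\ll W}\shf{F}(W)}\injlim_{V_{2}\ll W}\shf{F}(W)\) has four (the base is \(\shf{F}(\emptyset)=*\)). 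So the presheaf pullback is not excisive, sheafification genuinely changes it, and your displayed formula for \(p^{*}\)—on which both directions of your argument rest—is not correct as stated. I should note that the paper's own proof leans on the parallel assertion that \(U\mapsto\injlim_{V\gg U}\shf{G}(V)\) ``is already excisive by our assumption on \(\cat{C}\),'' and the same example (the constant sheaf on \(\gamma X\), which is \(p^{*}\) of the constant sheaf on \(X\), evaluated at the disconnected open \([0,1]\setminus\{1/2\}\), whose only way-above open is \([0,1]\)) puts pressure on that step too; so this is a point to raise with the author rather than a defect unique to your write-up, but as written your justification cannot be completed.
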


\begin{proof}
Recall that a \(\cat{C}\)-valued sheaf on \(\gamma X\)
  is an excisive functor \(R^{\op} \to \cat{C}\),
  where \(R\) denotes the frame of open subsets of~\(X\),
  by \cref{xjhzdx}.
  By adjunction, \(\shf{G}\) lies in the essential image of \(p^{*}\)
  if and only if the counit map
  \(p^{*}p_{*}\shf{G} \to \shf{G}\)
  is an equivalence.
  We compute this.

  The functor \(p_{*}\shf{G}\)
  sends an open~\(V\) of~\(X\)
  to \(\projlim_{W\ll V}\shf{G}(W)\),
  and thus \(p^{*}p_{*}\shf{G}\) is the sheafification
  of the presheaf
  \begin{equation*}
    U\mapsto\injlim_{V\gg U}\projlim_{W\ll V}\shf{G}(W),
  \end{equation*}
  where \(U\) is an open of~\(X\).
  We prove that this is equivalent to
  the left-hand side of \cref{e:i9uc6},
  which is already excisive
  by our assumption on~\(\cat{C}\).

  Therefore,
  it suffices to show that
  the morphism
  \begin{align*}
    \projlim_{V\gg U}\injlim_{W\ll V}W
    \to
    \projlim_{V\gg U}V
  \end{align*}
  is an equivalence in \(\Pro(\Ind(R))\).
  Since it is a poset,
  we need to obtain a morphism in the other direction,
  but we can do this
  by observing that
  for any \(U\ll V\),
  there is~\(W\)
  satisfying \(U\ll W\ll V\).
\end{proof}

\begin{proposition}\label{shv_cofl}
Consider a stably compact space~\(X\)
  and a presentable \(\infty\)-category~\(\cat{C}\).
  Suppose that finite limits
  commute with filtered colimits in~\(\cat{C}\).
  Let \(\shf{G}\) be a \(\cat{C}\)-valued sheaf on~\(\gamma X\).
  A \(\cat{C}\)-valued sheaf on \((\gamma(X^{\op}))^{\op}\)
  is in the essential image of~\(q^{*}\)
  if and only if
  \begin{equation}
    \label{e:iv8wp}
    \injlim_{L\gg'K}\shf{G}(L)\to\shf{G}(K)
  \end{equation}
  is an equivalence
  for a compact saturated set~\(K\) of~\(X\).
  Here, \(K\ll'L\) for compact saturated sets
  means that \(X\setminus L\ll X\setminus K\) holds
  in \(X^{\op}\);
  i.e., that there is an open in~\(X\) such that
  \(K\subset U\subset L\).
\end{proposition}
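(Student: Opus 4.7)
The argument will run in parallel with the proof of \cref{shv_fl}, but with opens of~$X$ replaced by compact saturated sets and the way-below relation~$\gg$ replaced by~$\gg'$.

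By adjunction, $\shf{G}$ is in the essential image of~$q^{*}$ if and only if the counit $q^{*}q_{*}\shf{G}\to\shf{G}$ is an equivalence, and the plan is to compute this composite. The first step is to set up a dictionary: unfolding the Flachsmeyer construction applied to~$X^{\op}$ and then passing to the de~Groot dual via \cref{xoc8bw,xx808s} identifies every compact saturated set~$K$ of~$X$ with an open of~$(\gamma(X^{\op}))^{\op}$, which gives meaning to~$\shf{G}(K)$ in~\cref{e:iv8wp}. Under this dictionary, the computation of~$p_{*}\shf{G}$ appearing in the proof of \cref{shv_fl} translates to a description of~$q_{*}\shf{G}$ on opens of~$X$ as a cofiltered limit of values~$\shf{G}(M)$ over certain compact saturated sets~$M$, so that~$q^{*}q_{*}\shf{G}$ becomes the sheafification of the presheaf
\[
K\mapsto\injlim_{L\gg'K}\projlim_{M\ll'L}\shf{G}(M)
\]
on compact saturated sets of~$X$.

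The hypothesis on~$\cat{C}$ then implies, as in \cref{shv_fl}, that the left-hand side of~\cref{e:iv8wp} is already excisive. Identifying it with the above sheafification therefore reduces to an equivalence of $\Pro(\Ind)$-objects on the poset of compact saturated sets of~$X$ ordered by~$\ll'$. This in turn follows from the interpolation property that for any $K\ll'L$ there exists~$M$ with $K\ll'M\ll'L$, which is the usual way-below interpolation in the stably continuous frame of~$X^{\op}$, transported across the bijection of \cref{xx808s} between opens of~$X^{\op}$ and compact saturated sets of~$X$.

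The main obstacle I expect is the frame identification in the dictionary step: describing opens of~$(\gamma(X^{\op}))^{\op}$ in terms of compact saturated sets of~$X$ with enough precision to read off the explicit formula for~$q_{*}\shf{G}$ and for $q^{*}q_{*}\shf{G}$. Once this translation is in hand, the remaining steps are direct analogs of the corresponding ones in the proof of \cref{shv_fl} and go through with only notational changes.
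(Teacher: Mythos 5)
Your skeleton agrees with the paper's proof — reduce to the counit \(q^{*}q_{*}\shf{G}\to\shf{G}\) being an equivalence, compute \(q^{*}q_{*}\shf{G}\) as a sheafification, and compare \(\Pro(\Ind)\)-objects in a poset — but the step you present as a routine translation is exactly where the content lies, and as written there is a gap. The presheaf pullback of \(q_{*}\shf{G}\) is indexed by \emph{opens} of~\(X\): since \(q^{*}(U)=\bigvee_{L\subset U}L\) in the frame \(\Ind(Q)\) of \((\gamma(X^{\op}))^{\op}\) (where \(Q\) is the coframe of compact saturated sets), one finds \(q_{*}\shf{G}(U)=\projlim_{L\subset U}\shf{G}(L)\), and hence \(q^{*}q_{*}\shf{G}\) is the sheafification of \(K\mapsto\injlim_{U\supset K}\projlim_{L\subset U}\shf{G}(L)\) — a formula mixing opens and compact saturated sets. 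Your formula \(K\mapsto\injlim_{L\gg'K}\projlim_{M\ll'L}\shf{G}(M)\) does not come from any direct translation of \cref{shv_fl}: de~Groot duality is not an isomorphism of locales, and \cref{shv_fl} applied to \(p'\colon\gamma(X^{\op})\to X^{\op}\) describes the image of \((p')^{*}\) inside \(\Shv(\gamma(X^{\op}))\), a different functor with a different source, whose characterizing condition would involve \(\injlim_{L\ll'K}\) rather than \(\injlim_{L\gg'K}\).

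Your formula is in fact equivalent to the correct one, but proving that equivalence is the crux. One needs maps in both directions between \(\projlim_{U\supset K}\injlim_{L\subset U}L\) and \(\projlim_{L\gg'K}L\) in \(\Pro(\Ind(Q))\), and the direction starting from the open-indexed object requires the following: whenever \(K\subset U\) with \(U\) open, there is a compact saturated~\(L\) with \(K\ll'L\subset U\). This local-compactness statement, relating inclusion into opens to the relation \(\ll'\), is not a consequence of the interpolation \(K\ll'L\Rightarrow\exists M\,(K\ll'M\ll'L)\) that you invoke, and it is precisely the non-obvious input the paper isolates — noting explicitly that, unlike in \cref{shv_fl}, neither comparison morphism is obvious here. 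So your concluding claim that the remaining steps go through with only notational changes is where the proposal falls short; supplying the two interpolation facts (an open between \(K\) and \(L\) when \(K\ll'L\), which is your definition of \(\ll'\), and a compact saturated set between \(K\) and an open \(U\supset K\)) closes the gap.
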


\begin{remark}\label{xjlmvy}
  When \(X\) is Hausdorff,
  \cref{shv_cofl} recovers
  Lurie’s notion of “K-sheaves” in~\cite[Section~7.3.4]{LurieHA}.
\end{remark}

\begin{proof}
Recall that a \(\cat{C}\)-valued sheaf on \((\gamma(X^{\op}))^{\op}\)
  is an excisive functor \(Q^{\op} \to \cat{C}\),
  where \(Q\) denotes the coframe of compact saturated subsets of~\(X\),
  by \cref{xjhzdx}.
  By adjunction, \(\shf{G}\) lies in the essential image of \(q^{*}\)
  if and only if the counit map
  \(q^{*}q_{*}\shf{G} \to \shf{G}\) is an equivalence.
  We compute this.

  The functor \(q_{*}\shf{G}\) sends an open~\(U\) of~\(X\)
  to \(\projlim_{L\subset U} \shf{G}(L)\),
  where \(L\) runs over compact saturated subsets.
  Thus \(q^{*}q_{*}\shf{G}\) is the sheafification
  of the presheaf
  \begin{equation*}
    K\mapsto
    \injlim_{U\supset K} \projlim_{L \subset U} \shf{G}(L)
  \end{equation*}
  where \(K\) is a compact saturated set of~\(X\).
  We prove that this is equivalent to
  the left-hand side of \cref{e:iv8wp},
  which is already excisive
  by our assumption on~\(\cat{C}\).

The proof is similar to that of \cref{shv_fl},
  but this time,
  we do not have obvious morphisms
  between
  \(\projlim_{U\supset K}\injlim_{L\subset U}L\)
  and
  \(\projlim_{L\gg' K}L\).
  Nevertheless,
  again, since \(\Pro(\Ind(Q))\) is a poset,
  it suffices to explicitly construct them.
  The morphism from the first to the second
  is obtained from the fact that
  \(K\ll'L\) implies
  that there is an open~\(U\) such that \(K\subset U\subset L\).
  The morphism in the other direction
  is obtained from the fact that
  when \(K\subset U\),
  there is a compact saturated set~\(L\)
  such that \(K\ll'L\subset U\).
\end{proof}

We note the following,
which is useful:

\begin{proposition}\label{sheafify}
  Let \(\cat{C}\) be a presentable \(\infty\)-category
  and \(\shf{F}\) a \(\cat{C}\)-valued
  presheaf on a stably compact space~\(X\).
  Suppose that \(\shf{F}\) is excisive.
  Then the morphism
  \begin{equation*}
    \shf{F}(U)
    \to
    \projlim_{V\ll U}\shf{F}(V)
  \end{equation*}
  for an open~\(U\) of~\(X\)
  is the sheafification morphism
  evaluated at~\(U\).
\end{proposition}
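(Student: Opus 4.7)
The plan is to set \(\shf{G}(U) := \projlim_{V \ll U} \shf{F}(V)\), verify that \(\shf{G}\) is a sheaf, and show that the canonical morphism \(\shf{F} \to \shf{G}\) exhibits \(\shf{G}\) as the sheafification. Functoriality of \(\shf{G}\) is immediate, since \(U \leq U'\) gives \(\{V \mid V \ll U\} \subseteq \{V \mid V \ll U'\}\), and the map \(\shf{F}(U) \to \shf{G}(U)\) is the universal map from \(\shf{F}(U)\) into the system \((\shf{F}(V))_{V \ll U}\).

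By \cref{xrmpdj}, to show \(\shf{G}\) is a sheaf, it suffices to verify excision and preservation of cofiltered limits. For the latter, consider a directed join \(U = \bigvee_i U_i\); applying \cref{i:con_con} of \cref{xbmydj} to each \(U_i\), we see that \(U\) itself is a directed join of elements \(W\) satisfying \(W \ll U_i\) for some \(i\), and a cofinality argument then yields \(\shf{G}(U) \simeq \projlim_i \shf{G}(U_i)\). For excision, the key input is the cofinality of the map \((W_1, W_2) \mapsto W_1 \cup W_2\) from \(\{(W_1, W_2) \mid W_i \ll U_i\}\) to \(\{W \mid W \ll U_1 \cup U_2\}\); this follows because \(U_1 \cup U_2 = \bigvee_{W_i \ll U_i}(W_1 \cup W_2)\) is a directed join by \cref{i:con_con} of \cref{xbmydj}, so any \(W \ll U_1 \cup U_2\) is dominated by some \(W_1 \cup W_2\). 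Combining this with the excision of \(\shf{F}\) and the commutativity of limits with pullbacks, one rewrites \(\shf{G}(U_1 \cup U_2)\) as \(\shf{G}(U_1) \times_{\shf{G}(U_1 \cap U_2)} \shf{G}(U_2)\); the middle term is identified by a further cofinality argument, with \((V, V)\) providing a preimage for each \(V \ll U_1 \cap U_2\).

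For the universal property, suppose \(\shf{H}\) is a sheaf with a map \(\shf{F} \to \shf{H}\). By \cref{xrmpdj}, \(\shf{H}\) preserves cofiltered limits, and by \cref{i:con_con} of \cref{xbmydj}, \(U = \bigvee_{V \ll U} V\) is a directed join, so \(\shf{H}(U) \simeq \projlim_{V \ll U} \shf{H}(V)\). The composite \(\shf{G}(U) \to \projlim_{V \ll U} \shf{H}(V) \simeq \shf{H}(U)\) yields the desired factorization \(\shf{G} \to \shf{H}\), and therefore the canonical morphism \(\shf{F}(U) \to \shf{G}(U)\) is the sheafification morphism. The main obstacle lies in the excision step: the substantive input is the decomposition property that \(W \ll U_1 \cup U_2\) implies \(W \leq W_1 \cup W_2\) with \(W_i \ll U_i\), which is the essential use of the stably continuous frame axioms; the remaining cofinality manipulations, while requiring care to be carried out \(\infty\)-categorically, reduce to standard combinatorics on directed posets.
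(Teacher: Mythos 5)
Your route is genuinely different from the paper's. The paper proves this in three lines by passing through the Flachsmeyer construction \(p\colon\gamma X\to X\): sheafification commutes with \(p^{*}\), an excisive presheaf on the quasicompact opens of the spectral space \(\gamma X\) is already a sheaf there by \cref{xjhzdx}, and \(p_{*}\) evaluated at \(U\) is exactly \(\projlim_{V\ll U}\). Your direct verification is precisely the ``more involved'' argument anticipated in \cref{x65mfc}; it buys independence from the spectral-space machinery at the cost of the cofinality bookkeeping. That bookkeeping is essentially right. One attribution to correct: the decomposition \(W\ll U_{1}\cup U_{2}\Rightarrow W\leq W_{1}\cup W_{2}\) with \(W_{i}\ll U_{i}\) uses only continuity (\cref{i:con_con} of \cref{xbmydj}); the place where \emph{stability} (\cref{i:con_alg}) is genuinely needed is the middle term of the excision square, where you must know \(W_{1}\cap W_{2}\ll U_{1}\cap U_{2}\) for the map \((W_{1},W_{2})\mapsto W_{1}\cap W_{2}\) to land in the index poset of \(\shf{G}(U_{1}\cap U_{2})\) at all; finality of that map is then the \((V,V)\) observation you make.

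The one real gap is the last step. You produce, for each sheaf \(\shf{H}\) and each map \(\shf{F}\to\shf{H}\), \emph{a} factorization through \(\shf{G}\), and conclude ``therefore \(\shf{F}\to\shf{G}\) is the sheafification.'' Existence of factorizations does not characterize the unit of a reflective localization: it only exhibits the true sheafification \(\shf{F}'\) as a retract of \(\shf{G}\) under \(\shf{F}\). You must also show the factorization is essentially unique, i.e.\ that restriction along \(\shf{F}\to\shf{G}\) induces an equivalence \(\Map(\shf{G},\shf{H})\to\Map(\shf{F},\shf{H})\) for every sheaf \(\shf{H}\). The standard way to close this is the localization criterion: writing \(L\shf{F}(U)=\projlim_{V\ll U}\shf{F}(V)\) with unit \(\eta\colon\id\to L\), your argument already gives that \(L\shf{F}\) is a sheaf and that \(\eta_{\shf{H}}\) is an equivalence for every sheaf \(\shf{H}\) (hence \(\eta_{L\shf{F}}\) is one); what remains is to check that \(L\eta_{\shf{F}}\) is an equivalence as well. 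This follows either by identifying \(L\eta_{\shf{F}}\) with \(\eta_{L\shf{F}}\) componentwise (which requires setting \(L\) up coherently, e.g.\ as a restriction of a right Kan extension along \(R\to\Ind(R)\)), or by invoking the interpolation property of the way-below relation (\(W\ll U\Rightarrow\exists V,\ W\ll V\ll U\)), exactly as in the proof of \cref{shv_fl}. With that addition your proof is complete.
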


\begin{proof}
  We write \((\X)'\) for the sheafification.
  We aim to compute \(\shf{F}'\).
  This is equivalent to
  \(p_{*}p^{*}(\shf{F}')\),
  where \(p\colon\gamma X\to X\)
  denotes the Flachsmeyer construction.
  This can also be computed as
  \(p_{*}((p^{*}\shf{F})')\),
  where \(p^{*}\) denotes
  the presheaf pullback.
  Since \(\shf{F}\) is excisive,
  for any open~\(U\) of~\(X\),
  regarded as a quasicompact open of \(\gamma X\),
  we have \((p^{*}\shf{F})'(U)\simeq\shf{F}(U)\).
  Hence we obtain the desired result.
\end{proof}

\begin{remark}\label{x65mfc}
  One can also prove \cref{sheafify} directly
  without relying on the Flachsmeyer construction,
  though the argument is more involved.
\end{remark}

\begin{corollary}\label{xcy9f7}
  Let \(X\) be a stably compact space.
  Then for opens \(U\ll U'\),
  the morphism \(U\to U'\)
  in \(\Shv(X)\) is compact.
\end{corollary}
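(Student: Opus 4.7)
The plan is to verify the compactness condition of \cref{xl2h0i} directly, using \cref{sheafify} to compute filtered colimits in \(\Shv(X)\). First, I recall that an open~\(V\) of~\(X\) corresponds to a subterminal object with \(\Map_{\Shv(X)}(V,\shf{F})\simeq\shf{F}(V)\) for any sheaf~\(\shf{F}\), so the condition of \cref{xl2h0i} for the morphism \(U\to U'\) becomes a statement about evaluations of sheaves.

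Given a filtered diagram \((D_{j})_{j}\) in \(\Shv(X)\), since finite limits commute with filtered colimits in \(\Cat{S}\), the presheaf colimit \(V\mapsto\injlim_{j}D_{j}(V)\) is excisive. By \cref{sheafify} applied to this presheaf, one obtains
\[
  \bigl(\injlim_{j}D_{j}\bigr)(V)\simeq\projlim_{W\ll V}\injlim_{j}D_{j}(W)
\]
for every open~\(V\).

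Now take \(V=U'\). The hypothesis \(U\ll U'\) places~\(U\) inside the indexing set of the above projective limit, so projection to the component at \(W=U\) produces the candidate filler
\[
  \Map\bigl(U',\injlim_{j}D_{j}\bigr)
  \simeq\projlim_{W\ll U'}\injlim_{j}D_{j}(W)
  \to\injlim_{j}D_{j}(U)
  \simeq\injlim_{j}\Map(U,D_{j}).
\]
This is the key geometric input: without \(U\ll U'\), the required projection simply does not exist.

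The final step will be to verify that the two triangles of the diagram in \cref{xl2h0i} commute. The upper triangle is immediate from the definition of the canonical map \(\injlim_{j}\Map(U',D_{j})\to\Map(U',\injlim_{j}D_{j})\), which on the presheaf level is obtained by restriction to arbitrary \(W\ll U'\). For the lower triangle, the restriction \((\injlim_{j}D_{j})(U')\to(\injlim_{j}D_{j})(U)\) is realized via \cref{sheafify} by projecting at each \(W\ll U\), and every such \(W\) satisfies \(W\ll U'\) by transitivity of~\(\ll\); these projections agree with first projecting at \(W=U\) and then sheafifying. The only possible obstacle is the homotopy-coherent bookkeeping needed to assemble these identifications, but no further geometric content is required.
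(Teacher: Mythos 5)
Your proof is correct and follows essentially the same route as the paper's: both identify the filtered colimit of sheaves via \cref{sheafify} as \(\projlim_{W\ll U'}\injlim_{j}D_{j}(W)\) and obtain the filler by projecting to the component at \(W=U\), which lies in the index poset precisely because \(U\ll U'\). The paper packages the commutativity of the two triangles as a chain of inclusions of ideals \(\{V\mid V\ll U\}\subset\{V\mid V\leq U\}\subset\{V'\mid V'\ll U'\}\subset\{V'\mid V'\leq U'\}\), which is the same bookkeeping you describe.
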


\begin{proof}
  Note that
  filtered colimits of excisive presheaves
  stay excisive,
  since
  filtered colimits commute with finite limits
  in~\(\Cat{S}\).
  Thus we may apply \cref{sheafify}
  to compute the filtered colimit of sheaves
  \(\injlim_{i}\shf{F}_{i}\).
  We need a filler in the diagram
  \begin{equation*}
    \begin{tikzcd}
      \injlim_{i}(\shf{F}_{i}(U'))\ar[r]\ar[d]&
      \injlim_{i}(\shf{F}_{i}(U))\ar[d]\\
      \projlim_{V'\ll U'}\injlim_{i}(\shf{F}_{i}(V'))\ar[r]\ar[ur,dashed]&
      \projlim_{V\ll U}\injlim_{i}(\shf{F}_{i}(V))\rlap.
    \end{tikzcd}
  \end{equation*}
  From the inclusions of ideals
  \begin{equation*}
    \{V\mid V\ll U\}
    \subset\{V\mid V\leq U\}
    \subset\{V'\mid V'\ll U'\}
    \subset\{V'\mid V'\leq U'\},
  \end{equation*}
  we get a canonical filler.
\end{proof}

\subsection{Digression: proper base change and Verdier duality}\label{ss:ver}

Lurie proved the following important properties
about sheaves on compact Hausdorff spaces
in~\cite[Section~7.3.4]{LurieHTT} and~\cite[Section~5.5.5]{LurieHA}:

\begin{theorem}[Lurie]\label{xuizxc}
  Let \(X\) be a compact Hausdorff space.
  \begin{enumerate}
    \item\label{i:pbc}
      The \(\infty\)-topos \(\Shv(X)\) is proper
      (see \cref{xwri4g}).
    \item\label{i:ver}
      For a stable \(\infty\)-category~\(\cat{C}\)
      having limits and colimits,
      there is a canonical equivalence
      \({\VD}\colon\Shv(X;\cat{C})\to\cShv(X;\cat{C})\)
      pointwise given by
      \begin{equation*}
        \VD(\shf{F})(U)
        =
        \injlim_{V\supset X\setminus U}\fib
        \bigl(
          \shf{F}(X)\to\shf{F}(V)
        \bigr),
      \end{equation*}
      where \(\cShv(X;\cat{C})=\Shv(X;\cat{C}^{\op})^{\op}\)
      denotes the \(\infty\)-category
      of cosheaves.
  \end{enumerate}
\end{theorem}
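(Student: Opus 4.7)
The plan is to treat both parts by combining the K-sheaf presentation from \cref{shv_cofl} with the self-duality of compact Hausdorff spaces under de~Groot duality (\cref{xumbdo}).

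For part~\cref{i:pbc}, I would reduce properness of $\Shv(X)$ to the Beck--Chevalley condition for the pushforward along closed inclusions $i\colon Z \hookrightarrow X$. Using \cref{sheafify}, the sections of $i_{*}i^{*}\shf{F}$ over an open $U$ can be computed as a filtered colimit $\injlim_{V}\shf{F}(V)$ as $V$ ranges over open neighborhoods of $Z\cap U$ contained in~$U$. The essential input is normality of a compact Hausdorff space: every such $V$ cofinally contains another open neighborhood $V'$ of $Z\cap U$ whose closure lies in~$V$. This cofinal refinement reduces the required limit/colimit interchange to the commutation of filtered colimits with finite limits in the target.

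For part~\cref{i:ver}, I would rewrite the given formula by substituting $K=X\setminus V$ to obtain
\begin{equation*}
\VD(\shf{F})(U) \simeq \injlim_{K\subset U}\fib\bigl(\shf{F}(X)\to\shf{F}(X\setminus K)\bigr),
\end{equation*}
a compactly supported sections construction where $K$ ranges over compact subsets of~$U$. Using the K-sheaf description of \cref{shv_cofl}, I would construct a candidate inverse $\VD^{-1}\colon\cShv(X;\cat{C})\to\Shv(X;\cat{C})$ by the dual recipe exchanging the roles of opens and compact sets, and then verify mutual inversion by direct computation, using the self-duality of~$X$ to match the sheaf and cosheaf pictures.

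The main obstacle will be this verification in part~\cref{i:ver}: although the formulas are formally symmetric after passage to the K-sheaf picture, showing that $\VD \circ \VD^{-1}$ and $\VD^{-1} \circ \VD$ are canonically the identity requires unwinding nested (co)fiber sequences and tracking their interaction with filtered colimits and cofiltered limits. Conceptually, Verdier duality in this setting ought to be a categorification of de~Groot self-duality for compact Hausdorff spaces, but making this rigorous still demands careful diagrammatic bookkeeping at the level of the K-sheaf presentation.
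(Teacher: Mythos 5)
This statement is quoted from Lurie and not reproved in the paper; the relevant comparison is with the paper's proofs of the generalizations \cref{sc_proper,sc_verdier}. Your attempt has a genuine gap in part~\cref{i:pbc}. Properness of the \(\infty\)-topos \(\Shv(X)\) in the sense of \cref{xwri4g} is a base-change condition against \emph{arbitrary geometric morphisms of \(\infty\)-toposes} \(\cat{Y}\to\cat{X}\) over the base \(\Cat{S}\); it does not reduce to a Beck--Chevalley condition for closed inclusions \(Z\hookrightarrow X\). (Indeed, \cref{0_proper} warns that the localic notion of properness, which is what your closed-inclusion condition probes, does \emph{not} imply properness of the sheaf \(\infty\)-topos.) The topological input you identify---normality, i.e.\ interpolation of neighborhoods---is the right ingredient, but it must be fed into a different argument: one models \(\Shv(X;\cat{Y})\) for an \emph{arbitrary} \(\infty\)-topos \(\cat{Y}\) as excisive functors satisfying a filtered-colimit interpolation condition (the K-sheaf description of \cref{shv_cofl}, or the way-below description of \cref{shv_fl}), and then observes that pullback along any geometric morphism is computed objectwise because inverse images preserve finite limits and filtered colimits. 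Without quantifying over all \(\infty\)-toposes (or invoking the Martini--Wolf characterization of \cref{xco75c}, which postdates Lurie's theorem and which you do not cite), your reduction does not establish properness.

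For part~\cref{i:ver}, your rewriting of \(\VD\) as compactly supported sections \(\injlim_{K\subset U}\fib(\shf{F}(X)\to\shf{F}(X\setminus K))\) is correct, and the K-sheaf strategy is the right one, but the proof is not actually carried out: you flag the mutual-inversion check as the main unresolved obstacle. The step that dissolves this obstacle is to present \emph{both} \(\Shv(X;\cat{C})\) and \(\cShv(X;\cat{C})\) as excisive functors on the same poset of compact (saturated) subsets---the former via \cref{shv_cofl}, the latter via \cref{shv_fl} applied to the dual---after which \(\VD\) becomes literally \(\shf{G}\mapsto(K\mapsto\fib(\shf{G}(X)\to\shf{G}(K)))\) with inverse \(\shf{G}'\mapsto(K\mapsto\cofib(\shf{G}'(K)\to\shf{G}'(X)))\); invertibility is then immediate from stability of \(\cat{C}\), with no nested (co)fiber bookkeeping. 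As written, your proposal identifies the correct presentations but stops short of the argument that makes the duality a one-line consequence of them.
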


\begin{remark}\label{x10lcy}
Note that
  \cref{i:ver} of \cref{xuizxc}
  was proven for locally compact Hausdorff spaces,
  but that version can be deduced from the compact case.
\end{remark}

In this section,
we generalize these statements
to stably compact spaces.

\begin{theorem}\label{sc_proper}
  For a stably compact space~\(X\),
  the \(\infty\)-topos \(\Shv(X)\) is proper.
\end{theorem}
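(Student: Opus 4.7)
The plan is to reduce to the spectral case via the Flachsmeyer construction \(p\colon\gamma X\to X\) from \cref{xs5a86}. Writing \(R\) for the frame of opens of \(X\), the frame \(\Ind(R)\) is coherent: it is generated under finite meets by the principal ideals, which are its compact elements. Hence \(\gamma X\) corresponds to a spectral locale.

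First I would exhibit \(X\) as a retract of \(\gamma X\) in \(\Cat{Loc}\). Since \(X\) is stably continuous, the join map \(\Ind(R)\to R\), \(I\mapsto\bigvee I\), is a frame homomorphism by \cref{i:con_alg} of \cref{xbmydj} (as already observed in the proof of \cref{xmx7uh}), and this corresponds to a locale morphism \(s\colon X\to\gamma X\). Computing on frames, \((p\circ s)^{-1}\) sends \(U\in R\) to the principal ideal of \(U\) and then to its join, which is \(U\) itself, so \(p\circ s=\id_X\). Passing to sheaves, \(p^{*}\) becomes fully faithful with retraction \(s^{*}\), realizing \(\Shv(X)\) as a retract of \(\Shv(\gamma X)\) in the \(\infty\)-category of \(\infty\)-topoi; in particular \(\Gamma_X\simeq\Gamma_{\gamma X}\circ p^{*}\).

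Next, since \(\gamma X\) is spectral, \(\Shv(\gamma X)\) is a coherent \(\infty\)-topos, and is proper in the sense of \cref{xwri4g} by the standard \(\infty\)-categorical extension of the classical theorem on coherent topoi. I would then argue that properness passes to \(\Shv(X)\) through the retract structure: for any \(\infty\)-topos \(\cat{Y}\), tensoring with \(\cat{Y}\) preserves retracts, so the Beck–Chevalley transformation associated to the cartesian square for \(\Shv(X)\to *\) along \(\cat{Y}\to *\) is a retract of the corresponding transformation for \(\Shv(\gamma X)\to *\); since retracts of equivalences are equivalences, \(\Shv(X)\) is proper.

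The main obstacle I anticipate is upgrading the essentially formal observation that \(\Gamma_X\) preserves filtered colimits (immediate from \(\Gamma_X\simeq\Gamma_{\gamma X}\circ p^{*}\), combined with the fact that the Schwartz description \cref{shv_fl} shows the inclusion \(p^{*}\) is even closed under filtered colimits) to the full proper base change statement. This requires verifying that Beck–Chevalley equivalences in \(\infty\)-topoi are stable under retract diagrams and pinning down a precise reference for the properness of coherent \(\infty\)-topoi in the \(\infty\)-categorical setting.
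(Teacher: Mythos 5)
Your route is not the one the paper's proof actually takes, but it is precisely the alternative the paper itself sketches in \cref{xogrs7}: reduce to the spectral case via the Flachsmeyer retract (\cref{xmx7uh}) and invoke Lurie's result for spectral spaces (\cref{xp9f04}). The paper's written proof is instead a direct verification: using \cref{shv_fl} it identifies \(\Shv(X;\cat{X})\) with the excisive functors \(R^{\op}\to\cat{X}\) for which \cref{e:i9uc6} is an equivalence, observes that the base-change functor \(g^{*}\) is computed objectwise by \(f^{*}\) (which preserves finite limits and filtered colimits, hence preserves both conditions), and reads off the base-change equivalence from the fact that pushforward to \(\cat{X}''\) is evaluation at the top element. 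One small correction to your setup: the inverse image of the Flachsmeyer map \(p\colon\gamma X\to X\) is \emph{not} the principal-ideal map (which is not a frame homomorphism, since it fails to preserve infinite joins); it is \(\check{\jmath}(U)=\{V\mid V\ll U\}\). The composite \((p\circ s)^{-1}(U)=\bigvee\{V\mid V\ll U\}=U\) still holds, but only by \cref{i:con_con} of \cref{xbmydj}, so the retract structure does survive.

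The genuine gap is the step you yourself flag: "the Beck--Chevalley transformation for \(\Shv(X)\) is a retract of the one for \(\Shv(\gamma X)\), and retracts of equivalences are equivalences." As stated this does not typecheck: the two transformations are morphisms in \(\Fun(\Shv(X;\cat{X}'),\cat{X}'')\) and \(\Fun(\Shv(\gamma X;\cat{X}'),\cat{X}'')\) respectively, so one cannot directly exhibit the former as a retract of the latter. Relating them by pasting squares gives \(\alpha_{\Shv(X)}=\bigl(p''_{*}\beta\bigr)\circ\bigl(\alpha_{\Shv(\gamma X)}\circ s'_{*}\bigr)\), where \(\beta\) is an auxiliary Beck--Chevalley transformation for the section \(s\) itself; invertibility of \(\alpha_{\Shv(\gamma X)}\) then only shows that \(\alpha_{\Shv(X)}\) is a split monomorphism, and you still need to handle \(\beta\), which is a base-change statement of exactly the kind you are trying to prove. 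So "properness descends along retracts of \(\infty\)-topoi" is true but requires an argument, and the most economical way to supply the missing ingredient here is to note that \(p^{*}\), its retraction, and the condition \cref{e:i9uc6} cutting out the essential image of \(p^{*}\) are all computed objectwise in the excisive-functor model and hence commute with the objectwise \(f^{*}\) --- at which point you have essentially reproduced the paper's direct proof. I would either carry out that computation or give a self-contained proof that the class of proper \(\infty\)-topoi is closed under retracts before treating this step as done.
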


\begin{remark}\label{xp9f04}
  Lurie proved this when \(X\) is spectral
  in~\cite[Corollary~7.3.5.3]{LurieHTT}.
  So \cref{sc_proper} also generalizes this result.
\end{remark}

\begin{theorem}\label{sc_verdier}
  Let \(X\) be a stably compact space.
  Then there is a canonical equivalence
  \({\VD}\colon\Shv(X;\Cat{Sp})\to\cShv(X^{\op};\Cat{Sp})\)
  pointwise given by
  \begin{equation*}
    \VD(\shf{F})(X\setminus K)
    =\injlim_{U\supset K}\fib(\shf{F}(X)\to\shf{F}(U))
  \end{equation*}
  where \(K\) is a compact saturated subset of~\(X\).
\end{theorem}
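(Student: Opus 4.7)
The plan is to mirror Lurie's proof strategy for compact Hausdorff spaces (\cref{i:ver} of \cref{xuizxc}), adapted to the stably compact setting using the K-sheaf and K-cosheaf descriptions developed in \cref{ss:sheaf}.

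First, I would verify that the stated formula defines a cosheaf on~\(X^{\op}\). Since opens of~\(X^{\op}\) are precisely complements of compact saturated subsets of~\(X\), one recasts \(\VD(\shf{F})\) as a functor \(K\mapsto \shf{F}^{\vee}(K)\) on the coframe of compact saturated subsets, where \(\shf{F}^{\vee}(K)=\injlim_{U\supset K}\fib(\shf{F}(X)\to\shf{F}(U))\). By \cref{shv_cofl} applied to~\(X^{\op}\), it suffices to check that \(\shf{F}^{\vee}\) is excisive and satisfies the continuity condition analogous to~\cref{e:iv8wp}. Both follow formally from the corresponding properties of \(\shf{F}\) on~\(X\) via \cref{shv_fl}, together with the fact that fibers commute with filtered colimits and with pullback squares in~\(\Cat{Sp}\).

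Next, I would construct an explicit inverse \(\VD^{-1}\colon\cShv(X^{\op};\Cat{Sp})\to\Shv(X;\Cat{Sp})\) by the formally dual formula, sending a cosheaf \(\shf{G}\) on~\(X^{\op}\) (equivalently, an excisive and continuous functor on compact saturated subsets of~\(X\)) to the sheaf on~\(X\) whose value on an open~\(U\) is a suitable cofiltered limit of desuspended cofibers of the form \(\cofib(\shf{G}(X)\to\shf{G}(X\setminus K))[-1]\) as \(K\) ranges over compact saturated sets contained in~\(U\); that this defines a sheaf on~\(X\) is then verified via \cref{sheafify}.

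The main step, and the principal obstacle, is establishing mutual inverseness \(\VD^{-1}\circ\VD\simeq\id\) (and vice versa). Unwinding the composite produces a nested interchange of filtered colimits and cofiltered limits, and the equivalence ultimately reduces to a form of proper base change---specifically, the identification of \(\fib(\shf{F}(X)\to\shf{F}(U))\) with ``sections of~\(\shf{F}\) supported in \(X\setminus U\)''. Proper base change for \(\Shv(X)\) is available to us as \cref{sc_proper}, and combining it with the way-below interpolations intrinsic to the stably compact structure (used already in the proofs of \cref{shv_fl,shv_cofl,sheafify}) should yield the desired equivalence. As a sanity check, the construction recovers Lurie's~\(\VD\) in the compact Hausdorff case \(X=X^{\op}\), and in the spectral case it reduces, via \cref{xjhzdx}, to the elementary combinatorial duality between constructible sheaves on~\(X\) and on its Hochster dual.
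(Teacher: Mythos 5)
Your first paragraph is on the right track and matches the paper's strategy: both you and the paper transport everything to the coframe \(Q\) of compact saturated subsets via \cref{shv_fl,shv_cofl}. The problem is that the step you yourself single out as ``the main step, and the principal obstacle''---mutual inverseness---is precisely the step you do not carry out. You propose to resolve it by an unspecified interchange of filtered colimits and cofiltered limits plus proper base change (\cref{sc_proper}), and conclude that this ``should yield the desired equivalence''; that is a hope, not an argument, and in fact none of that machinery is needed. The point you are missing is that once \emph{both} categories live on \(Q\)---\(\Shv(X;\Cat{Sp})\) as excisive functors \(Q^{\op}\to\Cat{Sp}\) satisfying \cref{e:iv8wp} (via \cref{shv_cofl}), and \(\cShv(X^{\op};\Cat{Sp})\) as excisive functors \(Q\to\Cat{Sp}\) satisfying \cref{e:i9uc6} (via \cref{shv_fl} applied to \(X^{\op}\))---the duality is the tautological stable operation \(\shf{G}\mapsto\bigl(K\mapsto\fib(\shf{G}(X)\to\shf{G}(K))\bigr)\), with inverse \(\shf{G}'\mapsto\bigl(K\mapsto\cofib(\shf{G}'(K)\to\shf{G}'(X))\bigr)\). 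Mutual inverseness is literally the fiber--cofiber sequence in \(\Cat{Sp}\), and the two side conditions are exchanged because \(\fib\) and \(\cofib\) commute with the filtered colimits and finite limits involved. The stated pointwise formula then falls out of the formula \(\shf{F}\mapsto\bigl(K\mapsto\injlim_{U\supset K}\shf{F}(U)\bigr)\) for the first identification. There is no colimit/limit interchange and no appeal to properness.

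The garbling of your candidate inverse is a symptom of the same issue. For a cosheaf the structure maps go \(\shf{G}(X\setminus K)\to\shf{G}(X)\), so there is no natural map \(\shf{G}(X)\to\shf{G}(X\setminus K)\) whose cofiber you could take; and since \(\cofib(f)[-1]\simeq\fib(f)\) in a stable \(\infty\)-category, your ``desuspended cofiber'' is just another fiber, so the composite with \(\VD\) cannot return \(\shf{F}\) (what is needed is the honest unshifted cofiber, via \(\cofib\bigl(\fib(A\to B)\to A\bigr)\simeq B\)). If you invert on \(Q\) as above, the inverse is a single cofiber with no shift and no auxiliary limit, and the passage back from compact saturated sets to opens is already packaged in \cref{shv_cofl}; your closing sanity checks (compact Hausdorff, spectral/Hochster) are fine but do not substitute for this step.
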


\begin{remark}\label{xmdzsq}
  In \cref{sc_verdier},
  we can replace~\(\Cat{Sp}\) with
  any stable presentable \(\infty\)-category~\(\cat{C}\).
  This can be done by tensoring~\(\cat{C}\),
  since we know that \(\Shv(X;\Cat{Sp})\)
  is dualizable by \cref{xc8o0g,x2hild}.
\end{remark}

We first recall the notion of properness
from~\cite[Section~7.3.1]{LurieHTT}:

\begin{definition}\label{xwri4g}
  Let 
  \(p\colon\cat{Y}\to\cat{X}\)
  be a geometric morphism
  between \(\infty\)-toposes.
  We say that
  \(p\) is \emph{proper}
  if for any cartesian squares
  of \(\infty\)-toposes
  \begin{equation*}
    \begin{tikzcd}
      \cat{Y}''\ar[r,"g'"]\ar[d,"p''"']&
      \cat{Y}'\ar[r,"g"]\ar[d,"p'"]&
      \cat{Y}\ar[d,"p"]\\
      \cat{X}''\ar[r,"f'"]&
      \cat{X}'\ar[r,"f"]&
      \cat{X}\rlap,
    \end{tikzcd}
  \end{equation*}
  the canonical natural transformation
  \((f')^{*}\circ p'_{*}\to p''_{*}\circ(g')^{*}\) is an equivalence.
  When \(\cat{X}\) is~\(\Cat{S}\),
  we simply say that \(\cat{Y}\) is \emph{proper}.
\end{definition}

\begin{remark}\label{xhdrl5}
  A geometric morphism between \(1\)-toposes
  satisfying the condition of \cref{xwri4g}
  when “\(\infty\)-topos” is replaced by “\(1\)-topos”
  is usually called \emph{tidy} in \(1\)-topos theory literature.
\end{remark}

\begin{remark}\label{0_proper}
  The sheaf \(\infty\)-topos of
  a proper locale,
  i.e., a locale satisfying the condition of \cref{xwri4g}
  when “\(\infty\)-topos” is replaced by “locale”,
  is \emph{not} proper in general.
  This makes~\cref{i:pbc} of \cref{xuizxc}
  subtle.
\end{remark}

\begin{proof}[Proof of \cref{sc_proper}]
  Let \(\cat{Y}\to\cat{X}\) be a geometric morphism.
  We obtain the diagram
  \begin{equation*}
    \begin{tikzcd}
      \Shv(X;\cat{Y})\ar[r,"g"]\ar[d,"q"']&
      \Shv(X;\cat{X})\ar[d,"p"]\\
      \cat{Y}\ar[r,"f"]&
      \cat{X}\rlap.
    \end{tikzcd}
  \end{equation*}
  We need to show that
  \(f^{*}p_{*}\shf{F}\to q_{*}g^{*}\shf{F}\)
  is an equivalence
  for any object~\(\shf{F}\) of~\(\Shv(X;\cat{X})\).

  We write \(R\) for the frame of opens of~\(X\).
  We use \cref{shv_fl}
  to identify
  \(\Shv(X;\cat{X})\)
  with the full subcategory of
  excisive functors \(R^{\op}\to\cat{X}\)
  such that \cref{e:i9uc6} is an equivalence.
  We similarly proceed for \(\Shv(X;\cat{Y})\).
  Then \(g^{*}\)
  can be computed as objectwise
  applying~\(f^{*}\),
  since \(f^{*}\)
  commutes with finite limits and filtered colimits.
  Therefore,
  we obtain the desired result.
\end{proof}

Our proof above is direct,
but
more conceptual proofs are also possible:

\begin{remark}\label{xogrs7}
  To prove \cref{sc_proper},
  by \cref{xmx7uh},
  we can assume that \(X\) is spectral.
  This case was already established by Lurie;
  see \cref{xp9f04}.
\end{remark}

\begin{remark}\label{xco75c}
  Martini–Wolf~\cite{MartiniWolf25}
  recently proved the characterization of proper geometric morphisms
  expected in~\cite[Remark~7.3.1.5]{LurieHTT}
  (and known in \(1\)-topos theory).
  In particular,
  an \(\infty\)-topos~\(\cat{X}\) is proper if and only if
  the final object is compact.
  Hence, \cref{sc_proper}
  follows from \cref{xc8o0g}.
  However note that
  given how the proof in~\cite{MartiniWolf25}
  goes,
  this proof is similar to \cref{xogrs7};
  cf.~\cite[Remark~3.5.5]{MartiniWolf25}.
  Nevertheless,
  the reader is encouraged to consult their paper
  for a more satisfactory treatment about properness
  in general.
\end{remark}

We now move on to proving Verdier duality:

\begin{proof}[Proof of \cref{sc_verdier}]
  We write~\(Q\)
  for the coframe of compact saturated subsets of~\(X\).
  We use \cref{shv_cofl}
  to identify \(\Shv(X;\Cat{Sp})\)
  with the \(\infty\)-category~\(\cat{C}\)
  of excisive functors \(\shf{G}\colon Q^{\op}\to\Cat{Sp}\)
  such that \cref{e:iv8wp} is an equivalence.
  We use \cref{shv_fl}
  to identify \(\cShv(X;\Cat{Sp})\)
  with the \(\infty\)-category~\(\cat{C}'\)
  of excisive functors \(\shf{G}'\colon Q\to\Cat{Sp}\)
  such that \cref{e:i9uc6} is an equivalence.
  The functor \(\cat{C}\to\cat{C}'\)
  given by
  \begin{equation*}
    \shf{G}\mapsto\bigl(K\mapsto\fib(\shf{G}(X)\to\shf{G}(K))\bigr)
  \end{equation*}
  is an equivalence
  with the inverse
  \begin{equation*}
    \shf{G}'\mapsto\bigl(K\mapsto\cofib(\shf{G}'(K)\to\shf{G}'(X))\bigr).
  \end{equation*}
  Since
  the equivalence
  \(\Shv(X;\Cat{Sp})\to\cat{C}\) is defined as
  \begin{equation*}
    \shf{F}\mapsto\biggl(K\mapsto\injlim_{U\supset K}\shf{F}(U)\biggr),
  \end{equation*}
  we obtain the desired description of~\(\VD\).
\end{proof}

\begin{remark}\label{xg28w6}
The proof of \cref{sc_verdier} above
  can be regarded as a reduction
  to the spectral case.
  We consider
  the morphism \(p\colon\gamma(X^{\op})\to X^{\op}\)
  and its dual \(q\colon(\gamma(X^{\op}))^{\op}\to X\).
  Via Verdier duality for \(\gamma(X^{\op})^{\op}\),
  we have identified
  the essential image
  of~\(q^{*}\) with that of~\((p_{*})^{\vee}\).
\end{remark}

\begin{remark}\label{xvzpx4}
One can observe that our Verdier duality statement
  comes from sheaf operations:
  For a stably compact space~\(X\),
  we write \(T\) for the closure of the (set-theoretic) diagonal
  of \(X^{\op}\times X\).
  Then the pairing
  \begin{equation*}
    B\colon\Shv(X^{\op};\Cat{Sp})\otimes_{\Cat{Sp}}\Shv(X;\Cat{Sp})\simeq\Shv(X^{\op}\times X;\Cat{Sp})
    \xrightarrow{i^{*}}\Shv(T;\Cat{Sp})\xrightarrow{p_{*}}\Shv(*;\Cat{Sp})
    \simeq\Cat{Sp}
  \end{equation*}
  determines the equivalence in \cref{sc_verdier} via
  \begin{equation*}
    \Shv(X;\Cat{Sp})
    \to[\Shv(X^{\op};\Cat{Sp}),\Shv(X^{\op};\Cat{Sp})\otimes\Shv(X;\Cat{Sp})]
    \xrightarrow{B}[\Shv(X^{\op};\Cat{Sp}),\Cat{Sp}],
  \end{equation*}
  where \([\X,\X]\) denotes
  the internal mapping object in~\(\Cat{Pr}\).
\end{remark}

\section{Continuous spectrum}\label{s:sm_con}

In this section,
we introduce the notion of continuous spectrum,
which is the main object of study in this paper.
We begin in \cref{ss:vsc}
by introducing the notion of very Schwartz coidempotents.
In \cref{ss:sm_con}, we show that they form a stably continuous frame.
In \cref{ss:adj}, we establish the adjoint characterization,
thereby obtaining the unstable generalization of \cref{main_dbl}.

\subsection{Very Schwartz coidempotents}\label{ss:vsc}

We refer the reader to~\cite[Section~2.2]{ttg-sm}
for the notion of coidempotents.

\begin{definition}\label{xdgmmi}
  Let \(\cat{C}\) be an object of \(\CAlg(\Cat{Pr}^{\con})\).
  We consider the idealoid~\(\cat{I}\)
  of~\(\cIdem(\cat{C})\) spanned by
  morphisms of coidempotents \(C\to D\)
  that are compact as morphisms in~\(\cat{C}\).
  An \(\Ind\)-coidempotent of~\(\cat{C}\) is called
  \emph{Schwartz} and \emph{very Schwartz} if
  it is in \(\Ind_{\cat{I}}(\cIdem(\cat{C}))\)
  and \(\Ind_{\cat{I}_{\sd}}(\cIdem(\cat{C}))\)
  (see \cref{xhyy8b} for \((\X)_{\sd}\)),
  respectively.
\end{definition}

\begin{proposition}\label{x8yfz6}
  For \(\cat{C}\in\CAlg(\Cat{Pr}^{\con})\),
  we write \(\Ind(\cIdem(\cat{C}))'\)
  for the full subcategory
  of \(\Ind(\cIdem(\cat{C}))\)
  spanned by very Schwartz \(\Ind\)-coidempotents.
  The functor
  \begin{equation*}
    \Ind(\cIdem(\cat{C}))'
    \to
    \cIdem(\cat{C})
  \end{equation*}
  is fully faithful.
\end{proposition}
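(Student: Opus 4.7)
The strategy is to transfer the fully faithful colimit statement \cref{unique} at the level of~$\cat{C}$ to the subcategory of coidempotents, using that $\cIdem(\cat{C}) \hookrightarrow \cat{C}$ is a fully faithful inclusion.

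First, I would note that since $\cIdem(\cat{C}) \hookrightarrow \cat{C}$ is fully faithful, the induced functor $\Ind(\cIdem(\cat{C})) \to \Ind(\cat{C})$ is also fully faithful. Moreover, the idealoid $\cat{I}$ of \cref{xdgmmi} is by definition the pullback of the idealoid of compact morphisms in~$\cat{C}$, so that a very Schwartz $\Ind$-coidempotent, viewed as an $\Ind$-object of~$\cat{C}$, is a Schwartz $\Ind$-object in the sense of \cref{xxc7v8}.

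Next, for a very Schwartz $\Ind$-coidempotent $X = (C_{i})_{i}$, I need to verify that its colimit $\injlim_{i} C_{i}$ in~$\cat{C}$ lies in $\cIdem(\cat{C})$, so that the functor in the statement is well-defined. Using \cref{sequential}, $X$ is a filtered colimit of sequential diagrams $(C_{n})_{n}$ of coidempotents whose transitions $C_{n} \to C_{n+1}$ are very compact in~$\cat{C}$. Subdivisibility then allows one to insert intermediate coidempotents at each step, and the corresponding result in~\cite{ttg-sm} on $\Ind$-coidempotents yields $\injlim_{i} C_{i} \in \cIdem(\cat{C})$.

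Finally, the full faithfulness follows formally: for two very Schwartz $\Ind$-coidempotents $X = (C_{i})_{i}$ and $Y = (D_{j})_{j}$ with colimits $C, D \in \cIdem(\cat{C})$, we compute
\[
  \Map_{\Ind(\cIdem(\cat{C}))}(X,Y)
  \simeq \Map_{\Ind(\cat{C})}(X,Y)
  \simeq \Map_{\cat{C}}(C,D)
  \simeq \Map_{\cIdem(\cat{C})}(C,D),
\]
using the fully faithful inclusions at both ends and \cref{xd0jpr} in the middle. The main obstacle is the second step—checking that the colimit of a very Schwartz $\Ind$-coidempotent is itself a coidempotent—as the mapping-space calculation is then an essentially formal consequence of \cref{unique} in~$\cat{C}$; this is also where the ``very'' in very Schwartz (as opposed to merely Schwartz) is decisively used.
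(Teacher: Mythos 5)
Your overall strategy—transferring \cref{unique}/\cref{xd0jpr} along a fully faithful inclusion of coidempotents—is the right one, but the inclusion you use is the wrong one, and this breaks the displayed computation. The forgetful functor \(\cIdem(\cat{C})\to\cat{C}\) is \emph{not} fully faithful: a coidempotent is an object of the slice \(\cat{C}_{/\unit}\), and it is \(\cIdem(\cat{C})\hookrightarrow\cat{C}_{/\unit}\) that is a full inclusion. Mapping spaces between coidempotents are empty or contractible, whereas mapping spaces between their underlying objects in \(\cat{C}\) need not be; already for \(\cat{C}=\Cat{Sp}\) and \(C=D=\unit=\SS\) one has \(\Map_{\cIdem(\Cat{Sp})}(\SS,\SS)\simeq *\) but \(\Map_{\Cat{Sp}}(\SS,\SS)\simeq\Omega^{\infty}\SS\). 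Hence the first and last equivalences in your chain \(\Map_{\Ind(\cIdem(\cat{C}))}(X,Y)\simeq\Map_{\Ind(\cat{C})}(X,Y)\) and \(\Map_{\cat{C}}(C,D)\simeq\Map_{\cIdem(\cat{C})}(C,D)\) are false as written. The repair is to replace \(\cat{C}\) by \(\cat{C}_{/\unit}\) throughout: the projection \(\cat{C}_{/\unit}\to\cat{C}\) creates filtered colimits and its mapping spaces are fibers of those of \(\cat{C}\), so a morphism of coidempotents that is compact in \(\cat{C}\) (the condition in \cref{xdgmmi}) is compact in \(\cat{C}_{/\unit}\), and \cref{xd0jpr} applies there. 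This is exactly the paper's proof, phrased as: the functor in question is a full subfunctor of the equivalence \(\Ind_{\cat{I}}(\cat{C})_{/\unit}\to\cat{C}_{/\unit}\) induced by the equivalence \(\Ind_{\cat{I}}(\cat{C})\to\cat{C}\) coming from the proof of \cref{con_0}.

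Separately, what you identify as the main obstacle—that the colimit of a very Schwartz \(\Ind\)-coidempotent is again a coidempotent—is not where any difficulty lies, and neither \cref{sequential} nor subdivisibility is needed for it: coidempotents are closed under \emph{all} filtered colimits in \(\cat{C}_{/\unit}\) by~\cite[Theorem~3.8]{ttg-sm}, which is precisely how directed joins are computed in \cref{xjwh0f}. Correspondingly, the adjective ``very'' plays no role in this proposition; the identical statement and proof hold for Schwartz \(\Ind\)-coidempotents, and subdivisibility only becomes essential later (e.g.\ in \cref{xwl07g} and \cref{x9ydr9}).
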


\begin{proof}
  From the proof of \cref{con_0}
  we know that for \(\cat{C}\in\Cat{Pr}^{\con}\),
  the functor
  \(\Ind_{\cat{I}}(\cat{C})\to\cat{C}\)
  is an equivalence,
  when \(\cat{I}\) is the idealoid of compact morphisms.
The functor in the statement
  is a full subfunctor of
  the induced equivalence
  \(\Ind_{\cat{I}}(\cat{C})_{/\unit}\to\cat{C}_{/\unit}\).
\end{proof}

\begin{definition}\label{x01624}
  In the situation of \cref{x8yfz6},
  we call a coidempotent
  \emph{very Schwartz}
  if it is inside the essential image of the functor.
  We write \(\cIdem_{\vSc}(\cat{C})\) for this poset.
\end{definition}

\begin{example}\label{xy5x4z}
  In the situation of \cref{x01624},
  let \(C\to\unit\) be a coidempotent object
  such that \(C\) is compact.
  Then it is very Schwartz.
\end{example}

\begin{remark}\label{xa7ror}
  Even in the compactly generated setting
  (i.e., \(\cat{C}\in\CAlg(\Cat{Pr}^{\cg})\)),
  there exist very Schwartz coidempotents
  that are not compact; see \cref{xrxo97}.
\end{remark}

\begin{remark}\label{xxyv58}
  In the situation of \cref{x01624},
  when \(\cat{C}\) is furthermore stable,
  we can equivalently consider idempotents.
  By~\cite[Proposition~2.14]{ttg-sm},
  the assignment
  \((c\colon C\to\unit)\mapsto(\unit\to\cofib(c))\)
  gives an equivalence
  \(\cIdem(\cat{C})\simeq\Idem(\cat{C})\).
  The idealoids of compact morphisms
  on both sides match up
  under this equivalence by \cref{cpt_st}.
  Hence we can similarly define \(\Idem_{\vSc}(\cat{C})\),
  which is equivalent to \(\cIdem_{\vSc}(\cat{C})\).
\end{remark}

\subsection{Continuous spectrum}\label{ss:sm_con}

\begin{proposition}\label{x9ydr9}
  Let \(\cat{C}\) be an object of \(\CAlg(\Cat{Pr}^{\con})\).
  Then \(\cIdem_{\vSc}(\cat{C})\)
  is a stably continuous frame.
\end{proposition}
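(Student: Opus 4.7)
My plan is to apply the characterization of stably continuous frames in Proposition~\cref{xbmydj} to $P = \cIdem_{\vSc}(\cat{C})$: I would verify that $P$ is a frame, introduce a characterization of the way-below relation via compact morphisms in~$\cat{C}$, and then use it to check the two conditions (i) and (ii).

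\emph{Frame structure.} The top $\unit$ lies in $P$ by Example~\cref{xy5x4z} since $\cat{C}\in\CAlg(\Cat{Pr}^{\con})$ has compact unit. Given very Schwartz presentations $(C_i)$ and $(C'_j)$ of $x,x'\in P$, the product-indexed diagram $(C_i\otimes C'_j)$ is a very Schwartz $\Ind$-coidempotent with colimit $x\otimes x'=x\wedge x'$: tensor preserves compact morphisms by Corollary~\cref{x0daz5}, and individual subdivisions tensor together. Directed joins in $\cIdem(\cat{C})$ are filtered colimits in~$\cat{C}$, and Lemma~\cref{xm2a0p} applied to the subdivisible idealoid gives closure of $P$ under them. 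Finite joins in $\cIdem(\cat{C})$ are pushouts in~$\cat{C}$, and a diagram chase based on Lemma~\cref{xbagrg} establishes closure of $P$ under them. Distributivity is inherited from the ambient frame $\cIdem(\cat{C})$.

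\emph{Characterization of $\ll$.} I would show that $y\ll x$ in $P$ if and only if there exists $z\in P$ with $y\leq z\leq x$ and $z\to x$ compact in~$\cat{C}$. For the ``if'' direction, given a directed $D\subset P$ with $x\leq\bigvee D$, the composite $z\to x\to\bigvee D$ is a map from $z$ to a filtered colimit in~$\cat{C}$; compactness of $z\to x$ factors it through some $d\in D$, and compatibility with the coidempotent structures (both sides compose to $z\to\unit$) forces $y\leq z\leq d$. The ``only if'' direction will be deduced from condition (i).

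\emph{Condition (i).} Write $x=\injlim_i C_i$ with $(C_i)$ very Schwartz. For each compact transition $C_i\to C_j$, Lemma~\cref{x7sojd} produces a refinement $D\colon\QQ\cap[0,1]\to\cIdem(\cat{C})$ with $D(0)=C_i$, $D(1)=C_j$ and all transitions compact. Set $y_{i,j}=\injlim_{a\in\QQ\cap[0,1)}D(a)$ in~$\cat{C}$; this is a coidempotent (filtered colimits of coidempotents stay coidempotents, since $C\otimes C\simeq C$ for any coidempotent~$C$) and very Schwartz by construction. Since $C_i\leq y_{i,j}$ and the factored map $y_{i,j}\to C_j\to x$ has second factor compact (by Schwartzness of $(C_i)$), the composite $y_{i,j}\to x$ is compact and hence $y_{i,j}\ll x$ by the ``if'' direction. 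Since $\bigvee_{i,j}y_{i,j}\geq\bigvee_i C_i=x$, we obtain $x=\bigvee_{y\ll x}y$. The ``only if'' direction of the characterization then follows: the family of $z\in P$ with $z\leq x$ and $z\to x$ compact is directed (using closure under finite joins together with Lemma~\cref{xbagrg} to see that joins preserve ``admits a compact map to~$x$'') with join $x$, so $y\ll x$ supplies such a $z$ with $y\leq z$.

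\emph{Condition (ii) and main obstacle.} Given $y\ll x_k$ for $k=1,\dotsc,n$, the characterization supplies $z_k\in P$ with $y\leq z_k\leq x_k$ and $z_k\to x_k$ compact. The meet $z=z_1\otimes\dotsb\otimes z_n$ satisfies $y\leq z\leq x_1\wedge\dotsb\wedge x_n$, and $z\to x_1\otimes\dotsb\otimes x_n$ is compact as a tensor product of compact morphisms by Corollary~\cref{x0daz5}; the characterization then gives $y\ll x_1\wedge\dotsb\wedge x_n$. The main technical obstacle lies in (i): constructing a very Schwartz coidempotent $y_{i,j}$ from a single compact morphism of coidempotents requires both the dense subdivisibility encoded in $\cat{I}_{\sd}$ and the fact that filtered colimits of coidempotents remain coidempotents. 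Closure of $P$ under finite joins and preservation of ``admits a compact map to~$x$'' under these joins are the other delicate points, both relying on the cube argument of Lemma~\cref{xbagrg}.
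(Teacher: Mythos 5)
Your proposal is correct and follows essentially the same route as the paper: the frame structure is verified exactly as in the paper's Lemma~\ref{xjwh0f}, your characterization of \(\ll\) is equivalent to Lemma~\ref{xwl07g}, and your \(y_{i,j}=\injlim_{a<1}D(a)\) is the same half-open-colimit device the paper uses (there phrased as cofinality of \(J=I\cap\cIdem_{\vSc}(\cat{C})\) in the ideal~\(I\), which also gives the directedness you instead extract from closure under finite joins). One small wording fix: \cref{x7sojd} applies to transitions lying in \(\cat{I}_{\sd}\) (very compact morphisms of coidempotents), not to arbitrary compact transitions, but since \((C_i)_i\) is very Schwartz these are cofinal and the argument goes through unchanged.
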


\begin{remark}\label{xymzpz}
  With some additional effort, one sees directly
  that \(\cIdem_{\vSc}\)
  defines a functor \(\CAlg(\Cat{Pr}^{\con})\to\Cat{SCFrm}\).
  Instead, we see this by proving the adjoint characterization
  in \cref{ss:adj}.
\end{remark}

\begin{definition}\label{xj21xs}
  For an object~\(\cat{C}\) of \(\CAlg(\Cat{Pr}^{\con})\),
  we write \(\Sm^{\con}(\cat{C})\)
  for the stably compact locale corresponding
  to \(\cIdem_{\vSc}(\cat{C})\)
  and call it the \emph{continuous spectrum} of~\(\cat{C}\).
\end{definition}

We now prove \cref{x9ydr9}.
First, we see that it is a frame:

\begin{lemma}\label{xjwh0f}
  In the situation of \cref{x9ydr9},
  the full subposet
  \(\cIdem_{\vSc}(\cat{C})\subset\cIdem(\cat{C})\)
  is a subframe.
\end{lemma}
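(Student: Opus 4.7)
My strategy is to verify, as a full subposet of the frame $\cIdem(\cat{C})$, that $\cIdem_{\vSc}(\cat{C})$ contains the top and bottom, is closed under binary meets, and is closed under arbitrary joins; distributivity is then inherited from the ambient frame.

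The top $\unit$ lies in $\cIdem_{\vSc}$ because $\unit$ is compact by \cref{x0daz5}, so it is presented by the constant $\Ind$-coidempotent at $\unit$; the bottom is analogous. For the binary meet, which in $\cIdem(\cat{C})$ coincides with the tensor product, I combine very Schwartz $\Ind$-presentations $(C_i)_i$ of $C$ and $(D_j)_j$ of $D$ (available by \cref{x8yfz6}) into the product diagram $(C_i \otimes D_j)_{(i,j)}$, whose colimit in $\cat{C}$ is $C \otimes D$. Each transition factors as $C_i \otimes D_j \to C_{i'} \otimes D_j \to C_{i'} \otimes D_{j'}$ with both legs compact by \cref{x0daz5}, so the diagram is Schwartz. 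To upgrade to very Schwartz I refine each factor transition to a $\QQ \cap [0,1]$-indexed diagram with compact transitions via \cref{x7sojd}; the diagonal of the resulting product is a $\QQ \cap [0,1]$-indexed refinement of $C_i \otimes D_j \to C_{i'} \otimes D_{j'}$ whose transitions are compact by \cref{x0daz5} together with the idealoid property \cref{cpt_idl}, which exhibits the tensor product transition as very compact.

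For arbitrary joins, I reduce to the filtered and binary cases. Filtered joins are immediate from \cref{xm2a0p} applied to the idealoid $\cat{I}_{\sd}$: a filtered colimit of very Schwartz $\Ind$-coidempotents is again very Schwartz, and the underlying colimit computes the filtered join in $\cIdem(\cat{C})$. For the binary join I invoke the explicit colimit formula for $C_1 \vee C_2$ in $\cIdem(\cat{C})$ from~\cite{ttg-sm} and, given very Schwartz $\Ind$-presentations $(C_{1,i})_i$ and $(C_{2,j})_j$, construct a very Schwartz $\Ind$-presentation of $C_1 \vee C_2$ by applying that formula termwise; compactness and subdivisibility of the resulting transitions follow from \cref{x0daz5} and the same diagonal trick as in the meet case.

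The main obstacle will be the binary join: unlike meets, joins in $\cIdem(\cat{C})$ are not directly given by the tensor product, so one has to engage with their specific colimit presentation and check that both compactness and the subdivision calculus of \cref{s:ind} propagate through it. The extremal, meet, and filtered-join cases are by contrast essentially formal from \cref{x0daz5}, \cref{xm2a0p}, and \cref{x7sojd}.
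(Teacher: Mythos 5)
Your decomposition is the same as the paper's: identify the frame operations in \(\cIdem(\cat{C})\) (finite meets as tensor products, directed joins as filtered colimits, binary joins as pushouts of \(C\gets C\otimes D\to D\)) and check closure of \(\cIdem_{\vSc}(\cat{C})\) under each. The top/bottom, binary meet, and filtered join cases are handled exactly as in the paper (via \cref{x0daz5}, the diagonal of a product of \(\QQ\cap[0,1]\)-refinements, and \cref{xm2a0p}), and those parts are fine.

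The gap is in the binary join, which you correctly flag as the main obstacle but then do not actually resolve. You assert that compactness of the transitions \(C_{1,i}\vee C_{2,j}\to C_{1,i'}\vee C_{2,j'}\) "follows from \cref{x0daz5} and the same diagonal trick as in the meet case," but \cref{x0daz5} only controls tensor products of compact morphisms; it says nothing about pushouts. Since \(C\vee D\) is the pushout of \(C\gets C\otimes D\to D\), you would need compact morphisms to be stable under forming pushouts of spans, and they are \emph{not}: \cref{xa4f06} and \cref{xvg681} explicitly warn that compact morphisms fail to be closed under finite colimits and cofibers. The paper supplies the missing ingredient as \cref{xbagrg}, a dedicated lemma about a \(3\times 3\) diagram of spans: the induced map on pushouts from stage \(0\) to stage \(2\) is compact provided the \emph{middle} row is compact in the first transition and the two \emph{outer} rows are compact in the second transition. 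This staggered hypothesis is exactly what the subdivision structure of a very Schwartz \(\Ind\)-coidempotent provides (one uses the first half of the subdivision for the \(C\otimes D\) leg and the second half for the \(C\) and \(D\) legs), and it is not reproduced by the naive diagonal trick. Without \cref{xbagrg} or an equivalent argument, the binary join case of your proof does not go through.
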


\begin{proof}
  Recall from~\cite[Theorem~3.8]{ttg-sm}
  that
  we can describe
  the frame operations in
  \(\cIdem(\cat{C})\) as follows:
  \begin{enumerate}
    \item\label{i:f_meet}
      Finite meets are computed as tensor products.
    \item\label{i:d_join}
      Directed joins are computed as filtered colimits.
    \item\label{i:f_join}
      The least element~\(\bot\) is computed as the initial object.
      The join~\(C\vee D\) is computed by taking the pushout of
      \(C\gets C\otimes D\to D\).
  \end{enumerate}
  We need to check that \(\cIdem_{\vSc}(\cat{C})\)
  is closed under these operations.

We first treat \cref{i:f_meet}.
  First, the unit object is very Schwartz,
  since it is compact
  by \cref{x0daz5}.
  When \(C\) and~\(D\) are very Schwartz,
  we lift them to very Schwartz \(\Ind\)-coidempotents
  \((C_{i})_{i}\) and \((D_{j})_{j}\).
  Then \((C_{i}\otimes D_{j})_{i,j}\) is very Schwartz
  by \cref{x0daz5}.
  This shows that \(C\otimes D\) is very Schwartz.
  Similarly, we see that \cref{i:d_join} works
  by taking the corresponding very Schwartz \(\Ind\)-coidempotents
  and using \cref{xm2a0p}.
We then consider~\cref{i:f_join}.
  The initial object is compact and thus very Schwartz.
  For binary joins,
  by arguing as above,
  it suffices to show that
  the \(\Ind\)-coidempotent
  \((C_{i}\vee D_{j})_{i,j}\) is very Schwartz
  when so are \((C_{i})_{i}\) and \((D_{j})_{j}\).
  This follows from
  combining \cref{x0daz5} with \cref{xbagrg}.
\end{proof}

\begin{lemma}\label{xwl07g}
  In the situation of \cref{x9ydr9},
  the relation \(C\leq D\) in \(\cIdem_{\vSc}(\cat{C})\)
  refines to \(C\ll D\)
  if and only if its underlying morphism in~\(\cat{C}\) is compact.
\end{lemma}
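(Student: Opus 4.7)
The plan is to reduce both directions to the concrete description in \cref{xjwh0f} that directed joins in $\cIdem_{\vSc}(\cat{C})$ coincide with filtered colimits in the ambient~$\cat{C}$, combined with the idealoid property of compact morphisms (\cref{cpt_idl}).

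For the ``only if'' direction, I would lift~$D$ to a very Schwartz $\Ind$-coidempotent $(D_j)_j$ with colimit~$D$, so that $D = \bigvee_j D_j$ in $\cIdem_{\vSc}(\cat{C})$ by \cref{xjwh0f}. The way-below condition $C \ll D$ immediately produces $j_0$ with $C \leq D_{j_0}$. The $\cat{I}$-restriction property (\cref{x5hdob}) applied to the lifting then yields $j' \geq j_0$ such that the underlying morphism $D_{j_0} \to D_{j'}$ in~$\cat{C}$ is compact. The composite $C \to D_{j_0} \to D_{j'} \to D$ contains a compact factor in the middle, so by \cref{cpt_idl} the underlying morphism $C \to D$ in~$\cat{C}$ is itself compact.

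For the ``if'' direction, assume the underlying morphism $C \to D$ is compact. Given a directed family $(E_k)_k$ in $\cIdem_{\vSc}(\cat{C})$ with $D \leq \bigvee_k E_k$, the join is the filtered colimit $\injlim_k E_k$ in~$\cat{C}$ by \cref{xjwh0f}. The composite $C \to D \to \injlim_k E_k$ is then compact (\cref{cpt_idl}), and applying \cref{xl2h0i} to the diagram $(E_k)_k$ produces a factorization $C \to E_k \to \injlim_k E_k$ of this composite, for some~$k$, in~$\cat{C}$. Post-composing with the augmentation $\injlim_k E_k \to \unit$ identifies $C \to E_k \to \unit$ with the structure map of~$C$, which—because $\cIdem(\cat{C})$ is a full subposet of $\cat{C}_{/\unit}$—gives $C \leq E_k$ in $\cIdem_{\vSc}(\cat{C})$.

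The only subtle point is promoting the factorization that the compactness axiom supplies in~$\cat{C}$ to an inequality in the coidempotent poset; this is resolved by post-composing with the augmentation and invoking posetality of $\cIdem(\cat{C})$. I do not anticipate any further obstacle.
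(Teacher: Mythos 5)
Your ``if'' direction is correct and is essentially the paper's argument: directed joins in \(\cIdem_{\vSc}(\cat{C})\) are computed as filtered colimits in~\(\cat{C}\), compactness of \(C\to D\) produces a factorization of \(C\to\injlim_{k}E_{k}\) through some~\(E_{k}\), and posetality of \(\cIdem(\cat{C})\subset\cat{C}_{/\unit}\) upgrades that factorization to an inequality. The subtlety you flag at the end is real and you resolve it correctly.

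The ``only if'' direction has a genuine gap. The relation \(C\ll D\) lives in the frame \(\cIdem_{\vSc}(\cat{C})\), so it only gives information about directed subsets \emph{of \(\cIdem_{\vSc}(\cat{C})\)} whose join dominates~\(D\). But the terms~\(D_{j}\) of a very Schwartz \(\Ind\)-coidempotent presenting~\(D\) are merely coidempotents; nothing in \cref{xdgmmi} or \cref{x01624} forces each individual~\(D_{j}\) to be a very Schwartz \emph{coidempotent} (a constant \(\Ind\)-object is Schwartz only when the object is compact, cf.\ \cref{xa4ihu}). Hence \(D=\bigvee_{j}D_{j}\) holds in \(\cIdem(\cat{C})\) but is not a priori a directed join of a family in \(\cIdem_{\vSc}(\cat{C})\), and you cannot apply \(C\ll D\) to it to extract \(C\leq D_{j_{0}}\). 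The paper repairs exactly this point with a cofinality argument: writing \(I\) for the ideal of \(\cIdem(\cat{C})\) presenting~\(D\), for each \(D'\in I\) one uses subdivisibility to refine a compact transition out of~\(D'\) to a chain \(D'(\X)\colon\QQ\cap[0,1]\to\cIdem(\cat{C})\) with compact transitions, and sets \(D''=\injlim_{i<1/2}D'(i)\); this \(D''\) \emph{is} a very Schwartz coidempotent, lies in~\(I\), and dominates~\(D'\), so \(J=I\cap\cIdem_{\vSc}(\cat{C})\) is cofinal in~\(I\) and \(D=\bigvee J\) is a legitimate directed join in \(\cIdem_{\vSc}(\cat{C})\). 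Applying \(C\ll D\) to~\(J\), and then factoring once more through a compact transition, yields the compactness of \(C\to D\). Once this substitution is made, the rest of your argument (composing with a compact factor and invoking \cref{cpt_idl}) goes through.
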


\begin{proof}
  We use \cref{xjwh0f} freely in this proof.

  The “if” direction follows from
  the fact that
  the forgetful functor
  \(\cIdem_{\vSc}(\cat{C})\to\cat{C}\)
  preserves filtered colimits.

We prove the “only if” direction.
  We write~\(\cat{I}\) for the idealoid of \(\cIdem(\cat{C})\) in \cref{xdgmmi}.
  Let \(I\) be an object of \(\Ind_{\cat{I}_{\sd}}(\cIdem(\cat{C}))\)
  whose join is~\(D\),
  regarded as an ideal of \(\cIdem(\cat{C})\).
  Consider the intersection \(J=I\cap{\cIdem_{\vSc}(\cat{C})}\).
  We claim that \(J\subset I\) is cofinal.
  Indeed,
  for \(D'\in I\),
  we have a factorization
  \(D'=D'(0)\to D'(1)\to D\)
  with \(D'\colon\QQ\cap[0,1]\to\cIdem(\cat{C})\)
  such that \(D'(p)\to D'(q)\) is compact for \(p<q\).
  We then take
  \begin{equation*}
    D''=\injlim_{i<1/2}D'(i)
  \end{equation*}
  and see that \(D''\) is inside~\(J\).
  This cofinality shows that
  \(D\) can be written as the directed join~\(\bigvee J\)
  in \(\cIdem(\cat{C})\) and hence in \(\cIdem_{\vSc}(\cat{C})\).
  By assumption,
  we can find \(D'\in J\) such that
  \(C\) factors through \(C\to D'\to D\).
  By arguing as above again,
  we obtain a further factorization
  \(C\to D'\to D''\to D\)
  such that \(D'\to D''\) is compact in~\(\cat{C}\).
  Therefore, \(C\to D\) is compact in~\(\cat{C}\).
\end{proof}

\begin{proof}[Proof of \cref{x9ydr9}]
  We write~\(R\) for \(\cIdem_{\vSc}(\cat{C})\),
  which is a frame by \cref{xjwh0f}.
  We then check the conditions in \cref{xbmydj}.
  Since \cref{i:con_alg}
  follows immediately from \cref{xwl07g},
  we are left to verify~\cref{i:con_con}.
  Consider an element \(C\in\cIdem_{\vSc}(\cat{C})\).
  As in the proof of \cref{xwl07g},
  we write \(C\) as the join of~\(I\cap\cIdem_{\vSc}(\cat{C})\),
  where \(I\) is the ideal corresponding
  to the very Schwartz \(\Ind\)-coidempotent
  corresponding to~\(C\).
  By \cref{xwl07g}, we see that \cref{i:con_con} is satisfied.
\end{proof}

\subsection{The continuous sheaves--spectrum adjunction}\label{ss:adj}

We prove the adjunction we have promised:

\begin{theorem}\label{sm_con_adj}
  Let \(\cat{C}\) be an object of
  \(\CAlg(\Cat{Pr}^{\con})\)
  and \(R\) a stably continuous frame.
  Then the map
  \begin{equation*}
    \Map_{\Cat{Frm}}(R,\cIdem_{\vSc}(\cat{C}))
    \hookrightarrow
    \Map_{\Cat{Frm}}(R,\cIdem(\cat{C}))
    \simeq
    \Map_{\CAlg(\Cat{Pr})}(\Shv(R),\cat{C}),
  \end{equation*}
  where the second equivalence is from~\cite[Theorem~B]{ttg-sm},
  restricts to an equivalence
  \begin{equation*}
    \Map_{\Cat{SCFrm}}(R,\cIdem_{\vSc}(\cat{C}))
    \simeq
    \Map_{\CAlg(\Cat{Pr}^{\con})}(\Shv(R),\cat{C}).
  \end{equation*}
\end{theorem}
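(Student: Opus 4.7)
The plan is to use the equivalence $\Map_{\Cat{Frm}}(R,\cIdem(\cat{C}))\simeq\Map_{\CAlg(\Cat{Pr})}(\Shv(R),\cat{C})$ from~\cite{ttg-sm} and show that the two full sub-\(\infty\)-groupoids on either side pick out the same set of points inside the middle. By~\cref{xbmydj,xdzqu2}, a frame map $\phi\colon R\to\cIdem_{\vSc}(\cat{C})$ lies in $\Cat{SCFrm}$ exactly when it preserves the way-below relation; by~\cref{x0daz5}, a colimit-preserving symmetric monoidal functor $F\colon\Shv(R)\to\cat{C}$ lies in $\CAlg(\Cat{Pr}^{\con})$ exactly when it preserves compact morphisms. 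So for corresponding $\phi\leftrightarrow F$, I must show: ($\phi$ factors through $\cIdem_{\vSc}(\cat{C})$ and preserves way-below) $\iff$ ($F$ preserves compact morphisms).

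For the direction starting from $F$, fix $r\in R$. By~\cref{xbmydj} together with the standard interpolation property of continuous frames, the set $\{r'\in R\mid r'\ll r\}$ is directed, and for any $r'\ll r$ there is $r''$ with $r'\ll r''\ll r$. By~\cref{xcy9f7} the transition $r'\to r''$ is compact in $\Shv(R)$, so by~\cref{x5hdob} the Ind-object $(r')_{r'\ll r}$ is Schwartz, and by~\cref{xrz98z} (noting $\Shv(R)\in\Cat{Pr}^{\con}$ via~\cref{xc8o0g}) is automatically very Schwartz. Each $r'$ is subterminal in $\Shv(R)$, so this is an Ind-coidempotent with colimit $r$. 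Applying $F$, which preserves colimits, symmetric monoidal structure, and compact morphisms, yields a very Schwartz Ind-coidempotent $(\phi(r'))_{r'\ll r}$ in $\cat{C}$ with colimit $\phi(r)$, showing $\phi(r)\in\cIdem_{\vSc}(\cat{C})$. Moreover, for $r'\ll r$ in $R$, \cref{xcy9f7} gives $r'\to r$ compact in $\Shv(R)$, hence $\phi(r')\to\phi(r)$ is compact in $\cat{C}$, so by~\cref{xwl07g} we have $\phi(r')\ll\phi(r)$ in $\cIdem_{\vSc}(\cat{C})$. Thus $\phi$ preserves way-below.

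For the converse direction, assume $\phi$ factors through $\cIdem_{\vSc}(\cat{C})$ and preserves way-below. By~\cref{xhfb2c}, it suffices to show the right adjoint $G\colon\cat{C}\to\Shv(R)$ of $F$ preserves filtered colimits. For $C\in\cat{C}$ one has $G(C)(r)=\Map_{\cat{C}}(\phi(r),C)$. Given a filtered diagram $(C_j)_j$ in $\cat{C}$, the presheaf $r\mapsto\injlim_j G(C_j)(r)$ is excisive, since each $G(C_j)$ is a sheaf and filtered colimits commute with finite limits in $\Cat{S}$. By~\cref{sheafify}, its sheafification evaluated at $r$ is
\begin{equation*}
  \projlim_{r'\ll r}\injlim_j\Map_{\cat{C}}(\phi(r'),C_j).
\end{equation*}
On the other hand, $G(\injlim_j C_j)(r)=\Map_{\cat{C}}(\phi(r),\injlim_j C_j)$. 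Since $\phi$ preserves way-below, \cref{xwl07g} says the transitions of $(\phi(r'))_{r'\ll r}$ are compact in $\cat{C}$, so this Ind-coidempotent is Schwartz with colimit $\phi(r)$; then~\cref{xd0jpr} identifies $\Map_{\cat{C}}(\phi(r),\injlim_j C_j)$ with the same expression $\projlim_{r'\ll r}\injlim_j\Map_{\cat{C}}(\phi(r'),C_j)$. Hence $G(\injlim_j C_j)\simeq\injlim_j G(C_j)$ opens-pointwise, as required.

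The main technical point is the sheafification computation in the second direction: matching the sheafification of the pointwise filtered colimit (via~\cref{sheafify}) with the mapping space into $\phi(r)$ (via~\cref{xd0jpr}) is exactly where the very Schwartz hypothesis on $\phi(r)$ is invoked. Everything else is a translation between the way-below condition on $\phi$ and the compact-morphism condition on $F$ through the dictionary supplied by~\cref{xwl07g,xcy9f7}, together with the interpolation property in the stably continuous frame $R$.
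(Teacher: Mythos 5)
Your proof is correct and follows essentially the same route as the paper's: one direction via \cref{xcy9f7} and interpolation of the way-below relation in \(R\) to produce very Schwartz \(\Ind\)-coidempotents, the other via \cref{sheafify} and \cref{xd0jpr} to show the right adjoint preserves filtered colimits. The only cosmetic point is that the very-Schwartzness of \((\phi(r'))_{r'\ll r}\) as an \(\Ind\)-\emph{coidempotent} is best obtained directly from the interpolation property of \(\ll\) in \(R\) (which you do invoke), since \cref{xrz98z} only gives subdivisibility of compact morphisms in \(\Shv(R)\) without keeping the intermediate objects subterminal.
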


From this,
we immediately obtain the following,
which is the unstable generalization of \cref{main_dbl}:

\begin{corollary}\label{xdiyc5}
  The continuous spectrum
  \(\Sm^{\con}\)
  defines a functor \(\CAlg(\Cat{Pr}^{\con})\to\Cat{SC}^{\op}\)
  that is right adjoint to~\(\Shv\).
\end{corollary}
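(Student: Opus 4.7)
The plan is to deduce this corollary formally from \cref{sm_con_adj} using standard $\infty$-categorical adjoint functor machinery. No new geometric or categorical content is required beyond what \cref{sm_con_adj} already supplies; the task is just to promote a family of mapping space equivalences to the data of a functor with an adjunction.

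First, I would fix the forward functor $\Shv\colon\Cat{SC}^{\op}\to\CAlg(\Cat{Pr}^{\con})$, which is provided by \cref{xc8o0g}. For each $\cat{C}\in\CAlg(\Cat{Pr}^{\con})$, the assignment $R\mapsto\Map_{\CAlg(\Cat{Pr}^{\con})}(\Shv(R),\cat{C})$ is a functor $\Cat{SCFrm}\to\Cat{S}$. The content of \cref{sm_con_adj} is a natural equivalence
\begin{equation*}
  \Map_{\Cat{SCFrm}}(R,\cIdem_{\vSc}(\cat{C}))
  \simeq
  \Map_{\CAlg(\Cat{Pr}^{\con})}(\Shv(R),\cat{C}),
\end{equation*}
and I would check that naturality in $R$ is inherited, on the one hand, from the naturality of the inclusion into $\Map_{\Cat{Frm}}(R,\cIdem(\cat{C}))$, and on the other hand from the naturality of the equivalence in~\cite[Theorem~B]{ttg-sm}. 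This exhibits $\cIdem_{\vSc}(\cat{C})$ as the corepresenting object in $\Cat{SCFrm}$—equivalently, $\Sm^{\con}(\cat{C})$ as the representing object in $\Cat{SC}^{\op}$—for the functor $R\mapsto\Map_{\CAlg(\Cat{Pr}^{\con})}(\Shv(R),\cat{C})$.

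Next, I would invoke the standard fact (e.g.,~\cite[Proposition~5.2.4.2]{LurieHTT}) that pointwise existence of an adjoint is sufficient: if for every $\cat{C}$ the functor $\Map(\Shv(\X),\cat{C})\colon\Cat{SC}\to\Cat{S}$ is representable, then $\Shv$ admits a right adjoint whose value at $\cat{C}$ is the representing object, and this right adjoint is uniquely determined. Applying this criterion with representing object $\Sm^{\con}(\cat{C})$ produces at once both the functoriality of $\Sm^{\con}\colon\CAlg(\Cat{Pr}^{\con})\to\Cat{SC}^{\op}$ and the adjunction $\Shv\dashv\Sm^{\con}$. The unit and counit are then extracted automatically from the equivalences of mapping spaces.

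There is essentially no obstacle in the argument; the only point requiring care is to work consistently in the $\infty$-categorical setting rather than at the homotopy category level, so that the corepresenting objects assemble into a genuine functor of $\infty$-categories rather than just a map on objects. This is handled uniformly by the adjoint functor theorem cited above, which is why I prefer to extract the functoriality from it rather than attempting to construct $\Sm^{\con}$ on morphisms by hand (as mentioned in \cref{xymzpz}).
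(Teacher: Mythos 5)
Your proposal is correct and matches the paper's intended route: the paper derives \cref{xdiyc5} immediately from \cref{sm_con_adj} by exactly this mechanism (pointwise corepresentability promoted to a genuine right adjoint via the standard criterion), as \cref{xymzpz} confirms that functoriality of \(\cIdem_{\vSc}\) is meant to be obtained from the adjoint characterization rather than checked by hand.
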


\begin{proof}[Proof of \cref{sm_con_adj}]
  Fix a morphism
  \(F\colon\Shv(R)\to\cat{C}\) in \(\CAlg(\Cat{Pr})\).
  This determines a frame morphism
  \(f\colon R\to\cIdem(\cat{C})\)
  by restriction.
  What we have to show is that
  \(F\) is in \(\CAlg(\Cat{Pr}^{\con})\)
  if and only if
  \(f\) restricts to a morphism
  of stably continuous frames \(R\to\cIdem_{\vSc}(\cat{C})\).

  We first prove the “only if” direction.
  By \cref{xcy9f7},
  when \(x\ll x'\),
  the morphism \(F(j(x))\to F(j(x'))\) is compact.
  Therefore,
  for any \(x\in R\),
  the \(\Ind\)-coidempotent \((j(y))_{y\ll x}\)
  is very Schwartz.
  Hence \(j(x)=\injlim_{y\ll x}j(y)\)
  is a very Schwartz coidempotent,
  and \(f\) restricts to a frame morphism
  \(R\to\cIdem_{\vSc}(\cat{C})\).
  Since it also preserves the way-below relation,
  it is also a morphism of stably continuous frames.

  We then prove the “if” direction
  by showing that the right adjoint~\(G\) of~\(F\)
  preserves filtered colimits.
  It suffices to show that the morphism
  \begin{equation*}
    \biggl(\injlim_{i}G(C_{i})\biggr)(x)
    \simeq\projlim_{y\ll x}\injlim_{i}\Map(F(j(y)),C_{i})
    \to\Map(F(j(x)),C)
    \simeq G(C)(x),
  \end{equation*}
  where the first equivalence
  follows from \cref{sheafify},
  is an equivalence.
  By assumption,
  \((F(j(z)))_{z\ll y}\) is very Schwartz,
  so the desired claim follows from \cref{xd0jpr}.
\end{proof}

\section{Computing continuous spectra}\label{s:ex}

In this section,
we perform sample computations
of~\(\Sm^{\con}(\cat{C})\)
for two kinds of~\(\cat{C}\):
In \cref{ss:sm_qcoh},
we consider \(\cat{C}=\D(X)\)
for a scheme~\(X\).
In \cref{ss:sm_sc},
we consider \(\cat{C}=\Shv(X;\Cat{Sp})\)
for a stably compact space~\(X\).

\begin{remark}\label{x2xgqq}
Another potentially interesting computation
  arises in the setting of Clausen–Scholze’s analytic geometry.
  Let \(K\) be a Stein compact set,
  and \(\cat{C}\) denote the rigidification (see \cref{rigidify})
  of \(\D(\Cls{O}(K)_{\liq})\),
  where \(\shf{O}(K)_{\liq}\) is the ring
  of overconvergent holomorphic functions on~\(K\),
  equipped with the liquid analytic structure~\cite{Analytic}
  (with respect to an implicitly fixed parameter \(0 < p \leq 1\)).
  By \cref{ana_rig},
  \(\cat{C}\) can be computed as the \(\infty\)-category
  of modules over~\(\Cls{O}(K)\)
  in the rigidification of \(\D(\CC_{\liq})\).
  Their lecture~\cite{Complex}
  constructs a canonical map
  \(\Sm(\D(\shf{O}(K))) \to \lvert K \rvert\),
  where \(\lvert K \rvert\) denotes the underlying topological space.
Hence, by \cref{main_rig}, we obtain a morphism
  \(\Sm^{\rig}(\cat{C}) \to \lvert K \rvert\)
  of compact Hausdorff spaces.
  However, this map is \emph{not} an isomorphism;
  in particular,
  \(\Sm^{\con}(\cat{C}) \to \lvert K \rvert\)
  cannot be an isomorphism either.
  The space \(\Sm^{\rig}(\cat{C})\)
  admits many additional open subsets
  corresponding to various growth conditions.
\end{remark}

\begin{remark}\label{xxwrfo}
  It would be interesting to study
  the continuous or rigid spectrum
  of the stable presentably symmetric monoidal \(\infty\)-category
  of localizing motives,
  which Efimov~\cite{EfimovRigidity} proved to be rigid.
\end{remark}

\subsection{Quasicoherent sheaves on schemes}\label{ss:sm_qcoh}

We first recall the following:

\begin{definition}\label{xzd2iz}
  Let \(X\) be a quasicompact quasiseparated scheme.
  We write \(\lvert X\rvert\) for the underlying spectral space.
  This induces a canonical map of locales
  \(\Sm(\D(X))\to\lvert X\rvert^{\op}\)
  (see \cref{xoc8bw});
  on the level of frames,
  it maps cocompact closed subset \(X\setminus U\)
  to \(j_{*}\Cls{O}_{U}\),
  where \(j\) is the inclusion of \(U\) into \(X\).
  We say that
  \(X\) satisfies the \emph{telescope conjecture}
  if this map is an equivalence.
\end{definition}

The following was proven
in the affine case in~\cite{Neeman92}
and in general in~\cite{ATJLSS04}:

\begin{theorem}[Neeman, Alonso~Tarrío–Jeremías~López–Souto~Salorio]\label{x4r6i1}
  Any noetherian scheme
  satisfies the telescope conjecture.
\end{theorem}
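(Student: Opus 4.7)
The plan is to reduce to the affine case, where the result is Neeman's classical telescope conjecture for noetherian rings. First I would unwind both sides concretely: for noetherian~\(X\), every open of~\(\lvert X\rvert\) is quasicompact, so the frame of~\(\lvert X\rvert^{\op}\) is the lattice of specialization-closed subsets of~\(\lvert X\rvert\), i.e., unions of closed subsets. The frame of \(\Sm(\D(X))\) is \(\cIdem(\D(X))\); the map in question sends a closed subset \(Z=X\setminus U\) to \(j_{*}\Cls{O}_{U}\), and since every specialization-closed subset is a directed union of closed ones, the entire map is determined by this by preservation of directed joins. Using~\cite[Proposition~2.14]{ttg-sm} to pass to idempotents, \(Z\) corresponds to the smashing Bousfield localization \(L_{Z}\) away from~\(Z\), with cofiber sequence \(\R\Gamma_{Z}\Cls{O}_{X}\to\Cls{O}_{X}\to L_{Z}\Cls{O}_{X}\).

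For the affine case \(X=\Spec R\) with \(R\) noetherian, injectivity is easy: the specialization-closed subset~\(Z\) is recovered as the support locus of the coidempotent \(\R\Gamma_{Z}\Cls{O}_{X}\), i.e., the set of points where its stalk is nonzero. Surjectivity is the substantive content and I would invoke Neeman's classification of localizing subcategories of \(\D(R)\) by arbitrary subsets of \(\Spec R\), built from the Koszul complexes at each prime; within this classification the smashing subcategories correspond precisely to the specialization-closed subsets. Composing yields the desired bijection on the affine level.

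For general noetherian~\(X\), the extension proceeds by Zariski descent, realized concretely in the argument of Alonso Tarrío–Jeremías López–Souto Salorio. Both \(X\mapsto\cIdem(\D(X))\) and \(X\mapsto\lvert X\rvert^{\op}\) satisfy descent along affine open covers (the first since \(\D\) does and idempotents are detected locally; the second since Zariski covers give coequalizers in~\(\Cat{Spec}\)), so the affine equivalence propagates globally: a smashing idempotent \(E\in\D(X)\) restricts on each affine to a specialization-closed subset of the affine, and these glue on overlaps to a specialization-closed subset of~\(\lvert X\rvert\). The hardest step is the affine surjectivity itself—the classical telescope conjecture—whose proof depends on the availability of Koszul-complex compact generators at every prime; the subsequent globalization carried out in~\cite{ATJLSS04} is, by comparison, routine Zariski gluing.
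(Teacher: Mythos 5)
The paper does not actually prove this statement—it is imported by citation, with the affine case attributed to Neeman and the globalization to Alonso~Tarrío–Jeremías~López–Souto~Salorio—so there is no internal argument to compare against. Your sketch is a faithful outline of how those cited proofs go (Neeman's classification of localizing subcategories of \(\D(R)\) for noetherian \(R\) as the essential input, injectivity via supports, then Zariski gluing of idempotent algebras and of specialization-closed subsets), and, like the paper, it ultimately treats that affine classification as a black box rather than establishing it.
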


\begin{remark}\label{x79j7u}
  In the situation of \cref{xzd2iz},
  whether \(X\) satisfies
  the telescope conjecture can be checked at the level of local rings
  by~\cite[Theorem~4.9]{HrbekHuZhu24}.
  For example,
  if all local rings are noetherian,
  \(X\) satisfies the telescope conjecture.
\end{remark}

We see that \(\Sm^{\con}\) is not interesting in this case:

\begin{theorem}\label{xgbjvz}
  Let \(X\) be a noetherian scheme.
Then \(\Sm^{\con}(\D(X))\) is the (finite) set of connected components.
\end{theorem}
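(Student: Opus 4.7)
The plan is to use the telescope conjecture to identify coidempotents of $\D(X)$ with specialization-closed subsets of $|X|$, reduce to $X$ connected, and then show that the very Schwartz condition collapses all but the trivial coidempotents.

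By \cref{x4r6i1}, the telescope conjecture canonically identifies $\cIdem(\D(X))$ with the frame of specialization-closed subsets of $|X|$ (equivalently, the frame of opens of $|X|^{\op}$). Since $X$ is noetherian, it decomposes as $X = X_{1} \sqcup \dotsb \sqcup X_{n}$ into finitely many connected components, giving $\D(X) \simeq \prod_{i} \D(X_{i})$. The construction $\cIdem_{\vSc}$ commutes with finite products: compact morphisms in a product of presentable $\infty$-categories are componentwise compact (checking \cref{xl2h0i} factor by factor), and the idealoid of \cref{xdgmmi} together with its subdivision then splits along the product. So it suffices to prove $\cIdem_{\vSc}(\D(X)) = \{0, \unit\}$ for $X$ connected noetherian.

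Assume $X$ is connected noetherian. By \cref{x9ydr9,xwl07g}, a coidempotent $C$ is very Schwartz iff $C = \bigvee\{D : D \ll C\}$ in the stably continuous frame $\cIdem_{\vSc}(\D(X))$, where $D \ll C$ corresponds to compactness of the underlying morphism $D \to C$ in $\D(X)$. Since $\unit$ is compact in $\D(X)$ (the scheme $X$ being noetherian, hence qcqs), \cref{cpt_st} translates this into compactness of the induced morphism $E_{D} \to E_{C}$ between the idempotent cofibers $E_{\bullet} = \cofib(\bullet \to \unit)$. Under the telescope identification, this is the localization map $j_{U_{D},*}\shf{O}_{U_{D}} \to j_{U_{C},*}\shf{O}_{U_{C}}$ for opens $U_{C} \subseteq U_{D} \subseteq X$.

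The crux is to prove: for $X$ connected noetherian, any such compact morphism forces $U_{C}$ to be clopen in $X$, hence $U_{C} \in \{\emptyset, X\}$. I would approach this via \cref{xo7ubc}: compactness factors the morphism as $E_{U_{D}} \to P \to E_{U_{C}}$ through a perfect complex $P$. Post-composing with the idempotent unit $\unit \to E_{U_{D}}$, the localization $\unit \to E_{U_{C}}$ factors through $P$, and tensoring with $E_{U_{C}}$ then exhibits $E_{U_{C}}$ as a retract of $P \otimes E_{U_{C}}$ in $\D(X)$. The hard part is turning this retract datum into compactness of $E_{U_{C}}$ itself (equivalently, $U_{C}$ being clopen in $X$): the retract is tautological after restricting to $U_{C}$, so the substance must come from analyzing the behavior near the topological boundary $\partial U_{C}$, where the noetherian hypothesis should obstruct the retract unless $U_{C}$ is closed in $X$. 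Granting this, the supremum $\bigvee\{D \ll C\}$ equals $0$ for any $0 < C < \unit$, contradicting very Schwartzness. Combining with the product reduction, $\cIdem_{\vSc}(\D(X))$ is the power set of $\pi_{0}(X)$, and \cref{xj21xs} identifies $\Sm^{\con}(\D(X))$ with the finite discrete space $\pi_{0}(X)$.
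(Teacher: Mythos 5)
Your reduction to connected $X$ and your appeal to the telescope conjecture (\cref{x4r6i1}) match the paper, but the step you yourself flag as ``the hard part'' is a genuine gap, and the way you have set it up cannot be repaired. First, the claim that a single compact morphism $j_{U_D,*}\shf{O}_{U_D}\to j_{U_C,*}\shf{O}_{U_C}$ forces $U_C$ to be clopen is false as stated: since $\unit=\Cls{O}_X$ is a compact object of $\D(X)$, the morphism $\Cls{O}_X\to j_*\shf{O}_{U}$ is compact for \emph{every} open $U$ (take $U_D=X$, i.e.\ $D=0$, which is certainly very Schwartz and satisfies $D\ll C$ for all $C$), so no statement of this shape can hold. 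Second, the retract datum you extract carries no information, as you half-notice: precomposing the factorization $E_{U_D}\to P\to E_{U_C}$ with $\unit\to E_{U_D}$ yields a factorization of $\unit\to E_{U_C}$ through a perfect complex, which already holds with $P=\unit$ for any open whatsoever. The content of compactness lives in the morphism $E_{U_D}\to E_{U_C}$ itself, and your precomposition discards it. (Your concluding inference is also off: even granting ``$D\ll C\Rightarrow U_C$ clopen'', that constrains $C$, not the $D$'s, so it does not show $\bigvee\{D:D\ll C\}=0$.)

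The paper's mechanism is genuinely different and uses the very Schwartz structure, not just one compact morphism. Its key lemma (\cref{xb6h9a}, proved by a valuative criterion: pull back along a discrete valuation ring $V$ and use that $\Frac(V)$ is an idempotent algebra which is not perfect) shows only that a compact morphism $j_*\shf{O}_{U}\to j'_*\shf{O}_{U'}$ interposes a closed subset $Z$ with $U'\subset Z\subset U$. For connected $X$ and $\emptyset\neq U'\subset U\neq X$ this forces the inclusion to be \emph{strict} (equality would make $U=Z$ clopen), and nothing more. The proof is then finished by exploiting subdivisibility: a nontrivial very Schwartz idempotent refines to a $\QQ\cap[0,1]$-indexed chain of compact transitions, hence to an infinite strictly ascending chain of opens, contradicting the ascending chain condition on the noetherian space $\lvert X\rvert$. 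You should replace your ``crux'' with an argument of this type; as it stands, the boundary analysis you defer to is exactly the missing content of the theorem.
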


\begin{remark}\label{xuh4yi}
We can equivalently rephrase \cref{xgbjvz}
  that
  \(\Sm^{\con}(\D(\X))\) recovers its Pierce spectrum
  (see \cref{xuujx5})
  for noetherian schemes.
  This is not true in general
  without noetherianness,
  as we see in \cref{xrxo97}.
\end{remark}

\begin{lemma}\label{xb6h9a}
  Let \(X\) be a noetherian scheme
  and \(U'\subset U\) an inclusion
  of quasicompact open subsets.
  We have corresponding idempotent algebras
  \(j_{*}\Cls{O}_{U}\) and \(j'_{*}\Cls{O}_{U'}\)
  in~\(\D(X)\).
  If the map \(j_{*}\Cls{O}_{U}\to j'_{*}\Cls{O}_{U'}\)
  factors through a perfect complex,
  there is a closed subset~\(Z\)
  such that \(U'\subset Z\subset U\) holds.
\end{lemma}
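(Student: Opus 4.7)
The plan is to show that $\overline{U'} \subset U$ as subsets of~$X$; then taking $Z = \overline{U'}$ does the job. Suppose for contradiction that some $x \in \overline{U'} \setminus U$ exists. Since $X$ is noetherian (in particular sober), $\overline{U'}$ is the union of the irreducible closed subsets $\overline{\{\eta\}}$ as $\eta$ ranges over $U'$, so we may choose $\eta \in U'$ with $x \in \overline{\{\eta\}}$; note that $\eta \in U$ since $U' \subset U$. By the standard existence of DVRs realizing specializations in locally noetherian schemes, there is a discrete valuation ring~$V$ with fraction field $K = \Frac V$ and a morphism $f \colon \Spec V \to X$ sending the generic point to~$\eta$ and the closed point to~$x$.

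Next, I would pull back the factorization $j_{*}\Cls{O}_U \xrightarrow{a} P \xrightarrow{b} j'_{*}\Cls{O}_{U'}$ along~$f$. Since $f$ carries the generic point of $\Spec V$ into $U'$ and the closed point outside $U$, we have $f^{-1}(U) = f^{-1}(U') = \{\eta_{V}\} = \Spec K$. Because open immersions are flat, derived base change along $j$ and $j'$ identifies
\begin{equation*}
  f^{*}(j_{*}\Cls{O}_U) \simeq K \simeq f^{*}(j'_{*}\Cls{O}_{U'})
\end{equation*}
in~$\D(V)$, both concentrated in degree~$0$, and the pulled-back map between them is the identity $K \to K$. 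Applying $f^{*}$ therefore exhibits $K$ as a retract of $f^{*}P$ in~$\D(V)$.

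Because $f^{*}$ preserves perfect (equivalently, dualizable) complexes, $f^{*}P$ is perfect over~$V$, and so is its retract~$K$. But~$V$ is a DVR of dimension one, so every perfect $V$-complex has finitely generated cohomology, whereas $K = \Frac V$ is not finitely generated as a $V$-module. This contradiction completes the argument.

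The main conceptual step is the valuation-ring reduction, which converts the abstract categorical hypothesis into a concrete local obstruction; once the DVR is in place, the rest amounts to derived base change along an open immersion, together with the classical fact that $\Frac V$ is not compact in $\D(V)$ for a nontrivial DVR.
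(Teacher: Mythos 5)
Your proof is correct and follows essentially the same route as the paper: reduce to a discrete valuation ring witnessing a specialization from a point of $U'$ to a point outside $U$, base change, and derive a contradiction from $\Frac(V)$ being a retract of a perfect complex. The only (cosmetic) difference is that you identify the base-changed idempotents as $\Frac(V)$ by direct Tor-independent base change along the open immersions, whereas the paper deduces this from the structure of the chain of idempotent algebras $V\to E\to E'\to\Frac(V)$ over the DVR.
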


\begin{remark}\label{x5g2jo}
Note that \cref{xb6h9a} cannot be generalized
  to arbitrary quasicompact quasiseparated schemes.
  Consider a valuation ring~\(V\)
  with nonzero noninvertible elements~\(x\) and~\(x'\)
  satisfying \(x^{n}\mid x'\) for any~\(n\).
  Then the map \(V[1/x]\to V[1/x']\)
  factors through a perfect complex.
\end{remark}

\begin{proof}
Suppose the contrary.
  This means that there is a point \(x\notin U\)
  that is in the closure of~\(U'\).
  Choose
  a discrete valuation ring~\(V\)
  and a commutative diagram
  \begin{equation*}
    \begin{tikzcd}
      \Spec(\Frac(V))\ar[r]\ar[d,hook]&
      U'\ar[d,hook]\\
      \Spec V\ar[r]&
      X\rlap,
    \end{tikzcd}
  \end{equation*}
  where the bottom map maps
  the closed point to~\(x\).
  We write \(E\) and~\(E'\)
  for the base change of the idempotent algebras
  \(j_{*}\Cls{O}_{U}\) and \(j'_{*}\Cls{O}_{U'}\),
  respectively.
We have a sequence of idempotent algebras
  \(V\to E\to E'\to\Frac(V)\)
  such that \(V\to E\) is not an equivalence.
  This forces \(E=E'={\Frac(V)}\),
  but it is not perfect, which is a contradiction.
\end{proof}

\begin{proof}[Proof of \cref{xgbjvz}]
  By considering connected components,
  we can assume that \(X\) is connected.
  In this case,
  we need to show that a very Schwartz idempotent
  is~\(\Cls{O}_{X}\) or \(0\).

  By \cref{x4r6i1},
  any idempotent is represented by a \(\Pro\)-object
  of quasicompact subsets \((U_{i})_{i}\).
  When an inclusion \(\emptyset\neq U'\subset U\neq X\)
  determines a compact morphism,
  by \cref{xb6h9a}
  and the connectedness assumption,
  it must be strict.
Hence, if there is a nontrivial very Schwartz idempotent,
  we can find an ascending sequence of open subsets,
  which contradicts
  the noetherianness assumption.
\end{proof}

We finish this section with
some sporadic observations in the non-noetherian case:

\begin{example}\label{xrxo97}
  This example is based on \cref{x5g2jo}.
Let \(k\) be a field.
  We take \(P=\QQ\cap[0,1)\)
  and consider \(\Gamma=\bigoplus_{p\in P}\ZZ\)
  with the lexicographic order,
  which becomes a totally ordered abelian group.
  Then we take~\(V\) to
  be the ring of Hahn series \(k\llbracket T^{\Gamma_{\geq0}}\rrbracket\).
  We can see that
  for \(p<p'\) in~\(P\),
  the map
  \(V[T^{-p}]\to V[T^{-p'}]\) is compact,
  since it factors through \(T^{-p'}V\).
  Therefore,
  \(\Frac(V)=\injlim_{p}V[T^{-p}]\) is a very Schwartz idempotent.
  See also \cref{xu0zbv}.
\end{example}

\begin{example}\label{xf5r8m}
  Let \(A\) be a Boolean ring.
  Then \(\Spec A\)
  satisfies the telescope conjecture,
  e.g., by \cref{x79j7u}.
  In this case,
  all idempotents are very Schwartz,
  and hence it recovers
  the Zariski (or equivalently, Pierce)
  spectrum of~\(A\).
\end{example}

\subsection{Spectral sheaves on stably compact spaces}\label{ss:sm_sc}

We here prove \cref{tan_sc}:

\begin{theorem}\label{xw2z2y}
  For a stably compact space~\(X\),
  the morphism
  \(\Sm^{\con}(\Shv(X;\Cat{Sp}))\to X\)
  is an equivalence.
\end{theorem}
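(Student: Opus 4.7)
The morphism in question is, by \cref{xdiyc5} and \cref{sm_con_adj}, opposite to the canonical frame map
\begin{equation*}
  \phi_X\colon R_X\to\cIdem_{\vSc}(\Shv(X;\Cat{Sp})),\quad U\mapsto(j_!\unit_U\to\unit),
\end{equation*}
where $R_X$ denotes the frame of opens of $X$ and $j\colon U\hookrightarrow X$. The goal is to show $\phi_X$ is an isomorphism in $\Cat{SCFrm}$.

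First, I would check that $\phi_X$ is well-defined. Since $R_X$ is stably continuous by \cref{xbmydj}, we have $U=\bigvee_{V\ll U}V$, and by \cref{xcy9f7} each transition $j_!\unit_V\to j_!\unit_{V'}$ for $V\ll V'$ is compact in $\Shv(X;\Cat{Sp})$, so combined with \cref{xrz98z} this presents $j_!\unit_U$ as a very Schwartz $\Ind$-coidempotent. Injectivity of $\phi_X$ follows from the recollement: the cofiber of $j_!\unit_U\to\unit$ is $i_*\unit$ for $i\colon X\setminus U\hookrightarrow X$, from which $U$ is recovered as the maximal open on which this cofiber vanishes.

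For surjectivity, my strategy is to apply \cref{xh40rh} with $\cat{C}_0\subset\cIdem(\Shv(X;\Cat{Sp}))$ the essential image of $\phi_X$ and $\cat{I}$ the idealoid of compact morphisms from \cref{xdgmmi}. The idealoid $\cat{I}$ is subdivisible by \cref{xrz98z}; once I show that every morphism of $\cat{I}$ factors through an object of $\cat{C}_0$, \cref{xh40rh} yields that $\cat{I}$ is generated by its restriction to $\cat{C}_0$, so every very Schwartz $\Ind$-coidempotent is equivalent to one indexed by objects of $\cat{C}_0$, forcing $\phi_X$ to be surjective.

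The main obstacle is thus the factorization claim: every compact morphism between coidempotents in $\Shv(X;\Cat{Sp})$ factors through some $j_!\unit_U\to\unit$ with $U\in R_X$. I would prove this via the Flachsmeyer retract structure, using that the section $X\to\gamma X$ of $p\colon\gamma X\to X$ (\cref{xs5a86}) presents $\Shv(X;\Cat{Sp})$ as a retract of the compactly generated category $\Shv(\gamma X;\Cat{Sp})$ (\cref{xl9gxj}). By \cref{x11iiz} the morphism remains compact in $\Shv(\gamma X;\Cat{Sp})$, so by \cref{xo7ubc} it factors through a compact object $j_!\unit_V$ with $V$ a quasicompact open of $\gamma X$. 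Descending via the retraction and using the buffer construction of \cref{fun_buf} to upgrade the intermediate compact object to a coidempotent of the form $j_!\unit_U$ with $U\in R_X$ completes the argument; the delicate point is verifying that the $U$ obtained indeed lies in $R_X$ rather than remaining in $R_{\gamma X}$, which I expect to follow from \cref{xq6bfr} encoding the compatibility of the Flachsmeyer construction with way-below relations in stably compact locales.
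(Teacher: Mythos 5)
Your setup is fine: the map $U\mapsto(j_!\unit_U\to\unit)$ is well defined and injective for the reasons you give, and the whole theorem reduces to surjectivity. But the surjectivity argument has a fatal gap at the factorization claim. Test it at $X=\ast$: there $\gamma X=\ast$, the only coidempotents of the form $j_!\unit_U$ are $0$ and $\unit$, and your claim becomes the assertion that any compact morphism $E\to E'$ of coidempotents of $\Cat{Sp}$ with $E\neq0$ forces $E'=\unit$ — i.e.\ essentially that $\Sm^{\con}(\Cat{Sp})$ is a point (\cref{xaymiy}). This is the actual content of the theorem at a point, and it is not formal: $\Sm(\Cat{Sp})$ is large, and ruling out compact transitions inside a Schwartz $\Ind$-idempotent requires genuine chromatic input. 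The paper sandwiches any nonzero idempotent of $\Cat{Sp}_{(p)}$ between $L_n^{\fin}\SS$ and $L_n\SS$ and then invokes \cref{xadi4q}, the statement that $L_n^{\fin}\SS\to L_n\SS$ does not factor through a bounded-below spectrum. Your Flachsmeyer argument only produces a factorization through a compact \emph{object} of $\Shv(\gamma X;\Cat{Sp})$ (for $X=\ast$, a finite spectrum), which is nowhere near a coidempotent, and \cref{fun_buf} — a device for subdividing morphisms in $\Fun(\Delta^1,\X)$ — offers no mechanism to promote it to one. A second, independent problem: your appeal to \cref{xh40rh} needs the idealoid $\cat{I}$ of \cref{xdgmmi} to be subdivisible \emph{inside} $\cIdem(\Shv(X;\Cat{Sp}))$, but \cref{xrz98z} only gives subdivisibility in the ambient category, with intermediate objects that are not coidempotents; this distinction is precisely why the paper must work with \emph{very} Schwartz coidempotents rather than Schwartz ones.

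For comparison, the paper's route is stalkwise: it first proves $\Sm^{\con}(\Cat{Sp})=\ast$ via the chromatic argument above, deduces that each stalk $E_x$ of a very Schwartz idempotent is $\SS$ or $0$, cuts out the open locus where it vanishes, and then uses compactness of each transition $E_i\to E$ together with compactness of $X^{\op}$ to produce finitely many closed sets $Z_{x_1},\dots,Z_{x_n}$ covering $X$ and hence a retraction $E_i\to\SS$. Any correct proof will need the local computation at a point as input; your proposal tries to bypass it and therefore cannot close.
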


We start with the following observation:

\begin{proposition}\label{xvs1r4}
  Let \(p\) be a prime.
  If \((E_{i})_{i}\)
  is a Schwartz \(\Ind\)-idempotent
  of~\(\Cat{Sp}_{(p)}\),
  it is equivalent to
  either~\(\SS_{(p)}\) or~\(0\).
\end{proposition}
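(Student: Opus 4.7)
By \cref{sequential}, I would first reduce to a sequential Schwartz \(\Ind\)-idempotent \((E_{0}\to E_{1}\to E_{2}\to\dotsb)\) in \(\Idem(\Cat{Sp}_{(p)})\) (using \cref{xxyv58}) whose transitions \(E_{i}\to E_{i+1}\) are compact morphisms in \(\Cat{Sp}_{(p)}\); write \(E=\injlim_{i}E_{i}\), and aim to show \(E\simeq 0\) or \(E\simeq \SS_{(p)}\). Since \(\Cat{Sp}_{(p)}\) is compactly generated by \(\SS_{(p)}\), whose compact objects are the finite \(p\)-local spectra, \cref{xo7ubc} provides a factorization \(E_{i}\to F_{i}\to E_{i+1}\) for each \(i\), with \(F_{i}\) a finite \(p\)-local spectrum.

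The poset structure on \(\Idem(\Cat{Sp}_{(p)})\) gives \(E_{i}\otimes E_{i+1}\simeq E_{i+1}\) (since \(E_{i}\leq E_{i+1}\)). Tensoring the factorization above with \(E_{i+1}\) produces
\begin{equation*}
  E_{i+1}\simeq E_{i}\otimes E_{i+1}\to F_{i}\otimes E_{i+1}\to E_{i+1}\otimes E_{i+1}\simeq E_{i+1},
\end{equation*}
whose total composite is the identity by uniqueness of morphisms in the poset \(\Idem(\Cat{Sp}_{(p)})\). Hence each \(E_{i+1}\) is a direct summand in \(\Cat{Sp}_{(p)}\) of \(F_{i}\otimes E_{i+1}\).

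The plan is then to combine this with the classical fact---a consequence of the Hopkins--Smith thick subcategory theorem---that the only compact idempotent algebras in \(\Cat{Sp}_{(p)}\) are \(0\) and \(\SS_{(p)}\) (since a compact idempotent yields a finite smashing localization preserving compact objects, which on \(\Cat{Sp}_{(p)}\) must be trivial). In particular, if for some \(i\) the transition \(E_{i}\to E_{i+1}\) is the identity of the poset, then that identity is compact, forcing \(E_{i}\) itself to be compact, hence \(E_{i}\in\{0,\SS_{(p)}\}\); by the subsequent poset structure the colimit \(E\) then lies in \(\{0,\SS_{(p)}\}\) as well.

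The main obstacle is ruling out an infinite strictly ascending Schwartz chain whose colimit is an intermediate idempotent (such as the rationalization \(\SS_{(p)}\otimes\QQ\) or some finite chromatic localization \(L_{n}^{f}\SS_{(p)}\)). For this I would analyze the cofibre sequence \(\SS_{(p)}\to E\to C\): the factorization \(\SS_{(p)}\to F_{i}\to E_{i+1}\to E\) shows that \(\SS_{(p)}\to E\) factors through each compact \(F_{i}\), so by the octahedral axiom \(C\) sits in a cofibre triangle \(\cofib(\SS_{(p)}\to F_{i})\to C\to\cofib(F_{i}\to E)\) with compact first term. Combining \(E\otimes C\simeq 0\) (by idempotency of \(E\)) with the Hopkins--Smith classification of thick subcategories of compact \(E\)-acyclics then constrains \(C\) enough to force \(C=0\), yielding \(E\simeq \SS_{(p)}\), or else \(E\simeq 0\) in the degenerate case.
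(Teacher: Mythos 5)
Your setup is fine and your retract observation (that \(E_{i+1}\) is a summand of \(F_{i}\otimes E_{i+1}\)) is a genuinely useful standard fact about smashing localizations, but there is a real gap exactly where you locate the ``main obstacle,'' and the resolution you sketch cannot work. Every constraint you propose to exploit is also satisfied by the intermediate idempotents you need to exclude. For \(E=\QQ\) (and likewise \(E=L_{n}^{\fin}\SS\)) with \(C=\cofib(\SS_{(p)}\to E)\neq 0\): the unit \(\SS_{(p)}\to E\) trivially factors through the compact object \(\SS_{(p)}\) itself, so ``\(\SS_{(p)}\to E\) factors through each compact \(F_{i}\)'' carries no information; \(E\otimes C\simeq 0\) holds for \emph{every} smashing idempotent; and the compact \(E\)-acyclics form the perfectly legitimate thick ideal of finite spectra of type \(\geq 1\). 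Nothing in the Hopkins--Smith classification of thick ideals distinguishes \(\SS_{(p)}\) from \(\QQ\) or \(L_{n}^{\fin}\SS\) at this level, so no amount of massaging these constraints can force \(C=0\). The content of the proposition is precisely that these nontrivial smashing idempotents are not \emph{Schwartz}, i.e., cannot be reached by a chain of idempotents with compact transition \emph{morphisms}, and your argument never uses compactness of the transitions in a way that sees this.

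What is needed is a chromatic input ruling out compact transitions in such a chain. The paper's route: by Hopkins--Smith, each intermediate \(E_{i}\) is sandwiched as \(L_{n}^{\fin}\SS\to E_{i}\to L_{n}\SS\) for the \(n\) determined by its finite acyclics; these heights are non-increasing along the chain and hence stabilize; a compact transition \(E_{i}\to E_{j}\) at the stable height factors through a finite, hence bounded-below, spectrum by \cref{xo7ubc}, so \(L_{n}^{\fin}\SS\to L_{n}\SS\) would too; and this is impossible---for \(n=0\) because \(\QQ\) is not a compact object of \(\Cat{Sp}_{(p)}\), and for \(n>0\) by a \(v_{n}\)-self-map argument (\cref{xadi4q}). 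Your proposal contains no substitute for either of these two non-formal inputs, so the case you yourself flag as the crux remains open. (A minor additional slip: from \(E_{i}=\SS_{(p)}\) for some \(i\) you cannot conclude that the colimit is \(\SS_{(p)}\), since \(\SS_{(p)}\) is the bottom of the poset and the chain may still grow afterwards; the dichotomy you want is ``eventually constant'' versus ``increases infinitely often.'')
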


To prove \cref{xvs1r4},
we need a simple observation
from chromatic homotopy theory.
For a prime~\(p\) (which is implicit in the notation),
we write \(L_{n}\) for the Bousfield localization
with respect to~\(E(n)\)
and \(L_{n}^{\fin}\)
the corresponding finite localization
(i.e., the Bousfield localization
with respect to~\(T(0)\oplus\dotsb\oplus T(n)\)).

\begin{lemma}\label{xadi4q}
  For \(n>0\),
  the map
  \(L_{n}^{\fin}\SS
  \to L_{n}\SS\)
  does not factor through a spectrum bounded below.
\end{lemma}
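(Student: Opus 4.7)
The plan is to argue by contradiction. Suppose \(L_{n}^{\fin}\SS\to L_{n}\SS\) factors as \(L_{n}^{\fin}\SS\to X\to L_{n}\SS\) with \(X\) bounded below. I will derive a contradiction by smashing with a type-\(n\) finite complex and using a connectivity-tending-to-infinity argument to force the \(K(n)\)-localization map to be nullhomotopic.

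The first step is to choose a type-\(n\) finite complex \(F(n)\) together with a \(v_{n}\)-self-map \(v\colon\Sigma^{|v_{n}|}F(n)\to F(n)\); such a pair exists for every \(n\geq1\) by the Hopkins–Smith periodicity theorem. Smashing the supposed factorization with \(F(n)\) keeps the middle term bounded below (as \(F(n)\) is finite), and since both \(L_{n}^{\fin}\) and \(L_{n}\) are smashing and \(F(n)\) is \(K(i)\)- and \(T(i)\)-acyclic for \(i<n\), standard chromatic reductions identify the resulting factorization with
\[v^{-1}F(n)\to X\wedge F(n)\to L_{K(n)}F(n),\]
whose outer map is the \(K(n)\)-localization unit of \(v^{-1}F(n)\).

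The key observation is that any map from \(v^{-1}F(n)\) into a bounded-below spectrum is nullhomotopic. Writing \(v^{-1}F(n)\simeq\injlim_{j}\Sigma^{-j|v_{n}|}F(n)\), one computes
\[\map(v^{-1}F(n),Y)\simeq\projlim_{j}\Sigma^{j|v_{n}|}\map(F(n),Y).\]
When \(Y\) is bounded below, \(\map(F(n),Y)\) is bounded below (\(F(n)\) being finite), so the tower \(\Sigma^{j|v_{n}|}\map(F(n),Y)\) has connectivities tending to \(+\infty\), and its limit vanishes (by the Milnor sequence, each \(\pi_{k}\) eventually stabilizes to zero). Applying this to \(Y=X\wedge F(n)\) forces the composite \(v^{-1}F(n)\to L_{K(n)}F(n)\) to be null. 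However, this composite is the unit map of \(K(n)\)-localization applied to \(v^{-1}F(n)\), and being null would force \(L_{K(n)}F(n)\simeq0\)—contradicting \(K(n)\wedge F(n)\neq0\), which holds because \(F(n)\) is of type \(n\).

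The main obstacle will be laying out the chromatic reductions in the first step cleanly—specifically, that \(L_{n}^{\fin}F(n)\simeq v^{-1}F(n)\) and \(L_{n}F(n)\simeq L_{K(n)}F(n)\), together with the identification of the smashed natural transformation as the \(K(n)\)-localization unit—but these are classical consequences of type-\(n\)-ness combined with the smashing property of the two localizations. The connectivity argument in the second step is elementary once the colimit presentation of \(v^{-1}F(n)\) is in hand.
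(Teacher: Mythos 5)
Your proof is correct and follows essentially the same route as the paper: smash the putative factorization with a type-\(n\) finite complex carrying a \(v_{n}\)-self-map, identify \(L_{n}^{\fin}F\) with the telescope \(v^{-1}F\), kill any map from the telescope into a bounded-below target by the inverse-limit connectivity argument, and contradict the nonvanishing of \(L_{K(n)}F\). The only cosmetic difference is that you phrase the final contradiction via the \(K(n)\)-localization unit being null forcing \(L_{K(n)}F\simeq0\), whereas the paper directly cites that \(F\to L_{K(n)}F\) is nonzero; these are the same point.
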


\begin{proof}
Suppose that it factors through a bounded-below spectrum~\(C\).
  Let \(F\) be a (nonzero) finite \(p\)-local spectrum
  of type~\(n\)
  and \(v_{n}\colon\Sigma^{d}F\to F\) a \(v_{n}\)-self map.
  Then
  \(L_{n}^{\fin}F
  =v_{n}^{-1}F\) is \(v_{n}\)-periodic.
  So
  the map \(L_{n}^{\fin}F\to L_{n}F\)
  factors through
  the limit
  \begin{equation*}
    \dotsb\xrightarrow{\Sigma^{d}v_{n}\otimes{\id}}\Sigma^{d}F\otimes C\xrightarrow{v_{n}\otimes{\id}}F\otimes C,
  \end{equation*}
  which vanishes, since \(C\) is bounded below.
Therefore, the map should be zero,
  which is contradicting
  the fact that \(F\to L_{K(n)}F\) is nonzero.
\end{proof}

\begin{proof}[Proof of \cref{xvs1r4}]
Suppose that it is neither~\(\SS_{(p)}\) nor~\(0\).
  We assume that the index \(\infty\)-category is a poset.
  For each~\(i\),
  Then there is a (unique) \(n\geq0\)
  such that
  \(E_{i}\) fits between~\(L_{n}^{\fin}\SS\)
  and~\(L_{n}\SS\).
  Therefore,
  we can find a factorization
  \(L_{n}^{\fin}\SS\to E_{i}\to E_{j}\to L_{n}\SS\)
  for some~\(n\) and \(i\leq j\)
  such that \(E_{i}\to E_{j}\) is compact,
  and hence it factors through a spectrum bounded below by \cref{xo7ubc}.
  When \(n=0\), this is impossible,
  since \(\QQ\) is not compact.
  When \(n>0\),
  this is impossible by \cref{xadi4q}.
\end{proof}

\begin{corollary}\label{xaymiy}
  The space
  \(\Sm^{\con}(\Cat{Sp})\)
  is a singleton.
\end{corollary}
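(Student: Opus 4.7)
The plan is to reduce to the \(p\)-local statement of \cref{xvs1r4} via \(p\)-localization and rationalization, and then patch back together using arithmetic fracture. By \cref{xxyv58}, it suffices to show that the only very Schwartz idempotents of \(\Cat{Sp}\) are \(0\) and \(\SS\), so I would work with idempotents throughout.

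First, I would verify that for each prime \(p\), the \(p\)-localization \(L_{(p)}\colon\Cat{Sp}\to\Cat{Sp}_{(p)}\), as well as the rationalization \(L_{\QQ}\colon\Cat{Sp}\to\D(\QQ)\), are morphisms in \(\CAlg(\Cat{Pr}^{\con})\). These are smashing symmetric monoidal localizations and their right adjoints (the full inclusions of local objects) preserve all colimits, so by \cref{con_1} they land in \(\Cat{Pr}^{\con}\), and the symmetric monoidal structure is automatic. Consequently, if \(E\) is a very Schwartz idempotent of \(\Cat{Sp}\) with presentation \((E_{i})_{i}\), then \((L_{(p)}(E_{i}))_{i}\) and \((L_{\QQ}(E_{i}))_{i}\) are again very Schwartz \(\Ind\)-idempotents of \(\Cat{Sp}_{(p)}\) and \(\D(\QQ)\), respectively. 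By \cref{xvs1r4}, \(E_{(p)}\in\{0,\SS_{(p)}\}\), and the rational analog gives \(E_{\QQ}\in\{0,\QQ\}\); the latter follows either from the elementary fact that \(\D(\QQ)\) has only the trivial idempotents, or from \cref{xgbjvz} applied to \(\Spec\QQ\).

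Finally, I would finish by an arithmetic fracture argument on the unit \(u\colon\SS\to E\). If \(E_{\QQ}=0\), then \((E_{(p)})_{\QQ}=E_{\QQ}=0\) for every prime \(p\), and since \((\SS_{(p)})_{\QQ}=\QQ\neq0\), this forces \(E_{(p)}=0\) for all \(p\); then each homotopy group \(\pi_{n}E\) has vanishing \(p\)-localization for every \(p\) and vanishing rationalization, hence vanishes, so \(E=0\). If instead \(E_{\QQ}=\QQ\), the same rational check forces \(E_{(p)}=\SS_{(p)}\) for all \(p\), so \(u\) becomes an equivalence after rationalization and after every \(p\)-localization; the fiber \(\fib(u)\) then vanishes everywhere, so \(u\) is an equivalence and \(E=\SS\). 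The main obstacle is really the first step—checking that the \(p\)-local reductions land in \(\Cat{Pr}^{\con}\) and therefore carry very Schwartz idempotents to very Schwartz idempotents—because the deep input, namely the classification at each prime, is already \cref{xvs1r4}.
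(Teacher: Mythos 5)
Your proof is correct, and its skeleton is the same as the paper's: localize at each prime and rationally, invoke \cref{xvs1r4} (plus the trivial rational case) to pin down each localization, and patch. The two arguments diverge only at the patching step. The paper concludes by observing that \(E\) must be of the form \(\SS[1/P]\) and then base-changes to \(\ZZ\), using the noetherian computation \cref{xgbjvz} to force \(P=\emptyset\); you instead run an arithmetic fracture argument on the unit \(u\colon\SS\to E\), noting that \(E_{(p)}=0\) would force \(E_{\QQ}=0\), so that in the nonzero case every \(E_{(p)}\) is the trivial idempotent and \(\fib(u)\) dies after every localization. Your version is arguably tighter—it avoids \cref{xgbjvz} entirely (you only cite it as an optional crutch for the rational case, where the elementary classification of idempotents in \(\D(\QQ)\) suffices)—and it makes explicit a point the paper leaves implicit, namely that the smashing localizations \(\Cat{Sp}\to\Cat{Sp}_{(p)}\) and \(\Cat{Sp}\to\D(\QQ)\) are morphisms in \(\CAlg(\Cat{Pr}^{\con})\) (via \cref{con_1}/\cref{xhfb2c} and \cref{x0daz5}) and therefore carry (very) Schwartz \(\Ind\)-idempotents to (very) Schwartz \(\Ind\)-idempotents, which is exactly what legitimizes applying \cref{xvs1r4} to \(E_{(p)}\). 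Both proofs are sound; yours trades a geometric input for an elementary homotopy-group computation.
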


\begin{proof}
  Let \(E\) be a very Schwartz idempotent.
  Our aim is to show that it is equivalent
  to~\(\SS\) or~\(0\).
  Since \(E\otimes\QQ\) is an idempotent
  in~\(\D(\QQ)\), it is either~\(\QQ\) or~\(0\).
  If \(E\otimes\QQ\) vanishes,
  by \cref{xvs1r4},
  \(E\) itself must vanish.
We then assume
  \(E\otimes\QQ\simeq\QQ\).
  By \cref{xvs1r4},
  it is of the form \(\SS[1/P]\)
  for some set of primes~\(P\).
  In this case,
  we base change to~\(\ZZ\)
  to see that \(P\) must be empty
  by \cref{xgbjvz}.
\end{proof}

\begin{proof}[Proof of \cref{tan_sc}]
  We know that \(\SS_{Z}\) is a very Schwartz idempotent
  for a closed subset~\(Z\);
  see the proof of \cref{sm_con_adj}.
  We prove the converse.

  Let \(E\) be a very Schwartz idempotent.
  By \cref{xaymiy},
  \(E_{x}\) is either~\(\SS\) or~\(0\)
  for any point~\(x\).
  Let \(U\) be the subset of~\(X\)
  spanned by points~\(x\)
  such that \(E_{x}\) vanishes.
Since being zero for a ring
  is equivalent to \(0=1\),
  we see that \(U\) is open.
  We then replace~\(X\) with the complement of~\(U\).
  Now,
  it suffices to show that
  for any very Schwartz idempotent~\(E\) on~\(X\),
  the map \(\SS\to E\) is an equivalence
  under the assumption that
  it is a stalkwise equivalence.

  We write \(E\) as the colimit of
  a very Schwartz \(\Ind\)-idempotent \((E_{i})_{i}\).
  We fix~\(i\) and show \(E_{i}=\SS\).
For each point~\(x\),
  we get a map
  \(E\to\SS_{\overline{\{x\}}}\)
  of idempotent algebras,
  since \(E_{x}\simeq\SS\) by assumption.
  Since \(E_{i}\to E\) is compact,
  we obtain a factorization
  \(E_{i}\to\SS_{Z_{x}}\to\SS_{\overline{\{x\}}}\)
  for some closed subset~\(Z_{x}\)
  satisfying
  \(X\setminus Z_{x}\ll X\setminus\overline{\{x\}}\).
Therefore,
  we can take a compact saturated subset~\(K_{x}\)
  such that
  \(X\setminus Z_{x}\subset K_{x}\subset X\setminus\overline{\{x\}}\) holds.
  Since \(X^{\op}\) is compact,
  we see that there exist
  \(x_{1}\), …,~\(x_{n}\)
  such that \(X\setminus K_{x_{1}}\),
  …,~\(X\setminus K_{x_{n}}\) cover~\(X\).
  Hence \(Z_{1}\), …,~\(Z_{n}\) also cover~\(X\),
  and we indeed obtain the desired map \(E_{i}\to\SS\).
\end{proof}

\section{Rigid spectrum}\label{s:rig_sp}

We now consider the rigid spectrum~\(\Sm^{\rig}\),
which, as noted in \cref{ss:tan_ch},
is simply the Stone–Čech compactification of~\(\Sm\).
Here, we provide a concrete definition
in terms of very nuclear idempotents.
In \cref{ss:tc,ss:rig},
we recall the notion of rigid
stable presentably symmetric monoidal \(\infty\)-categories.
This sets the stage for \cref{ss:nr},
where we introduce the rigid spectrum for such categories.
We then compute some examples
in \cref{ss:rig_sample}.
Finally, in \cref{ss:rig_thm},
we digress to highlight the role of very nuclear algebras in general.

\subsection{Trace-class morphisms and (very) nuclear \texorpdfstring{\(\Ind\)}{Ind}-objects}\label{ss:tc}

The notion of nuclear modules
in the abstract categorical setting
was first systematically studied
by Clausen–Scholze (cf.~\cite[Section~13]{Analytic}).
We here review basics of this theory.

We first recall the following:

\begin{definition}\label{xz1vid}
  We say that
  an object~\(C\) in a symmetric monoidal \(\infty\)-category
  is \emph{dualizable} if there are an object~\(D\)
  and morphisms
  \(\eta\colon\unit\to D\otimes C\)
  and
  \(\epsilon\colon C\otimes D\to\unit\)
  such that
  \begin{align*}
    C\simeq
    C\otimes\unit&\xrightarrow{{\id_{C}}\otimes\eta}
    C\otimes D\otimes C\xrightarrow{\epsilon\otimes{\id_{C}}}
    \unit\otimes C
    \simeq C,\\
    D\simeq
    \unit\otimes D&\xrightarrow{{\id_{D}}\otimes\eta}
    D\otimes C\otimes D\xrightarrow{\epsilon\otimes{\id_{D}}}
    D\otimes\unit
    \simeq D,
  \end{align*}
  are equivalent to \({\id}_C\) and \({\id}_D\), respectively.
\end{definition}

A modification leads us to the following definition:

\begin{definition}\label{xi691o}
  We say that a morphism \(f\colon C\to C'\)
  in a symmetric monoidal \(\infty\)-category is \emph{trace class} if
  there are
  morphisms \(\eta\colon\unit\to D\otimes C'\)
  and \(\epsilon\colon C\otimes D\to\unit\)
  such that the composite
  \begin{equation*}
    C\simeq
    C\otimes\unit\xrightarrow{{\id_{C}}\otimes\eta}
    C\otimes D\otimes C'\xrightarrow{\epsilon\otimes{\id_{C'}}}
    \unit\otimes C'
    \simeq C'
  \end{equation*}
  is equivalent to~\(f\).
\end{definition}

\begin{example}\label{xajqb1}
  For \(\cat{C}\in\CAlg(\Cat{Pr})\),
  when the unit is compact,
  any trace-class morphisms \(C\to C'\) is compact.
  Indeed,
  adopting the notation from \cref{xi691o},
  the natural transformation
  \([C',\X]\to[C,\X]\)
  factors through \(D\otimes\X\),
  where \([\X,\X]\) denotes the internal mapping object.
  Consider a filtered colimit \(E=\injlim_{k}E_{k}\).
  \begin{equation*}
    \begin{tikzcd}
      \injlim_{k}[C',E_{k}]\ar[r]\ar[d]&
      \injlim_{k}(D\otimes E_{k})\ar[r]\ar[d,"\simeq"]&
      \injlim_{k}[C,E_{k}]\ar[d]\\
      {\bigl[C',\injlim_{k}E_{k}\bigr]}\ar[r]&
      D\otimes\bigl(\injlim_{k}E_{k}\bigr)\ar[r]&
      {\bigl[C,\injlim_{k}E_{k}\bigr]}
    \end{tikzcd}
  \end{equation*}
  By applying \(\Map(\unit,\X)\)
  and using the compactness of~\(\unit\),
  we get the desired lift.

  We see that the converse holds
  when \(\cat{C}\) is stable
  and moreover rigid; see \cref{rig_tc}.
\end{example}

\begin{example}\label{x4psyd}
  For \(\cat{C}\in\CAlg(\Cat{Pr})\),
  suppose that the unit is \(\kappa\)-compact
  and the underlying \(\infty\)-category
  is \(\kappa\)-compactly generated.
  Then any trace-class morphism
  factors through a \(\kappa\)-compact object.
  Indeed,
  with the notation of \cref{xi691o},
  the unit
  \(\eta\) factors through some \(C'_{0}\to C'\),
  where \(C'_{0}\) is \(\kappa\)-compact.
\end{example}

\begin{remark}\label{xa0g0m}
  In \cref{xi691o},
  when the symmetric monoidal structure is closed,
  we can always take \(D\) to be \([C,\unit]\),
  where \([\X,\X]\) denotes
  the internal mapping object.
  In this case,
  a morphism \(C\to C'\) is trace class
  if and only if
  the corresponding morphism
  \(\unit\to[C,C']\)
  factors through \([C,\unit]\otimes C'\to[C,C']\).
  This aligns with the intuition
  behind trace-class operators in functional analysis;
  recall that
  a bounded operator on~\(\ell^{2}\)
  is trace class if
  it lies in the image of
  \((\ell^{2})^{*}\otimes_{\pi}\ell^{2}\to\Cls{B}(\ell^{2})\).
\end{remark}

We note some formal properties of trace-class morphisms:

\begin{lemma}\label{xd9v6a}
  In a symmetric monoidal \(\infty\)-category,
  trace-class morphisms form
  an idealoid (see \cref{xbjisd}).
\end{lemma}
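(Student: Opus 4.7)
The plan is to prove the statement directly from the definition, by exhibiting trace-class witnesses for $g \circ f \circ h$ from those for $f$. Fix a trace-class morphism $f \colon C \to C'$, together with witnesses $D$, $\eta \colon \unit \to D \otimes C'$, and $\epsilon \colon C \otimes D \to \unit$ as in \cref{xi691o}; and fix arbitrary composable morphisms $h \colon B \to C$ and $g \colon C' \to C''$. The assertion to prove is that $g \circ f \circ h$ is trace class.

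My candidate witnesses use the same auxiliary object $D$, together with the morphisms
\[
\eta' = (\id_D \otimes g) \circ \eta \colon \unit \to D \otimes C'', \qquad \epsilon' = \epsilon \circ (h \otimes \id_D) \colon B \otimes D \to \unit.
\]
I would then verify that the induced composite
\[
B \simeq B \otimes \unit \xrightarrow{\id_B \otimes \eta'} B \otimes D \otimes C'' \xrightarrow{\epsilon' \otimes \id_{C''}} \unit \otimes C'' \simeq C''
\]
coincides with $g \circ f \circ h$. Substituting the definitions of $\eta'$ and $\epsilon'$ and using the interchange law between tensor and composition in a symmetric monoidal $\infty$-category, this composite rewrites as
\[
B \otimes \unit \xrightarrow{\id_B \otimes \eta} B \otimes D \otimes C' \xrightarrow{\id \otimes \id \otimes g} B \otimes D \otimes C'' \xrightarrow{h \otimes \id \otimes \id} C \otimes D \otimes C'' \xrightarrow{\epsilon \otimes \id_{C''}} \unit \otimes C'',
\]
which, by further interchange, equals the pre-composition with $h$ and post-composition with $g$ of the defining composite of $f$; that is, $g \circ f \circ h$.

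The only point requiring care is the $\infty$-categorical coherence of the diagram chase, but there is no genuine obstacle: all of the data in play (the definition of trace-class and the naturality squares used) live in the symmetric monoidal structure itself, so the verification reduces to an identity in the homotopy $(2,1)$-category of the tensor structure. Thus I expect no substantive difficulty, and the proof should be essentially a one-line remark.
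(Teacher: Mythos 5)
Your proof is correct and matches the paper's approach: the paper simply states that the claim follows immediately from the definition, and your explicit witnesses $\eta' = (\id_D\otimes g)\circ\eta$ and $\epsilon' = \epsilon\circ(h\otimes\id_D)$ are exactly the details being left implicit there.
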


\begin{proof}
  This immediately follows from the definition.
\end{proof}

\begin{lemma}\label{xjd6rf}
  We consider a diagram
  \begin{equation*}
    \begin{tikzcd}
      C_{0}'\ar[r]\ar[d]&
      C_{1}'\ar[r]\ar[d]&
      C_{2}'\ar[d]\\
      C_{0}\ar[r]&
      C_{1}\ar[r]&
      C_{2}
    \end{tikzcd}
  \end{equation*}
  in a stably symmetric monoidal \(\infty\)-category~\(\cat{C}\).
  We write
  \(C_{0}''\to C_{1}''\to C_{2}''\)
  for the cofibers of the vertical morphisms.
  If \(C_{0}'\to C_{1}'\)
  and \(C_{1}\to C_{2}\) are trace class,
  so is \(C_{0}''\to C_{2}''\).
\end{lemma}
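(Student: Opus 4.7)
My plan is to use the internal-hom characterization of trace class from \cref{xa0g0m} (applicable in the closed setting of $\CAlg(\Cat{Pr})$): a morphism $f\colon X\to Y$ is trace class iff the classifying map $\unit\to[X,Y]$ factors through $[X,\unit]\otimes Y\to[X,Y]$. Denote the bottom row by $C_0\xrightarrow{g_0}C_1\xrightarrow{g_1}C_2$, the top row by $C_0'\xrightarrow{g_0'}C_1'\xrightarrow{g_1'}C_2'$, the verticals by $h_i\colon C_i'\to C_i$, and the cofiber projections by $p_i\colon C_i\to C_i''$. Fix trace-class witnesses $(D_1,\eta_1,\epsilon_0)$ for $g_0'$ and $(D_2,\eta_2,\epsilon_1)$ for $g_1$, and let $\bar\phi\colon C_0''\to C_2''$ be the induced map. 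First I would note that $\phi:=p_2\circ g_1\circ g_0\colon C_0\to C_2''$ is trace class by \cref{xd9v6a}, with explicit lift
\[
\ell_0\colon\unit\xrightarrow{\eta_2}D_2\otimes C_2\xrightarrow{\id\otimes p_2}D_2\otimes C_2''\to[C_0,\unit]\otimes C_2'',
\]
where the last arrow uses the adjunct $D_2\to[C_0,\unit]$ of $\tilde\epsilon_1:=\epsilon_1\circ(g_0\otimes\id_{D_2})$.

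Using stability and closedness, the cofiber sequence $C_0'\to C_0\to C_0''$ produces a fiber sequence $[C_0'',\unit]\otimes C_2''\to[C_0,\unit]\otimes C_2''\to[C_0',\unit]\otimes C_2''$, so proving $\bar\phi$ trace class reduces to producing a canonical null-homotopy of the image of $\ell_0$ in $[C_0',\unit]\otimes C_2''$. Setting $\tilde\epsilon:=\epsilon_1\circ(h_1\otimes\id_{D_2})$, commutativity of the left square yields $\tilde\epsilon_1\circ(h_0\otimes\id_{D_2})=\tilde\epsilon\circ(g_0'\otimes\id_{D_2})$; plugging in the trace-class formula $g_0'=(\epsilon_0\otimes\id_{C_1'})\circ(\id\otimes\eta_1)$ and unwinding adjunctions shows that $D_2\to[C_0',\unit]$ factors as
\[
D_2\xrightarrow{\eta_1\otimes\id}D_1\otimes C_1'\otimes D_2\xrightarrow{\id\otimes\tilde\epsilon}D_1\to[C_0',\unit],
\]
where the last arrow is the adjunct of $\epsilon_0$. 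Consequently, the image of $\ell_0$ in $[C_0',\unit]\otimes C_2''$ factors through a map $\unit\to D_1\otimes C_2''$.

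The key identity $(\tilde\epsilon\otimes\id_{C_2})\circ(\id_{C_1'}\otimes\eta_2)=g_1\circ h_1=h_2\circ g_1'$---combining the trace-class formula for $g_1$ with right-square commutativity---then shows that this map $\unit\to D_1\otimes C_2''$ equals $(\id_{D_1}\otimes(p_2\circ h_2\circ g_1'))\circ\eta_1$, which is canonically null-homotopic via the cofiber null-homotopy $p_2\circ h_2\simeq 0$. Propagating this gives the lift $\ell\colon\unit\to[C_0'',\unit]\otimes C_2''$, exhibiting $\bar\phi$ as trace class. The main obstacle I anticipate is the adjunction chase identifying $D_2\to[C_0',\unit]$ as factoring through $D_1$ and ensuring the various canonical null-homotopies compose coherently in the $\infty$-categorical sense rather than merely on $\pi_0$; a less conceptual alternative would construct witnesses $(D,\eta,\epsilon)$ for $\bar\phi$ directly, say with $D=D_1\otimes D_2$, but this appears considerably more involved.
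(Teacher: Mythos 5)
Your proposal follows essentially the same route as the paper's proof: fix witnesses for the two given trace-class morphisms, use the fiber sequence \([C_{0}'',\unit]\otimes C_{2}''\to[C_{0},\unit]\otimes C_{2}''\to[C_{0}',\unit]\otimes C_{2}''\) (the paper phrases this as a cartesian square), lift the evident witness of \(C_{0}\to C_{2}''\) by producing a nullhomotopy of its image in \([C_{0}',\unit]\otimes C_{2}''\) out of the witness for \(C_{0}'\to C_{1}'\) together with a cofiber-sequence nullhomotopy, and then check that the lift classifies the induced map. Two points you leave open are exactly the ones the paper treats explicitly, and you should supply them. First, the lemma is stated for an arbitrary stably symmetric monoidal \(\cat{C}\), which need not be closed, so the internal-hom characterization of \cref{xa0g0m} is not directly available; the paper reduces to the closed case by passing to \(\Ind(\cat{C})\) and using compactness of \(\unit\in\Ind(\cat{C})\) to show that a morphism of \(\cat{C}\) is trace class in \(\cat{C}\) if and only if it is so in \(\Ind(\cat{C})\). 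Second, the coherence issue you flag at the end is genuine: a lift of \(\ell_{0}\) along the fiber sequence obtained from \emph{some} nullhomotopy only produces a point of \([C_{0}'',\unit]\otimes C_{2}''\) whose image in \([C_{0}'',C_{2}'']\) agrees with the classifying map of \(\bar\phi\) after restriction along \(C_{0}\to C_{0}''\); to conclude that it classifies \(\bar\phi\) itself one must match your chosen nullhomotopy against the tautological one, which is what the paper's final step does by mapping the constructed diagrams to the corresponding tautological diagrams in \([C_{0}'',C_{2}'']\). With those two additions your argument is the paper's argument, up to the inessential difference that your nullhomotopy is routed through \(p_{2}\circ h_{2}\simeq0\) rather than through the nullity of \(C_{1}'\to C_{1}\to C_{1}''\).
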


\begin{proof}
One could prove this directly,
  but we opt here for a shortcut
  derived from \cref{xa0g0m}.
  We assume that \(\cat{C}\) is small
  and consider \(\Ind(\cat{C})\),
  for which we can use \cref{xa0g0m}.
  Then we claim that a morphism
  \(C\to C'\) in~\(\cat{C}\)
  is trace class in~\(\cat{C}\)
  if and only if so in~\(\Ind(\cat{C})\).
  The “only if” direction is clear,
  and the “if” direction follows from
  the compactness of~\(\unit\in\Ind(\cat{C})\):
  We write \([C,\unit]\) as the colimit
  \(\injlim_{i}D_{i}\)
  of some filtered diagram in~\(\cat{C}\),
  but then a witness \(\unit\to[C,\unit]\otimes C'\)
  factors through \(\unit\to D_{i}\otimes C'\)
  for some~\(i\).

  Therefore,
  we can assume that \(\cat{C}\) is closed
  and write \([\X,\X]\) for the internal mapping object.
  We write \(D_{i}\), \(D_{i}'\), and \(D_{i}''\)
  for \([\X,\unit]\) of \(C_{i}\), \(C_{i}'\), and \(C_{i}''\),
  respectively.
  We fix witnesses
  \begin{align*}
    \eta\colon\unit&\to D_{1}\otimes C_{2},&
    \eta'\colon\unit&\to D_{0}'\otimes C_{1}'.
  \end{align*}
  We then construct
  a morphism \(\eta''\colon\unit\to D_{0}''\otimes C_{2}''\)
  using the diagram
  \begin{equation*}
    \begin{tikzcd}
      \unit\ar[r,dashed]\ar[d,"\eta"']&
      D_{0}''\otimes C_{2}''\ar[r]\ar[d]&
      0\ar[d]\\
      D_{1}\otimes C_{2}\ar[r]&
      D_{0}\otimes C_{2}''\ar[r]&
      D_{0}'\otimes C_{2}''\rlap.
    \end{tikzcd}
  \end{equation*}
  Since the right square is cartesian,
  we need to provide a nullhomotopy for
  the composite
  \(\unit\to D_{1}\otimes C_{2}\to D_{0}'\otimes C_{2}''\).
  This comes from the diagram
  \begin{equation*}
    \begin{tikzcd}
      \unit\ar[r,"\eta'"]\ar[d,"\eta"']&
      D_{0}'\otimes C_{1}'\ar[r]\ar[d]\ar[rr,bend left=15,"0"]&
      D_{0}'\otimes C_{1}\ar[d]\ar[r]\ar[rd]&
      D_{0}'\otimes C_{1}''\ar[rd]&
      \\
      D_{1}\otimes C_{2}\ar[r]&
      D_{0}'\otimes C_{1}'\otimes D_{1}\otimes C_{2}\ar[r]&
      D_{0}'\otimes C_{1}\otimes D_{1}\otimes C_{2}\ar[r]&
      D_{0}'\otimes C_{2}\ar[r]&
      D_{0}'\otimes C_{2}''\rlap.
    \end{tikzcd}
  \end{equation*}
  We then show that this indeed witnesses
  \(C_{0}''\to C_{2}''\).
  To prove this,
  we just need to observe that
  the tautological diagrams
  \begin{equation*}
    \begin{tikzcd}
      \unit\ar[r]\ar[d]&
      {[C_{0}'',C_{2}'']}\ar[r]\ar[d]&
      0\ar[d]\\
      {[C_{1},C_{2}]}\ar[r]&
      {[C_{0},C_{2}'']}\ar[r]&
      {[C_{0}',C_{2}'']}
    \end{tikzcd}
  \end{equation*}
  and
  \begin{equation*}
    \begin{tikzcd}
      \unit\ar[r]\ar[d]&
      {[C_{0}',C_{1}']}\ar[r]\ar[d]\ar[rr,bend left=15,"0"]&
      {[C_{0}',C_{1}]}\ar[d]\ar[r]\ar[rd]&
      {[C_{0}',C_{1}'']}\ar[rd]&
      \\
      {[C_{1},C_{2}]}\ar[r]&
      {[C_{0}',C_{1}']}\otimes{[C_{1},C_{2}]}\ar[r]&
      {[C_{0}',C_{1}]}\otimes{[C_{1},C_{2}]}\ar[r]&
      {[C_{0}',C_{2}]}\ar[r]&
      {[C_{0}',C_{2}'']}
    \end{tikzcd}
  \end{equation*}
  receive maps from the diagrams above,
  respectively.
\end{proof}

\begin{definition}\label{xombx7}
  In a symmetric monoidal \(\infty\)-category,
  we say that a morphism is \emph{very trace class}
  if it is in \(\cat{I}_{\sd}\) (see \cref{xhyy8b}),
  where \(\cat{I}\) is the idealoid of trace-class morphisms.
\end{definition}

\begin{definition}\label{xsdc04}
  Let \(\cat{C}\) be a presentably symmetric monoidal \(\infty\)-category.
  We write \(\cat{I}\) for the class of trace-class morphisms in~\(\cat{C}\).
  We say that an \(\Ind\)-object is
  \emph{nuclear} and \emph{very nuclear}
  if it is in
  \(\Ind_{\cat{I}}(\cat{C})\) (see \cref{strict})
  and \(\Ind_{\cat{I}_{\sd}}(\cat{C})\),
  respectively.
\end{definition}

\begin{remark}\label{xw917h}
  As noted in \cref{x17h0a},
  the functional-analytic analog of
  \cref{xsdc04} is called a “dual nuclear” (or “conuclear”) module.
\end{remark}

In the compactly generated situation,
it has an easier characterization:

\begin{proposition}\label{xzlma5}
  For \(\cat{C}\in\CAlg(\Cat{Pr}^{\cg})\),
  the following are equivalent for an object \(C\in\cat{C}\):
  \begin{conenum}
    \item\label{i:vyhu6}
      It is the colimit
      of some nuclear \(\Ind\)-object.
    \item\label{i:2hh2s}
      For any compact object~\(C_{0}\),
      the map
      \([C_{0},\unit]\otimes C\to[C_{0},C]\)
      in~\(\cat{C}\)
      is an equivalence,
      where \([\X,\X]\) denotes
      the internal mapping object.
  \end{conenum}
\end{proposition}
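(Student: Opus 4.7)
The plan is to prove both implications directly. Throughout, for a compact \(C_{0}\), write \(\alpha_{X}\colon [C_{0},\unit] \otimes X \to [C_{0},X]\) for the natural transformation in question, adjoint to \(\mathrm{ev} \otimes \id\); recall from \cref{xa0g0m} that a morphism \(C \to C'\) is trace class precisely when its adjoint \(\unit \to [C,C']\) factors through the comparison map \([C,\unit] \otimes C' \to [C,C']\).

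For the implication \cref{i:vyhu6} \(\Rightarrow\) \cref{i:2hh2s}: The class of \(C \in \cat{C}\) for which \(\alpha_{C}\) is an equivalence is closed under filtered colimits, since both \([C_{0},\unit] \otimes (\X)\) and \([C_{0},\X]\) preserve them---the former trivially and the latter because for \(Y \in \cat{C}_{\omega}\) the tensor \(Y \otimes C_{0}\) remains compact by the \(\CAlg(\Cat{Pr}^{\cg})\)-assumption, reducing the claim to ordinary commutativity of filtered colimits when tested against compacts. Combined with \cref{sequential}, this reduces us to \(C = \injlim_{n} C_{n}\) with \(C_{n} \in \cat{C}_{\omega}\) and each \(f_{n}\colon C_{n} \to C_{n+1}\) trace class. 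Fix a witness \(\eta_{n}\colon \unit \to [C_{n},\unit] \otimes C_{n+1}\) whose image in \([C_{n},C_{n+1}]\) is adjoint to \(f_{n}\), and set
\[
  \psi_{n}\colon [C_{0},C_{n}]
  \xrightarrow{\id \otimes \eta_{n}}
  [C_{0},C_{n}] \otimes [C_{n},\unit] \otimes C_{n+1}
  \xrightarrow{\mathrm{comp} \otimes \id}
  [C_{0},\unit] \otimes C_{n+1},
\]
where \(\mathrm{comp}\) denotes the enriched composition \([C_{0},C_{n}] \otimes [C_{n},\unit] \to [C_{0},\unit]\). A diagram chase using the trace-class identity \((\mathrm{ev} \otimes \id) \circ (\id \otimes \eta_{n}) = f_{n}\) shows \(\psi_{n} \circ \alpha_{C_{n}} = \id \otimes f_{n}\), while associativity of the enriched composition yields \(\alpha_{C_{n+1}} \circ \psi_{n} = (f_{n})_{*}\). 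Together these supply a cofinal zigzag between the filtered systems \(([C_{0},\unit] \otimes C_{n})_{n}\) and \(([C_{0},C_{n}])_{n}\), so the induced map of colimits---which is \(\alpha_{C}\)---is an equivalence.

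For the implication \cref{i:2hh2s} \(\Rightarrow\) \cref{i:vyhu6}: Pick any filtered presentation \(C = \injlim_{j} C_{j}\) with \(C_{j} \in \cat{C}_{\omega}\). I claim that \((C_{j})_{j}\) is already a nuclear Ind-object, which by \cref{x5hdob} reduces to producing, for each \(j\), an index \(l \geq j\) such that the canonical transition \(\tau_{j,l}\colon C_{j} \to C_{l}\) is trace class. Applying \(\Map(\unit,\X)\) to the equivalence \([C_{j},\unit] \otimes C \simeq [C_{j},C]\) given by \cref{i:2hh2s}, the canonical map \(C_{j} \to C\) corresponds to a morphism \(\eta\colon \unit \to [C_{j},\unit] \otimes C\). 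Since \(\unit\) and \(C_{j}\) are both compact and compacts are stable under tensor in \(\CAlg(\Cat{Pr}^{\cg})\), the object \([C_{j},\unit]\) is itself compact; hence \([C_{j},\unit] \otimes C = \injlim_{k}[C_{j},\unit] \otimes C_{k}\), and \(\eta\) factors through some \([C_{j},\unit] \otimes C_{k}\). This factorization is a trace-class witness for a map \(\varphi\colon C_{j} \to C_{k}\) whose composite with \(C_{k} \to C\) agrees with the original \(C_{j} \to C\). Because \(C_{j}\) is compact, \(\Map(C_{j},C) = \injlim_{l}\Map(C_{j},C_{l})\), so \(\varphi\) and the canonical transition \(\tau_{j,k}\) become equal after further composition with some \(\tau_{k,l}\colon C_{k} \to C_{l}\). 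Hence \(\tau_{j,l} = \tau_{k,l} \circ \tau_{j,k} = \tau_{k,l} \circ \varphi\), and \cref{xd9v6a} shows this composite is trace class.

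The main obstacle is the zigzag verification in the first implication: the two identities are formal consequences of the trace-class axioms and enriched associativity, but carrying out the diagram chase cleanly---especially tracking which tensor factor plays which role and the passage between \(\eta_{n}\) and \(f_{n}\) via the natural map \([C_{n},\unit] \otimes C_{n+1} \to [C_{n},C_{n+1}]\)---requires some care.
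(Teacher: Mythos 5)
Your proof is correct and follows essentially the same route as the paper: for \cref{i:vyhu6} \(\Rightarrow\) \cref{i:2hh2s} the paper simply produces, for each trace-class \(C\to C'\), the filler \([C_{0},C]\to[C_{0},\unit]\otimes C'\) (your \(\psi_{n}\)) and invokes \cref{xsoyii} for a general directed diagram, so your reduction to the sequential case via \cref{sequential} and the explicit zigzag identities are a more laborious packaging of the same idea; for \cref{i:2hh2s} \(\Rightarrow\) \cref{i:vyhu6} the two arguments coincide. One justification in your second implication is wrong as stated: \([C_{j},\unit]\) is an internal mapping object, not a tensor product of compacts, so ``compacts are stable under tensor'' does not show it is compact (and in general it need not be). Fortunately this claim is not needed: \([C_{j},\unit]\otimes(\X)\) preserves filtered colimits simply because \(\otimes\) does so in each variable, and the factorization of \(\eta\colon\unit\to\injlim_{k}([C_{j},\unit]\otimes C_{k})\) through a finite stage follows from the compactness of its \emph{source} \(\unit\), which is exactly what the paper uses. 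With that repair the argument is complete.
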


\begin{proof}
  We first show
  \(\text{\cref{i:vyhu6}}
  \Rightarrow\text{\cref{i:2hh2s}}\).
  Fix a compact object~\(C_{0}\).
  By \cref{xsoyii}
  and the fact that \([C_{0},\X]\)
  preserves filtered colimits,
  it suffices to construct a filler in
  \begin{equation*}
    \begin{tikzcd}
      {[C_{0},\unit]}\otimes C\ar[r]\ar[d]&
      {[C_{0},\unit]}\otimes C'\ar[d]\\
      {[C_{0},C]}\ar[r]\ar[ur,dashed]&
      {[C_{0},C']}
    \end{tikzcd}
  \end{equation*}
  for any trace-class morphism \(C\to C'\),
  but this exists by definition
  (for any \(C_{0}\in\cat{C}\)).

  We then assume
  that \cref{i:2hh2s} is satisfied
  for an object \(C\in\cat{C}\).
  We write \(C\) as a directed colimit \(\injlim_{i}C_{i}\),
  where \(C_{i}\) is compact.
  Then we claim that it is nuclear.
  We take~\(i_{0}\) and consider
  the canonical morphism \(C_{i_{0}}\to C\).
  Then by assumption,
  this is witnessed by a morphism
  \begin{equation*}
    \unit\to[C_{i_{0}},\unit]\otimes C
    \simeq[C_{i_{0}},\unit]\otimes\biggl(\injlim_{i}C_{i}\biggr)
    \simeq\injlim_{i}([C_{i_{0}},\unit]\otimes C_{i}).
  \end{equation*}
  Since \(\unit\) is compact,
  there is \(i\geq i_{0}\) such that
  the map factors through
  \([C_{i_{0}},\unit]\otimes C_{i}\).
  This implies that \(C_{i_{0}}\to C\)
  factors through as \(C_{i_{0}}\to C_{i}\to C\)
  and \(C_{i_{0}}\to C_{i}\) is trace class.
\end{proof}

\begin{remark}\label{ana_cg}
  Beware that
  our assumption in \cref{xzlma5} is \emph{not} satisfied
  in general
  by the \(\infty\)-category of modules over an analytic ring
  (both in light condensed mathematics
  and heavy condensed mathematics with cutoff strong-limit cardinal).
  This is why~\cite[Definition~13.10]{Analytic}
  only considered an equivalence of mapping spectra
  instead of internal mapping objects as in~\cref{i:2hh2s}.
  Nevertheless, this assumption holds in the solid case,
  significantly simplifying the theory.
  See also \cref{ana_rig}.
\end{remark}

\subsection{Rigid categories}\label{ss:rig}

The following was introduced in~\cite[Appendix~D]{Gaitsgory15}:

\begin{definition}[Gaitsgory]\label{xsl2l2}
  A stable presentably symmetric monoidal \(\infty\)-category~\(\cat{C}\)
  is rigid if \(\unit\) is compact
  and the multiplication \(\cat{C}\otimes\cat{C}\to\cat{C}\)
  has a \(\cat{C}\otimes\cat{C}\)-linear right adjoint.
\end{definition}

\begin{example}\label{xxw64m}
  In the situation of \cref{xsl2l2}
  further assume that \(\cat{C}\) is compactly generated.
  Then it is rigid
  if and only if it is rigid in the classical sense;
  i.e., \(\unit\) is compact, and all compact objects are dualizable.
\end{example}

We here explain the following,
which are by now standard;
see~\cite{Ramzi2} for a more general discussion
over bases other than~\(\Cat{Sp}\)
(but see \cref{xu26r5}):

\begin{proposition}\label{rig_0}
  A stable presentably symmetric monoidal \(\infty\)-category~\(\cat{C}\)
  with compact unit
  is rigid
  if and only if every object
  can be written as the colimit
  of a nuclear \(\Ind\)-object.
\end{proposition}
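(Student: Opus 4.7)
The plan is to mirror the proof of \cref{con_0}, substituting trace-class morphisms for compact morphisms and nuclear \(\Ind\)-objects for Schwartz \(\Ind\)-objects throughout. By \cref{xajqb1}, the standing compact-unit hypothesis already ensures that any trace-class morphism is compact, so every nuclear \(\Ind\)-object is Schwartz; the content of the proposition is that rigidity supplies the converse.

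For the ``only if'' direction, I would first observe that any rigid \(\cat{C}\) is dualizable in \(\Cat{Pr}_{\st}\) (a formal consequence of Gaitsgory's definition), hence continuous by \cref{x2hild}. \Cref{con_0} then writes every object \(C\) as the colimit of a Schwartz \(\Ind\)-object \((C_{i})_{i}\), and it remains to promote each compact transition \(C_{i} \to C_{j}\) into a trace-class morphism. The key auxiliary statement to prove is that in a rigid \(\cat{C}\) with compact unit every compact morphism is trace class. I would establish this by exploiting the coevaluation \(\eta \colon \unit_{\cat{C} \otimes \cat{C}} \to \mu^{R}(\unit_{\cat{C}})\) supplied by the \(\cat{C} \otimes \cat{C}\)-linear right adjoint \(\mu^{R}\) of the multiplication: for a compact morphism \(f \colon C \to C'\), the compactness of \(\unit\) combined with that of \(f\) lets one factor an appropriately-tensored copy of \(\eta\) through a single pure tensor \(D \boxtimes C'\) with \(D \in \cat{C}\), yielding trace-class witnesses \(\unit \to D \otimes C'\) and \(C \otimes D \to \unit\) for \(f\).

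For the ``if'' direction, assume every object is a colimit of a nuclear \(\Ind\)-object. Since nuclearity implies Schwartz-ness, \cref{con_0} immediately gives that \(\cat{C}\) is continuous. To establish rigidity, I would reduce to the compactly generated case via \cref{x9m28r}: choose a sufficiently large regular cardinal \(\kappa\) so that \(\check{\jmath} \dashv j\) exhibits \(\cat{C}\) as a retract of \(\Ind(\cat{C}_{\kappa})\) in \(\Cat{Pr}\). The nuclearity hypothesis, translated through \cref{xzlma5} applied inside the compactly generated category \(\Ind(\cat{C}_{\kappa})\), gives the equivalence \([C_{0}, \unit] \otimes \check{\jmath}(C) \simeq [C_{0}, \check{\jmath}(C)]\) for every \(\kappa\)-compact \(C_{0}\) and every \(C \in \cat{C}\). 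Varying \(C\) over \(\kappa\)-compact objects forces each \(\kappa\)-compact of \(\Ind(\cat{C}_{\kappa})\) in the essential image of \(\check{\jmath}\) (which is cofinal for the purposes of testing dualizability) to be dualizable; \cref{xxw64m} then makes \(\Ind(\cat{C}_{\kappa})\) rigid, and the symmetric-monoidal retract transfers rigidity down to \(\cat{C}\).

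The main obstacle is the descent of rigidity through the retract in the ``if'' direction: rigidity is not preserved under arbitrary retracts in \(\Cat{Pr}\), and its preservation requires the retract to respect the symmetric monoidal structure finely enough that the \(\cat{C} \otimes \cat{C}\)-linear right adjoint of \(\mu_{\cat{C}}\) arises from its counterpart on \(\Ind(\cat{C}_{\kappa})\). Checking this requires the nuclear presentations to be compatible with the tensor product (a trace-class analogue of \cref{xlcwpp}, together with \cref{x0daz5}) and careful tracking of how \(\check{\jmath}\) intertwines the two multiplications. The parallel technical heart of the ``only if'' direction—promoting compact morphisms to trace-class ones in a rigid category—is also nontrivial but follows a standard template.
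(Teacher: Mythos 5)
Your ``only if'' direction is essentially the paper's: rigid implies dualizable (\cref{x01b14}), hence continuous by \cref{x2hild}, so \cref{con_0} provides Schwartz presentations, and the entire content is the auxiliary claim that compact morphisms in a rigid category are trace class. That claim is the paper's \cref{rig_tc}, proved by rewriting \([C,C']\) as \([\unit\boxtimes C,m^{\R}(C')]\), factoring \(\check{\jmath}(C)\to\check{\jmath}(C')\) through a \(\kappa\)-compact object, and invoking the projection formula of \cref{x7qlp8}; your sketch via the coevaluation \(\unit\to m^{\R}(\unit)\) is the same idea, just vaguer about where the compactness of \(f\) itself (rather than only that of \(\unit\)) enters.

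The ``if'' direction has a genuine gap. You propose to show that \(\Ind(\cat{C}_{\kappa})\) is rigid and then descend along the retract \(\check{\jmath}\dashv j\), but \(\Ind(\cat{C}_{\kappa})\) is essentially never rigid: by \cref{xxw64m} this would force every \(\kappa\)-compact object of \(\cat{C}\) to be dualizable in \(\Ind(\cat{C}_{\kappa})\), hence (duals of compact objects being compact when \(\unit\) is compact) dualizable already in \(\cat{C}\). This fails even for \(\cat{C}=\Shv([0,1];\Cat{Sp})\), which is rigid but whose \(\kappa\)-compact objects include non-dualizable sheaves such as skyscrapers. The intermediate step---that the nuclearity condition of \cref{xzlma5} for the objects \(\check{\jmath}(C)\) ``forces each \(\kappa\)-compact in the essential image of \(\check{\jmath}\) to be dualizable''---is a non sequitur (nuclearity of a class of objects says nothing about dualizability of the compact generators), and rigidity of \(\Ind(\cat{C}_{\kappa})\) would in any case require \emph{all} of its compact objects, not a cofinal subclass; so the problem sits upstream of the retract-descent issue you flag as the main obstacle. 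The paper's argument avoids this entirely: closure of trace-class morphisms under \(\otimes\), together with \cref{xajqb1} and \cref{x0daz5}, places \(\cat{C}\) in \(\CAlg(\Cat{Dbl})\), so \(m^{\R}\) exists in \(\Cat{Pr}\); one then verifies the \(\cat{C}\otimes\cat{C}\)-linearity of \(m^{\R}\) directly, since a trace-class witness for \(C\to C'\) yields a canonical filler in the projection-formula square for \(m^{\R}\), and \cref{xsoyii} applied to a nuclear presentation of an arbitrary object upgrades these fillers to the required equivalence. Any repair of your route would end up reproducing that direct argument; the detour through rigidity of \(\Ind(\cat{C}_{\kappa})\) cannot be fixed.
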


\begin{remark}\label{xu26r5}
  In the literature,
  one typically considers a variant of \cref{rig_0}
  where only “basic” (i.e., sequential) nuclear objects are treated,
  and the \(\infty\)-category is then generated under colimits
  by them.
  Here, by contrast,
  we work with general \(\Ind\)-objects.
  See also \cref{xah8cd}.
\end{remark}

We first prove the following:

\begin{proposition}\label{rig_tc}
  A morphism
  in a rigid stable presentably symmetric monoidal \(\infty\)-category~\(\cat{C}\)
  is compact if and only if it is trace class.
\end{proposition}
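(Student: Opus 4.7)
The \emph{if} direction is immediate from \cref{xajqb1}: rigidity supplies compactness of~\(\unit\) by \cref{xsl2l2}, and \cref{xajqb1} then shows that any trace-class morphism is compact.

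For the \emph{only if} direction, the plan is to invoke the standard structural fact that in a rigid stable presentably symmetric monoidal \(\infty\)-category, every object is a filtered colimit of dualizable objects; see~\cite{Ramzi2} (in the compactly generated case, this follows at once from \cref{xxw64m} together with the classical identification of compact and dualizable objects). Given a compact morphism \(f\colon C\to C'\), present \(C'\simeq\injlim_{j}D_{j}\) with each \(D_{j}\) dualizable; applying the filler property of \cref{xl2h0i} to the element \(\id_{C'}\in\Map(C',\injlim_{j}D_{j})\) yields, up to homotopy, a factorization \(C\to D_{j_{0}}\to C'\) of \(f\) through a dualizable object.

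It then remains to verify that any morphism factoring through a dualizable object \(D\) is trace class. Writing \(f=h\circ g\) with \(g\colon C\to D\) and \(h\colon D\to C'\), we take \(D^{\vee}\) as the witnessing object in \cref{xi691o} and set \(\eta=(\id\otimes h)\circ\mathrm{coev}_{D}\colon\unit\to D^{\vee}\otimes C'\) and \(\epsilon=\mathrm{ev}_{D}\circ(g\otimes\id)\colon C\otimes D^{\vee}\to\unit\); the composite specified in \cref{xi691o} then recovers \(f=h\circ g\) by the triangle identity for the duality between \(D\) and \(D^{\vee}\). The main obstacle of the plan is the structural fact invoked at the start of the preceding paragraph: in the compactly generated case it is essentially tautological, but in the general rigid setting it is a genuinely nontrivial theorem about the internal structure of rigid categories, which we cite rather than reprove.
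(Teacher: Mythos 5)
The ``if'' direction is fine and matches the paper. The ``only if'' direction, however, rests on a false structural claim. You assert that in a rigid stable presentably symmetric monoidal \(\infty\)-category every object is a filtered colimit of \emph{dualizable} objects. In a rigid category the unit is compact, and for \(D\) dualizable one has \(\Map(D,\X)\simeq\Map(\unit,D^{\vee}\otimes\X)\), so every dualizable object is compact; your claim would therefore force every rigid category to be compactly generated. This fails for \(\Shv([0,1];\Cat{Sp})\), which is rigid (one of the central examples of this paper) but not compactly generated. What is true---and what \cite{Ramzi2} and \cref{rig_0} actually say---is that every object is the colimit of a \emph{nuclear} \(\Ind\)-object: the transition maps are trace class, but the terms need not be dualizable. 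Your misremembered version is the characterization of compactly generated rigid categories (\cref{xxw64m}), not of rigid categories in general.

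Your argument can be repaired: present \(C'\) as the colimit of a nuclear \(\Ind\)-object \((D_{j})_{j}\), use compactness of \(f\) to factor it as \(C\to D_{j_{0}}\to C'\), then insert a trace-class transition \(D_{j_{0}}\to D_{j_{1}}\) and conclude by the idealoid property of trace-class morphisms (\cref{xd9v6a}); this makes your final step about factoring through a dualizable object unnecessary (that step is correct but now moot). Be aware, though, that within this paper such a repair is circular: the ``only if'' direction of \cref{rig_0} is deduced \emph{from} \cref{rig_tc}, so you would need to import an independent proof of the nuclear presentation (e.g.\ from~\cite{Ramzi2}). The paper avoids this by arguing directly from the definition of rigidity: it rewrites the internal mapping objects \([C,C']\) via the \(\cat{C}\otimes\cat{C}\)-linear right adjoint \(m^{\R}\) of the multiplication, factors the compact morphism through a \(\kappa\)-compact object via \(\check{\jmath}\), and applies \cref{x7qlp8} to produce the witness of the trace-class property.
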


\begin{lemma}\label{x7qlp8}
  Consider \(\cat{C}\in\CAlg(\Cat{Pr}_{\st})\)
  and \(\cat{D}\in\Cat{Pr}_{\st}\).
  If \(D\in\cat{D}\) is compact,
  the morphism
  \begin{equation*}
    C\otimes[\unit\boxtimes D,X]
    \to
    [\unit\boxtimes D,C\otimes X]
  \end{equation*}
  is an equivalence
  for any \(C\in\cat{C}\) and \(X\in\cat{C}\otimes\cat{D}\).
  Here for a \(\cat{C}\)-linear \(\infty\)-category~\(\cat{M}\),
  we write \([\X,\X]\) for the mapping object in~\(\cat{C}\);
  i.e., for \(M\) and \(M'\in\cat{M}\),
  the object \([M,M']\in\cat{C}\)
  corepresents the functor \(\Map_{\cat{M}}(\X\otimes M,M')\colon
  \cat{C}^{\op}\to\Cat{S}\).
\end{lemma}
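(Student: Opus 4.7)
Plan: The strategy is to identify the right adjoint $[\unit\boxtimes D,\X]\colon\cat{C}\otimes\cat{D}\to\cat{C}$ of $\boxtimes D\colon\cat{C}\to\cat{C}\otimes\cat{D}$ as a concrete colimit-preserving functor, after which the equivalence follows by reducing to generators.

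Let $\widehat{D}\colon\Cat{Sp}\to\cat{D}$ denote the unique colimit-preserving functor sending $\SS$ to $D$, so that $\boxtimes D=\id_{\cat{C}}\otimes\widehat{D}$. The right adjoint of $\widehat{D}$ is $\map_{\cat{D}}(D,\X)\colon\cat{D}\to\Cat{Sp}$. By compactness of $D$ it preserves filtered colimits, and since it is exact between stable \(\infty\)-categories it preserves finite colimits; hence it preserves all colimits and is itself a morphism in $\Cat{Pr}_{\st}$. Tensoring the adjunction $\widehat{D}\dashv\map_{\cat{D}}(D,\X)$ with $\id_{\cat{C}}$ then yields an adjunction $\id_{\cat{C}}\otimes\widehat{D}\dashv\id_{\cat{C}}\otimes\map_{\cat{D}}(D,\X)$ in $\Cat{Pr}_{\st}$, and by uniqueness of right adjoints this identifies $[\unit\boxtimes D,\X]$ with $\id_{\cat{C}}\otimes\map_{\cat{D}}(D,\X)$. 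In particular $[\unit\boxtimes D,\X]$ preserves all colimits.

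With this description in hand, both $X\mapsto C\otimes[\unit\boxtimes D,X]$ and $X\mapsto[\unit\boxtimes D,C\otimes X]$ preserve colimits in $X$, so it suffices to verify the claim when $X=C'\boxtimes D'$ for $C'\in\cat{C}$ and $D'\in\cat{D}$, since such objects generate $\cat{C}\otimes\cat{D}$ under colimits. In this case the explicit formula yields $[\unit\boxtimes D,C'\boxtimes D']\simeq C'\otimes\map_{\cat{D}}(D,D')$ and $[\unit\boxtimes D,C\otimes(C'\boxtimes D')]\simeq(C\otimes C')\otimes\map_{\cat{D}}(D,D')$, and the comparison map is the canonical associativity equivalence in the $\cat{C}$-module $\cat{C}$. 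The main obstacle is the first step—ensuring $\map_{\cat{D}}(D,\X)$ genuinely lies in $\Cat{Pr}_{\st}$ so that it may be tensored with $\id_{\cat{C}}$—which crucially uses both compactness of $D$ and stability of $\cat{D}$; without stability one only obtains preservation of filtered colimits, which is insufficient.
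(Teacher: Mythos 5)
Your proof is correct and follows essentially the same route as the paper: both arguments translate compactness of \(D\) into the statement that \(\widehat{D}\colon\Cat{Sp}\to\cat{D}\) admits a right adjoint in \(\Cat{Pr}_{\st}\), and then apply \(\cat{C}\otimes\X\) to obtain a \(\cat{C}\)-linear right adjoint of \(\X\boxtimes D\), which is exactly the assertion of the lemma. Your additional reduction to generators \(X=C'\boxtimes D'\) is a harmless (slightly redundant) way of making the \(\cat{C}\)-linearity of the tensored right adjoint explicit.
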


\begin{proof}
  The object~\(D\) corresponds to
  a morphism \(\Cat{Sp}\to\cat{D}\) in~\(\Cat{Pr}_{\st}\)
  and its compactness corresponds to
  the fact that it admits a right adjoint in~\(\Cat{Pr}_{\st}\).
  By applying \(\cat{C}\otimes\X\),
  it shows that
  the \(\cat{C}\)-linear functor
  \(\cat{C}\to\cat{C}\otimes\cat{D}\)
  corresponding to \(\unit\boxtimes D\)
  admits a \(\cat{C}\)-linear right adjoint,
  which implies the desired result.
\end{proof}

\begin{proof}[Proof of \cref{rig_tc}]
  Since the unit is compact,
  trace-class morphisms are compact
  by \cref{xajqb1}.
  Thus, it suffices to prove the converse.
  In this proof,
  we keep the notation of \cref{x7qlp8}.
We consider \(\cat{C}\otimes\cat{C}\)
  as a \(\cat{C}\)-linear \(\infty\)-category
  via the left factor.
  Let \(C\to C'\) be a compact morphism.
  We wish to provide a filler in the diagram
  \begin{equation*}
    \begin{tikzcd}
      {C'\otimes[C',\unit]}\ar[r]\ar[d]&
      {C'\otimes[C,\unit]}\ar[d]\\
      {[C',C']}\ar[r]\ar[ur,dashed]&
      {[C,C']}\rlap.
    \end{tikzcd}
  \end{equation*}
  Since the multiplication
  \(m\colon\cat{C}\otimes\cat{C}\to\cat{C}\)
  admits a \(\cat{C}\)-linear right adjoint~\(m^{\R}\),
  we can equivalently consider the diagram
  \begin{equation*}
    \begin{tikzcd}
      {C'\otimes[\unit\boxtimes C',m^{\R}(\unit)]}\ar[r]\ar[d]&
      {C'\otimes[\unit\boxtimes C,m^{\R}(\unit)]}\ar[d]\\
      {[\unit\boxtimes C',m^{\R}(C')]}\ar[r]\ar[ur,dashed]&
      {[\unit\boxtimes C,m^{\R}(C')]}\rlap.
    \end{tikzcd}
  \end{equation*}
  Take a sufficiently large regular cardinal~\(\kappa\)
  and consider
  \(\check{\jmath}\colon\cat{C}\hookrightarrow\Ind(\cat{C}_{\kappa})\).
  Since \(C\to C'\) is compact,
  \(\check{\jmath}(C)\to\check{\jmath}(C')\)
  factors through~\(j(D)\)
  for some \(D\in\cat{C}_{\kappa}\).
  Now, we obtain the refinement
  \begin{equation*}
    \begin{tikzcd}
      {C'\otimes[\unit\boxtimes C',m^{\R}(\unit)]}\ar[r]\ar[d]&
      {C'\otimes[\unit\boxtimes j(C),({\id}\otimes\check{\jmath})(m^{\R}(\unit))]}\ar[r]\ar[d]&
      {C'\otimes[\unit\boxtimes C,m^{\R}(\unit)]}\ar[d]\\
      {[\unit\boxtimes C',m^{\R}(C')]}\ar[r]&
      {[\unit\boxtimes j(C),({\id}\otimes\check{\jmath})(m^{\R}(C'))]}\ar[r]&
      {[\unit\boxtimes C,m^{\R}(C')]}\rlap,
    \end{tikzcd}
  \end{equation*}
  where the middle vertical arrow
  is an equivalence by \cref{x7qlp8}.
\end{proof}

Now, we are left to prove \cref{rig_0}.
We first observe the following:

\begin{lemma}\label{x01b14}
  The forgetful functor
  \(\CAlg(\Cat{Pr}_{\st})_{\rig}\hookrightarrow\CAlg(\Cat{Pr}_{\st})\)
  factors through \(\CAlg(\Cat{Dbl})\).
\end{lemma}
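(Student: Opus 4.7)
My plan has two parts: first, verifying that every rigid $\cat{C}$ lies in $\Cat{Dbl}$ together with its structure maps, and second, verifying that each morphism in $\CAlg(\Cat{Pr}_{\st})_{\rig}$ is a morphism in $\CAlg(\Cat{Dbl})$. The master observation driving both parts is that rigidity forces trace-class morphisms and compact morphisms to coincide: one direction is \cref{xajqb1} (using compactness of $\unit$, which is part of rigidity), and the other is \cref{rig_tc}.

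At the object level, I would first argue that a rigid $\cat{C}$ is continuous. By the coincidence just noted, nuclear and Schwartz $\Ind$-objects in $\cat{C}$ are the same, so \cref{rig_0} combined with \cref{con_0} gives continuity. Together with stability, Lurie's \cref{x2hild} then yields dualizability, so the underlying $\infty$-category lies in $\Cat{Dbl}$. For the structural morphisms, the unit $\Cat{Sp} \to \cat{C}$ has right adjoint $\Map_{\cat{C}}(\unit, \X)$, which is colimit-preserving because $\unit \in \cat{C}$ is compact; the multiplication $\cat{C} \otimes \cat{C} \to \cat{C}$ admits a $\cat{C} \otimes \cat{C}$-linear right adjoint by the very definition of rigidity, and such linearity in particular forces this right adjoint to be a morphism in $\Cat{Pr}_{\st}$, hence colimit-preserving. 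Both structure morphisms thus land in $\Cat{Dbl}$.

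At the morphism level, let $F \colon \cat{C} \to \cat{D}$ be a colimit-preserving symmetric monoidal functor between rigid categories. Both sides are continuous by the previous paragraph, so by \cref{con_1} (equivalently \cref{xhfb2c}), $F$ is a morphism in $\Cat{Dbl}$ if and only if it preserves compact morphisms. By the compact-versus-trace-class coincidence applied in both $\cat{C}$ and $\cat{D}$, this reduces to showing that $F$ preserves trace-class morphisms, which is immediate: transporting the witnessing maps $\eta$ and $\epsilon$ of \cref{xi691o} through the symmetric monoidal functor $F$ produces the required data for $F(f)$.

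I do not anticipate any genuine obstacle; the argument is a formal reshuffling of already-established identifications. The only nontrivial inputs—\cref{rig_tc}, \cref{rig_0}, and Lurie's \cref{x2hild}—have all been invoked earlier in the paper, and no further combinatorial work is needed.
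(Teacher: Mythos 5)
There is a genuine circularity in your proposal relative to the paper's logical development. Both pillars of your argument—\cref{rig_0} and \cref{rig_tc}—presuppose the very lemma you are proving. The paper's proof of the ``only if'' direction of \cref{rig_0} (rigid implies every object is a colimit of a nuclear \(\Ind\)-object) begins by invoking \cref{x01b14} to get dualizability, then \cref{con_0} to get Schwartz presentations, then \cref{rig_tc} to convert compact to trace class. Likewise, the proof of \cref{rig_tc} runs the compact morphism through \(\check{\jmath}\colon\cat{C}\hookrightarrow\Ind(\cat{C}_{\kappa})\), whose existence (\cref{x9m28r}) requires \(\cat{C}\) to be continuous—something the paper only knows for a rigid category via \cref{x01b14} combined with \cref{x2hild}. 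So your route ``trace class \(=\) compact, hence nuclear \(=\) Schwartz, hence continuous by \cref{rig_0} and \cref{con_0}, hence dualizable by \cref{x2hild}'' and your morphism-level reduction ``preserves compact \(\Leftrightarrow\) preserves trace class'' both rest on statements that are downstream of \cref{x01b14}. The pieces of your argument that do stand on their own are the treatment of the structure maps (the right adjoint of the unit is \(\map(\unit,\X)\), colimit-preserving since \(\unit\) is compact and we are stable; the right adjoint of the multiplication is linear, hence colimit-preserving) and the observation that symmetric monoidal functors preserve trace-class morphisms—but the latter does not suffice without \cref{rig_tc}.

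The lemma needs a self-contained argument, and the paper gives one directly from the rigidity data. For an object \(\cat{C}\), the composites \(\Cat{Sp}\xrightarrow{u}\cat{C}\xrightarrow{m^{\R}}\cat{C}\otimes\cat{C}\) and \(\cat{C}\otimes\cat{C}\xrightarrow{m}\cat{C}\xrightarrow{u^{\R}}\Cat{Sp}\) serve as unit and counit exhibiting \(\cat{C}\) as self-dual in \(\Cat{Pr}_{\st}\) (here \(m^{\R}\) is colimit-preserving by its \(\cat{C}\otimes\cat{C}\)-linearity and \(u^{\R}\) by compactness of \(\unit\)), which simultaneously shows \(\cat{C}\in\Cat{Dbl}\) and that \(u\) and \(m\) are morphisms in \(\Cat{Dbl}\). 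For a morphism \(F\colon\cat{C}\to\cat{D}\), factor it as \(\cat{C}\to\cat{C}\otimes\cat{D}\to\cat{D}\): the first arrow is a base change of \(\Cat{Sp}\to\cat{D}\) and the second a base change of \(\cat{C}\otimes\cat{C}\to\cat{C}\), so each admits a right adjoint in \(\Cat{Pr}_{\st}\). I recommend you either adopt this direct argument or, if you prefer your trace-class route, first supply proofs of \cref{rig_tc} and the relevant direction of \cref{rig_0} that do not pass through dualizability.
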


\begin{proof}
  It suffices to show that
  the forgetful functor
  \(\CAlg(\Cat{Pr}_{\st})_{\rig}\to\Cat{Pr}_{\st}\)
  factors through \(\Cat{Dbl}\).
  For an object~\(\cat{C}\),
  we see that
  \(\unit\xrightarrow{u}\cat{C}\xrightarrow{m^{\R}}\cat{C}\otimes\cat{C}\)
  and \(\cat{C}\otimes\cat{C}\xrightarrow{m}\cat{C}\xrightarrow{u^{\R}}\cat{C}\)
  are the unit and counit in~\(\Cat{Pr}_{\st}\).
  For a morphism \(\cat{C}\to\cat{D}\),
  it can be written as the composite
  \(\cat{C}\to\cat{C}\otimes\cat{D}\to\cat{D}\).
  The first arrow
  is a base change of \(\unit\to\cat{D}\)
  and thus admits a right adjoint in~\(\Cat{Pr}_{\st}\).
  The second arrow
  is a base change of \(\cat{C}\otimes\cat{C}\to\cat{C}\)
  and thus admits a right adjoint in~\(\Cat{Pr}_{\st}\).
\end{proof}

\begin{proof}[Proof of \cref{rig_0}]
  We prove the “only if” direction.
  Suppose that \(\cat{C}\) is rigid.
  Then it is dualizable by \cref{x01b14}
  and hence by \cref{con_0},
  we can write any object
  as a Schwartz \(\Ind\)-object.
  Hence the desired result follows from
  \cref{rig_tc}.

  We then prove the “if” direction.
  Since the unit is compact,
  trace-class morphisms are compact by \cref{xajqb1}.
  Also, since when \(C\to C'\) and \(D\to D'\) are trace class
  then so is \(C\otimes D\to C'\otimes D'\),
  we see that \(\cat{C}\) is in \(\CAlg(\Cat{Dbl})\)
  by \cref{x0daz5}.
  In particular,
  the right adjoint~\(m^{\R}\) of
  \(m\colon\cat{C}\otimes\cat{C}\to\cat{C}\)
  exists in~\(\Cat{Pr}\).
  It suffices
  to show that it is \(\cat{C}\otimes\cat{C}\)-linear.
  We regard \(\cat{C}\otimes\cat{C}\)
  as a \(\cat{C}\)-linear \(\infty\)-category
  via the left factor
  and prove that \(m^{\R}\) is \(\cat{C}\)-linear.
  Let \(X\) be an object of \(\cat{C}\otimes\cat{C}\).
  When \(C\to C'\) is trace class,
  we have a dashed arrow—canonical
  with respect to the witness—in
  the diagram
  \begin{equation*}
    \begin{tikzcd}
      C\otimes m^{\R}(X)\ar[r]\ar[d]&
      C'\otimes m^{\R}(X)\ar[d]\\
      m^{\R}(C\otimes X)\ar[r]\ar[ur,dashed]&
      m^{\R}(C'\otimes X)\rlap.
    \end{tikzcd}
  \end{equation*}
  Using this and \cref{xsoyii},
  the linearity for~\(C\in\cat{C}\) follows
  from writing~\(C\) as the colimit of some nuclear \(\Ind\)-object.
\end{proof}

\begin{corollary}\label{rigidify}
  The inclusion \(\CAlg(\Cat{Pr}_{\st})_{\rig}
  \hookrightarrow\CAlg(\Cat{Pr}_{\st})\) has a right adjoint,
  which is pointwise given by
  \(\cat{C}\mapsto\Ind_{\cat{I}}(\cat{C})\),
  where \(\cat{I}\) is the idealoid of very trace-class morphisms.
\end{corollary}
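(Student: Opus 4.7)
The plan is to verify that the formula $\tilde{\cat{C}} := \Ind_{\cat{I}}(\cat{C})$ (the $\infty$-category of very nuclear Ind-objects, in the sense of Definition~\ref{xsdc04}) defines an object of $\CAlg(\Cat{Pr}_{\st})_{\rig}$, and that the tautological colimit functor $\tilde{\cat{C}} \to \cat{C}$ is the counit of the desired adjunction.

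First I would promote $\tilde{\cat{C}}$ to a stable presentable symmetric monoidal $\infty$-category with compact unit. Closure under filtered colimits is Lemma~\ref{xm2a0p}; stability uses Lemma~\ref{xjd6rf}. The symmetric monoidal structure descends from $\cat{C}$ because the tensor product of two very nuclear Ind-objects $(C_i)_i$ and $(D_j)_j$ is again very nuclear: each transition $C_i \otimes D_j \to C_{i'} \otimes D_{j'}$ factors as $C_i \otimes D_j \to C_{i'} \otimes D_j \to C_{i'} \otimes D_{j'}$, each piece being trace-class by the idealoid property (Lemma~\ref{xd9v6a}); moreover, diagonally combining Lemma~\ref{x7sojd}-subdivisions of the two factors produces a trace-class subdivision of the tensor product. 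The unit $\unit \in \cat{C}$, regarded as a constant Ind-object, is compact. Rigidity of $\tilde{\cat{C}}$ then follows from Proposition~\ref{rig_0}: every object of $\tilde{\cat{C}}$ is, by Lemma~\ref{x5hdob}, a filtered colimit along very trace-class maps in $\cat{C}$, and these become trace-class inside $\tilde{\cat{C}}$.

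For the universal property, let $\cat{D}$ be rigid and $F\colon \cat{D} \to \cat{C}$ a colimit-preserving symmetric monoidal functor. Rigid categories are continuous (Lemma~\ref{x01b14}), so Corollary~\ref{xrz98z} shows every compact morphism in $\cat{D}$ is very compact, and Proposition~\ref{rig_tc} identifies compact with trace-class; combining these, every trace-class morphism in $\cat{D}$ is very trace-class. Consequently, any $D \in \cat{D}$ admits a presentation as the colimit of a very nuclear Ind-object $(D_i)_i$, and $(F(D_i))_i \in \tilde{\cat{C}}$ provides the lift $\tilde F(D)$. This is valid because symmetric monoidal functors preserve trace-class maps and transport subdivisions, hence preserve very trace-class maps. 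Functoriality, monoidality, colimit-preservation, and uniqueness of the lift follow by the same bookkeeping as in the proof of Theorem~\ref{sm_con_adj}, using fully faithfulness of $\tilde{\cat{C}} \to \cat{C}$ on very nuclear Ind-objects (Corollary~\ref{unique} combined with Example~\ref{xajqb1}).

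The main obstacle I anticipate is assembling the pointwise lifts $\tilde F(D)$ into a coherent functor of presentably symmetric monoidal $\infty$-categories; while morally parallel to Theorem~\ref{sm_con_adj}, this requires verifying that independence of $\tilde F(D)$ from the chosen very nuclear presentation $(D_i)_i$ promotes to higher coherences, and that all relevant diagrams in $\tilde{\cat{C}}$ can be simultaneously refined along very nuclear presentations---a task I expect to handle by combining the sequential reduction of Lemma~\ref{sequential} with the functorial framework of~\cite[Section~4.8.1]{LurieHA}.
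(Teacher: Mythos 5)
Your overall shape (show \(\Ind_{\cat{I}}(\cat{C})\) is a rigid object of \(\CAlg(\Cat{Pr}_{\st})\), then check the universal property of the colimit functor) matches the paper, but two steps hide genuine gaps. First, the rigidity verification: you assert that by \cref{x5hdob} every object of \(\tilde{\cat{C}}\) is ``a filtered colimit along very trace-class maps in \(\cat{C}\), and these become trace-class inside \(\tilde{\cat{C}}\).'' This is a category error: the terms \(C_{n}\) of the defining diagram are objects of \(\cat{C}\), not of \(\tilde{\cat{C}}\) (a constant \(\Ind\)-object is very nuclear only in degenerate cases), so the diagram does not exhibit the object as a colimit of a nuclear \(\Ind\)-object \emph{of} \(\tilde{\cat{C}}\), which is what \cref{rig_0} requires. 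The paper closes exactly this gap with a subdivision argument: refine the sequence to \(\QQ\cap[0,\infty)\), left Kan extend to \([0,\infty)\to\Ind(\cat{C})\), and take the partial colimits \(C(x+n)\) at irrational cuts \(x\); these partial colimits \emph{are} objects of \(\tilde{\cat{C}}\), and the transitions between them factor through trace-class morphisms of \(\cat{C}\), hence are trace class in \(\tilde{\cat{C}}\). Relatedly, your one-line appeal to \cref{xjd6rf} for stability skips the real work: that lemma needs the staggered hypotheses (\(C_{0}'\to C_{1}'\) and \(C_{1}\to C_{2}\) trace class), and to get \emph{very} trace-class transitions on the cofibers one needs \cref{fun_ind}, \cref{sequential}, and the buffer trick of \cref{fun_buf} to set it up.

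Second, the universal property. Your pointwise lift \(D\mapsto(F(D_{i}))_{i}\) has the coherence problem you acknowledge, and your proposed repair—full faithfulness of \(\tilde{\cat{C}}\to\cat{C}\) via \cref{unique} and \cref{xajqb1}—is not available: \cref{xajqb1} requires the unit of \(\cat{C}\) to be compact, which is not assumed (and fails in the motivating analytic examples), so trace-class morphisms in \(\cat{C}\) need not be compact and the counit \(\cat{C}_{\rig}\to\cat{C}\) need not be fully faithful. The paper avoids both problems by working functorially from the start: \((\X)_{\rig}=\Ind_{\cat{I}}(\X)\) is an honest functor with a natural transformation to the identity, it restricts to an equivalence \(\cat{D}_{\rig}\simeq\cat{D}\) on rigid \(\cat{D}\) (by \cref{rig_0} and \cref{unique}, which \emph{do} apply there since the unit is compact), and the comonad is idempotent (\((\cat{C}_{\rig})_{\rig}\simeq\cat{C}_{\rig}\)); the coreflection then follows formally. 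If you reframe your lift as \(\cat{D}\simeq\cat{D}_{\rig}\xrightarrow{F_{\rig}}\cat{C}_{\rig}\), the coherence and uniqueness issues disappear, but as written the proposal does not establish the adjunction.
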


\begin{proof}
We write \(\cat{C}_{\rig}\) for \(\Ind_{\cat{I}}(\cat{C})\).

  First, we prove that \(\cat{C}_{\rig}\) is in \(\CAlg(\Cat{Pr}_{\st})\)
  for \(\cat{C}\in\CAlg(\Cat{Pr})\).
  We can take~\(\kappa\)
  such that \(\cat{C}\in\CAlg(\Cat{Pr}^{\kappa}_{\st})\).
  Then it follows
  from \cref{mini,x4psyd} that very nuclear \(\Ind\)-objects are in
  \(\Ind(\cat{C}_{\kappa})\).
  By \cref{xm2a0p,sequential},
  it suffices to show that
  it is also closed under cofibers.
  We consider a morphism
  between very nuclear \(\Ind\)-objects.
  By \cref{fun_ind},
  such morphism can be lifted to an \(\Ind\)-morphism
  \((C'_{i}\to C_{i})_{i}\).
  We want to prove that its cofiber \((C''_{i})_{i}\)
  is very nuclear.
  Then again,
  by \cref{sequential},
  we can assume that
  the index \(\infty\)-category is~\(\NN\).
  (The discussion above
  is valid when we replace “very nuclear” with “nuclear”.
  In that case,
  we can conclude here by noting that
  \(C''_{2n}\to C''_{2n+2}\) is trace class
  for any~\(n\),
  which follows from \cref{xjd6rf}.)
  Our current situation is that we have a diagram
  \begin{equation*}
    \begin{tikzcd}
      C'_{0}\ar[r]\ar[d]&
      C'_{1}\ar[r]\ar[d]&
      C'_{2}\ar[r]\ar[d]&
      \cdots\\
      C_{0}\ar[r]&
      C_{1}\ar[r]&
      C_{2}\ar[r]&
      \cdots
    \end{tikzcd}
  \end{equation*}
  such that \(C_{n}\to C_{n+1}\)
  and \(C'_{n}\to C'_{n+1}\)
  are very trace class.
  Then by \cref{fun_buf},
  we can use \cref{xjd6rf}
  for the diagram
  \begin{equation*}
    \begin{tikzcd}
      C'_{0}\ar[r]\ar[d]&
      C'_{1}\ar[r]\ar[d]&
      C'_{2}\ar[r]\ar[d]&
      \cdots\\
      C_{1}\ar[r]&
      C_{2}\ar[r]&
      C_{3}\ar[r]&
      \cdots
    \end{tikzcd}
  \end{equation*}
  and conclude the desired result.

  We then prove that \(\cat{C}_{\rig}\) is rigid
  for \(\cat{C}\in\CAlg(\Cat{Pr})\).
  We use \cref{rig_0}.
  First, the unit is compact in~\(\cat{C}_{\rig}\)
  by construction and \cref{xm2a0p}.
  We need to prove that every object
  is the colimit of some nuclear \(\Ind\)-object,
  i.e.,
  \(\Ind_{\cat{J}}(\cat{C}_{\rig})\to\cat{C}_{\rig}\)
  is essentially surjective,
  where \(\cat{J}\) is the ideal of trace-class morphisms in~\(\cat{C}_{\rig}\).
  By \cref{xm2a0p},
  we need to show that
  the \(\Ind\)-object \((C(n))_{n}\)
  in~\(\cat{C}\)
  such that \(C(n)\to C(n+1)\)
  is very trace class is in the image.
  By definition,
  we obtain an extension \(C\colon\QQ\cap[0,\infty)\to\cat{C}\)
  such that \(C(p)\to C(q)\) is trace class for \(p< q\).
  We left Kan extend this diagram
  to obtain a diagram \([0,\infty)\to\Ind(\cat{C})\),
  which we denote by~\(C\) by abuse of notation.
  We fix an irrational \(x\in(0,1)\)
  and consider \((C(x+n))_{n}\).
  This is an \(\Ind\)-object in~\(\cat{C}_{\rig}\).
  Moreover,
  \(C(x+n)\to C(x+n+1)\) is trace class in~\(\cat{C}_{\rig}\),
  since it factors through
  a trace-class morphism in~\(\cat{C}\).
  This shows the desired claim.

  Thus,
  we have a functor \((\X)_{\rig}\)
  that admits a natural transformation to~\(\id\).
  By \cref{rig_0} (and \cref{unique})
  when \(\cat{C}\) is rigid,
  \(\cat{C}_{\rig}\to\cat{C}\) is an equivalence.
  Therefore,
  we are left to show that
  for any \(\cat{C}\in\CAlg(\Cat{Pr}_{\st})\),
  the morphism
  \(\cat{C}_{\rig}\to\cat{C}\) becomes
  an equivalence after applying~\((\X)_{\rig}\).
  We write \(\cat{I}\)
  for the idealoid of very trace-class morphisms in~\(\cat{C}\)
  and \(\cat{J}\) for the idealoid
  of (very) trace-class morphisms in~\(\cat{C}_{\rig}\).
  We need to prove that the morphism
  \(\Ind_{\cat{J}}(\cat{C}_{\rig})\to\Ind_{\cat{I}}(\cat{C})\)
  is an equivalence,
  but this follows from the argument above.
\end{proof}

\begin{remark}\label{x52p4w}
  For \(\cat{C}\in\CAlg(\Cat{Pr}_{\st})\),
  the functor
  \(\cat{C}_{\rig}\to\cat{C}\)
  induces an equivalence on
  the full subcategories of dualizable objects.
  This can be seen by considering the right adjoints
  of the inclusions
  \begin{equation*}
    \CAlg(\Cat{Pr}^{\cg}_{\st})_{\rig}
    \hookrightarrow
    \CAlg(\Cat{Pr}_{\st})_{\rig}
    \hookrightarrow
    \CAlg(\Cat{Pr}_{\st}).
  \end{equation*}
  One can also check this directly by the description
  of \(\cat{C}_{\rig}\) in \cref{rigidify}.
\end{remark}

\subsection{Rigid spectrum}\label{ss:nr}

Finally,
we define the notion of very nuclear idempotents
and rigid spectrum.

\begin{definition}\label{xxlhf9}
  Let \(\cat{C}\)
  be a rigid stable presentably symmetric monoidal \(\infty\)-category.
  We say that
  a map of idempotents
  \(E\to E'\) is \emph{trace class}
  if there is an idempotent~\(F\)
  such that \(E'\otimes F\) is zero
  and the square
  \begin{equation}
    \label{e:rha0a}
    \begin{tikzcd}
      \unit\ar[r]\ar[d]&
      E\ar[d]\\
      F\ar[r]&
      E\otimes F
    \end{tikzcd}
  \end{equation}
  is cartesian.
\end{definition}

\begin{lemma}\label{xdi90c}
  In the situation of \cref{xxlhf9},
  any trace-class map of idempotents
  \(E\to E'\)
  underlies a trace-class morphism.
\end{lemma}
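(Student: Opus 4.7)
The plan is to construct trace-class data $(D,\eta,\epsilon)$ from the given witness $F$ and verify that the resulting composite is the given map $\phi\colon E\to E'$ underlying the idempotent map.

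I would take $D := \fib(\unit \to F)$, the coidempotent corresponding to~$F$, giving a cofiber sequence $D \to \unit \to F$ in~$\cat{C}$. Tensoring this sequence with~$E'$ and using $E' \otimes F = 0$ yields a canonical equivalence $D \otimes E' \simeq E'$. Tensoring instead with~$E$ gives $E \otimes D = \fib(E \to E \otimes F)$; by the cartesianness of~\cref{e:rha0a}, the two vertical fibers of the square agree, so the canonical map $D \to E \otimes D$ induced by $\unit \to E$ is an equivalence. Using these identifications, I would define $\eta\colon \unit \to D \otimes E'$ as the composite $\unit \to E' \xrightarrow{\sim} D \otimes E'$ of the unit of $E'$ with the inverse of the first equivalence, and $\epsilon\colon E \otimes D \to \unit$ as $E \otimes D \xrightarrow{\sim} D \to \unit$.

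It then remains to check that $(\epsilon \otimes E') \circ (E \otimes \eta) = \phi$. The main intermediate observation is that base-changing \cref{e:rha0a} along $\X \otimes E'$, which again relies on $F \otimes E' = 0$, shows that the map $E' \to E \otimes E'$ induced by the unit of~$E$ is itself an equivalence; combined with the unit compatibility $\phi \circ (\unit \to E) = (\unit \to E')$ and the right-unit axiom for the idempotent $E'$, this forces its inverse to coincide with $\mu_{E'} \circ (\phi \otimes E')$. Substituting this identification into the expansion of the composite, it collapses to $\mu_{E'} \circ (\phi \otimes (\unit \to E'))\colon E \to E' \otimes E' \to E'$, which reduces to $\phi$ again by the right-unit axiom for~$E'$.

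The main obstacle is the diagram chase in the last step. The manipulations are purely formal, but several overlapping equivalences — $D \otimes E' \simeq E'$, $E \otimes D \simeq D$, and $E' \simeq E \otimes E'$ — appear simultaneously, and care is needed to keep track of which equivalence is being used as an inverse of which. No further input beyond the two hypotheses on~$F$ and the stability of~$\cat{C}$ enters the argument; in particular, rigidity of~$\cat{C}$ plays no role in this particular lemma (it will intervene elsewhere, e.g.\ in \cref{rig_tc}, to make trace class and compact agree).
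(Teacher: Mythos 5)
Your proof is correct, and it takes a somewhat different route from the paper's. The paper uses the internal-hom reformulation of trace-class morphisms (\cref{xa0g0m}): it applies the exact functor \([E,\X]\otimes E'\) to the cartesian square \cref{e:rha0a}, notes that \([E,F]\) and \([E,E\otimes F]\) are \(F\)-modules and hence die after tensoring with \(E'\), and concludes that \([E,\unit]\otimes E'\to[E,E]\otimes E'\simeq E'\) is an equivalence, so that \(\unit\to E'\) lifts to the required witness \(\unit\to[E,\unit]\otimes E'\). You instead exhibit explicit duality data in the sense of \cref{xi691o}, taking \(D=\fib(\unit\to F)\) and extracting the two equivalences \(D\otimes E'\simeq E'\) and \(D\simeq E\otimes D\) from the hypothesis \(E'\otimes F=0\) and the cartesianness of \cref{e:rha0a}, respectively; your concluding diagram chase (identifying \((u_E\otimes\id_{E'})^{-1}\) with \(\mu_{E'}\circ(\phi\otimes\id_{E'})\) and collapsing via the unit axiom) is sound. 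Your version is slightly longer but avoids internal mapping objects altogether and makes the final compatibility with the given map \(\phi\) fully explicit, a check the paper's proof leaves implicit; the paper's version is shorter but relies on the closed structure and on knowing \([E,E]\simeq E\) for an idempotent algebra. Your observation that rigidity of \(\cat{C}\) is not used is also accurate.
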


\begin{proof}
We write \([\unit,\unit]\)
  for the internal mapping object.
  We apply \([E,\X]\otimes E'\)
  to \cref{e:rha0a}
  to see that
  \([E,\unit]\otimes E'\to[E,E]\otimes E'\simeq E'\)
  is an equivalence.
  Hence we obtain a map \(\unit\to[E,\unit]\otimes E'\),
  which witnesses \(E\to E'\).
\end{proof}

\begin{definition}\label{xm9707}
  Let \(\cat{C}\)
  be a rigid stable presentably symmetric monoidal \(\infty\)-category.
  We write~\(\cat{I}\) for the idealoid
  of trace-class maps of idempotents.
  We consider the colimit functor
  \(\Ind_{\cat{I}_{\sd}}(\Idem(\cat{C}))\to\Idem(\cat{C})\).
  By \cref{xdi90c},
  we can apply \cref{unique}
  to see that this is fully faithful.
  We write \(\Idem_{\vnuc}(\cat{C})\)
  for the essential image
  and call its objects \emph{very nuclear idempotents}.
\end{definition}

\begin{remark}\label{xmzmlj}
  In \cref{xm9707},
  note that a very nuclear idempotent
  is not an idempotent
  that is inside the image of
  \(\Ind_{\cat{I}_{\sd}}(\Idem(\cat{C}))\to\cat{C}\),
  where \(\cat{I}\) is the idealoid
  of trace-class morphisms in~\(\cat{C}\).
  Since \(\cat{C}\) is rigid,
  this just recovers the notion of very Schwartz idempotent
  by \cref{rig_tc}.

  However,
  outside of the rigid situation,
  considering this notion inside algebras gives
  natural settings for certain questions;
  see \cref{xz52u7}.
\end{remark}

As in \cref{ss:sm_con},
we then prove that
\(\Idem_{\vnuc}(\cat{C})\) is a subframe
of \(\Idem(\cat{C})\)
and it is compact regular.
We use a shortcut here
by using Banaschewski–Mulvey’s construction
of the Stone–Čech compactification functor.
The first observation
is that the condition
of morphisms between idempotents
being trace-class only depends on~\(\Idem(\cat{C})\):

\begin{definition}\label{xd7840}
  In a distributive lattice,
  we say that \(p\) is \emph{rather below}~\(q\),
  denoted \(p\prec q\),
  if there is \(r\) such that
  \(p\wedge r=\bot\)
  and \(q\vee r=\top\).
\end{definition}

\begin{example}\label{xdzqx4}
  In the situation of \cref{xxlhf9},
  we see that
  \(E\to E'\) is trace class
  if and only if the corresponding
  opens in \(\Sm(\cat{C})\) satisfy the rather-below relation.
\end{example}

\begin{example}\label{xl5ls4}
  Suppose that a frame~\(R\) is compact;
  i.e., \(\top\in R\) is a compact object.
  We claim that \(x\prec y\) implies \(x\ll y\).
  Fix an element~\(z\)
  satisfying \(x\wedge z=\bot\) and \(y\vee z=\top\).
  Consider a directed subset~\(D\)
  satisfying \(y=\bigvee D\).
  Then since \(\bigvee_{d\in D}(d\vee z)=y\vee z=\top\)
  and \(\top\) is compact,
  there is \(d\in D\) such that \(d\vee z=\top\).
  Then by
  \begin{equation*}
    x
    =x\wedge\top
    =x\wedge(d\vee z)
    =(x\wedge d)\vee(x\wedge z)
    =(x\wedge d)\vee\bot
    =x\wedge d,
  \end{equation*}
  we see that \(x\leq d\).
\end{example}

We then recall
how to formalize
the compact Hausdorff property
in locale theory:

\begin{definition}\label{xy6u8d}
  We call a frame \emph{regular}
  if we have \(\bigvee_{y\prec x}y=x\)
  for any~\(x\).
\end{definition}

\begin{example}\label{xs8chv}
  If a topological space is regular,
  its frame of open subsets is regular.
\end{example}

\begin{example}\label{xjrb0o}
  By \cref{xl5ls4},
  a compact regular frame
  is stably compact and hence spatial.
  Therefore,
  the opposite of the category of compact regular frames
  is equivalent
  to the category of compact Hausdorff\footnote{In general topology,
    we do not say “compact regular”,
    since it is just equivalent to compact Hausdorff.
  } spaces~\(\Cat{CH}\).
\end{example}

The Stone–Čech compactification
also exists in locale theory,
as proven in~\cite{BanaschewskiMulvey80}\footnote{Note that they proved the existence
  of the compact regular reflection
  and compact \emph{completely} regular reflection.
  Since we assume the axiom of choice,
  we do not have to distinguish between them.
}.
We here recall the construction in~\cite[Proposition~2]{BanaschewskiMulvey80}:

\begin{theorem}[Banaschewski–Mulvey]\label{xv2gz5}
  Let \(R\) be a frame.
  Then the construction \(R\mapsto\Ind_{I_{\sd}}(R)\),
  where \(I\) denotes the idealoid of the way-below relation,
  gives the right adjoint
  to the inclusion
  of the category of compact regular frames
  to the category of frames.
\end{theorem}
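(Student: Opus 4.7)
The plan is to follow Banaschewski--Mulvey's original construction, verifying that \(\Ind_{I_{\sd}}(R)\) is a compact regular frame and that the join map \(\Ind_{I_{\sd}}(R)\to R\) satisfies the desired universal property. The key observation driving the universality is that, for a compact regular frame~\(K\), the relations \(\ll\) and \(\prec\) coincide (by \cref{xl5ls4}) and are interpolative, so every element of~\(K\) is approximated by \(I_{\sd}\)-chains; this will let us uniquely lift frame morphisms out of~\(K\) to~\(\Ind_{I_{\sd}}(R)\).

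First I would show that \(\Ind_{I_{\sd}}(R)\) is a subframe of~\(\Ind(R)\). Closure under directed joins is immediate from the posetal case of \cref{xm2a0p}. Closure under binary meets is the main technical step: given \(I_{\sd}\)-restricted ideals \(J_1,J_2\) and \(p\in J_1\cap J_2\), subdivisibility lets us interpolate chains \(p\ll p_i'\ll p_i''\ll q_i\in J_i\); using the distributive law in~\(R\) together with these interpolating elements, one produces \(q\in J_1\cap J_2\) that is \(I_{\sd}\)-related to~\(p\). Next I would check compactness and regularity. The top element is the \(I_{\sd}\)-saturation of~\(R\), and it is compact because any directed cover by \(I_{\sd}\)-restricted subideals must contain each element at some stage. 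For regularity, given \(p\in J\in\Ind_{I_{\sd}}(R)\), choose \(p\ll p'\ll p''\ll q\in J\) via subdivisibility; the principal ideal on~\(p\) is then rather-below the principal ideal on~\(q\) inside \(\Ind_{I_{\sd}}(R)\), with a separator built from the pseudocomplement of the principal ideal on~\(p'\).

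For the adjunction, given a frame morphism \(f\colon K\to R\) with \(K\) compact regular, I would define \(\tilde f\colon K\to\Ind_{I_{\sd}}(R)\) by \(\tilde f(x)=\{f(y)\mid y\ll x\text{ in }K\}\). Since \(\ll\) in~\(K\) is interpolative, \(\tilde f(x)\) is an \(I_{\sd}\)-restricted ideal of~\(R\); the frame morphism axioms follow from the frame-compatibility of \(\ll\) in~\(K\), and uniqueness holds because any lift is forced on \(\ll\)-chains, which generate~\(K\) under directed joins by regularity. The counit \(\Ind_{I_{\sd}}(R)\to R\) sending an ideal to its join recovers~\(f\) by construction. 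I expect the main obstacle to be closure under binary meets and the verification of regularity in \(\Ind_{I_{\sd}}(R)\); both boil down to delicate chain-interpolation arguments parallel to those in the Banaschewski--Mulvey original, translated into the idealoid language of the paper.
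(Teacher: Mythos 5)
Your overall strategy---show that \(\Ind_{I_{\sd}}(R)\) is a compact regular subframe of \(\Ind(R)\) and then verify the universal property by lifting a frame morphism \(f\colon K\to R\) along \(\ll\)-approximations in \(K\)---is the right one. The paper itself gives no proof beyond citing Banaschewski--Mulvey and noting in \cref{xuw100} that the argument parallels that of \cref{rigidify}, and what you outline is essentially the Banaschewski--Mulvey argument.

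There is, however, a genuine gap at the central step, and it comes from taking the phrase ``the way-below relation'' in the statement at face value. Your lift \(\tilde f(x)=\{f(y)\mid y\ll x\}\) is an \(I_{\sd}\)-restricted ideal only if \(f\) carries way-below pairs of \(K\) to (very) way-below pairs of \(R\). Arbitrary frame morphisms do \emph{not} preserve \(\ll\); that preservation is precisely the extra condition singling out morphisms of stably continuous frames in \cref{xbmydj}. What every frame morphism does preserve is the rather-below relation \(\prec\) of \cref{xd7840}, since it is defined by finite meets and joins; and although \(\ll\) and \(\prec\) coincide in the compact regular source \(K\), in a general target \(R\) the relation \(\prec\) does not imply \(\ll\). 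Read literally, the construction also fails to produce a compact regular frame: for \(R\) the open-set frame of \(\RR\), the way-below relation is already interpolative, so \(\Ind_{I_{\sd}}(R)\) is the rounded-ideal completion, which is \(R\) itself and is not compact (and for \(R\) the open-set frame of \(\QQ\) the way-below relation is trivial and \(\Ind_{I_{\sd}}(R)\) collapses to a point, whereas \(\beta\QQ\neq\emptyset\)). The relation that makes your argument---and the theorem---work is \(\prec\): one must take \(I\) to be the rather-below idealoid, so that \(I_{\sd}\) is the classical ``completely below'' relation; this is what Banaschewski--Mulvey actually use, and it is what the analogy with \cref{rigidify} dictates (trace-class corresponds to rather-below, cf.\ \cref{xdzqx4}, not to compact). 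The same substitution is needed in your closure-under-meets and regularity steps, which require the separator elements furnished by \(\prec\) (with \(\ll\) alone, \(p\ll q_{1}\) and \(p\ll q_{2}\) need not give \(p\ll q_{1}\wedge q_{2}\) in a general frame). Finally, note that the principal ideal \({\downarrow}p\) you use as an approximant in the regularity argument need not lie in \(\Ind_{I_{\sd}}(R)\) at all; the approximants should be the rounded ideals \(\{x\mid x\mathrel{I_{\sd}}p\}\).
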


\begin{remark}\label{xuw100}
  The statement of \cref{xv2gz5}
  closely parallels that of \cref{rigidify}.
  It can be proven using a similar argument.
\end{remark}

Therefore, the rigid version of \cref{sm_con_adj}
is a simple consequence of this fact:

\begin{corollary}\label{xm44hc}
  Let \(\cat{C}\)
  be a rigid stable presentably symmetric monoidal \(\infty\)-category
  and \(R\) a compact regular frame.
  Then the map
  \begin{equation*}
    \Map_{\Cat{Frm}}(R,\Idem_{\vnuc}(\cat{C}))
    \hookrightarrow
    \Map_{\Cat{Frm}}(R,\Idem(\cat{C}))
    \simeq
    \Map_{\CAlg(\Cat{Pr}_{\st})}(\Shv(R;\Cat{Sp}),\cat{C}),
  \end{equation*}
  where the second equivalence is from~\cite[Theorem~A]{ttg-sm},
  is an equivalence.
\end{corollary}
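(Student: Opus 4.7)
The plan is to realize $\Idem_{\vnuc}(\cat{C})$ as the compact regular reflection (i.e., Stone--Čech compactification) of the frame $\Idem(\cat{C})$ itself, and then read off the corollary from the universal property provided by \cref{xv2gz5}. Once this identification is in hand, everything is formal.

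The first step is to translate the trace-class condition into a purely frame-theoretic relation on $\Idem(\cat{C})$. By \cref{xdzqx4}, a morphism of idempotents $E \to E'$ is trace class precisely when $E \prec E'$ holds in $\Idem(\cat{C})$. Therefore, by \cref{xm9707}, the subposet $\Idem_{\vnuc}(\cat{C}) \subset \Idem(\cat{C})$ is the essential image of the restricted $\Ind$-construction for the idealoid of $\prec$. The second step is to match this with the construction in \cref{xv2gz5}. The identification is clean in this setting because \(\cat{C}\) is rigid: by \cref{rig_tc}, compact morphisms in~$\cat{C}$ coincide with trace-class morphisms, so the $\ll$-relation in $\Idem(\cat{C})$—computed via directed joins, which are filtered colimits of idempotents—and the $\prec$-relation give rise to the same subdivisible idealoid (both encode what becomes the way-below relation in the compact regular target). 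Consequently \cref{xv2gz5}, applied to $\Idem(\cat{C})$, exhibits $\Idem_{\vnuc}(\cat{C})$ as the compact regular reflection of $\Idem(\cat{C})$, together with its canonical map down to $\Idem(\cat{C})$.

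With this identification, the corollary is an immediate consequence of the adjunction. For any compact regular frame~$R$, the universal property yields
\begin{equation*}
  \Map_{\Cat{Frm}}(R, \Idem_{\vnuc}(\cat{C}))
  \simeq
  \Map_{\Cat{CRFrm}}(R, \Idem_{\vnuc}(\cat{C}))
  \simeq
  \Map_{\Cat{Frm}}(R, \Idem(\cat{C})),
\end{equation*}
where the first equivalence uses that $\Cat{CRFrm} \hookrightarrow \Cat{Frm}$ is fully faithful (so that the mapping space is unchanged) and the second is the stated adjunction from \cref{xv2gz5}. Composing with \cite[Theorem~A]{ttg-sm} gives the desired equivalence with $\Map_{\CAlg(\Cat{Pr}_{\st})}(\Shv(R;\Cat{Sp}),\cat{C})$.

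The only genuinely nontrivial point is the reconciliation of the two idealoids in the second step—namely, that the idealoid of trace-class maps of idempotents (i.e., $\prec$ in $\Idem(\cat{C})$) and the idealoid named in \cref{xv2gz5} give the same restricted $\Ind$-construction in our situation. This is where rigidity enters essentially via \cref{rig_tc}; outside of the rigid setting the two relations would genuinely differ. Everything else is pure frame-theoretic bookkeeping.
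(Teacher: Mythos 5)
Your overall strategy is the paper's: identify \(\Idem_{\vnuc}(\cat{C})\) with the Banaschewski--Mulvey compact regular coreflection of \(\Idem(\cat{C})\) and then read off the statement from the universal property together with \cite[Theorem~A]{ttg-sm}; the formal adjunction bookkeeping at the end is fine. The gap is exactly the step you flag as the only nontrivial one. You claim that the way-below idealoid and the rather-below idealoid on \(\Idem(\cat{C})\) have the same subdivisible interior, deducing this from \cref{rig_tc}. That is a non-sequitur: \cref{rig_tc} identifies compact morphisms with trace-class \emph{morphisms of \(\cat{C}\)}, whereas by \cref{xdzqx4} the relation \(\prec\) on \(\Idem(\cat{C})\) corresponds to trace-class \emph{maps of idempotents} in the sense of \cref{xxlhf9}, a strictly stronger condition requiring a complementary idempotent. \Cref{xdi90c} gives only the one implication, and \cref{xmzmlj} explicitly warns that the two notions differ: restricting along "trace class as a morphism of \(\cat{C}\)" recovers the \emph{very Schwartz} idempotents, not the very nuclear ones. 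If your identification held, it would force \(\cIdem_{\vSc}(\cat{C})=\Idem_{\vnuc}(\cat{C})\), i.e.\ \(\Sm^{\con}(\cat{C})=\Sm^{\rig}(\cat{C})\), for every rigid \(\cat{C}\) --- a collapse the paper nowhere asserts. What is actually available (via \cref{xl5ls4} and compactness of the frame \(\Idem(\cat{C})\)) is only the containment \(\prec\,\subseteq\,\ll\), hence one inclusion of subdivisible interiors.

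The good news is that no such reconciliation is needed. The Banaschewski--Mulvey coreflection is the restricted-\(\Ind\) construction for the \emph{rather-below} idealoid (the "way-below" in the statement of \cref{xv2gz5} should be read that way: applied to \(\ll\) the construction does not return a regular frame --- already for the Sierpi\'nski frame it returns the three-element chain rather than the point). With that reading, \cref{xdzqx4} together with \cref{xm9707} says that \(\Idem_{\vnuc}(\cat{C})\) \emph{is} \(\Ind_{I_{\sd}}(\Idem(\cat{C}))\) for \(I\) the rather-below idealoid, i.e.\ it is on the nose the compact regular coreflection of \(\Idem(\cat{C})\), with counit the inclusion into \(\Idem(\cat{C})\). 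Your final paragraph then completes the proof verbatim, and rigidity of \(\cat{C}\) is used only to have \cref{xxlhf9,xdzqx4,xm9707} available, not to compare \(\ll\) with \(\prec\).
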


\begin{definition}\label{x92jxq}
  For a rigid stable presentably symmetric monoidal \(\infty\)-category~\(\cat{C}\),
  we write \(\Sm^{\rig}(\cat{C})\)
  for the locale whose frame of open subsets is \(\Idem_{\vnuc}(\cat{C})\)
  and call it the \emph{rigid spectrum} of~\(\cat{C}\).
\end{definition}

Rewriting \cref{xm44hc},
we obtain \cref{main_rig}.

\subsection{Computing rigid spectra}\label{ss:rig_sample}

We now compute rigid spectra in several cases.

\begin{example}\label{xuujx5}
For a quasicompact quasiseparated scheme~\(X\),
  we claim that dualizable idempotents in~\(\D(X)\)
  correspond to clopen subsets.
  Indeed, by~\cite[Proposition~2.6.0.3]{LurieSAG},
  such an idempotent is the pushforward
  of the structure sheaf of some quasicompact open subscheme of~\(X\).
  Then by~\cite[Lemma~11.1.4.6]{LurieSAG},
  we see that it must be closed too.
  This argument
  is valid more generally
  for any quasicompact quasiseparated algebraic space,
  but in fact, it can be deduced formally,
  see \cref{xqp9qd}.

  By this, we obtain a morphism of locales
  \(\Sm^{\rig}(\D(X))\to\pi_{0}\lvert X\rvert\),
  where the Stone space \(\pi_{0}\lvert X\rvert\)
  is called the \emph{Pierce spectrum}.
  This morphism need not be an isomorphism in general;
  see \cref{xu0zbv,xo25m9,xav7u6}.
\end{example}

The following observation arose in a discussion with Rodríguez~Camargo:

\begin{example}\label{xqp9qd}
  Let \(E\) be an idempotent algebra
  in a stably symmetric monoidal \(\infty\)-category~\(\cat{C}\)
  whose underlying object is dualizable.
  We claim that \(E\) is complementable.
  Indeed, the dual \(E^{\vee}\) naturally carries an \(E\)-module structure,
  so there is an equivalence \(E^{\vee} \simeq E^{\vee} \otimes E\).
  Dualizing this yields \(E \simeq E^{\vee}\),
  from which we obtain a canonical morphism \(E \to \unit\)
  exhibiting \(E\) as an idempotent coalgebra.
  The idempotent algebra \(\cofib(E \to \unit)\)
  complements~\(E\).
\end{example}

Computing \(\Sm^{\rig}\) is
obviously easier than \(\Sm^{\con}\),
since very nuclear idempotents
are very Schwartz.
For example,
\cref{xgbjvz} implies the following:

\begin{corollary}\label{x56p8x}
  Let \(X\) be a noetherian scheme.
  Then \(\Sm^{\rig}(\D(X))\) is the (finite) set
  of connected components.
\end{corollary}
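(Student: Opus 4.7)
The plan is to show $\Idem_{\vnuc}(\D(X)) = \Idem_{\vSc}(\D(X))$ and then invoke \cref{xgbjvz} to identify both with the finite set $\pi_{0}(X)$ of connected components.

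For the inclusion $\Idem_{\vnuc}(\D(X)) \subseteq \Idem_{\vSc}(\D(X))$, I would observe that a trace-class map of idempotents underlies a trace-class morphism in $\D(X)$ by \cref{xdi90c}, and since $\D(X)$ is rigid in the sense of \cref{xxw64m} (compactly generated with compact objects perfect, hence dualizable), \cref{rig_tc} identifies trace-class with compact morphisms. Thus the idealoid used to define very nuclear in \cref{xm9707} is contained in the one used to define very Schwartz in \cref{xdgmmi}, after using \cref{xxyv58} to pass between idempotents and coidempotents, and the containment follows.

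For the reverse inclusion, by \cref{xgbjvz} combined with \cref{xxyv58}, $\Idem_{\vSc}(\D(X))$ is the finite Boolean algebra of subsets of $\pi_{0}(X)$, each element $E$ coming from a clopen subscheme. By \cref{xuujx5} and \cref{xqp9qd}, such $E$ is dualizable and hence complementable: a complement $F$ satisfies $E \otimes F = 0$ and $\unit \simeq E \oplus F$. This biproduct decomposition realizes the square in \cref{e:rha0a} as cartesian, exhibiting $\id_{E}$ as a trace-class map of idempotents in the sense of \cref{xxlhf9}. It is evidently subdivisible (via the constant diagram on $E$), so $\id_{E} \in \cat{I}_{\sd}$; consequently every morphism into $E$ lies in $\cat{I}_{\sd}$ by the idealoid property applied to $\id_{E}$. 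The constant $\Ind$-system $(E)$ is therefore $\cat{I}_{\sd}$-restricted, exhibiting $E$ as very nuclear.

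Combining both inclusions, $\Idem_{\vnuc}(\D(X))$ is the frame of opens of the finite discrete space $\pi_{0}(X)$, so \cref{x92jxq} gives $\Sm^{\rig}(\D(X)) \simeq \pi_{0}(X)$. The only substantive point to verify is that complementability yields exactly the cartesian square demanded by \cref{xxlhf9}, which is immediate from the biproduct relation in the stable setting; everything else is a direct consequence of results already cited.
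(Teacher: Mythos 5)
Your proposal is correct and follows essentially the same route as the paper, which derives the corollary from \cref{xgbjvz} via the observation that very nuclear idempotents are very Schwartz (your \cref{xdi90c}+\cref{rig_tc} argument), with the reverse inclusion supplied by the complementability of clopen idempotents from \cref{xuujx5,xqp9qd}. You merely spell out in detail the steps the paper leaves implicit in its one-line deduction.
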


Moreover,
\cref{xw2z2y} implies the following,
which proves \cref{tan_ch}:

\begin{corollary}\label{xa1a4f}
  For a compact Hausdorff space~\(X\),
  the morphism \(\Sm^{\rig}(\Shv(X;\Cat{Sp}))\to X\)
  is an equivalence.
\end{corollary}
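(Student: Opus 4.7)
The plan is to reduce to \cref{xw2z2y} by showing that, for a compact Hausdorff $X$, every very Schwartz idempotent of $\Shv(X;\Cat{Sp})$ is also very nuclear. Since every very nuclear idempotent is very Schwartz (as noted in the discussion preceding this corollary), this will give an equality $\Idem_{\vnuc}(\Shv(X;\Cat{Sp})) = \Idem_{\vSc}(\Shv(X;\Cat{Sp}))$, whence $\Sm^{\rig}(\Shv(X;\Cat{Sp})) \simeq \Sm^{\con}(\Shv(X;\Cat{Sp})) \simeq X$ by \cref{xw2z2y}.

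By \cref{xw2z2y}, the frame of very Schwartz idempotents of $\Shv(X;\Cat{Sp})$ is isomorphic to the frame of opens of $X$; this frame is compact regular because $X$ is compact Hausdorff (\cref{xjrb0o}). In any compact regular frame, one has $U = \bigvee_{V \prec U} V$ by regularity, and the rather-below relation interpolates: if $V \prec U$, then $V \prec V' \prec U$ for some $V'$. This interpolation is the locale-theoretic form of Urysohn separation. For a pair $V \prec V'$ witnessed by some $W$ in the frame of opens of $X$, the same $W$, viewed inside the ambient frame $\Idem(\Shv(X;\Cat{Sp}))$ via the subframe inclusion of \cref{xjwh0f}, witnesses $V \prec V'$ there as well, since the inclusion preserves finite meets, arbitrary joins, and the top and bottom elements. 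By \cref{xdzqx4}, this means the corresponding map of idempotents is trace class in the sense of \cref{xxlhf9}. Hence the family indexed by $V \prec U$ furnishes a subdivisible $\Ind$-idempotent with trace-class transitions and colimit $U$, so $U$ is very nuclear in the sense of \cref{xm9707}.

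The essential content is compact regularity of the open frame of $X$ for compact Hausdorff $X$, together with the correspondence \cref{xdzqx4} between trace-classness and rather-below. The main bookkeeping obstacle is the direction of arrows: a morphism of idempotent algebras $E \to E'$ corresponds to the target being below the source in the frame order, so one must carefully match the source and target of the trace-class map with the endpoints of the rather-below relation transferred from the frame of opens of $X$. Once this is handled correctly, the argument is a direct combination of \cref{xw2z2y}, \cref{xjrb0o}, and \cref{xdzqx4}.
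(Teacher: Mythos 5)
Your argument is correct, and it is a legitimate way to extract the corollary from \cref{xw2z2y}; the paper itself offers no written proof beyond the assertion that \cref{xw2z2y} implies the statement. Your route is the concrete one: you show $\Idem_{\vnuc}(\Shv(X;\Cat{Sp}))=\Idem_{\vSc}(\Shv(X;\Cat{Sp}))$ by transporting the rather-below relation of the compact regular frame $\mathcal{O}(X)\simeq\Idem_{\vSc}(\Shv(X;\Cat{Sp}))$ (together with its interpolation property, i.e.\ normality of $X$) along the subframe inclusion into $\Idem(\Shv(X;\Cat{Sp}))$ and then invoking \cref{xdzqx4} to convert rather-below into trace-class maps of idempotents. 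The steps all check out: subframe inclusions preserve $\bot$, $\top$, $\wedge$, $\vee$, so witnesses for $\prec$ transfer; interpolation makes the ideal $\{V\mid V\prec U\}$ into an $\cat{I}_{\sd}$-restricted $\Ind$-object for the trace-class idealoid; and regularity gives $U=\bigvee_{V\prec U}V$, so $U$ is very nuclear. A more formal alternative, which is probably what the author intends by ``implies'': since $\Cat{CH}\subset\Cat{SC}$ and $\CAlg(\Cat{Pr}_{\st})_{\rig}\subset\CAlg(\Cat{Dbl})$ are full subcategories (the latter by \cref{x01b14}) and both adjunctions of \cref{main_rig,main_dbl} restrict the same functor $\Shv(\X;\Cat{Sp})$, one computes for $Y\in\Cat{CH}$ that $\Map_{\Cat{CH}}(\Sm^{\rig}(\Shv(X)),Y)\simeq\Map_{\Cat{SC}}(\Sm^{\con}(\Shv(X)),Y)\simeq\Map_{\Cat{CH}}(X,Y)$ by \cref{xw2z2y}, and concludes by Yoneda; this avoids touching the frames at all, whereas your argument has the merit of exhibiting the trace-class witnesses explicitly. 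One small slip in your closing remark: a map of idempotent algebras $E\to E'$ corresponds to $U_E\leq U_{E'}$ in the frame of opens of $\Sm(\cat{C})$ (source below target, as one checks on $\SS_Z\to\SS_{Z'}$ for $Z\supset Z'$), not the other way around; this does not affect your proof, since \cref{xdzqx4} already packages the correct matching of directions.
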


We study some more examples:

\begin{example}\label{xu0zbv}
We here show that
  there is a domain~\(A\) that is not a field
  such that \(\Frac(A)\) is a very nuclear idempotent.
  This is analogous to \cref{xrxo97},
  but this time,
  we take \(\Gamma=\bigoplus_{p\in P}\ZZ[1/2]\)
  and consider \(V=k\llbracket T^{\Gamma_{\geq0}}\rrbracket\).
  We claim that
  \(V[T^{-p}]\to V[T^{-p'}]\)
  for \(p<p'\) is a trace-class map
  of idempotents in \(\D(V)\).
  Indeed, \(V/\langle T^{p/2^{n}}\mid n\geq0\rangle\)
  is an idempotent
  witnessing its trace-class property.
\end{example}

\begin{example}\label{xo25m9}
For a compact Hausdorff space~\(X\),
  we write \(\Cls{C}(X)\)
  for the ring of continuous real-valued functions on~\(X\).
  It is well known
  that the kernel~\(I\)
  of \(\Cls{C}(X)\to\Cls{C}(Z)\)
  is a flat \(\Cls{C}(X)\)-module with \(I^{2}=I\);
  see, e.g.,~\cite[Theorem~6.13]{k-ros-1}.
  From this,
  we observe that
  there is a surjection
  \(\Sm^{\rig}(\D(\Cls{C}(X)))\to X\).
\end{example}

\begin{example}\label{xav7u6}
This is similar to \cref{xo25m9},
  but here we argue more concretely
  using principal localizations.
  Consider \(A=\Cls{C}([0,1])\).
  For \(p\in[0,1)\), we consider
  \begin{equation*}
    f_{p}(x)=
    \begin{cases}
      0&\text{if \(x\in[0,p]\),}\\
      \frac{x-p}{1-p}&\text{if \(x\in[p,1]\).}\\
    \end{cases}
  \end{equation*}
  We claim that \(A[f_{p}^{-1}]\to A[f_{p'}^{-1}]\)
  is a trace-class map of idempotents
  for \(p<p'\).
  To see this,
  we set \(q=(p+p')/2\)
  and consider \(g(x)=f_{1-q}(1-x)\).
  Then \(A[g^{-1}]\) witnesses
  the trace-class property.
\end{example}

\begin{remark}\label{xmd5ll}
  Note that in \cref{xav7u6},
  we only used the Zariski idempotents here.
  A similar consideration shows
  that \(\beta\) of a spectral space
  does not coincide with~\(\pi_{0}\)
  (and hence not totally disconnected).
\end{remark}

\subsection{Digression: very nuclear algebras and rigidification}\label{ss:rig_thm}

We here demonstrate that the general notion
of very nuclear algebras is important:

\begin{theorem}\label{xz52u7}
  Let \(\cat{C}\) be an object of \(\CAlg(\Cat{Pr}^{\cg}_{\st})\).
  Suppose that
  \(A\) is the colimit of
  an object in \(\Ind_{\cat{I}_{\sd}}(\CAlg(\cat{C}))\),
  where \(\cat{I}\) is the pullback
  of the idealoid of trace-class morphisms
  in~\(\cat{C}\)
  along the forgetful functor \(\CAlg(\cat{C})\to\cat{C}\).
  Then the canonical functor
  \begin{equation*}
    \Mod_{A}(\cat{C}_{\rig})\to(\Mod_{A}(\cat{C}))_{\rig}
  \end{equation*}
  is an equivalence.
\end{theorem}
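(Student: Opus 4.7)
The approach is to construct a natural comparison functor \(\Phi\) from the left-hand side to the right-hand side, verify full faithfulness formally, and then prove essential surjectivity by refining \(\Ind\)-object sequences using the very nuclear Ind-algebra presentation of \(A\).

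\textbf{Setup and construction of \(\Phi\).} The hypothesis supplies a very nuclear object \((A_i)_i\) of \(\Ind(\CAlg(\cat{C}))\) whose colimit in \(\cat{C}\) is \(A\). Since \(\CAlg\) commutes with filtered colimits, this lifts to an algebra \(\tilde{A}\in\CAlg(\cat{C}_{\rig})\) whose image in \(\cat{C}\) under the fully faithful symmetric monoidal colimit functor \(\cat{C}_{\rig}\hookrightarrow\cat{C}\) (\cref{rigidify,rig_tc}) is \(A\). As a module category over an algebra in a rigid stable presentably symmetric monoidal \(\infty\)-category, \(\Mod_{\tilde{A}}(\cat{C}_{\rig})\) is itself rigid. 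The induced symmetric monoidal functor \(\Mod_{\tilde{A}}(\cat{C}_{\rig})\to\Mod_A(\cat{C})\) then lifts, by the universal property of \((\X)_{\rig}\) (\cref{rigidify}), to the desired \(\Phi\colon\Mod_{\tilde{A}}(\cat{C}_{\rig})\to(\Mod_A(\cat{C}))_{\rig}\). Full faithfulness of \(\Phi\) is formal: both sides embed fully faithfully into \(\Mod_A(\cat{C})\)---the target by \cref{rigidify} (via \cref{unique}), and the source because the fully faithful inclusion \(\cat{C}_{\rig}\hookrightarrow\cat{C}\) transfers to modules (mapping spaces of \(A\)-modules are computed as fiber sequences built from mapping spaces of underlying objects).

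\textbf{Essential surjectivity.} By \cref{sequential}, it suffices to show that every sequential very trace-class \(\Ind\)-object \(M_0\to M_1\to\dotsb\) in \(\Mod_A(\cat{C})\) admits a cofinal refinement \(M'_0\to M'_1\to\dotsb\) whose transitions are trace class in \(\cat{C}\), since such a refinement tautologically lifts to a very nuclear \(\Ind\)-\(\tilde{A}\)-module in \(\cat{C}_{\rig}\). The refinement is constructed by combining trace-class witnesses \((D_n,\eta_n\colon A\to D_n\otimes_A M_{n+1},\epsilon_n\colon M_n\otimes_A D_n\to A)\) in \(\Mod_A(\cat{C})\) with the very nuclear Ind-algebra structure on \(A\). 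Specifically, for each \(n\) one chooses \(k(n)\) so that \(A_{k(n)}\to A_{k(n)+1}\) is trace class in \(\cat{C}\), precomposes \(\eta_n\) with \(\unit\to A_{k(n)}\to A\), and uses this together with the canonical map \(D_n\otimes M_{n+1}\to D_n\otimes_A M_{n+1}\) and the trace-class witness for \(A_{k(n)}\to A_{k(n)+1}\) to produce a trace-class witness in \(\cat{C}\) for a composite transition \(M_n\to M_{n+2}\), exploiting the subdivisibility of the very trace-class idealoid (cf.~\cref{fun_buf}) to absorb two transitions at once.

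\textbf{Main obstacle.} The principal difficulty is in the last step: translating a trace-class witness involving \(\otimes_A\) into one involving \(\otimes\) in \(\cat{C}\) requires careful use of the Ind-algebra presentation. The key technical input is that the trace-class transitions \(A_k\to A_{k+1}\) in \(\cat{C}\) act as a bridge letting one extract \(\cat{C}\)-level witnesses from \(\Mod_A(\cat{C})\)-level witnesses; making this precise while simultaneously organizing the refinement to be cofinal and natural in the \(\Ind\)-object structure is where I expect the bulk of the technical work.
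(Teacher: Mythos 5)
Your setup of \(\Phi\) and the full-faithfulness argument are fine and agree with the paper (both sides are full subcategories of \(\Mod_A(\cat{C})\)). The gap is in essential surjectivity. You propose to keep the objects \(M_n\) and show that composite transitions \(M_n\to M_{n+2}\) become trace class in \(\cat{C}\). This fails already for the unit module: \(M_n=A\) with identity transitions is a very nuclear \(\Ind\)-object of \(\Mod_A(\cat{C})\), but \(\id_A\) is trace class in \(\cat{C}\) if and only if \(A\) is dualizable, which is false in the cases of interest; no composite of identities helps. So no amount of witness manipulation on the given transitions can work---one must \emph{change the objects}. Relatedly, the witness translation you flag as the main obstacle has a second unaddressed problem on the counit side: \(\epsilon_n\colon M_n\otimes_A D_n\to A\) lands in \(A\), not \(\unit\), and there is no map \(A\to\unit\); and the comparison \(D_n\otimes M_{n+1}\to D_n\otimes_A M_{n+1}\) points the wrong way for the unit side. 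These are not technicalities to be absorbed by subdivisibility; they are the reason a direct witness-chasing argument over the full colimit algebra \(A\) cannot succeed.

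The paper's resolution is a shearing construction that your outline is missing. One writes \(A=\injlim_n A^n\) with very trace-class transitions, extends over \(\QQ\cap[0,\infty)\), filters \(\Mod_A(\cat{C})\) as the colimit of the \(\Mod_{A^{<p}}(\cat{C})\), and replaces a very nuclear \(\Ind\)-module \((M(p'))_{p'}\) over \(A^{<p}\) by the sheared system \(A^{p'}\otimes_{A^{<p}}M(p')\), whose transitions are compact in \(\cat{C}\) because the algebra transitions are (\cref{xauqm1}) and whose terms are nuclear in \(\cat{C}\) because nuclearity over a nuclear algebra descends to nuclearity over \(\unit\) (\cref{xmxx8h}, proved via the internal-Hom characterization of \cref{xzlma5}, not via explicit witnesses); a compact map into a nuclear object is then trace class, giving very nuclearity of the colimit (\cref{x4gx1u}), and \cref{xsoyii} interleaves the two filtrations. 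Your intuition that the trace-class transitions of \(A\) must serve as the bridge is correct, but they must be used to base-change the modules themselves, not merely to modify witnesses, and the whole argument has to be run on the filtration \(A^{<p}\) rather than on \(A\)-modules directly, since there is no room to shear above the colimit algebra.
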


\begin{remark}\label{xxoacl}
  In \cref{xz52u7},
  the assumption on~\(A\) is stronger
  than what is used in the proof:
  For example,
  transitions can be~\(\E_{1}\) instead of~\(\E_{\infty}\).
  We only treat this case
  for simplicity
  and leave such generalizations
  to the reader.
\end{remark}

We see some geometric examples
to motivate this theorem:

\begin{example}\label{ana_rig}
  Let \(K\) be a Stein compact set.
  Then the commutative algebra object \(\shf{O}(K)\)
  in \(\D(\CC_{\liq})\)
  (see \cref{x2xgqq})
  satisfies the condition of \cref{xz52u7}; see~\cite{Complex}.
However, as noted in \cref{ana_cg},
  we do \emph{not} expect \(\D(\CC_{\liq})\)
  to lie in \(\CAlg(\Cat{Pr}^{\cg}_{\st})\).
  Nevertheless,
  the “sequential” part of this category does,
  and it suffices to apply \cref{xz52u7}
  to conclude that the canonical functor
  \begin{equation*}
    \Mod_{\shf{O}(K)}(\D(\CC_{\liq})_{\rig})
    \to
    \D(\shf{O}(K)_{\liq})_{\rig}
  \end{equation*}
  is an equivalence.
  A detailed account of this will be given elsewhere.

  The same remarks apply if we replace
  the liquid structure with the gaseous structure
  (and heavy condensed mathematics
  with the light condensed mathematics) of~\cite{AnaSta}.
\end{example}

\begin{example}\label{xourh2}
  We here explain a nonexample relevant to \cref{ana_rig}.
  Instead of \(\cat{O}(K)\),
  we take~\(\CC[T]\).
  We claim that
  the canonical functor
  \(\Mod_{\CC[T]}(\D(\CC_{\liq})_{\rig})
  \to\D(\CC[T]_{\liq})_{\rig}\)
  is not an equivalence.
  Indeed, the dualizable object of \(\D(\CC[T])_{\liq}\)
  constructed in~\cite[Example~9.6]{Complex},
  which determines a dualizable object
  by \cref{x52p4w},
  is not in the image.
\end{example}

We then move on to proving \cref{xz52u7}.
Discussions with Scholze were helpful in developing this proof.

\begin{lemma}\label{xauqm1}
  Consider \(\cat{C}\in\CAlg(\Cat{Pr}^{\cg}_{\st})\)
  and an object \(A\in\CAlg(\cat{C})\).
  Let \(B^{(\X)}\colon\QQ\cap[0,\infty)\to\cat{C}\)
  be a diagram such that \(B^{p}\to B^{q}\) is compact
  for \(p<q\)
  and \(B^{0}\) is the underlying object of~\(A\).
  When \(M\) is a compact \(A\)-module,
  \(B^{p}\otimes_{A}M\to B^{q}\otimes_{A}M\) is compact in~\(\cat{C}\)
  for \(p<q\).
\end{lemma}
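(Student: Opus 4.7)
The plan is to show that the class $P$ of $A$-modules $M$ satisfying the conclusion is a thick subcategory of $\Mod_{A}(\cat{C})$ containing the free modules $A\otimes c$ on compact objects $c\in\cat{C}$. Since $\cat{C}\in\CAlg(\Cat{Pr}^{\cg}_{\st})$, the forgetful functor $U\colon\Mod_{A}(\cat{C})\to\cat{C}$ is conservative and preserves all colimits, so its left adjoint $A\otimes(\X)$ carries compact objects to compact objects. Consequently, the free modules on a set of compact generators of~$\cat{C}$ form a set of compact generators of $\Mod_{A}(\cat{C})$, and every compact $A$-module lies in their thick closure. Thus it suffices to verify that $P$ contains the free modules and is closed under shifts, retracts, and cofibers.

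For $M=A\otimes c$ with $c\in\cat{C}$ compact, the map $B^{p}\otimes_{A}M\to B^{q}\otimes_{A}M$ identifies with $B^{p}\otimes c\to B^{q}\otimes c$. Since the tensor product on~$\cat{C}$ preserves compact objects in each variable, the internal mapping object $[c,\X]$ preserves filtered colimits. Hence a lift witnessing compactness of $B^{p}\otimes c\to B^{q}\otimes c$ against a filtered diagram $(X_{j})_{j}$ is obtained by applying the compactness of $B^{p}\to B^{q}$ to the filtered diagram $([c,X_{j}])_{j}$. Closure of~$P$ under shifts is immediate because $\Sigma$ is an autoequivalence, and closure under retracts follows from \cref{cpt_idl}, since compact morphisms form an idealoid.

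The main step is closure under cofibers. Given a cofiber sequence $M\to M'\to M''$ in $\Mod_{A}$ with $M,M'\in P$, and $p<q$ in $\QQ\cap[0,\infty)$, use the density of~$\QQ$ to choose an intermediate rational $p_{1}$ with $p<p_{1}<q$ and consider the commutative diagram
\begin{equation*}
  \begin{tikzcd}
    0\ar[r]&0\ar[r]&0\\
    B^{p}\otimes_{A}M\ar[r]\ar[u]\ar[d]&B^{p_{1}}\otimes_{A}M\ar[r]\ar[u]\ar[d]&B^{q}\otimes_{A}M\ar[u]\ar[d]\\
    B^{p}\otimes_{A}M'\ar[r]&B^{p_{1}}\otimes_{A}M'\ar[r]&B^{q}\otimes_{A}M'
  \end{tikzcd}
\end{equation*}
in~$\cat{C}$. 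The hypotheses of \cref{xbagrg} are met: the first horizontal map in the middle row is compact because $M\in P$; the second horizontal map in the bottom row is compact because $M'\in P$; the second horizontal map in the top row is trivially compact. Its conclusion gives that the induced map between the pushouts of the outer columns is compact. In the stable setting these pushouts are precisely $B^{p}\otimes_{A}M''$ and $B^{q}\otimes_{A}M''$, so $M''\in P$.

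The only nontrivial step is the cofiber closure; once the density of~$\QQ$ is exploited to subdivide $p<q$ as $p<p_{1}<q$ and \cref{xbagrg} is applied with a trivial bottom row, everything else reduces to standard manipulations with adjunctions and thick subcategories.
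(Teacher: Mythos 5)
Your proof is correct and follows essentially the same route as the paper: reduce to showing the class of modules satisfying the conclusion is thick and contains the free modules $A\otimes c$ on compact objects, then handle cofibers by inserting an intermediate index (you use an arbitrary $p_{1}$, the paper uses $(p+q)/2$) and applying \cref{xbagrg} with a zero row. Your write-up is somewhat more explicit than the paper's about why $A\otimes c$ lies in the class and why compact generation justifies the reduction, but the argument is the same.
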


\begin{proof}
  We consider the full subcategory \(\cat{D}\subset\Mod_{A}(\cat{C})\)
  spanned by~\(M\)
  such that 
  \(B^{p}\otimes_{A}M\to B^{q}\otimes_{A}M\) is compact in~\(\cat{C}\)
  for \(p<q\).
  We want to show that \(\cat{D}\) contains
  all compact objects.
  Clearly,
  \(\cat{D}\) is closed under retracts and 
  shifts.
  It also contains \(A\otimes C\)
  for any compact object \(C\in\cat{C}\).
  Therefore,
  we are left to show that
  \(\cat{D}\) is closed under cofibers.
  We consider the cofiber sequence \(M'\to M\to M''\)
  with \(M\) and \(M'\in\cat{D}\).
  Since 
  \(B^{p}\otimes_{A}M'\to B^{(p+q)/2}\otimes_{A}M'\)
  and 
  \(B^{(p+q)/2}\otimes_{A}M'\to B^{q}\otimes_{A}M'\)
  are compact by assumption,
  we see \(M''\in\cat{D}\) from \cref{xbagrg}.
\end{proof}

\begin{lemma}\label{xmxx8h}
  Consider \(\cat{C}\in\CAlg(\Cat{Pr}^{\cg}_{\st})\)
  and an object \(A\in\CAlg(\cat{C})\).
  Suppose that \(A\) underlies a nuclear object in~\(\cat{C}\).
  Then an \(A\)-module~\(M\) is nuclear
  in \(\Mod_{A}(\cat{C})\)
  if and only if it is nuclear in~\(\cat{C}\).
\end{lemma}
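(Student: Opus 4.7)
The plan is to compare the two nuclearity conditions via the internal-hom characterization of \cref{xzlma5}: in a compactly generated presentably symmetric monoidal stable \(\infty\)-category, an object \(X\) is nuclear iff \([C_{0},\unit]\otimes X\to[C_{0},X]\) is an equivalence for every compact \(C_{0}\), where \([-,-]\) denotes the internal mapping object. Applying this characterization both in \(\cat{C}\) and in \(\Mod_{A}(\cat{C})\) reduces the problem to an identification of the respective comparison maps.

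First I would reduce to a convenient family of compact \(A\)-modules. Since \(\cat{C}\in\CAlg(\Cat{Pr}^{\cg}_{\st})\), the \(\infty\)-category \(\Mod_{A}(\cat{C})\) is compactly generated by the objects \(A\otimes C_{0}\) with \(C_{0}\in\cat{C}\) compact. The locus of test objects \(N_{0}\) for which \([N_{0},A]_{A}\otimes_{A}M\to[N_{0},M]_{A}\) is an equivalence is closed under retracts and under finite colimits in \(N_{0}\) (since the internal hom sends colimits in its first variable to limits, and finite limits of equivalences are equivalences); hence it suffices to verify the nuclearity condition in \(\Mod_{A}(\cat{C})\) on the generators \(N_{0}=A\otimes C_{0}\).

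Next I would invoke the projection formula \([A\otimes C_{0},N]_{A}\simeq[C_{0},N]_{\cat{C}}\), valid for every \(A\)-module \(N\) (equipped with the \(A\)-action inherited from \(N\)). This is immediate from the free–forgetful adjunction between \(\cat{C}\) and \(\Mod_{A}(\cat{C})\) together with Yoneda: for \(Y\in\cat{C}\),
\[
\Map_{\cat{C}}(Y,[A\otimes C_{0},N]_{A})\simeq\Map_{\Mod_{A}}((A\otimes Y)\otimes_{A}(A\otimes C_{0}),N)\simeq\Map_{\cat{C}}(Y\otimes C_{0},N).
\]
Under this identification the comparison map becomes
\[
[C_{0},A]_{\cat{C}}\otimes_{A}M\to[C_{0},M]_{\cat{C}}.
\]

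Finally, using that \(A\) is nuclear in \(\cat{C}\), \cref{xzlma5} gives \([C_{0},A]_{\cat{C}}\simeq[C_{0},\unit]_{\cat{C}}\otimes A\), and tensoring with \(M\) over \(A\) collapses the left-hand side to \([C_{0},\unit]_{\cat{C}}\otimes M\). The resulting map \([C_{0},\unit]_{\cat{C}}\otimes M\to[C_{0},M]_{\cat{C}}\) is precisely the nuclearity condition for \(M\) in \(\cat{C}\) at \(C_{0}\). Letting \(C_{0}\) run over compacts of \(\cat{C}\) therefore shows that the two conditions are equivalent. I do not anticipate a serious obstacle: the only delicate point is careful bookkeeping of \(A\)-module structures in the projection formula, which is standard.
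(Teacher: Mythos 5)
Your proposal is correct and follows essentially the same route as the paper: both use the internal-hom characterization of nuclearity from \cref{xzlma5}, reduce to the compact generators \(A\otimes C_{0}\) of \(\Mod_{A}(\cat{C})\) via closure under retracts and finite colimits, identify \([A\otimes C_{0},N]_{A}\) with \([C_{0},N]\), and use the nuclearity of \(A\) to collapse \([C_{0},A]\otimes_{A}M\) to \([C_{0},\unit]\otimes M\). The only cosmetic difference is that you treat both implications uniformly through the generator reduction, whereas the paper writes out the two directions separately.
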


\begin{proof}
  In this proof,
  we use~\cref{i:2hh2s}
  of \cref{xzlma5} as the definition of nuclearity.

  Since \(A\) is nuclear,
  for any compact object \(C_{0}\in\cat{C}\),
  the morphism
  \([C_{0},\unit]\otimes A\to[C_{0},A]\)
  is an equivalence,
  where \([\X,\X]\)
  denotes the internal mapping object.

  We prove the “only if” direction.
  It suffices to prove that
  \([C_{0},\unit]\otimes M\to[C_{0},M]\)
  is an equivalence for any compact object \(C_{0}\in\cat{C}\).
  From the equivalences
  \begin{equation*}
    [C_{0},\unit]\otimes M
    \simeq
    ([C_{0},\unit]\otimes A)\otimes_{A}M
    \simeq
    [C_{0},A]\otimes_{A}M,
  \end{equation*}
  we obtain the desired 
  result by applying the nuclearity of~\(M\)
  as an \(A\)-module for the compact \(A\)-module \(A\otimes C_{0}\).

  We prove the “if” direction.
  We consider the full subcategory \(\cat{D}\subset\Mod_{A}(\cat{C})\)
  spanned by~\(M_{0}\) such that
  \([M_{0},A]_{A}\otimes_{A}M\to[M_{0},M]_{A}\)
  is an equivalence,
  where \([\X,\X]_{A}\)
  denotes the internal mapping object in \(\Mod_{A}(\cat{C})\).
  We want to show that \(\cat{D}\) contains
  all compact \(A\)-modules.
  Since it is immediate that \(\cat{D}\) is closed under
  retracts and finite colimits,
  it suffices to show that
  \(A\otimes C_{0}\in\cat{D}\)
  when \(C_{0}\) is a compact object in~\(\cat{C}\).
  The argument above shows the desired claim.
\end{proof}

\begin{lemma}\label{x4gx1u}
  Let \(\cat{C}\) be an object of \(\CAlg(\Cat{Pr}^{\cg}_{\st})\).
  A diagram
  \(A^{(\X)}\colon\QQ\cap[0,\infty)\to\CAlg(\cat{C})\)
  is such that
  \(A^{p}\to A^{q}\) underlies
  a trace-class morphism in~\(\cat{C}\) for \(p<q\).
  We write \(A=A^{0}\).
  A diagram
  \(M\colon\QQ\cap[0,\infty)\to\Mod_{A}(\cat{C})\)
  is such that
  \(M(p)\to M(q)\) is trace class in \(\Mod_{A}(\cat{C})\)
  for \(p<q\).
  Then
  \(M'=\injlim_{p}A^{p}\otimes_{A^{0}}M(p)\)
  is very nuclear over~\(\unit\).
\end{lemma}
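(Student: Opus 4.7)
The plan is to exhibit the $\Ind$-object
\[
\cat{X} = \bigl(A^{p} \otimes_{A^{0}} M(p)\bigr)_{p \in \QQ \cap [0,\infty)}
\]
in $\cat{C}$, with the transitions induced from the algebra maps $A^{p}\to A^{q}$ and the module maps $M(p)\to M(q)$, as a very nuclear $\Ind$-object whose colimit is $M'$. The colimit identification is immediate from the definition of~$M'$. To conclude that $\cat{X}\in\Ind_{\cat{I}_{\sd}}(\cat{C})$, where $\cat{I}$ denotes the idealoid of trace-class morphisms in~$\cat{C}$, I would invoke the density of $\QQ$ in $[0,\infty)$: every pair $p<q$ admits a $\QQ\cap[0,1]$-indexed refinement inside $\cat{X}$ itself, namely the assignment $t\mapsto A^{p+t(q-p)}\otimes_{A}M(p+t(q-p))$. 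Hence by \cref{x7sojd} it suffices to show that each individual transition $A^{p}\otimes_{A}M(p)\to A^{q}\otimes_{A}M(q)$ for $p<q$ is trace class in~$\cat{C}$.

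For the trace-class step, I would combine the witness $(B,\eta_{A},\epsilon_{A})$ for $A^{p}\to A^{q}$ in $\cat{C}$ with the witness $(N,\eta_{M},\epsilon_{M})$ for $M(p)\to M(q)$ in $\Mod_{A}(\cat{C})$, taking $\tilde B=B\otimes N\in\cat{C}$ as the candidate witness object, where $N$ is viewed with its $A$-module structure forgotten. The morphisms $\tilde\eta\colon\unit\to\tilde B\otimes(A^{q}\otimes_{A}M(q))$ and $\tilde\epsilon\colon(A^{p}\otimes_{A}M(p))\otimes\tilde B\to\unit$ would be assembled from the four given witness morphisms by promoting $\eta_{M}$ and $\epsilon_{M}$ to $\cat{C}$-morphisms via pre- (resp.\ post-) composition with the algebra unit $\unit\to A$, and then using the $A$-algebra structure of $A^{p},A^{q}$ together with commutativity of $\otimes_{A}$ in $\Mod_{A}$ to reorganize the resulting relative and absolute tensors into the required form; a diagram chase using the trace-class identities would verify that the composite $\tilde\epsilon\circ((A^{p}\otimes_{A}M(p))\otimes\tilde\eta)$ recovers the original transition.

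The main obstacle is precisely this witness construction: the $\Mod_{A}$-witness inherently uses the relative tensor $\otimes_{A}$, which is a coequalizer in $\cat{C}$ and does not admit natural splittings back to $\otimes_{\unit}$, so gluing the two trace-class factorizations into a single $\cat{C}$-witness is delicate. Should the direct construction prove intractable, an alternative is to work with $\tilde A=\injlim_{p}A^{p}$, which is nuclear in $\cat{C}$ by hypothesis via the $\Ind$-object $(A^{p})_{p}$, and to use the identification $M'\simeq\tilde A\otimes_{A}\tilde M$ with $\tilde M=\injlim_{p}M(p)$ very nuclear in $\Mod_{A}$. Then \cref{xmxx8h} transfers nuclearity from $\Mod_{\tilde A}$ (where $M'$ is nuclear as a base change of $\tilde M$ along $A\to\tilde A$) back to~$\cat{C}$, and the refined "very" conclusion is obtained by verifying that the diagonal $\QQ$-indexed presentation $(A^{p}\otimes_{A}M(p))_{p}$ meets the subdivisibility criterion of \cref{x7sojd} directly.
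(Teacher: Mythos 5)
Your reduction in the first paragraph is fine (density of $\QQ$ plus \cref{x7sojd} does reduce everything to showing each transition $A^{p}\otimes_{A}M(p)\to A^{q}\otimes_{A}M(q)$ is trace class in~$\cat{C}$), and you have correctly located the obstruction, but neither of your two routes closes it, so there is a genuine gap. The direct witness-gluing fails on the unit side: from $\eta_{A}\colon\unit\to B\otimes A^{q}$ and $\eta_{M}\colon A\to N\otimes_{A}M(q)$ you can only reach $B\otimes\bigl(A^{q}\otimes_{A}N\otimes_{A}M(q)\bigr)$, and the comparison map you would then need points from the absolute tensor $N\otimes(A^{q}\otimes_{A}M(q))$ \emph{to} the relative one, not the other way; no natural splitting exists, exactly as you suspect. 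Your fallback via $\tilde A=\injlim_{p}A^{p}$ and \cref{xmxx8h} only shows that the colimit $M'$ is \emph{nuclear}, i.e.\ the colimit of \emph{some} nuclear $\Ind$-object; it says nothing about the transitions of the diagonal presentation, and your closing sentence (``verify the subdivisibility criterion directly'') is precisely the statement to be proved. Note also that nuclear does not imply very nuclear in general, which is why the paper distinguishes $\cat{I}$ from $\cat{I}_{\sd}$ throughout.

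The paper's proof supplies two ingredients your proposal never engages with. First, by left Kan extension to $[0,\infty)$ and an irrational shift (as in the proof of \cref{rigidify}) one may assume each $A^{p}$ is itself very nuclear in~$\cat{C}$ and each $M(p)$ is nuclear in $\Mod_{A}(\cat{C})$; then \cref{xmxx8h} applies \emph{termwise}, showing each $A^{p}\otimes_{A}M(p)$ is nuclear in~$\cat{C}$, not merely the colimit. Second, each transition is shown to be \emph{compact} in~$\cat{C}$: the map $A^{p}\otimes_{A}M(p)\to A^{p}\otimes_{A}M(q)$ is trace class over~$A^{p}$, hence factors through a compact $A^{p}$-module, and \cref{xauqm1} converts this into compactness in~$\cat{C}$ by a thick-subcategory argument over compact $A$-modules that crucially uses the subdividability of the tower $(A^{r})_{r}$ together with \cref{xbagrg} to handle cofibers (a single trace-class map of algebras would not suffice here). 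Finally, a compact morphism into a nuclear object factors through a trace-class morphism (\cref{xo7ubc} plus the restricted-$\Ind$ property), so every transition is trace class and density of $\QQ$ upgrades this to ``very''. The termwise nuclearity reduction and \cref{xauqm1} are where the actual work lies, and both are absent from your argument.
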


\begin{proof}
  By Kan extension and irrational shifting
  as in the proof of \cref{rigidify},
  we can assume that
  the underlying object of~\(A(p)\) is very nuclear.
  Similarly,
  we can assume that
  \(M(p)\) is (very) nuclear 
  in \(\Mod_{A}(\cat{C})\).

  For any~\(p\),
  we see that \(A^{p}\otimes_{A^{0}}M(p)\)
  is nuclear in \(\Mod_{A^{p}}(\cat{C})\)
  and hence in~\(\cat{C}\) by \cref{xmxx8h}.
  Therefore,
  it suffices to show that
  \(A^{p}\otimes_{A^{0}}M(p)\to A^{q}\otimes_{A^{0}}M(q)\)
  is compact in~\(\cat{C}\)
  for \(p<q\).
  For that,
  note that
  \(A^{p}\otimes_{A^{0}}M(p)\to A^{p}\otimes_{A^{0}}M(q)\)
  is trace class in \(\Mod_{A^{p}}(\cat{C})\)
  and hence factors through a compact \(A^{p}\)-module.
  Now the desired result follows from \cref{xauqm1}.
\end{proof}

\begin{proof}[Proof of \cref{xz52u7}]
  By \cref{sequential},
  it suffices to consider
  the case
  when \(A=\injlim_{n}A^{n}\)
  such that \(A^{n}\to A^{n+1}\)
  is in~\(\cat{I}_{\sd}\).
  We extend this diagram
  to \(A^{(\X)}\colon\QQ\cap[0,\infty)\to\CAlg(\cat{C})\)
  such that \(A^{p}\to A^{q}\) in~\(\cat{I}_{\sd}\).
  For \(p>0\),
  we write \(A^{<p}\) for \(\injlim_{q<p}A^{q}\),
  which is an object of~\(\cat{C}_{\rig}\)
  by assumption.
  Since both sides are full subcategories
  of \(\Mod_{A}(\cat{C})\),
  the functor \(\Mod_{A}(\cat{C}_{\rig})
  \to\Mod_{A}(\cat{C})_{\rig}
  \) is fully faithful.
  For \(p>0\),
  we prove the existence of the factorization
  \begin{equation*}
    \begin{tikzcd}
      \Mod_{A^{<p}}(\cat{C}_{\rig})\ar[r]\ar[d,hook]&
      \Mod_{A^{<p+1}}(\cat{C}_{\rig})\ar[d,hook]\\
      (\Mod_{A^{<p}}(\cat{C}))_{\rig}\ar[r]\ar[ur,dashed]&
      (\Mod_{A^{<p+1}}(\cat{C}))_{\rig}\rlap;
    \end{tikzcd}
  \end{equation*}
  by \cref{xsoyii},
  this implies the desired claim.
  We prove this by considering a generator;
  by \cref{sequential},
  it suffices to show that
  \(M\colon\QQ\cap[0,\infty)\to\Mod_{A^{<p}}(\cat{C})\)
  such that \(M(p')\to M(q')\)
  is trace class over \(A^{<p}\) for \(p'<q'\),
  its base change underlies a very nuclear object in~\(\cat{C}\).
  This statement follows from \cref{x4gx1u}.
\end{proof}

\bibliographystyle{plain}

\begin{thebibliography}{10}

\bibitem{ATJLSS04}
Leovigildo Alonso~Tarr\'{\i}o, Ana Jerem\'{\i}as~L\'{o}pez, and Mar\'{\i}a~Jos\'{e} Souto~Salorio.
\newblock Bousfield localization on formal schemes.
\newblock {\em J. Algebra}, 278(2):585--610, 2004.

\bibitem{AnelLejay}
Mathieu Anel and Damien Lejay.
\newblock Exponentiable higher toposes, 2018.
\newblock \href{https://arxiv.org/abs/1802.10425v1}{\tt arXiv:1802.10425v1}.

\bibitem{verdier-asc}
Ko~Aoki.
\newblock Posets for which {V}erdier duality holds.
\newblock {\em Selecta Math. (N.S.)}, 29(5):Paper No. 78, 22, 2023.

\bibitem{ttg-sm}
Ko~Aoki.
\newblock The sheaves-spectrum adjunction, 2023.
\newblock \href{https://arxiv.org/abs/2302.04069v1}{\tt arXiv:2302.04069v1}.

\bibitem{k-ros-1}
Ko~Aoki.
\newblock {$K$}-theory of rings of continuous functions, 2024.
\newblock \href{https://arxiv.org/abs/2402.05257v1}{\tt arXiv:2402.05257v1}.

\bibitem{ttg-shv}
Ko~Aoki.
\newblock The smashing spectrum of sheaves, 2024.
\newblock \href{https://arxiv.org/abs/2406.03969v1}{\tt arXiv:2406.03969v1}.

\bibitem{BanaschewskiMulvey80}
B.~Banaschewski and C.~J. Mulvey.
\newblock Stone-\v{C}ech compactification of locales. {I}.
\newblock {\em Houston J. Math.}, 6(3):301--312, 1980.

\bibitem{BHIS}
Robert Burklund, Jeremy Hahn, Ishan Levy, and Tomer~M. Schlank.
\newblock {$K$}-theoretic counterexamples to {Ravenel's} telescope conjecture, 2023.
\newblock \href{https://arxiv.org/abs/2310.17459v1}{\tt arXiv:2310.17459v1}.

\bibitem{Complex}
Dustin Clausen and Peter Scholze.
\newblock Condensed mathematics and complex geometry, 2022.
\newblock Lecture notes available at the second author's website.

\bibitem{AnaSta}
Dustin Clausen and Peter Scholze.
\newblock Analytic stacks, 2023--2024.
\newblock Lecture videos available on YouTube.

\bibitem{Efimov}
Alexander~I. Efimov.
\newblock {$K$}-theory and localizing invariants of large categories, 2024.
\newblock \href{https://arxiv.org/abs/2405.12169v2}{\tt arXiv:2405.12169v2}.

\bibitem{EfimovRigidity}
Alexander~I. Efimov.
\newblock Rigidity of the category of localizing motives, \yyyy{2025}in preparation.

\bibitem{Flachsmeyer61}
J\"{u}rgen Flachsmeyer.
\newblock Zur {S}pektralentwicklung topologischer {R}\"{a}ume.
\newblock {\em Math. Ann.}, 144:253--274, 1961.

\bibitem{GabrielUlmer71}
Peter Gabriel and Friedrich Ulmer.
\newblock {\em Lokal pr\"{a}sentierbare {K}ategorien}.
\newblock Lecture Notes in Mathematics, Vol. 221. Springer-Verlag, Berlin-New York, 1971.

\bibitem{Gaitsgory15}
Dennis Gaitsgory.
\newblock Sheaves of categories and the notion of $1$-affineness.
\newblock In {\em Stacks and Categories in Geometry, Topology, and Algebra}, volume 643 of {\em Contemp. {{Math}}.}, pages 127--225. {Amer. Math. Soc., Providence, RI}, 2015.

\bibitem{GHKLMS}
Gerhard Gierz, Karl~Heinrich Hofmann, Klaus Keimel, Jimmie~D. Lawson, Michael~W. Mislove, and Dana~S. Scott.
\newblock {\em A compendium of continuous lattices}.
\newblock Springer-Verlag, Berlin-New York, 1980.

\bibitem{Grothendieck54}
Alexandre Grothendieck.
\newblock Sur les espaces ({$F$}) et ({$DF$}).
\newblock {\em Summa Brasil. Math.}, 3:57--123, 1954.

\bibitem{HrbekHuZhu24}
Michal Hrbek, Jiangsheng Hu, and Rongmin Zhu.
\newblock Gluing compactly generated t-structures over stalks of affine schemes.
\newblock {\em Israel J. Math.}, 262(1):235--275, 2024.

\bibitem{JohnstoneJoyal82}
Peter Johnstone and Andr\'{e} Joyal.
\newblock Continuous categories and exponentiable toposes.
\newblock {\em J. Pure Appl. Algebra}, 25(3):255--296, 1982.

\bibitem{Lurie05}
Jacob Lurie.
\newblock Tannaka duality for geometric stacks, 2005.
\newblock \href{https://arxiv.org/abs/math/0412266v2}{\tt arXiv:math/0412266v2}.

\bibitem{LurieHTT}
Jacob Lurie.
\newblock {\em Higher topos theory}, volume 170 of {\em Annals of Mathematics Studies}.
\newblock Princeton University Press, Princeton, NJ, 2009.

\bibitem{LurieHA}
Jacob Lurie.
\newblock Higher algebra.
\newblock Available at the author's website, 2017.

\bibitem{LurieSAG}
Jacob Lurie.
\newblock Spectral algebraic geometry (under construction).
\newblock Available at the author's website, 2018.

\bibitem{MartiniWolf25}
Louis Martini and Sebastian Wolf.
\newblock Proper morphisms of $\infty$-topoi, 2025.
\newblock \href{https://arxiv.org/abs/2311.08051v3}{\tt arXiv:2311.08051v3}.

\bibitem{Neeman92}
Amnon Neeman.
\newblock The chromatic tower for {$D(R)$}.
\newblock {\em Topology}, 31(3):519--532, 1992.
\newblock With an appendix by Marcel B\"{o}kstedt.

\bibitem{Ramzi1}
Maxime Ramzi.
\newblock Dualizable presentable $\infty$-categories, 2024.
\newblock \href{https://arxiv.org/abs/2410.21537v1}{\tt arXiv:2410.21537v1}.

\bibitem{Ramzi2}
Maxime Ramzi.
\newblock Locally rigid $\infty$-categories, 2024.
\newblock \href{https://arxiv.org/abs/2410.21524v1}{\tt arXiv:2410.21524v1}.

\bibitem{Analytic}
Peter Scholze.
\newblock Analytic geometry, 2020.
\newblock Lecture notes available at the author's website.

\bibitem{Scott70}
Dana~S. Scott.
\newblock Outline of a mathematical theory of computation.
\newblock In {\em Proceedings of the 4th Annual Princeton Conference on Information Sciences and Systems}, pages 169--176, 1970.

\end{thebibliography}
\let\SS\oldSS  \newcommand{\yyyy}[1]{}

\end{document}